\documentclass[12pt]{dalthesis}

\usepackage{tikz}
\usepackage{hexboard}
\usepackage{pgffor}
\usepackage{pspicture}
\usepackage{pstricks}
\usepackage{amsthm}
\usepackage{graphicx}
\usepackage{circuitikz}
\usepackage{amsthm}

\theoremstyle{plain}
\newtheorem{theorem}{Theorem}[chapter]
\newtheorem{lemma}[theorem]{Lemma}
\newtheorem{corollary}[theorem]{Corollary}

\theoremstyle{definition}
\newtheorem{definition}[theorem]{Definition}
\newtheorem{example}[theorem]{Example}

\newcommand{\hs}{\hspace{0.1cm}\hat{+}\hspace{0.1cm}}
\newcommand{\hleq}{\hspace{0.1cm} \hat{\leq} \hspace{0.1cm}}
\newcommand{\htri}{\hspace{0.1cm}\hat{\triangleleft}\hspace{0.1cm}}
\newcommand{\heq}{\hspace{0.1cm}\hat{\simeq}\hspace{0.1cm}}

\newcommand{\stard}{*_d}
\newcommand{\A}{\cal{A}}
\newcommand{\B}{\cal{B}}
\newcommand{\C}{\cal{C}}
\newcommand{\D}{\cal{D}}
\newcommand{\Le}{\cal{L}}
\newcommand{\R}{\cal{R}}
\newcommand{\N}{\cal{N}}

\def\gg#1|#2\gg{\{#1\mid#2\}}

\usepackage{graphicx}
\usepackage[hidelinks]{hyperref}
\hypersetup{
  colorlinks=true,
  linkcolor=blue,
  citecolor=black,
  filecolor=magenta,
  urlcolor=blue, }

\begin{document}

\title{The Combinatorial Game Theory of Rex+}
\author{Veronika Keras}

\bcshon  

\degree{Bachelor's of Science}
\degreeinitial{B. Sc}
\faculty{Science}
\dept{Department of Mathematics}

\defencemonth{April}\defenceyear{2026}

\nolistoftables
\nolistoffigures

\frontmatter

\begin{abstract}
In this thesis we present the combinatorial game theory of the game Rex+. Rex+ is a variant of the game Hex, played on a four sided board made out of hexagons. Both players take turns placing as many stones of their colour as they would like on the board, with the objective being to force the other player to connect their two sides. We describe a new ordering, and present some preliminary results on it.
\end{abstract}

\begin{acknowledgements}
Thanks to my friends and family for their support, and to my supervisor Peter Selinger for his guidance.
\end{acknowledgements}

\mainmatter

\chapter{Introduction}

Combinatorial game theory is the study of combinatorial games: games that are played between two players making alternating moves, with no random chance and perfect information.  Perfect information means that both players know everything about the game state.

Hex is a game played on a four sided board made out of hexagons. It is played between two players, Black and White, who take turns placing stones on the board with the aim of connecting their two sides of the board. Reverse Hex, or Rex, is a variant of Hex where the goal of the game is the opposite, to not connect your two sides, or equivalently to force your opponent to connect their two sides. Rex+ is a variant of Rex where each player can play more than one stone on their move. Selinger and Hockaday have analyzed Hex~\cite{hex_canon} and Rex~\cite{rex_cgt} respectively using the techniques of combinatorial game theory. In this thesis we will analyze Rex+ combinatorially.

First we will provide some background on important concepts in combinatorial game theory that we will use throughout this thesis. Then we will present some basic properties and examples to highlight why Rex+ is an interesting game to study. In Chapter~\ref{chap:CGT-rex+} we will present a new order and prove some preliminary results about it. We will then in Chapter~\ref{chap:weaker-option-closure} present the canonical form problem, and some weaker versions of option closure in an attempt to solve this problem. In Chapter~\ref{chap:canonical-forms} we will prove some results about existence and uniqueness of canonical forms under this order. Finally, in Chapter~\ref{chap:fallow-and-taut} we define a new property of some Rex+ positions, called fallowness, and present some alternative ways to characterize this property.

\chapter{Background}\label{chap:background}
	\section{Combinatorial Game Theory}
	
	We briefly recall some basic definitions from combinatorial game theory. For a more detailed introduction see ``On Numbers and Games"~\cite{ONAG}, ``Winning Ways"~\cite{WinningWays}, and ``Lessons in Play"~\cite{LessonsInPlay}.
	
	\subsection{Game Forms and Outcome Classes}
	We start by defining what a combinatorial game is. We say a game is combinatorial if it is played by two players who make alternating moves, with no random chance and perfect information. We usually call the two players Left and Right. 
	
	Some examples of combinatorial games are Tic Tac Toe, Chess, and Checkers. There are many different types of combinatorial games, which can classified by how the winner is determined. The most common type of combinatorial game is a class called normal play games. These are games where the objective is to be the last person to play, i.e. the first player who has no more moves loses.
	
	Some examples of normal play games include the games Nim and Domineering. 
	
	\begin{example}
		Nim is a two-player game played on a set of heaps. Each heap has a certain number of sticks in it. On their turn a player is allowed to remove as many sticks as they would like from one heap. The last player to take a stick wins.
	\end{example}
	
	\begin{example}
		Domineering is a two player game played on a grid. The two players Left and Right take turns playing a two-by-two domino on the grid, with Left playing vertical dominoes and Right playing horizontal dominoes.
	\end{example}
	
	Figure~\ref{fig:nim-example-position-1} shows an example of a possible Nim position, and Figure~\ref{fig:nim-example-position-2} is a legal move from the position in Figure~\ref{fig:nim-example-position-1}. Figure~\ref{fig:domineering-example-position} shows a sample position in Domineering, with one Left move and one Right move having been played.
	
	\begin{figure}
		\centering
		\begin{circuitikz}[baseline = (current bounding box.center)]
			\tikzstyle{every node}=[font=\fontsize{18.2pt}{23.7pt}\selectfont]
			\draw [short] (3.75,9.75) -- (3.75,8.875);
			\draw [short] (4,9.75) -- (4,8.875);
			\draw [short] (4.25,9.75) -- (4.25,8.875);
			\draw [short] (4.5,9.75) -- (4.5,8.875);
			\draw [short] (3.5,9) -- (4.75,9.5);
			\draw [short] (6.125,9.75) -- (6.125,8.875);
			\draw [short] (6.375,9.75) -- (6.375,8.875);
			\draw [short] (6.625,9.75) -- (6.625,8.875);
			\draw [short] (6.875,9.75) -- (6.875,8.875);
			\draw [short] (5.875,9) -- (7.125,9.5);
			\draw [short] (7.375,9.75) -- (7.375,8.875);
			\draw [short] (7.625,9.75) -- (7.625,8.875);
		\end{circuitikz}
		
		\caption{A Nim position with heaps of size 5 and 7.}
		\label{fig:nim-example-position-1}
	\end{figure}
	
	\begin{figure}
		\centering
		\begin{circuitikz}[baseline = (current bounding box.center)]
			\tikzstyle{every node}=[font=\fontsize{18.2pt}{23.7pt}\selectfont]
			\draw [short] (3.75,9.75) -- (3.75,8.875);
			\draw [short] (4,9.75) -- (4,8.875);
			\draw [short] (4.25,9.75) -- (4.25,8.875);
			\draw [short] (4.5,9.75) -- (4.5,8.875);
			\draw [short] (3.5,9) -- (4.75,9.5);
			\draw [short] (6.125,9.75) -- (6.125,8.875);
			\draw [short] (6.375,9.75) -- (6.375,8.875);
			\draw [short] (6.625,9.75) -- (6.625,8.875);
		\end{circuitikz}
		
		\caption{A Nim position with heaps of size 5 and 3.}
		\label{fig:nim-example-position-2}
	\end{figure}
	
	\begin{figure}
		\centering

		\begin{circuitikz}
			\tikzstyle{every node}=[font=\fontsize{18.2pt}{23.7pt}\selectfont]
			\draw  (2.625,12.125) rectangle (3.625,11.125);
			\draw  (3.625,12.125) rectangle (4.625,11.125);
			\draw  (2.625,10.125) rectangle (3.625,9.125);
			\draw  (3.625,10.125) rectangle (4.625,9.125);
			\draw  (1.75,12) rectangle (2.5,10.25);
			\draw  (2.75,11) rectangle (4.5,10.25);
			\draw [short] (2.625,12.125) -- (1.625,12.125);
			\draw [short] (1.625,12.125) -- (1.625,10.125);
			\draw [short] (1.625,10.125) -- (2.625,10.125);
			\draw [short] (4.625,11.125) -- (4.625,10.125);
			\draw [short] (2.625,11.125) -- (2.625,10.125);
		\end{circuitikz}

		\caption{A Domineering position with one move from Left and Right.}
		\label{fig:domineering-example-position}
	\end{figure}
	
	When we study games, we often write them out in what is called a game form. This is a way of recursively representing a game in terms of its options.
	
	\begin{definition}
		Given a game $G$, we say a \emph{left option} of $G$ is a position that the left player can make a move to. \emph{Right options} of $G$ are defined analogously.
	\end{definition}
	
	\begin{definition}
		Given a game $G$, we write $G = \gg G^{\cal{L}}|G^{\cal{R}}\gg$, where $G^{\cal{L}}$ is the set of left options, and $G^{\cal{R}}$ the set of right options.
	\end{definition}
	
	\begin{definition}
		For a typical left option of a game $G$ we write $G^L$, and for a typical right option we write $G^R$.
	\end{definition}
	
	\begin{definition}
		The game where neither player has any moves is written $0 = \gg|\gg$.
	\end{definition}	
	
	The game shown in Figure~\ref{fig:*-position} has the game form $\gg0|0\gg$ as both players have one option, namely to take the last stick, after which neither player has any remaining moves, so we are in the game $0$. This position has a special name and is called $*$. Figure~\ref{fig:1-position} shows a domineering position where Left has one move, and Right has no moves. It has the game form $\{0 \mid \}$. This position is given the name $1$.
	
	\begin{figure}
		\centering
		\begin{circuitikz}
			\tikzstyle{every node}=[font=\fontsize{18.2pt}{23.7pt}\selectfont]
			\draw [short] (3.75,9.75) -- (3.75,8.875);
		\end{circuitikz}

		\caption{The position $*$.}
		\label{fig:*-position}
	\end{figure}
	
	\begin{figure}
		\centering
		\begin{circuitikz}
			\tikzstyle{every node}=[font=\fontsize{18.2pt}{23.7pt}\selectfont]
			\draw  (1.625,12.125) rectangle (2.625,11.125);				\draw  (1.625,11.125) rectangle (2.625,10.125);
		\end{circuitikz}
		\caption{The position $1$}
		\label{fig:1-position}
	\end{figure}
	
	We now define the idea of an outcome class, a way of describing which player has a winning strategy in a given position. But before we do so we present an important observation on games, known as the fundamental theorem of combinatorial game theory.
	
	\begin{theorem}[The Fundamental Theorem of Combinatorial Game Theory]
		In a game $G$ where Alice plays first and Bob plays second, either Alice has a winning strategy or Bob has a winning strategy, but not both.
		\label{thm:fund-thm-of-cgt}
	\end{theorem}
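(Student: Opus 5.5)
We will prove the statement by structural induction on the game $G$, i.e.\ on its game tree. Throughout we assume, as is standard for combinatorial games in this setting, that every game is \emph{short}: there are finitely many positions and no infinite sequence of moves. Equivalently, the relation ``$H$ is an option of $G$'' is well-founded. This is what makes induction available. We also assume the winning condition is fixed, for instance normal play, where a player with no move on their turn loses. The argument applies verbatim to any rule that declares exactly one winner at each terminal position.

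For every game $G$ and each choice of first player, we will show that exactly one of Alice and Bob has a winning strategy. We prove the existence part and the uniqueness part separately.

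\textbf{Existence.} The base case is a terminal position, where the player to move has no options. Under normal play the player to move loses, so the other player trivially has a winning strategy. For the inductive step, suppose Alice moves first in $G$. Each option $G'$ she can move to is a game whose tree is strictly smaller than that of $G$, with Bob moving first. By the induction hypothesis, in each $G'$ either Bob has a winning strategy or Alice does.
\begin{itemize}
\item If some option $G'$ is one in which Alice, now playing second, has a winning strategy, then Alice wins $G$ by moving to $G'$ and following that strategy.
\item Otherwise, in every option Bob, now playing first, has a winning strategy. Then Bob wins $G$ by waiting for Alice's first move to some $G'$ and then following his winning strategy there.
\end{itemize}

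\textbf{Uniqueness.} Suppose both players had winning strategies. Play Alice's strategy against Bob's. Because the game is short, this produces a single finite play ending at a terminal position, and that play has exactly one winner. This contradicts the assumption that both strategies are winning, since each guarantees a win against every opposing line of play, including this one.

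The main point requiring care is the formal framework, not any calculation. We must make precise what a ``strategy'' is, namely a function from positions where it is that player's turn to options. We must also justify the induction, which rests on well-foundedness of the game tree; for infinite games the statement can fail. In this thesis we will simply note that all games considered, including Nim, Domineering and Rex+ on a finite board, are short. Each move strictly decreases a finite quantity: sticks, empty squares, or empty cells. So the induction is valid.
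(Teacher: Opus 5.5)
The thesis gives no proof of this statement. It is quoted in Chapter~\ref{chap:background} as a standard background fact, and the reader is referred to \emph{On Numbers and Games}, \emph{Winning Ways} and \emph{Lessons in Play}. So there is no argument of the author's to compare with.

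Your proof is the standard Zermelo-style backward induction, and it is correct. Existence follows by well-founded induction on the options of the player to move. Uniqueness follows by playing the two supposed winning strategies against each other. There is one small slip: ``Equivalently'' should be ``in particular''. Having finitely many positions and no infinite play is strictly stronger than well-foundedness of the option relation, and your induction needs only the latter.

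What your version adds to the bare citation is that it names the hypotheses the thesis leaves implicit, and these matter for the games the thesis actually studies. In the poset framework of Chapter~\ref{chap:CGT-rex+}, play ends at atoms $[a]$ rather than at positions where the player to move has no options. Your normal-play base case therefore does not literally apply there, and your clause about a rule declaring exactly one winner at each terminal position is exactly what is needed. For Rex+ that clause is not automatic. It rests on the no-draw property of Hex, namely that a completely filled board connects exactly one player's sides. You should cite this rather than take it for granted.
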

	
	\begin{definition}\label{def:outcome-classes}
		We define the following \emph{outcome classes}:
		\begin{itemize}
			\item $\cal{N}$: Both players have first player winning strategies.
			\item $\cal{P}$: Neither player has a first player winning strategy.
			\item $\cal{L}$: Left has a first and second player winning strategy.
			\item $\cal{R}$: Right has a first and second player winning strategy.
		\end{itemize}
		\hspace{0.1cm}
		We write $o(G)$ to denote the outcome class of a game $G$.
	\end{definition}
	
	Theorem~\ref{thm:fund-thm-of-cgt} tells us that those are the only possible outcome classes that can happen.
	
	Finally, we define a partial order on the outcome classes. This partial order is defined based on which outcome class is better for Left. Figure~\ref{fig:outcome-class-poset} shows the partial order for outcome classes.
	
	\begin{figure}
		\[
		\begin{tikzpicture}[baseline=(current bounding box.center)]
			\node[draw] (L) at (2,1) {$\Le$};
			\node[draw] (N) at (1,0) {$\cal{P}$};
			\node[draw] (P) at (3,0) {$\cal{N}$};
			\node[draw] (R) at (2,-1) {$\R$};
			\draw (L) -- (P);
			\draw (L) -- (N);
			\draw (P) -- (R);
			\draw (N) -- (R);
		\end{tikzpicture}
		\]
		\caption{The partial order on outcome classes.}
		\label{fig:outcome-class-poset}
	\end{figure}
	
	\subsection{Sum and Order}
	
	In combinatorial game theory, a useful technique for analyzing games is to divide the games into smaller parts and analyze them separately. To do this it is useful to define a way to add games together. Consider two games placed side by side, where you can play in one game or the other but not both. This is the idea that sums of games attempt to capture.
	
	\begin{definition}
		The game $G + H$ is defined recursively as $G + H = \gg G^{L}+H, G+H^{L}|G^{R}+H, G+H^{R}\gg$.
	\end{definition}
	
	Next we present the contextual order on normal play games, a way to compare two games to see which is better. The contextual order takes two games and says that one is better than the other if it is better for Left in every context, or more formally:
	
	\begin{definition}
		We say a game $G \leq_C H$ if for all games $X$, we have that $o(G+X) \leq o(H+X)$.
	\end{definition}
	
	Finally we define an intrinsic order on normal play games, which is a way to compare two games without having to consider every possible context $X$.
	
	\begin{definition}
		$G \leq H$ if 
		\begin{itemize}
			\item[1)] $\forall G^L, G^L \triangleleft H$ and;
			\item[2)] $\forall H^R, G \triangleleft H^R$.
		\end{itemize}
		$G \triangleleft H$ if 
		\begin{enumerate}
			\item[1)] $\exists G^R, G^R \leq H$ or;
			\item[2)] $\exists H^L, G \leq H^L$.
		\end{enumerate}
	\end{definition}
	
	\begin{definition}
		We say that $G$ and $H$ are \emph{equivalent}, written $G \simeq H$, if $G \leq H$ and $H \leq G$.
	\end{definition}
	
	We claim without proof that this intrinsic definition is equivalent to the contextual one. For more details see ``Lessons in Play"~\cite{LessonsInPlay}.
	
	\subsection{Canonical Forms}
	
	When we discuss games in terms of their game forms, the game forms become very large very quickly. To combat this, there are some reductions that can be applied to game forms to simplify them while preserving equivalence.
	
	The first is a reduction called domination. We present all these results without proof, for more details see ``Lessons in Play"~\cite{LessonsInPlay}.
	
	\begin{definition}
		A left option $A$ of a game $G$ is called \emph{dominated} if there is another left option $B$ such that $A \leq B$. We say $A$ is \emph{dominated by} $B$. The corresponding notion for a right option is defined dually.
	\end{definition}
	
	\begin{theorem}
		If $G = \gg A, B, G^{\Le}|G^{R}\gg$ and $A$ is dominated by $B$ then $G \simeq G'$, where $G' = \gg B, G^{\Le}|G^{R}\gg$.
	\end{theorem}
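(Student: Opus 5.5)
We show $G \leq G'$ and $G' \leq G$ directly from the recursive definition of $\leq$ and $\triangleleft$. We use the standard facts about the intrinsic order for normal play games: reflexivity, $X \leq X$ for every game $X$; transitivity of $\leq$, including the mixed forms $X \leq Y \triangleleft Z \Rightarrow X \triangleleft Z$ and $X \triangleleft Y \leq Z \Rightarrow X \triangleleft Z$; and the fact that $X \leq Y$ implies $X^L \triangleleft Y$ and $X \triangleleft Y^R$, which is part of the definition. Since the paper states the intrinsic order is equivalent to the contextual one, these can also be read off from $\leq_C$. From reflexivity we also get $X^L \triangleleft X$ and $X \triangleleft X^R$ for every option.

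\textbf{Showing $G' \leq G$.} First, every left option of $G'$ is also a left option of $G$. So for $G'^L$ we need $G'^L \triangleleft G$; this holds via clause 2 of $\triangleleft$ by taking $G^L = G'^L$ and using reflexivity $G'^L \leq G'^L$. Second, for each right option $G^R$ we need $G' \triangleleft G^R$. The right options of $G$ and $G'$ coincide, so we can use clause 1 with $G'^R = G^R$ and $G^R \leq G^R$.

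\textbf{Showing $G \leq G'$.} The right options again coincide, so $G \triangleleft G^R$ for each $G^R$ follows via clause 1 exactly as above. For left options, every left option of $G$ other than $A$ is a left option of $G'$, and is handled by clause 2 with reflexivity. The only interesting case is $A$ itself: we need $A \triangleleft G'$. Since $B$ is a left option of $G'$, it suffices by clause 2 to have $A \leq B$, which is precisely the domination hypothesis. Together the two inclusions give $G \simeq G'$.

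The only step needing care is the reliance on reflexivity $X \leq X$. This is itself proved by induction: for $X^L$ we get $X^L \triangleleft X$ via $X^L \leq X^L$, and for $X^R$ we get $X \triangleleft X^R$ via $X^R \leq X^R$. I expect this to be the main (though mild) obstacle if it is not taken as known. Notably, transitivity is never actually needed, because the dominated option $A$ is handled directly by the hypothesis $A \leq B$.
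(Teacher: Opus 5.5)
Your proof is correct. The paper states this background result without proof (deferring to \emph{Lessons in Play}), but your argument is exactly the one the paper gives for its option-closed analogue, Theorem~\ref{thm:domination-equal}: right options and the unchanged left options are matched via reflexivity, and $A$ is handled by $A \leq B$ with $B$ a left option of $G'$, with no transitivity needed.
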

	
	The second is a reduction called reversibility.
	
	\begin{definition}
		A left option $A$ of a game $G$ is called \emph{reversible} if it has a right option $A^R$ such that $A^R \leq G$. We say $A$ is \emph{reversible via} $A^R$. The corresponding notion is defined analogously for right options.
	\end{definition}
	
	\begin{theorem}
		If $G = \gg A, G^{\Le}|G^{\R}\gg$ and $A$ is reversible by $A^R$ then $G \simeq G'$, where $G' = \gg A^{R\Le}, G^{\Le}|G^{\R}\gg$
	\end{theorem}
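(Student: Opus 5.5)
We prove the reversibility theorem for normal play games: if $A$ is a left option of $G$ reversible via $A^R$ (so $A^R \leq G$), then $G \simeq G'$ with $G' = \gg A^{R\Le}, G^{\Le}|G^{\R}\gg$. We show $G' \leq G$ and $G \leq G'$ separately, each by checking the two clauses of the intrinsic definition of $\leq$. We use the standard facts for normal play: $\leq$ is reflexive and transitive, $X \leq Y$ and $Y \triangleleft Z$ give $X \triangleleft Z$ (and dually), and $X^L \triangleleft X \triangleleft X^R$ for any game $X$. All of these follow by routine induction, or from the equivalence with the contextual order, which the paper grants.

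\emph{First, $G' \leq G$.} We must check $G'^L \triangleleft G$ for every left option of $G'$, and $G' \triangleleft G^R$ for every right option of $G$.

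For $G'^L \in G^{\Le}$, it is also a left option of $G$, so $G'^L \triangleleft G$.

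For $G'^L = A^{RL}$, we have $A^{RL} \triangleleft A^R$ and $A^R \leq G$. Transitivity gives $A^{RL} \triangleleft G$.

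For $G' \triangleleft G^R$: the game $G^R$ is also a right option of $G'$, and $G^R \leq G^R$ by reflexivity. This is clause 1 of $\triangleleft$.

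\emph{Second, $G \leq G'$.} We must check $G^L \triangleleft G'$ for every left option of $G$, and $G \triangleleft G'^R$ for every right option of $G'$.

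For $G^L \in G^{\Le}$, it is a left option of $G'$ and $G^L \leq G^L$, so clause 2 gives $G^L \triangleleft G'$.

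For $G' ^R = G^R$, we have $G \triangleleft G^R$ via clause 1, since $G^R \leq G^R$.

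The remaining case is $G^L = A$, where we need $A \triangleleft G'$. This is the main obstacle. By clause 1 of $\triangleleft$ it suffices to prove $A^R \leq G'$. Using the first half, $G' \leq G$, together with $A^R \leq G$ does not obviously give this. So we verify $A^R \leq G'$ directly from the definition.

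First, each $A^{RL}$ is a left option of $G'$, so $A^{RL} \triangleleft G'$ by clause 2 with reflexivity.

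Second, for each right option $G^R$ of $G'$ we need $A^R \triangleleft G^R$. From the hypothesis $A^R \leq G$, the second clause of $\leq$ gives exactly $A^R \triangleleft G^R$ for every $G^R$. The right options of $G'$ coincide with those of $G$, so this settles the case.

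Hence $A^R \leq G'$, so $A \triangleleft G'$, which gives $G \leq G'$. Combined with the first half, $G \simeq G'$. The only delicate points are the mixed transitivity step $A^{RL} \triangleleft A^R \leq G \Rightarrow A^{RL}\triangleleft G$ and the observation that the right options of $G$ and $G'$ coincide.
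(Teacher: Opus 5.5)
Your proof is correct. The paper itself gives no proof of this normal-play statement; it defers to \emph{Lessons in Play}. Its proof of the analogue for $\hleq$, Theorem~\ref{thm:reversibility-equal}, has exactly your case structure, and the one real difference is how $A^R \leq G'$ is obtained. The paper gets $A^R \hleq G'$ from Lemma~\ref{thm:reversibility-X-G}. That is an inductive lemma transferring every $X \hleq G$ with $X \in \D$ to $X \hleq G'$, and its proof relies on the $\C$/$\D$ closure properties and the restricted transitivity theorems. You instead check $A^R \leq G'$ directly from the definition. The $A^{RL}$ are literally left options of $G'$, and $A^R \triangleleft G^R$ is clause 2 of the hypothesis $A^R \leq G$. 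This is shorter and uses only reflexivity. Likewise, the step you call delicate, $A^{RL} \triangleleft G$, needs no transitivity: it is precisely clause 1 of $A^R \leq G$, which is how the paper handles it. With that simplification your argument uses nothing beyond the definitions and reflexivity. It therefore transfers verbatim to $\hleq$, the atomic clause being vacuous because $G$ and $G'$ are not atoms, and yields $G \heq G'$ without assuming $G \in \C$. The paper's heavier lemmas are really needed for Theorem~\ref{thm:reversiblity-preserves-c}, i.e.\ to keep $G'$ inside $\C$ so that transitivity remains available afterwards.
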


	\section{Hex, Rex, and Rex+: the Rules}
	
	Hex is a game first developed by Piet Hein in 1942~\cite{Hein}. It is played on a hexagonal board, like in Figure~\ref{fig:hex-board}. The two players Black and White take turns placing stones of their own colour on the board, with the goal of connecting the two sides of their colour. 
	
	\begin{figure}
		\[
		\begin{hexboard}
			\board(4,4)
		\end{hexboard}
		\]
		\caption{A 4x4 hexboard.}
		\label{fig:hex-board}
	\end{figure}
	
	Figure~\ref{fig:black-win-hex} shows a winning position for Black, and Figure~\ref{fig:white-win-hex} shows a winning position for White. Hex was analyzed using the methods of combinatorial game theory by Selinger in~\cite{hex_canon}.
	
	\begin{figure}
		\[
		\begin{hexboard}
			\board(5,5)
			\black(3,3)
			\white(1,3)
			\black(2,1)
			\black(4,1)
			\black(4,2)
			\black(2,4)
			\black(2,5)
			\white(3,2)
			\white(4,3)
			\white(5,3)
			\white(3,4)
		\end{hexboard}
		\]
		\caption{A winning position in Hex for Black.}
		\label{fig:black-win-hex}
	\end{figure}
	
	\begin{figure}
		\[
		\begin{hexboard}
			\board(5,5)
			\black(3,3)
			\white(1,3)
			\black(3,1)
			\black(4,2)
			\white(2,4)
			\black(2,5)
			\white(3,2)
			\white(4,3)
			\white(5,3)
			\white(3,4)
			\white(1,4)
			\black(2,3)
			\black(4,4)
		\end{hexboard}
		\]
		\caption{A winning position in Hex for White.}
		\label{fig:white-win-hex}
	\end{figure}
	
	An important concept in Hex and related games is that of a dead cell which we now define.
	
	\begin{definition}
		Consider a position. A cell $x$ is called \emph{dead} if for every way of filling the position with stones, the outcome of the game does not change whether or not $x$ is empty, filled by a black stone, or filled by a white stone. We will sometimes write $*_d$ for a dead cell.
	\end{definition}
	
	Figure~\ref{fig:dead-cell} shows an example of a dead cell, marked with an $x$.
	
	\begin{figure}
		\[
		\begin{hexboard}
			\board(5,5)
			
			\black(2,4)
			\black(2,3)
			\black(3,4)
			\black(3,2)
			\hex(3,3)\label{$x$}
		\end{hexboard}
		\]
		\caption{A dead cell in Hex.}
		\label{fig:dead-cell}
	\end{figure}
	
	Rex, which stands for reverse Hex, is a variant of Hex where the goal of the game for both players is not to connect. Figure~\ref{fig:black-win-rex} shows a Rex position where Black has won. Rex was analyzed combinatorially by Hockaday in~\cite{rex_cgt}.
	
	\begin{figure}
		\[
		\begin{hexboard}
			\board(4,4)
			
			\black(1,1)
			\black(2,1)
			\black(4,1)
			\black(1,3)
			\black(2,4)
			\black(3,3)
			\black(3,4)
			\black(2,2)
			
			\white(1,2)
			\white(1,4)
			\white(2,3)
			\white(3,2)
			\white(4,2)
			\white(4,3)
			\white(4,4)
			\white(3,1)
		\end{hexboard}
		\]
		\caption{A winning position for Black in Rex.}
		\label{fig:black-win-rex}
	\end{figure}
	
	Rex+ is a variant of Rex where each player is allowed to play one or more stones of their colour on their turn, unlike Rex where they may only play one stone at a time. In this thesis we will analyze Rex+ using the strategies of combinatorial game theory.

	\section{The Knaster-Tarski Fixed Point Theorem}
	
	An important theorem we will use later in Chapter~\ref{chap:weaker-option-closure} is the Knaster-Tarski Fixed Point Theorem. This theorem tells us that every monotone function on a power set has a least fixed point. There is a more general version of this theorem for monotone functions on complete lattices, but here we only present the proof for power sets. For more, see~\cite{tarski1955lattice}.
	
	\begin{definition}
		A function $\phi$ on a partially ordered set is \emph{monotone} if it preserves the partial order, i.e., $x \leq y \Rightarrow \phi(x) \leq \phi(y)$.
	\end{definition}
	
	\begin{definition}
		Given a power set $\cal{P}(S)$, and $X, Y \in \cal{P}(S)$, we write $X \leq Y$ if $X \subseteq Y$.
	\end{definition}
	
	\begin{theorem}\label{thm:knaster-tarski}
		Given a set S, and a monotone function $\phi: \cal{P}(S) \rightarrow \cal{P}(S)$, $\phi$ has a least fixed point.
	\end{theorem}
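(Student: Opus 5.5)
The plan is the standard intersection construction. Let $\mathcal{F} = \{X \in \cal{P}(S) : \phi(X) \subseteq X\}$ be the family of \emph{pre-fixed points} of $\phi$. This family is nonempty, since $S$ itself lies in it: $\phi(S) \subseteq S$ trivially. We then define
\[
X_0 = \bigcap_{X \in \mathcal{F}} X,
\]
which is again an element of $\cal{P}(S)$. The claim will be that $X_0$ is the least fixed point of $\phi$.

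The first step is to show that $X_0$ is itself a pre-fixed point. Take any $X \in \mathcal{F}$. Since $X_0 \subseteq X$, monotonicity of $\phi$ gives $\phi(X_0) \subseteq \phi(X) \subseteq X$. So $\phi(X_0)$ is contained in every member of $\mathcal{F}$, and hence in their intersection; that is, $\phi(X_0) \subseteq X_0$.

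The second step is the reverse inclusion. Applying monotonicity to $\phi(X_0) \subseteq X_0$ yields $\phi(\phi(X_0)) \subseteq \phi(X_0)$. This says $\phi(X_0) \in \mathcal{F}$, and since $X_0$ is the intersection of all members of $\mathcal{F}$, we get $X_0 \subseteq \phi(X_0)$. Combining with the first step gives $\phi(X_0) = X_0$, so $X_0$ is a fixed point.

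Finally, leastness. Any fixed point $Y$ satisfies $\phi(Y) = Y$, so in particular $\phi(Y) \subseteq Y$ and $Y \in \mathcal{F}$. Therefore $X_0 \subseteq Y$, and $X_0$ is the least fixed point. I expect no real obstacle in this argument. The only subtle point is the reverse inclusion in the second step, which relies on applying $\phi$ a second time so as to place $\phi(X_0)$ back into $\mathcal{F}$.
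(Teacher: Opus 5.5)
Your proof is correct and follows essentially the same route as the paper: both take the intersection of all $X$ with $\phi(X) \subseteq X$ (nonempty since $S$ qualifies) and show it is itself such a set. Both then apply $\phi$ once more to obtain the reverse inclusion, and deduce leastness from the fact that every fixed point belongs to that family.
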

	
	\begin{proof}
		We define a set $A$ where $A = \{X \in {\cal P}({\cal S}) \mid \phi(X) \leq X\}$.
		
		Then $A$ is non-empty since the set $S \in \cal{P}(S)$ is in $A$. So $A$ has an infimum, namely, the intersection of all $X \in A$. Let us call it $Y$. Then $Y \leq X$ for every $X$ in $A$. Thus $\phi(Y) \leq \phi(X)$ since $\phi$ is monotone. Furthermore, $\phi(X) \leq X$. So $\phi(Y) \leq X$.
		
		This tells us that $\phi(Y)$ is a lower bound for $A$. Since $Y$ is the infimum of $A$ we have that $\phi(Y) \leq Y$. Thus, since $\phi$ is monotone it follows that $\phi(\phi(Y)) \leq \phi(Y)$.
		
		It follows that $\phi(Y)$ is in $A$. So $Y \leq\phi(Y)$. So we have $Y \leq \phi(Y) \leq Y$. Thus $Y$ is a fixed point. Furthermore, $Y$ is the smallest fixed point, since every fixed point is in $A$ and $Y$ is less than or equal to every element in $A$.
	\end{proof}

\chapter{Rex+ by Example}\label{chap:examples}	
	In this chapter we will informally cover some interesting properties and positions of Rex+. In later chapters we will cover much of this more formally.
	
	\section{$\cal{N}$ Positions}
	
	We saw in chapter~\ref{chap:background} the concept of an $\cal{N}$ position, a position where both players have a first player winning strategy. Surprisingly, we find that given an $\cal{N}$ position in Rex+ both players have the same winning set of cells, and that it is unique for both.
	
	\begin{theorem}
		If a Rex+ position $G$ is an $\cal{N}$ position, then there is a unique winning move for both players, and it is the same.
	\end{theorem}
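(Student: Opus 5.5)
The plan is a strategy-stealing argument built on a monotonicity property of Rex+. Here Black plays a set $S$ of empty cells and White plays a set $T$; in Rex+ a \emph{move} is a nonempty set of empty cells, so ``the winning move'' means a set of cells. First I would show that in an $\cal{N}$ position each player has some winning move. Then, fixing a winning move $S$ for Black and a winning move $T$ for White, I would prove $S = T$. Uniqueness then follows at once: if $S'$ is another winning move for Black, the same argument gives $S' = T = S$, and symmetrically for White.

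The tool is a monotonicity lemma, which I would prove first. Turning a cell from empty or black into white never hurts Black, and dually for White. This should be immediate from the Rex winning condition: at the end of the game, a black connection with fewer black stones is still present with more black stones. Then I would lift this to play by a copying strategy. Black follows the strategy from the original position, and ignores any White move on the modified cell.

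The delicate case, which I expect to be the main obstacle, is when White's whole move consists only of the modified cell, since every move must place at least one stone. The cleanest fix is to phrase the lemma for terminal (filled) boards and induct on the number of empty cells. Instead of copying strategies, one compares the option sets of the two positions directly. The key point is that any option of one position maps to an option of the other whose board differs from it only by extra white stones.

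With the lemma available, suppose $S \neq T$. By symmetry, either some cell $c \in T \setminus S$ exists, or there is a cell in $S \setminus T$; I treat the first case. After Black plays $S$, White is to move and loses. One candidate reply for White is $T \setminus S$, which is nonempty and whose cells are still empty, since it avoids $S$. This produces the board with black on $S$ and white on $T \setminus S$, with Black to move, and Black must win there.

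Now compare this with the position reached after White plays $T$, where Black is to move and loses. Black can reply with $S \setminus T$ if it is nonempty. The two resulting boards differ only on $S \cap T$, and there the colours are exchanged. Applying the monotonicity lemma, together with the fact that the same player is now in the same role, should give a position that is simultaneously winning and losing for the same player, which is a contradiction.

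The subcase $S \subseteq T$ needs separate handling, because then $S \setminus T$ is empty and Black has no such reply. Here I would have Black instead reply with $S \cup \{c\}$-type moves, or have White play $T$ directly against $S$.

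I expect the bookkeeping of these subcases, and above all ensuring every constructed reply is a nonempty legal move, to be where the real work lies. If the direct comparison fails, my fallback is to characterise a winning move as ``the unique set whose filling leaves a position that is a $\cal{P}$-position for the opponent.'' I would then show this set is determined by the board alone, independent of whose turn it is, which would give the theorem directly.
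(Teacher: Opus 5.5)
\textbf{There is a genuine gap in your comparison step.} After Black plays $S$ and White replies $T\setminus S$, it is \emph{Black's} turn. After White plays $T$ and Black replies $S\setminus T$, it is \emph{White's} turn. So the two boards you compare do not have the same player to move, and your claim that ``the same player is now in the same role'' fails.

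Flipping $S\cap T$ from black to white only tells you that Black, moving first, wins the second board. That White, moving first, also wins it is no contradiction: it just says the second board is an $\cal{N}$ position.

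The missing idea, which is what the paper's proof uses, is to advance the first line by one more move. Let $W$ be Black's winning reply after $S$ and $T\setminus S$. In the second line, let Black answer $T$ with $W\cup(S\setminus T)$. Now:
\begin{itemize}
\item both positions have White to move;
\item the two boards differ exactly in that the cells of $S\cap T$ are black in the first and white in the second;
\item Black wins the first and White wins the second.
\end{itemize}
Black-to-white monotonicity then gives the contradiction.

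Because $W$ is nonempty and disjoint from $S\cup T$, the reply $W\cup(S\setminus T)$ is always a legal move. So your separate subcase $S\subseteq T$ and the legality bookkeeping you expected to be the main obstacle disappear. If no empty cell remains after $T\setminus S$, just compare the filled boards directly.

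\textbf{Your monotonicity lemma is also overstated.} Turning an \emph{empty} cell white can hurt Black in Rex+. The paper's $*$-antimonotonicity example, with colours swapped, is a counterexample, so that direction cannot be proved. Only the black-to-white direction is needed, and it requires no induction on empty cells and no special cases. The two positions have the same empty cells and the same player to move, so Black plays the identical strategy in both. The final boards then differ only by extra white stones, and extra white stones preserve any white connection, hence a Black win.
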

	
	\begin{proof}
		Let the set $X$ represent a set of cells that is a winning move for Black, and $Y$ represent a set of cells that is a winning move for White. Let $Z$ be the intersection of $X$ and $Y$. 
		
		Suppose $X \neq Y$. Then, there are without loss of generality some stones in $Y$ that are not in $X$. Consider the game where Black plays first and plays $X$. Black is now winning this game, since $X$ is a winning move. Suppose White now plays the stones that are in $Y$ but not in $X$, then Black has a winning response, $W$. So Black is winning in the position where it is White's turn, White has played in $Y \setminus X$, and Black has played in $X$ and $W$. Consider instead the game where White plays first and plays $Y$, and Black responds with $W$ and any stones which are in $X$ but not $Y$ (of which there could be none). White is winning in this position since $Y$ was a winning first move. However, the only difference between this position and the previous one, is that the stones in $Z$ have gone from black to white. Having black stones become white is only ever good for Black, so if Black was winning in the previous position, they ought to still be winning. But we said White was winning here, since $Y$ is a winning first move. This is a contradiction, so we cannot have $Z$ is non empty, and $X \neq Y$.
		
		Thus we must have that $X = Y$. Furthermore, if we have two winning moves for Black, $X$ and $W$, which are not equal, then $W$ must also be equal to $Y$ by the above argument, so we would have winning moves for Black and White that are not equal, which we have just proved is not possible. So there is a unique winning move for both Black and White, and it is the same move.
	\end{proof}
	
	Figure~\ref{fig:N-position-dead-cell} shows an example of an $\cal{N}$ position in Rex+. We note that this position contains two dead cells, marked with $x$'s, and that playing in those dead cells is the winning move for both players. We find more generally, that any time there is a set of dead cells in a Rex+ position that the player whose turn it is always wants to fill all of the dead cells. The idea behind why this is true, is that if you don't, then you have only given your opponent more options. If they had a move they wanted to make in the non dead cells, then they can still make this move and also play in the dead cells, but if they had no move they wanted to make in the non dead cells, then they can simply play in the dead cells, and not the rest of the board, and since they are dead cells, they cannot have made the position worse than if you had played in the dead cells yourself.
	
	\begin{figure}
		\[
		\begin{hexboard}
			\board(3,3)
			
			\white(1,1)
			\white(2,1)
			\white(2,2)
			\white(1,3)
			
			\black(3,1)
			\black(3,3)
			
			\hex(1,2)\label{$x$}
			\hex(2,3)\label{$x$}
		\end{hexboard}
		\]
		\label{fig:N-position-dead-cell}
		\caption{An $\cal{N}$ position in Rex+.}
	\end{figure}
	
	From this it is clear to see that having many dead cells is the same as having one dead cell, as both players would just like to fill all of them anyways, and since this is Rex+ they can fill many dead cells in one turn as easily as they could fill one dead cell.
	
	One might think that the only way that an $\cal{N}$ position would arise is if there are dead cells on the board. We find that this is not the case. Figure~\ref{fig:N-position-not-dead} shows an example of an $\cal{N}$ position, with the winning set of moves marked with an $x$. However the cell $x$ is not in fact a dead cell.

	\begin{figure}
		\[
		\begin{hexboard}
			\board(3,3)
			
			\black(1,1)
			\black(3,3)
			
			\white(2,1)
			\white(2,3)
			
			\hex(2,2)\label{$x$}
		\end{hexboard}
		\]
		\caption{An $\cal{N}$ position in Rex+ with no dead cells.}
		\label{fig:N-position-not-dead}
	\end{figure}
	
	To conclude this section we remark that $\cal{N}$ positions do not appear frequently in games under optimal play, as they could only possibly be the starting position. No player ever plays to an $\cal{N}$ position under optimal play, as they would just play past it to a position where their opponent was losing instead.
	
	\section{$*$-Antimonotonicity}
	
	In studying Hex, one can remark that it is a strongly monotone game~\cite{hex_canon}, that is to say, in Hex one would always rather have a stone of their colour than an empty cell, and an empty cell than a stone of the opposite colour. One might expect that the opposite is true in Rex+ but we do not find this to be the case. We do find that it is always better to have a stone of the other player's colour than to have one of your own colour, since you can only lose possible ways to connect your sides by switching one of your stones to the opposite colour. However, sometimes it is indeed advantageous to have a stone of your own colour rather than an empty cell. For instance, Figure~\ref{fig:antimonotone-ex} shows two positions, where the only difference is that in the right one Black has played in cell $x$. In the right position, Black is winning, and in the left position Black is losing, as White can play in $x$. This demonstrates the idea that the only benefit one can gain from having one of their stones rather than an empty cell is by taking away a place to play from their opponent.
	
	\begin{figure}
		\[
		\begin{hexboard}
			\board(3,3)
			
			\hex(1,1)\label{$x$}
			
			\black(2,1)
			\black(2,3)
			\black(3,3)
			\black(3,1)
			
			\white(1,2)
			\white(1,3)
			\white(3,2)
		\end{hexboard}
		\hspace{1cm}
		\begin{hexboard}
			\board(3,3)
			
			\black(1,1)\label{$x$}
			
			\black(2,1)
			\black(2,3)
			\black(3,3)
			\black(3,1)
			
			\white(1,2)
			\white(1,3)
			\white(3,2)
		\end{hexboard}
		\]
		\caption{Two Rex+ positions. Black is winning in the right position, and losing in the left position.}
		\label{fig:antimonotone-ex}
	\end{figure}
	
	Informally, we say that it is always better to have an empty cell than to play a move and add a dead cell, because that dead cell gets rid of any benefit by adding back a place for the opponent to play. This property is called $*$-antimonotonicity and will be expanded on in Chapter~\ref{chap:fallow-and-taut}.
	
	\section{Premotivity}	
	In this section we will establish a property called premotivity. First though, we must give some background.
	
	Suppose we have a game and its opposite laid out, i.e. a Rex+ position and the same position with all the colours swapped, see Figure~\ref{fig:premotivity-ex} for an example. Then imagine we play both games, with each player having the option to move in one game or the other, but not both. Then the second player has a strategy to win at least one of the games, called the copycat strategy. This strategy will be familiar to those who have studied combinatorial game theory, but the idea is as follows: whichever move the first player makes, the second player copies their move in the other game. Then at the end of the game both boards will be identical except that the colours will have been swapped. Then a different colour wins both games, so player two must have won one of the games. We call this a copycat strategy.
	
	\begin{figure}
		\[
		\begin{hexboard}[baseline = (current bounding box.center)]
			\board(4,4)
			
			\black(3,2)
			\white(3,1)
			\black(2,2)
		\end{hexboard}
		\hspace{-3 cm}
		\scalebox{1}[-1]{
		\begin{hexboard}[baseline = (current bounding box.center)]
			\board(4,4)
			
			\black(1,3)
			\white(2,2)
			\white(2,3)
		\end{hexboard}
		}
		\]
		\caption{A Rex+ position and its opposite.}
		\label{fig:premotivity-ex}
	\end{figure}
	
	This second player copycat can be used for any game. What is special about Rex+ is that we can describe a first player copycat strategy as well. The strategy is as follows: we have the same set up, with two identical boards except that the colours of the stones and the edges have been swapped. Then the first player plays one stone anywhere on the board. After that they simply ignore the stone and follow the same copycat strategy as described until one of two things happens. If the second player plays a move of several stones which encompasses the first player's original move, then the first player copies that move except their original move. After that the two boards will be exactly the same, and the first player can employ the copycat strategy for the rest of the game. If the second player plays the first player's original stone, and only that one stone, then the first player can just do as they did at the start of the game and pick another random cell to play in. At the end of the game, it will be the second player's turn and they will be forced to play in the one remaining cell, the mirror of the move played by the first player, and make the two boards equal. Once the two boards are equal, the first player must again have won in at least one of the boards.
	
	This property of having a first player copycat strategy is called premotivity, and is one shared with ordinary set colouring games, like Rex as shown in Hockaday~\cite{rex_cgt}. However, that it is also possible for Rex+ is surprising.
	
\chapter{The Combinatorial Game Theory of Rex+}\label{chap:CGT-rex+}
	\section{Outcome Posets}
	
	Rex+ is what is called a game over a partially ordered set, or poset. This means its outcomes can be described in terms of a partially ordered set. 
	
	\begin{definition}
		A \emph{game over a partially ordered set} is a type of game where the final outcomes for any position form a partially ordered set, with the relation $a \leq b$ if and only if outcome $b$ is preferred by Left to outcome $a$. An element in the set of possible outcomes for a game is called an \emph{atom}.
	\end{definition}

	Consider Figure~\ref{fig:example-position}. There are five possible outcomes for the Left player. Either they connect all of their three terminals 1, 2 and 3, they connect terminals 1 and 2, they connect terminals 1 and 3, they connect terminals 2 and 3, or they connect none of their terminals. The option where Left connects none of their terminals is the best option for Left, so we call it top, written $\top$. The one where Left connects all three of their terminals is the worst option for Left, so we call it bottom, written $\bot$. The other three options are all incomparable to one another, as which one would be best depends on what is going on on the rest of the board. Then these outcomes form the partially ordered set seen in Figure~4.2.
	
	\begin{figure}
		\[
		\begin{hexboard}[baseline = (current bounding box.center)]
			\hex(2,1)
			\hex(3,1)
			\hex(4,1)
			\hex(1,2)
			\hex(2,2)
			\hex(3,2)
			\hex(4,2)
			\hex(1,3)
			\hex(2,3)
			\hex(3,3)
			\hex(4,3)
			\hex(1,4)
			\hex(2,4)
			\hex(3,4)
			
			\black(2,1)\label{1}
			\black(3,1)\label{1}
			\black(1,3)\label{3}
			\black(1,4)\label{3}
			\black(4,2)\label{2}
			\black(4,3)\label{2}
			
			\white(1,2)
			\white(4,1)
			\white(2,4)
			\white(3,4)
			\white(4,1)
		\end{hexboard}
		\]
		\caption{A sample Rex+ position.}
		\label{fig:example-position}
	\end{figure}
	
	\begin{figure}
		\[
		\begin{tikzpicture}[baseline=(current bounding box.center)]
			\node[draw] (top) at (2,2) {$\top$};
			\node[draw] (a) at (0,0) {a};
			\node[draw] (b) at (2,0) {b};
			\node[draw] (c) at (4,0) {c};
			\node[draw] (bot) at (2,-2) {$\bot$};
			\draw (top) -- (a);
			\draw (top) -- (b);
			\draw (top) -- (c);
			\draw (a) -- (bot);
			\draw (b) -- (bot);
			\draw (c) -- (bot);
		\end{tikzpicture}
		\]
		\label{fig:poset}
		\caption{The partially ordered set representing the possible outcomes of the position in Figure~\ref{fig:example-position}.}
	\end{figure}
	
	\section{Contextual and Intrinsic Orders}
	
	We start by defining an important property that all Rex+ games have called option closure.
	
	\begin{definition}
		A game $G$ is called \emph{option closed} if every $G^{LL}$ is itself a $G^L$ and every $G^{RR}$ is itself a $G^R$ (and similarly for all options of $G$).
	\end{definition}
	
	\begin{lemma}
		Every Rex+ game is option closed.
	\end{lemma}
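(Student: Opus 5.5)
The plan is to argue directly from the rules of Rex+, treating a position as a board together with a set of empty cells. A Left (Black) option of a position $G$ is obtained by choosing a nonempty set $S$ of empty cells and colouring every cell of $S$ black. A Right option is defined the same way with white stones. Throughout, I assume the convention that Left is Black, and ignore for the moment any special treatment of finished positions.

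First take a left option $G^L$, given by colouring a nonempty set $S$ of empty cells black. Any left option $G^{LL}$ of $G^L$ colours a further nonempty set $T$ of empty cells black, where $T$ is disjoint from $S$. The resulting position is exactly $G$ with all of $S \cup T$ coloured black. Since $S \cup T$ is a nonempty set of cells that were empty in $G$, this position is itself a single legal Black move from $G$, so $G^{LL}$ is a $G^L$. The argument for $G^{RR}$ is identical, with white stones in place of black ones.

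The definition also requires this for every option of $G$, and this follows because the argument never used anything special about $G$. Every option of $G$, and every iterated option, is again a Rex+ position, so the same reasoning applies to it. If one wants this phrased formally, it is an induction on the number of empty cells: the statement holds at $G$ by the argument above, and it holds at all options of $G$ because they have strictly fewer empty cells.

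The only delicate point is the game-ending convention. If a position in which someone has already connected their sides is treated as having no options, then when $G^L$ is such a terminal position it has no $G^{LL}$, and the condition holds vacuously. If instead play continues until the board is full, nothing changes in the argument. I will state the convention explicitly so that the options are defined as sets of resulting positions, and then note that the argument works identically under either convention.
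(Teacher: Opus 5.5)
Your proof is correct and takes the same approach as the paper: a Black move $S$ followed by a Black move $T$ yields the same position as the single Black move $S \cup T$, and dually for White. Your extra remarks make explicit what the paper leaves implicit, namely that every option of a Rex+ position is again a Rex+ position, so the hereditary clause holds, and that the terminal-position convention does not affect the argument.
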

	
	\begin{proof}
		Consider a game $G$ which is a Rex+ position. Then consider any $G^{LL}$. It is achieved by Black playing a set of black stones $X$ and then a set of black stones $Y$. But Black could have played $X \cup Y$ in one move, so $G^{LL}$ is a left option of $G$. The same is true for Right.
	\end{proof}
	
	We recall from Chapter~\ref{chap:background} the idea of a sum, a way to combine a set of games into one larger game. Since Rex+ is an option closed game, we must use a new sum that preserves the option closed property. We use the notation $\hs$ to represent this sum.
	
	\begin{definition}
		Given games $G$ and $H$, we define $G\hs H$ as follows: $G \hs H = \gg G^{L}\hs H, G\hs H^{L}, G^{L}\hs H^{L}|G^{R}\hs H, G\hs H^{R}, G^{R}\hs H^{R}\gg$
	\end{definition}
	
	We now begin defining a contextual order for option closed games. This is defined analogously to the contextual order on normal play games, but using our option closed sum.
	
	\begin{definition}
		We say $G \leq_C H$ if for all games $X$, $o(G \hs X) \leq o(H \hs X)$.
	\end{definition} 
	
	We then also define a new intrinsic order on option closed games as follows.
	
	\begin{definition} \label{def:intrinsic-order}
		$G \hleq H$ if 
		\begin{itemize}
			\item [1.] $\forall G^L, G^L \htri H$ and;
			\item [2.] $\forall H^R, G \htri H^R$ and;
			\item [3.] if $G = [a], H = [b]$ then $a \leq b$.
		\end{itemize}
		\vspace{0.1cm}
		$G \htri H$ if 
		\begin{itemize}
			\item [1.] $\exists G^R, G^R \hleq H$ or;
			\item [2.] $\exists H^L, G \hleq H^L$ or;
			\item [3.] $\exists G^R, H^L,  G^R \hleq H^L$.
		\end{itemize}
	\end{definition}
	
	\begin{lemma} \label{thm:order-reflexivity}
		For all games $G$, $G\hleq G$.
	\end{lemma}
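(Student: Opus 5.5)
We prove the statement by induction on the birthday (game tree depth) of $G$, establishing the auxiliary facts alongside it: for every option $G^L$ we have $G^L \htri G$, and for every option $G^R$ we have $G \htri G^R$. The induction is well founded because games are finite trees, and every option has strictly smaller birthday than $G$.

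\emph{Base case: $G$ is an atom $[a]$.} Here $G$ has no left or right options. Conditions 1 and 2 of Definition~\ref{def:intrinsic-order} therefore hold vacuously. Condition 3 asks for $a \leq a$, which holds by reflexivity of the partial order on atoms. So $[a] \hleq [a]$.

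\emph{Inductive step.} Suppose $G$ is not an atom, and assume $K \hleq K$ for every option $K$ of $G$. We check the conditions for $G \hleq G$.
\begin{itemize}
\item[1.] Fix a left option $G^L$. We need $G^L \htri G$. Clause 2 of the definition of $\htri$ asks for some left option $H^L$ of $H = G$ with $G^L \hleq H^L$. Take $H^L = G^L$; the induction hypothesis gives $G^L \hleq G^L$, so the clause is satisfied.
\item[2.] Symmetrically, fix a right option $G^R$. We need $G \htri G^R$. Clause 1 of the definition of $\htri$, with the right option $G^R$ of $G$, asks for $G^R \hleq G^R$, which the induction hypothesis supplies.
\item[3.] Condition 3 applies only when both games are atoms, so it is vacuous here.
\end{itemize}
Hence $G \hleq G$.

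No clause 3 of $\htri$ is needed, and option closure plays no role in this argument. The only point requiring care is bookkeeping. A game with no options that is not an atom, such as $\gg|\gg$ if it is allowed, falls under the inductive case with vacuous conditions and causes no trouble. We must also make sure the $\htri$ clauses are applied with the roles of $G$ and $H$ correctly assigned: in condition 1 the game $G^L$ plays the role of $G$ and $G$ plays the role of $H$.

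I expect no real obstacle. The main thing to state clearly is that the induction is on birthday, so the hypothesis is available for every option of $G$.
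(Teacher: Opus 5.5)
Your proof is correct and follows the paper's argument: induction on $G$, using $G^L \hleq G^L$ to get $G^L \htri G$ and $G^R \hleq G^R$ to get $G \htri G^R$, with $a \leq a$ handling the atomic case. The only difference is that you name which clause of $\htri$ is used in each step, which the paper leaves implicit.
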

	
	\begin{proof}
		By induction. Take a left option $G^L$ of $G$. Then, by induction hypothesis, $G^L \hleq G^L$ $\Rightarrow G^L \htri G$.
		
		Next, take a right option $G^R$ of $G$. By induction hypothesis $G^R \hleq G^R$ $\Rightarrow G \htri G^R$.
		
		Finally, if $G$ = $[a]$ is atomic, then we have that $a$ $\leq$ $a$, so $[a]$ $\hleq$ $[a]$.
	\end{proof}
	
	\begin{lemma}\label{thm:order-sums}
		Let $G$, $H$, and $K$ be games. Then:
		\begin{itemize}
			\item [1)] If $G \hleq H$, then $G \hs K \hleq H \hs K$.
			\item [2)] If $G \htri H$, then $G \hs K \htri H \hs K$.
		\end{itemize}
	\end{lemma}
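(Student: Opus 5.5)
We prove both parts simultaneously by induction on the combined birthday of $G$, $H$ and $K$ (the total size of the three game trees). Statement 1) at a triple is reduced to statement 2) at triples with a smaller sum, and vice versa. Throughout, we use the definition of $\hs$: every left option of $G \hs K$ has one of the forms $G^L \hs K$, $G \hs K^L$, or $G^L \hs K^L$, and right options are described dually.

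\textbf{Part 1).} Assume $G \hleq H$. We check the three clauses of Definition~\ref{def:intrinsic-order} for $G\hs K \hleq H \hs K$.

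\emph{Left options of $G \hs K$.}
\begin{itemize}
\item For $G^L \hs K$: since $G \hleq H$ we have $G^L \htri H$, and the induction hypothesis for 2) gives $G^L\hs K \htri H\hs K$.
\item For $G\hs K^L$: we need $G \hs K^L \htri H \hs K$. Lemma~\ref{thm:order-reflexivity} gives $K^L \hleq K^L$. The induction hypothesis for 1), applied to $G\hleq H$ with $K^L$, gives $G\hs K^L \hleq H \hs K^L$. Since $H\hs K^L$ is a left option of $H\hs K$, clause 2 of $\htri$ yields the claim.
\item For $G^L\hs K^L$: from $G^L \htri H$ and the induction hypothesis for 2) with $K^L$, we get $G^L\hs K^L \htri H\hs K^L$. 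We then need to lift this to $\htri H \hs K$. Here the combined moves of $\hs$ are essential: every witness for $G^L\hs K^L \htri H\hs K^L$ should translate into a witness against $H\hs K$, because a left option $H^L\hs K^L$ of $H\hs K^L$ is also a left option of $H\hs K$, and $H\hs K^L$ itself is one.
\end{itemize}
Right options of $H\hs K$ are handled symmetrically.

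\emph{Atomic clause.} Sums involving atoms need a convention. I would assume $[a]\hs[b]$ is atomic only when both summands are, with the combined atom monotone in each argument, so that clause 3 follows from $a\le b$.

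\textbf{Part 2).} Assume $G \htri H$ and case on the witness.
\begin{itemize}
\item If $G^R \hleq H$, then 1) by induction gives $G^R\hs K\hleq H\hs K$, and $G^R\hs K$ is a right option of $G\hs K$.
\item If $G\hleq H^L$, the argument is symmetric.
\item If $G^R\hleq H^L$, then 1) gives $G^R\hs K \hleq H^L\hs K$, which is clause 3 for $G\hs K \htri H\hs K$.
\end{itemize}

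\textbf{Main obstacle.} I expect the hardest step to be the diagonal option $G^L\hs K^L$, together with the related cases where a witness involves $K$'s options. It is unclear whether $\htri$ is upward-closed along left options in the right argument without transitivity, and transitivity has not been proved yet. My first attempt would avoid transitivity by unfolding the witness of $G^L\htri H$ directly:
\begin{itemize}
\item If the witness is $G^{LR}\hleq H$, induction gives $G^{LR}\hs K^L \hleq H\hs K^L$, a left option of $H\hs K$, so clause 2 applies.
\item If the witness is $G^L\hleq H^L$, we get $G^L\hs K^L\hleq H^L\hs K^L$, again a left option of $H \hs K$.
\item If the witness is $G^{LR}\hleq H^L$, we get $G^{LR}\hs K^L \hleq H^L\hs K^L$, which is clause 3.
\end{itemize}
So the direct unfolding appears to close every case. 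The same trick handles $G\hs K^L$, by choosing the left option $H\hs K^L$ of $H \hs K$. This avoids needing option closure, although option closure would also make those options available.
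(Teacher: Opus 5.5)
Your final argument is correct and is essentially the paper's proof: the same mutual induction, the same treatment of $G^L\hs K$ and $G\hs K^L$, the same three-way unfolding of $G^L\htri H$ for the diagonal option $G^L\hs K^L$ (so your tentative ``lift'' via the induction hypothesis for 2) is unnecessary), and the same implicit convention on sums of atoms. One small slip: in the subcase $G^{LR}\hleq H$, the inequality $G^{LR}\hs K^L\hleq H\hs K^L$ gives $G^L\hs K^L\htri H\hs K$ by clause 3 of $\htri$ (a right option of the left game is $\hleq$ a left option of the right game), not by clause 2.
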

	
	\begin{proof}
		We prove this by mutual induction on the two statements.
		
		\begin{itemize}
			\item [1)] Suppose $G \hleq H$. We wish to show $G \hs K \hleq H \hs K$. First, take a left option $A$ of $G \hs K$. 
			\begin{itemize}
				\item [] Case i) $A = G^L \hs K$ for some left option $G^L$ of $G$. Then, since $G \hleq H$, $G^L \htri H$. 
				
				Thus, by the induction hypothesis $A = G^L \hs K \htri H \hs K$.
				
				\item [] Case ii) $A = G \hs K^L$ for some left option $K^L$ of $K$. Then, by the induction hypothesis $G \hs K^L \hleq H \hs K^L$. Thus, $A = G \hs K^L \htri H \hs K$.
				
				\item [] Case iii) $A = G^L \hs K^L$ for some left options $G^L, K^L$ of $G$ and $K$ respectively. Since $G \hleq H$, $G^L \htri H$. This results in three possibilities, so we do further case distinction.
				\begin{itemize}
					\item [] Case a) $G^{LR} \hleq H$. Then, by the induction hypothesis, we have that $G^{LR} \hs K^L \hleq H\hs K^L$. Thus, $A = G^L \hs K^L \htri H \hs K$.
					\item [] Case b) $G^L \hleq H^L$. Then by the induction hypothesis, we have that $G^{L} \hs K^L \hleq H^L \hs K^L$. Thus, $A = G^L \hs K^L \htri H \hs K$.
					\item [] Case c) $G^{LR} \hleq H^L$. Then, by the induction hypothesis, we have that $G^{LR} \hs K^L \hleq H^L\hs K^L$. Thus, $A = G^L \hs K^L \htri H \hs K$. So $A \htri H\hs K$.
				\end{itemize}
			\end{itemize}
			
			Now we take a right option $B$ of $H \hs K$. 
			\begin{itemize}
				\item [] Case i) $B = H^R \hs K$ for some right option $H^R$ of $H$. Then, since $G \hleq H$, $G \htri H^R$. Thus, by the induction hypothesis $G \hs K \htri H^R \hs K = B$.
				\item [] Case ii) $B = H \hs K^R$ for some right option $K^R$ of $K$. Then, by the induction hypothesis $G \hs K^R \hleq H \hs K^R$. Thus, $G \hs K \htri H \hs K^R = B$.
				\item [] Case iii) $B = H^R \hs K^R$ for some right options $H^R, K^R$ of $H$ and $K$ respectively. Then since $G \hleq H$, $G \htri H^R$. This results in three possibilities, so we do further case distinction.
				\begin{itemize}
					\item [] Case a) $G \hleq H^{RL}$. Then, by the induction hypothesis, we have that $G \hs K^R \hleq H^{RL}\hs K^R$. Thus, $G \hs K \htri H^R \hs K^R = B$.
					\item [] Case b) $G^R \hleq H^R$. Then by the induction hypothesis, we have that $G^R \hs K^R \hleq H^R \hs K^R$. Thus, $G \hs K \htri H^R \hs K^R = B$.
					\item [] Case c) $G^R \hleq H^{RL}$. Then, by the induction hypothesis, we have that $G^R \hs K^R \hleq H^{RL}\hs K^R$. Thus, $G \hs K \htri H^R \hs K^R = B$.
				\end{itemize}
			\end{itemize}
			
			Finally, if $G \hs K = [a]$ and $H \hs K = [b]$ are both atomic, we have that $G = [c]$, $H = [d]$ and $K = [e]$ are all atomic as well. Then, since $G \hleq H$, we have that $c \leq d$. Thus, $a = c + e \leq d + e = b$. So we must have $G \hs K = [a] \hleq [b] = H \hs K$.
			
			Thus, $G \hs K \hleq H \hs K$.
			
			\item [2)] Suppose $G \htri H$. We wish to show $G \hs K \htri H \hs K$. There are three ways we can have $G \htri H$.
			\begin{itemize}
				\item [] Case i) $G^R \hleq H$. Then, by the induction hypothesis, $G^R \hs K \hleq H \hs K$. Thus $G \hs K \htri H \hs K$. 
				\item [] Case ii) $G \hleq H^L$. Then, by the induction hypothesis, $G \hs K \hleq H^L \hs K$. Thus $G \hs K \htri H \hs K$. 
				\item [] Case iii) $G^R \hleq H^L$. Then, by the induction hypothesis, $G^R \hs K \hleq H^L \hs K$. Thus $G \hs K \htri H \hs K$. 
			\end{itemize}
			
			So  we have $G \hs K \htri H \hs K$.\qedhere
		\end{itemize}
	\end{proof}
	
	\begin{lemma}\label{thm:order-pre-outcome-classes}
		Given option closed games $G$ and $H$:
		\begin{itemize}
			\item [1)] If Left has a second player winning strategy in $G$ and $G \hleq H$, then Left has a second player winning strategy in $H$.
			\item [2)] If Left has a second player winning strategy in $G$ and $G \htri H$, then Left has a first player winning strategy in $H$.
			\item [3)] If Left has a first player winning strategy in $G$ and $G \hleq H$, then Left has a first player winning strategy in $H$.
		\end{itemize}
	\end{lemma}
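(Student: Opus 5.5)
We prove the three statements simultaneously by induction on the combined birthday of $G$ and $H$. We use the recursive description of winning strategies. Left wins moving second in $X$ iff either $X=[a]$ is atomic with $a$ a Left-winning atom, or $X$ is non-atomic and every $X^R$ has a Left first-player win. Left wins moving first in $X$ iff either $X=[a]$ is atomic and Left-winning, or some $X^L$ has a Left second-player win. Option closure is not needed for the bookkeeping; it only guarantees that options and compositions of options remain option closed, so the induction hypothesis applies to them. For atoms we use that the set of Left-winning atoms is upward closed in the outcome poset. This is what condition 3 of Definition~\ref{def:intrinsic-order} is designed to interact with.

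For (1), assume Left wins moving second in $G$ and $G\hleq H$. If $H=[b]$ is atomic, we need $G$ to be atomic as well. I expect this atomic/non-atomic mismatch to be the main obstacle, and I would first check it against the paper's convention that atoms are the terminal positions. For instance, if $G$ has a left option $G^L$, then $G^L\htri[b]$ forces some $G^{LR}\hleq[b]$. If $G=[a]$, then $a\le b$ gives that $b$ is Left-winning. Otherwise, take any $H^R$. Then $G\htri H^R$, and we split on the three clauses. If $G^R\hleq H^R$: since Left wins second in $G$, Left wins first in $G^R$, and the induction on (3) gives that Left wins first in $H^R$. If $G\hleq H^{RL}$: the induction on (1) gives that Left wins second in $H^{RL}$, so Left wins first in $H^R$ by moving to $H^{RL}$. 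If $G^R\hleq H^{RL}$: Left wins first in $G^R$, so some $G^{RL}$ is a second-player Left win. Option closure lets us regard this as a second-player win position, and then $G^{RL}\htri H^{RL}$, or directly the induction on (3), gives that Left wins first in $H^{RL}$. I would actually route this case through (3) applied to $G^R\hleq H^{RL}$, which yields a first-player win in $H^{RL}$. That is not quite what is needed for $H^R$, so here I would use option closure: a left move of $H^{RL}$ is a left option of $H^R$ composed with a further left move, and by option closure (Definition of option closed, as in the Rex+ lemma) it is itself a left option of $H^R$. So Left's winning first move in $H^{RL}$ is a winning first move in $H^R$. This is the essential use of option closure.

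For (2), assume Left wins second in $G$ and $G\htri H$. If $G^R\hleq H$, then Left wins first in $G^R$, and (3) gives the same in $H$. If $G\hleq H^L$, then (1) gives that Left wins second in $H^L$, so Left moves to $H^L$. If $G^R\hleq H^L$, then (3) gives a first-player win in $H^L$, which means a move to some $H^{LL}$ where Left wins second. By option closure $H^{LL}$ is an $H^L$, so Left wins first in $H$.

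For (3), assume Left wins first in $G$, so Left wins second in some $G^L$, and $G\hleq H$. Then $G^L\htri H$, and (2) applied to the smaller pair $(G^L,H)$ gives that Left wins first in $H$. In the atomic case $G=[a]$ with $a$ Left-winning, $H$ must be atomic, or else we handle it as in (1), and $a\le b$ finishes. The induction is well founded because every appeal lowers the combined birthday, though the option-closure steps need care that the same pair is not reused.
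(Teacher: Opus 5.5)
Your core argument is the paper's proof. It is the same mutual induction, with the same three-way split in (2) and the same reduction of (3) to (2) via $G^L \htri H$. Option closure is used at exactly the same point: an $H^{LL}$, resp.\ $H^{RLL}$, is itself a left option. In (1) the paper simply applies (2) to the pair $(G,H^R)$; your re-splitting of $G \htri H^R$ into three clauses just re-proves (2) inline. Your worry about well-foundedness is unnecessary: every recursive call is on a pair of strictly smaller combined birthday, and option closure is only used to transfer a winning move, never to recurse.

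The one thing you leave open is the atomic mismatch you flagged. It arises in (1) with $H=[b]$ and $G$ non-atomic, and in (3), where you assert ``$H$ must be atomic'' by deferring to (1), which did not settle it. The paper's proof silently skips this as well, but it closes in one line. If $G$ is option closed and non-atomic, follow left options down to an atom; option closure makes that atom $[c]$ a left option of $G$. Then $G \hleq [b]$ would require $[c] \htri [b]$, which is impossible because neither atom has options (this is Lemma~\ref{thm:oc-atoms-comp}). Dually, $[a] \hleq H$ with $H$ option closed forces $H$ to be atomic, via an atomic right option $[d]$ and the impossible $[a] \htri [d]$. So in those cases both games are atoms, clause 3 of Definition~\ref{def:intrinsic-order} gives $a \le b$, and upward closure of the Left-winning atoms finishes both (1) and (3).
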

	
	\begin{proof}
		We prove these statements by mutual induction.
		\begin{itemize}
			\item [1)] We show that Left has a first player winning strategy in every right option of $H$. Take $H^R$. We have that $G \htri H^R$, so by induction hypothesis (2) it follows that Left has a first player winning strategy in $H^R$. Thus, Left has a first player winning strategy in $H$.
			\item [2)] There are three ways we can have $G \htri H$.
			\begin{itemize}
				\item [] Case i) $G^R \hleq H$. Left has a first player winning strategy in $G^R$, since they have a second player winning strategy in $G$. Thus by induction hypothesis (3), Left has a first player winning strategy in $H$.
				\item [] Case ii) $G \hleq H^L$. By induction hypothesis (1), Left has a second player winning strategy in $H^L$. Thus, they have a first player winning strategy in $H$.
				\item [] Case iii) $G^R \hleq H^L$. Left has a first player winning strategy in $G^R$, since they have a second player winning strategy in $G$. Thus by induction hypothesis (3), Left has a first player winning strategy in $H^L$. Thus, there is some $H^{LL}$ where Left has a second player winning strategy. Since $H$ is option closed, $H^{LL}$ is a Left option of $H$. Thus, Left has a first player winning strategy in $H$.
			\end{itemize}
			\item [3)] Since Left has a first player winning strategy in $G$, there is some $G^L$ where Left has a second player winning strategy. $G^L \htri H$, so by induction hypothesis (2), it follows that Left has a first player winning strategy in $H$.\qedhere
		\end{itemize}
	\end{proof}
	
	\begin{corollary}\label{thm:order-outcome-classes}
		For option closed games $G$ and $H$, if $G \hleq H$ then $o(G) \leq o(H)$.
	\end{corollary}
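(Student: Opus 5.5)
Corollary~\ref{thm:order-outcome-classes} follows from Lemma~\ref{thm:order-pre-outcome-classes} together with its dual for Right. The outcome class of a game is determined by two bits: whether Left has a first player winning strategy, and whether Left has a second player winning strategy. This works because, by Theorem~\ref{thm:fund-thm-of-cgt}, Right wins going first exactly when Left does not win going second, and Right wins going second exactly when Left does not win going first. Concretely, $o(G)=\Le$ when Left wins both first and second; $o(G)=\N$ when Left wins first but not second; $o(G)=\cal{P}$ when Left wins second but not first; and $o(G)=\R$ when Left wins neither.

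A second player win for Left implies a first player win for Left (a $\cal{P}$ position is one where neither player wins moving first). So these two bits are not arbitrary. Comparing with Figure~\ref{fig:outcome-class-poset}, the order on outcome classes is exactly the componentwise order on the pair (Left wins first, Left wins second). I will check this directly on the four classes: $\R$ is $(0,0)$ and lies below everything; $\Le$ is $(1,1)$ and lies above everything; $\N=(1,0)$ and $\cal{P}=(0,1)$ are incomparable. In fact $\cal{P}=(0,1)$ cannot occur once we know a second player win implies a first player win. I will therefore use Definition~\ref{def:outcome-classes} directly: $\cal{P}$ means neither player has a first player winning strategy, which under Theorem~\ref{thm:fund-thm-of-cgt} means Left wins going second but not going first. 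The poset in Figure~\ref{fig:outcome-class-poset} then matches componentwise order on these bits.

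With this reformulation, $o(G)\le o(H)$ amounts to two implications. First, if Left has a first player winning strategy in $G$, then Left has one in $H$. Second, if Left has a second player winning strategy in $G$, then Left has one in $H$. Given $G\hleq H$, part 3 of Lemma~\ref{thm:order-pre-outcome-classes} gives the first and part 1 gives the second. No dual statement for Right is needed, since Right's status is determined by Left's through Theorem~\ref{thm:fund-thm-of-cgt}.

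The only step needing care is the identification of the outcome-class order with the componentwise order on Left's two win bits. I would verify it by going through the four edges of Figure~\ref{fig:outcome-class-poset} and confirming that no other relations hold. One subtlety is atomic games $[a]$. There the outcome is read off the atom, and "winning" should be interpreted as the atom being at least the threshold. I would handle this by assuming, as Lemma~\ref{thm:order-pre-outcome-classes} implicitly does, that winning strategies are defined consistently for atomic terminal positions.
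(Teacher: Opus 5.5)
Your proof is correct and is the same as the paper's, which simply says the corollary follows immediately from Lemma~\ref{thm:order-pre-outcome-classes}. Encoding $o(G)$ by Left's first-player and second-player win bits, and then using parts 3) and 1) of that lemma for the two componentwise implications, is exactly the unpacking of that one line. Do delete the aside claiming that a second-player win for Left implies a first-player win, so that $\cal{P}$ cannot occur: it is false, since a Rex+ position whose only empty cell would complete Black's connection is $\{[\bot]\mid[\top]\}$, a $\cal{P}$ position where Left wins moving second but not first. Nothing in your final argument depends on that remark.
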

	
	\begin{proof}
		This follows immediately from Lemma~\ref{thm:order-pre-outcome-classes}.
	\end{proof}
	
	\begin{lemma}
		For option closed games $G$ and $H$, if $G \hleq H$ then $G \leq_C H$.
	\end{lemma}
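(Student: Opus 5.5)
Fix an arbitrary game $X$ as the context; by the definition of $\leq_C$ we must show $o(G \hs X) \leq o(H \hs X)$. We combine two earlier results: Lemma~\ref{thm:order-sums} moves the inequality into the context, and Corollary~\ref{thm:order-outcome-classes} converts it into an inequality of outcome classes. No new induction should be needed.

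Concretely, apply part 1) of Lemma~\ref{thm:order-sums} with $K = X$. From $G \hleq H$ this gives $G \hs X \hleq H \hs X$. Next apply Corollary~\ref{thm:order-outcome-classes} to the pair $G \hs X$ and $H \hs X$. This yields $o(G \hs X) \leq o(H \hs X)$. Since $X$ was arbitrary, $G \leq_C H$.

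The main obstacle is the hypothesis of Corollary~\ref{thm:order-outcome-classes}: it requires the games compared to be option closed. We must therefore check that $G \hs X$ is option closed whenever $G$ and $X$ are. (Alternatively, restrict the contexts $X$ in $\leq_C$ to option closed games, which is the natural reading in this chapter.) I would prove this by induction on the birthday of the sum.

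For the induction step, take a left option of $G \hs X$. It has one of the forms $G^L \hs X$, $G \hs X^L$, or $G^L \hs X^L$. A left option of that option is then of the form $A \hs B$, where $A$ is $G$, $G^L$ or $G^{LL}$, and $B$ is $X$, $X^L$ or $X^{LL}$, with at least one of them moved. Option closure of $G$ and $X$ collapses $G^{LL}$ to some $G^L$ and $X^{LL}$ to some $X^L$. Hence every $(G\hs X)^{LL}$ is again one of the three forms of left option of $G \hs X$. The clause $G^L \hs X^L$ in the definition of $\hs$ is exactly what makes this work, for instance when one component moves twice and the other once.

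The same argument applies dually to Right. The requirement that options are themselves option closed follows from the induction hypothesis, since options of $G \hs X$ are sums of options of option closed games. For the atomic case, an atomic sum has no options, so it is trivially option closed.
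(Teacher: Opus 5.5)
Your proof is correct and takes the paper's route exactly: Lemma~\ref{thm:order-sums} with $K = X$, followed by Corollary~\ref{thm:order-outcome-classes}. Your extra check that $G \hs X$ and $H \hs X$ are option closed fills a step the paper's proof passes over silently, since the corollary is stated only for option closed games. Note, though, that this check needs the context $X$ itself to be option closed, so restricting the contexts is a prerequisite of your closure argument rather than an alternative to it.
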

	
	\begin{proof}
		Suppose $G \hleq H$. Let $X$ be an arbitrary game. Then by Lemma~\ref{thm:order-sums} we have that $G \hs X \hleq H \hs X$. It follows from Lemma~\ref{thm:order-outcome-classes}, that $o(G \hs X) \leq o(H \hs X)$. So $G \leq_C H$.
	\end{proof}
	
	\begin{lemma}\label{thm:atoms-not-comp}
		Given a game $G$ and atoms $a$ and $b$, 
		\begin{itemize}
			\item [1)] $[a] \hleq G$ and $G \htri [b]$ is not possible.
			\item [2)] $[a] \htri G$ and $G \hleq [b]$ is not possible.
		\end{itemize}
	\end{lemma}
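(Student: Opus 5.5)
The plan is a simultaneous induction on the birthday of $G$ (atoms $[a]$, $[b]$ have no options). Both parts are proved together, since unfolding one leads to an instance of the other on an option of $G$.

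For part 1), suppose $[a] \hleq G$ and $G \htri [b]$. Since $[b]$ is atomic it has no left options. So the definition of $\htri$ leaves only clause 1: there is some $G^R$ with $G^R \hleq [b]$. (Clauses 2 and 3 each need an $H^L$ of $[b]$, which does not exist.) Next, $[a] \hleq G$ says that for every right option $G^R$ we have $[a] \htri G^R$. So we get a particular $G^R$ with $[a] \htri G^R$ and $G^R \hleq [b]$. This is exactly the forbidden configuration of part 2), with $G^R$ in place of $G$. Since $G^R$ has smaller birthday, the induction hypothesis gives a contradiction.

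For part 2), suppose $[a] \htri G$ and $G \hleq [b]$. As $[a]$ has no right options, clauses 1 and 3 of $\htri$ fail, so there is a $G^L$ with $[a] \hleq G^L$. From $G \hleq [b]$, every left option satisfies $G^L \htri [b]$. Hence $[a] \hleq G^L$ and $G^L \htri [b]$, which is the configuration of part 1) for the smaller game $G^L$. This contradicts the induction hypothesis.

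The base case is automatic. If $G$ has no right options, part 1) cannot even start, since $G \htri [b]$ would need some $G^R$. Likewise, if $G$ has no left options, part 2) fails immediately. In particular, for $G$ atomic neither hypothesis pair can hold, so condition 3 of $\hleq$ (the atom comparison) is never used. The only care needed is to check that each unfolding really lands in the other statement on a strictly smaller game. No genuine obstacle is expected: the whole argument is careful bookkeeping of which clauses of Definition~\ref{def:intrinsic-order} survive when one side is atomic.
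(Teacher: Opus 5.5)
Your proof is correct and is essentially the paper's argument: a mutual induction where atomicity of $[b]$ (resp.\ $[a]$) leaves only the clause $G^R \hleq [b]$ (resp.\ $[a] \hleq G^L$) of $\htri$, which places that option in the other forbidden configuration. Your explicit treatment of the base case and of the induction measure (birthday of $G$) makes precise what the paper leaves implicit.
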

	
	\begin{proof}
		We prove this by mutual induction on the two statements.
		\begin{itemize}
			\item [1)] Suppose we had $[a] \hleq G$ and $G \htri [b]$. From the second part we must have $G^R \hleq [b]$ since $[b]$ has no left options. But $[a] \htri G^R$ by the first part. This is a contradiction by statement (2).
			\item [2)] Suppose we had $[a] \htri G$ and $G \hleq [b]$. From the first part we must have $[a] \hleq G^L$ since $[a]$ has no right options. But $G^L \htri [b]$ by the second part. This is a contradiction by statement (1).\qedhere
		\end{itemize}
	\end{proof}
	
	\begin{lemma}\label{thm:oc-atoms-comp}
		If $G \hleq [a]$ for some option closed game $G$ and atom $a$, then $G$ is atomic.
	\end{lemma}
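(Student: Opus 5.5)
We argue by contradiction. Suppose $G \hleq [a]$ with $G$ option closed but not atomic, so $G$ has at least one left option or at least one right option. We treat the two cases separately and use Lemma~\ref{thm:atoms-not-comp} in each.

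\emph{Case 1: $G$ has a left option $G^L$.} Condition 1 of Definition~\ref{def:intrinsic-order} applied to $G \hleq [a]$ gives $G^L \htri [a]$. Since $[a]$ has no options, the only possible witness is clause 1, so some $G^{LR}$ satisfies $G^{LR} \hleq [a]$. This alone is not contradictory, so we need more. The natural tool is option closure: although $G^{LR}$ is not literally an option of $G$, we can descend further. Either $G^{LR}$ is atomic, or it has options and we repeat the argument on it.

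To turn this into a clean contradiction, we would prove the statement by induction on the (well-founded) birthday of $G$. Applying the induction hypothesis to $G^{LR}$, which is option closed because options of option-closed games are option closed, gives that $G^{LR} = [c]$ is atomic, and $[c] \hleq [a]$ forces $c \leq a$. We then need a contradiction from the left move. The key is that from $G^{LR} = [c]$ we get $[c] \hleq [a]$, hence $[c] \hleq G$ should follow in a suitable sense, combined with $G^L \htri [a]$. Lemma~\ref{thm:atoms-not-comp}(2) says $[x] \htri H$ and $H \hleq [b]$ cannot both hold. So we aim to produce an atom $x$ with $[x] \htri G$. Since $[x]$ has no right options, this requires $[x] \hleq G^L$ for some left option.

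The plan is therefore: take $G^L$ of maximal depth, or better, descend along $G^{L}, G^{LL}, \dots$. By option closure every $G^{LL}$ is itself a $G^L$, so among left options we can pick one, $G^{L_0}$, none of whose left options lead further. Using the induction hypothesis on $G^{L_0}$ and the relation $G^{L_0R}\hleq[a]$, we try to show $[c] \hleq G^{L_0}$, which gives $[c] \htri G$ and contradicts Lemma~\ref{thm:atoms-not-comp}(2) together with $G \hleq [a]$.

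\emph{Case 2: $G$ has a right option $G^R$ and no left options.} Condition 2 of the definition is vacuous because $[a]$ has no right options, so we must use the shape of $G$ itself. With no left options, Left has no move in $G$. Here we would use option closure the same way: pick a right option $G^R$ that is maximal, meaning all its $G^{RR}$ are already right options of $G$. We then try to derive $[c] \hleq G$ or a similar relation leading to a contradiction.

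The main obstacle is Case 2, and more generally establishing $[c] \hleq G^{L_0}$. Right options of $G$ are unconstrained by $G \hleq [a]$, and showing $[c] \hleq G^{L_0}$ means checking $[c] \htri G^{L_0R}$ for every right option of $G^{L_0}$. I would attack this by an inner induction, again using option closure so that each $G^{L_0RR}$ is a $G^{L_0R}$.

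If this fails, the fallback is to check whether the statement really needs $G$ to have a left option. In that case the proof should show directly that $G\hleq[a]$ with $G$ nonatomic forces some $G^L$, and then run Case 1.
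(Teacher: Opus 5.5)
There is a genuine gap. The step you flag as the main obstacle, showing $[c]\hleq G^{L_0}$ for the atom $c$ obtained from $G^{LR}\hleq[a]$, is never proved, and it is the wrong target: that atom comes from a right option, and nothing forces it to lie below $G^{L_0}$.

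\textbf{The missing observation.} The left option you were trying to select should simply be an atom. Choose a left option $G^{L_0}$ of minimal birthday. If it were non-atomic, it would have a left option; by option closure that is a left option of $G$ of smaller birthday, a contradiction. So $G^{L_0}=[b]$ is atomic. Then $[b]\hleq[b]$ by Lemma~\ref{thm:order-reflexivity}, so $[b]\htri G$. Together with $G\hleq[a]$ this contradicts Lemma~\ref{thm:atoms-not-comp}(2), which is exactly the paper's proof. Your own first step would also finish it: condition 1 of $G\hleq[a]$ demands $[b]\htri[a]$, which is impossible since $[b]$ has no right options and $[a]$ has no left options. The detour through $G^{LR}$ and the induction applied to it are unnecessary.

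\textbf{Case 2.} This case cannot be settled by any argument, and your fallback would fail. A game $G=\{\,\mid G^R\}$ with right options but no left options (for instance with $G^R=[0]$, which is option closed) satisfies $G\hleq[a]$ vacuously: there is no $G^L$, $[a]$ has no right option, and clause 3 does not apply. For such games the lemma is false. What excludes them is the standing convention for games over a poset: a non-atomic game has non-empty sets of left and right options, as in Rex+, where a non-atomic position has an empty cell that either player may fill. The paper assumes this implicitly. Its proof simply asserts that a non-atomic option closed $G$ has an atomic left option, and Lemma~\ref{thm:woc-atom-triangle-existence} likewise asserts that every well-defined game has an atomic $G^{L\ldots LL}$. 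The same convention is what guarantees that $G$ has a left option at all, so the minimal-birthday argument above can start.
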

	
	\begin{proof}
		Suppose $G$ is not atomic. Then, since $G$ is option closed, there exists an atomic left option $[b]$ of $G$. It follows that we have $[b] \htri G \hleq [a]$, which is a contradiction by Lemma~\ref{thm:atoms-not-comp}. So $G$ is atomic.
	\end{proof}
	
	\begin{lemma}\label{thm:oc-right-closure}
		Let $G$ be an option closed game, then for any game $H$, if $G^R \htri H$ for some right option $G^R$ of $G$, then $G \htri H$.
	\end{lemma}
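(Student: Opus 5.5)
Because $G^R \htri H$ holds by one of the three clauses in Definition~\ref{def:intrinsic-order}, I will split into those three cases. In each case I will either read off a witness for $G \htri H$ directly, or convert it into one using option closure, which says that every $G^{RR}$ is itself a right option of $G$. The argument is a case analysis only; no induction should be needed, since we never have to prove a new $\hleq$ relation, only reuse ones we are given.

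\emph{Case 1:} there is a right option $G^{RR}$ of $G^R$ with $G^{RR} \hleq H$. Since $G$ is option closed, $G^{RR}$ is a right option of $G$. Clause 1 of the definition of $\htri$ then gives $G \htri H$.

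\emph{Case 2:} there is a left option $H^L$ with $G^R \hleq H^L$. Here $G^R$ is a right option of $G$ and $H^L$ is a left option of $H$, so clause 3 (the clause that is special to the option closed order) gives $G \htri H$ directly.

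\emph{Case 3:} there are $G^{RR}$ and $H^L$ with $G^{RR} \hleq H^L$. As in Case 1, option closure makes $G^{RR}$ a right option of $G$, and clause 3 again yields $G \htri H$.

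The only step needing care is Cases 1 and 3, where we need $G^{RR}$ to be an honest right option of $G$. This is exactly where the hypothesis that $G$ is option closed is used. Note that nothing is required of $H$.
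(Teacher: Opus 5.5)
Your proof is correct and follows the paper's argument: the same three-way case split on how $G^R \htri H$ arises, with option closure making $G^{RR}$ a right option of $G$ in the first and third cases, and clause~3 of the definition closing the second and third. As you note, no induction is needed.
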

	
	\begin{proof}
		Suppose $G^R \htri H$. There are three ways in which this can happen.
		\begin{itemize}
			\item [] Case i) $G^{RR} \hleq H$. But, $G^{RR}$ is a right option of $G$ since $G$ is option closed. So we get $G \htri H$.
			\item [] Case ii) $G^R \hleq H^L$. It follows that $G \htri H$.
			\item [] Case iii) $G^{RR} \hleq H^L$. But, $G^{RR}$ is a right option of $G$ since $G$ is option closed. It follows that $G \htri H$.\qedhere
		\end{itemize}
	\end{proof}
	
	\begin{lemma}\label{thm:oc-left-closure}
		Let $H$ be an option closed game, then for any game $G$ if $G \htri H^L$ for some left option $H^L$ of $H$, then $G \htri H$.
	\end{lemma}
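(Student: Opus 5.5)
This is the mirror image of Lemma~\ref{thm:oc-right-closure}, with the roles of left and right exchanged. I would follow the same structure. Assume $G \htri H^L$ for some left option $H^L$ of $H$. I would split into cases according to the three clauses of Definition~\ref{def:intrinsic-order} that can witness $G \htri H^L$. In each case the goal is to exhibit one of the three clauses that witnesses $G \htri H$. The only ingredient needed is option closure of $H$: every $H^{LL}$ is itself a left option of $H$.

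\emph{Case i)} $G^R \hleq H^L$ for some right option $G^R$ of $G$. Since $H^L$ is a left option of $H$, this is exactly clause 3 for the pair $(G,H)$, so $G \htri H$. Option closure is not needed here.

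\emph{Case ii)} $G \hleq H^{LL}$ for some left option $H^{LL}$ of $H^L$. Because $H$ is option closed, $H^{LL}$ is a left option of $H$. So clause 2 gives $G \htri H$ directly.

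\emph{Case iii)} $G^R \hleq H^{LL}$. Again $H^{LL}$ is a left option of $H$ by option closure, and clause 3 yields $G \htri H$.

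No genuine obstacle arises. The one point to check is that the cases match up correctly: the clause using a right option of $G$ becomes clause 3 rather than clause 1, and the clauses involving $H^{LL}$ are exactly where option closure is used. There is no induction, and no case depends on atoms, since clause 3 of the definition of $\hleq$ plays no role in $\htri$.
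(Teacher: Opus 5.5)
Your proof is correct and matches the paper, whose proof simply mirrors Lemma~\ref{thm:oc-right-closure}. The three ways of witnessing $G \htri H^L$ become clause 3 directly, clause 2 via option closure of $H$, and clause 3 via option closure of $H$, each witnessing $G \htri H$.
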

	
	\begin{proof}
		The proof is similar to that of Lemma~\ref{thm:oc-right-closure}.
	\end{proof}
	
	\begin{theorem}\label{thm:oc-transitivity}
		For option closed $G$, $H$, and $K$
		\begin{itemize}
			\item[1)] $G\hleq H \hleq K \Rightarrow G \hleq K$
			\item[2)] $G\htri H \hleq K \Rightarrow G \htri K$
			\item[3)] $G\hleq H \htri K \Rightarrow G \htri K$
		\end{itemize}
	\end{theorem}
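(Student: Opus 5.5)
The plan is to prove all three statements simultaneously by induction on the sum of the birthdays of $G$, $H$ and $K$. Each appeal to the induction hypothesis will be on a triple in which at least one of the three games has been replaced by one of its options, so the measure strictly decreases. Every option of an option closed game is again option closed, by the parenthetical clause in the definition of option closure. Hence all games to which the hypothesis is applied are option closed, and Lemmas~\ref{thm:oc-atoms-comp}, \ref{thm:oc-right-closure} and \ref{thm:oc-left-closure} remain available throughout.

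For (1), suppose $G \hleq H \hleq K$.
\begin{itemize}
\item[] Left options: take a left option $G^L$. From $G \hleq H$ we get $G^L \htri H$. Together with $H \hleq K$, statement (2) of the induction hypothesis gives $G^L \htri K$.
\item[] Right options: take a right option $K^R$. From $H \hleq K$ we get $H \htri K^R$. Together with $G \hleq H$, statement (3) of the hypothesis gives $G \htri K^R$.
\item[] Atomic clause: suppose $G=[a]$ and $K=[c]$. Then $H \hleq [c]$ and $H$ is option closed, so $H$ is atomic by Lemma~\ref{thm:oc-atoms-comp}; write $H=[b]$. The atomic clauses of $G \hleq H$ and $H \hleq K$ then give $a \leq b \leq c$, and transitivity in the outcome poset yields $a \leq c$.
\end{itemize}
I expect this atomic clause to be the main point needing care, since without option closure $H$ need not be atomic. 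Lemma~\ref{thm:oc-atoms-comp} is exactly what removes that difficulty.

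For (2), suppose $G \htri H \hleq K$, and split on the three ways $G \htri H$ can hold.
\begin{itemize}
\item[] If $G^R \hleq H$, then hypothesis (1) gives $G^R \hleq K$, so $G \htri K$.
\item[] If $G \hleq H^L$, then $H \hleq K$ gives $H^L \htri K$, and hypothesis (3) gives $G \htri K$.
\item[] If $G^R \hleq H^L$, then again $H^L \htri K$, and hypothesis (3) gives $G^R \htri K$. Lemma~\ref{thm:oc-right-closure} then lifts this to $G \htri K$.
\end{itemize}

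For (3), the argument is dual. Suppose $G \hleq H \htri K$.
\begin{itemize}
\item[] If $H^R \hleq K$, then $G \htri H^R$ from $G \hleq H$, and hypothesis (2) gives $G \htri K$.
\item[] If $H \hleq K^L$, then hypothesis (1) gives $G \hleq K^L$, hence $G \htri K$.
\item[] If $H^R \hleq K^L$, then $G \htri H^R$, and hypothesis (2) gives $G \htri K^L$. Lemma~\ref{thm:oc-left-closure} then yields $G \htri K$.
\end{itemize}

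The only delicate points are the mixed third cases, which genuinely need option closure through the closure lemmas, and the atomic case of (1). The remaining steps are direct unfoldings of Definition~\ref{def:intrinsic-order}.
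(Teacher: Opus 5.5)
Your proof is correct and is essentially the paper's own argument: a mutual induction on the three statements, with Lemma~\ref{thm:oc-atoms-comp} settling the atomic clause of (1) and Lemmas~\ref{thm:oc-right-closure} and~\ref{thm:oc-left-closure} closing the mixed third cases of (2) and (3). Your version is slightly cleaner in two ways: you make the induction measure and the heredity of option closure explicit, and in cases 2(ii), 2(iii), 3(i) and 3(iii) you invoke hypotheses (3), (3), (2) and (2), which are the correct ones, whereas the paper's text cites hypothesis (1) in all four.
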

	
	\begin{proof}
		We prove these properties by mutual induction.
		\begin{itemize}
			\item [1)] First, consider any $G^L$. Then $G^L \htri H \hleq K$. By induction hypothesis (2), $G^L \htri K$. Next, consider any $K^R$. Then $G \hleq H \htri K^R$. By induction hypothesis (3), $G \htri K^R$. Finally, suppose $G= [a]$ and $K = [c]$ are atomic. Then by Lemma~\ref{thm:oc-atoms-comp}, $H = [b]$ is also atomic. So we have that $a \leq b \leq c$ and since $\leq$ is a transitive relation over the poset of outcomes, we have that $a\leq c$. Thus $G \hleq K$.
			\item [2)] There are three possible ways we can have $G \htri H$.
			\begin{itemize}
				\item [] Case i) $G^R \hleq H$. Then we have $G^R \hleq H \hleq K$. By induction hypothesis (1), $G^R \hleq K$. Hence, $G \htri K$.
				\item [] Case ii) $G \hleq H^L$. Then we have $G \hleq H^L \htri K$. By induction hypothesis (1), $G \htri K$.
				\item [] Case iii) $G^R \hleq H^L$. Then we have $G^R \hleq H^L \htri K$. By induction hypothesis (1), $G^R \htri K$. By Lemma~\ref{thm:oc-right-closure} it follows that $G \htri K$.
			\end{itemize}
			\item [3)] There are three possible ways we can have $H \htri K$.
			\begin{itemize}
				\item [] Case i) $H^R \hleq K$. Then we have $G \htri H^R \htri K$. By induction hypothesis (1), $G \htri K$.
				\item [] Case ii) $H \hleq K^L$. Then we have $G \hleq H \hleq K^L$. By induction hypothesis (1), $G \hleq K^L$. Hence, $G \htri K$.
				\item [] Case iii) $H^R \hleq K^L$. Then we have $G \htri H^R \htri K^L$. By induction hypothesis (1), $G \htri K^L$. By Lemma~\ref{thm:oc-left-closure} it follows that $G \htri K$.\qedhere
			\end{itemize}
		\end{itemize}
	\end{proof}
	
\chapter{Weaker Versions of Option Closure}\label{chap:weaker-option-closure}

	\section{The Canonical Form Problem}
	
	When working with Rex+ positions we would like to be able to work with unique canonical forms as we do in normal play. However, as soon as we attempt to apply similar reductions we lose the property of option closure, as we are removing and replacing options.
	
	Currently, our order is only transitive over option closed games, but when talking about canonical forms of option closed games we lose the property of option closure, and thus our ability to use transitivity. However, without transitivity, the idea of a canonical form uses its utility, as canonical forms are only equivalent to their original games, and without transitivity the notion of equivalence is largely meaningless.
	
	So we need to find some property of games that option closed games have, that is preserved by our reductions, and allows for transitivity. In this chapter we present several attempts, and the class of games we eventually found that satisfies these conditions.
	
	\section{Weak Option Closure}
	
	The first property we tried was a property we call weak option closure. 
	
	\begin{definition}
		A game $G$ is called \emph{locally weakly option closed} if for every left option $G^L$ of $G$, $X \htri  G^L \Rightarrow X \htri G$ and for every right option $G^R$ of $G$, $G^R \htri X \Rightarrow G \htri X$
	\end{definition}
	
	\begin{definition}\label{def:weak-option-closure}
		A game $G$ is called \emph{weakly option closed} if it is locally weakly option closed and all of its options are weakly option closed. 
	\end{definition}
	
	These properties are used in proving transitivity, so by weakening our notion of option closure to only these properties we do not lose transitivity as we will proceed to show. 
	
	\begin{lemma} \label{thm:woc-multi-left-option-triangle}
		For weakly option closed $G$, given a game $X$ such that $X \htri G^{L...L}$, then $X \htri G$.
	\end{lemma}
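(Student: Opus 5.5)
The plan is to argue by induction on the number $n$ of left moves in the chain $G^{L\dots L} = G^{L_1 L_2 \cdots L_n}$, that is, on the length of the sequence of left options leading from $G$ down to the game in question. Each step of the chain is handled by the local weak option closure property of Definition~\ref{def:weak-option-closure}, applied to the appropriate intermediate position. The one fact used at every step is that weak option closure is inherited by options: every option of a weakly option closed game is again weakly option closed. Hence every intermediate game $G^{L_1\cdots L_k}$ is weakly option closed, and in particular locally weakly option closed.

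For the base case $n=1$, we are given $X \htri G^L$. Since $G$ is locally weakly option closed, this gives $X \htri G$ directly.

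For the inductive step, suppose $X \htri G^{L_1 \cdots L_{n}}$ with $n \geq 2$. We write $G' = G^{L_1}$. Then $G'$ is weakly option closed by Definition~\ref{def:weak-option-closure}, and $G^{L_1\cdots L_n} = G'^{L_2\cdots L_n}$ is a chain of only $n-1$ left moves from $G'$. By the induction hypothesis applied to $G'$, we get $X \htri G' = G^{L_1}$. The base case applied to $G$ then gives $X \htri G$.

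Alternatively, one could peel off the last step first, using local weak option closure of $G^{L_1\cdots L_{n-1}}$ to get $X \htri G^{L_1\cdots L_{n-1}}$, and then apply the induction hypothesis to $G$ with a chain of length $n-1$. Either order works.

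There is no real obstacle here. The only point needing care is to state the induction so that the hypothesis quantifies over all weakly option closed games, not just the fixed $G$. This matters because the first version of the inductive step applies the hypothesis to $G^{L_1}$, which is a different game. Making the hypothesis general is legitimate precisely because weak option closure is hereditary under taking options, and that is all the argument needs.
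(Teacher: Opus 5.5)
Your proof is correct and is essentially the paper's argument: induction on the length of the chain of left options, using local weak option closure at each step and the fact that options of weakly option closed games are again weakly option closed. The paper uses the ordering you list as the alternative: it applies local closure at the penultimate game to get $X \htri G^{L\dots L}$ and then the induction hypothesis for $G$ itself. Your main version instead applies the hypothesis to $G^{L_1}$, and either ordering works.
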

	
	\begin{proof}
		We consider the two possibilities for what $G^{L...L}$ is.
		\begin{itemize}
			\item [] Case i) We have $X \htri G^L$. Then by Definition~\ref{def:weak-option-closure} it follows that $X \htri G$.
			\item [] Case ii) We have $X \htri G^{L...LL}$. Then, since $G^{L...L}$ is weakly option closed, it follows from Definition~\ref{def:weak-option-closure} that $X \htri G^{L...L}$. Then, by induction hypothesis, $X \htri G$.\qedhere
		\end{itemize}
	\end{proof}
	
	\begin{lemma}\label{thm:woc-multi-right-option-triangle}
		For weakly option closed $G$, given a game $X$ such that $G^{R...R} \htri X$, then $G \htri X$.
	\end{lemma}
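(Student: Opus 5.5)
The plan is to mirror the proof of Lemma~\ref{thm:woc-multi-left-option-triangle}, with the roles of Left and Right exchanged. We induct on the length $n$ of the chain of right options $G^{R\ldots R}$, that is, on the number of successive right moves leading from $G$ to the game in question. Two facts drive the argument. First, every option of a weakly option closed game is again weakly option closed, by Definition~\ref{def:weak-option-closure}. Consequently every game along the chain $G, G^R, G^{RR}, \ldots$ is weakly option closed. Second, each of these games is locally weakly option closed. So, for each such game $K$ and each right option $K^R$, the implication $K^R \htri X \Rightarrow K \htri X$ holds.

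In the base case the chain has length one, so we have $G^R \htri X$ for a right option $G^R$ of $G$. Since $G$ is locally weakly option closed, we obtain $G \htri X$ immediately.

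For the inductive step, suppose $G^{R\ldots RR} \htri X$, where the chain has length $n+1$. Write $K = G^{R\ldots R}$ for the game at depth $n$, so that $G^{R\ldots RR} = K^R$ is a right option of $K$. By the first observation, $K$ is weakly option closed. Local weak option closure of $K$ then gives $K \htri X$. Now $K$ is a right-option chain of length $n$ from $G$, so the induction hypothesis applied to $K \htri X$ yields $G \htri X$.

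An alternative is to peel the chain off from the top instead. That is, apply the induction hypothesis to the weakly option closed game $G^R$ to get $G^R \htri X$, and then use local weak option closure of $G$ to conclude $G \htri X$. This version is perhaps slightly cleaner, since the induction hypothesis is applied to a fixed weakly option closed game.

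There is no real obstacle in this proof. The only care needed is to make explicit that weak option closure is inherited by every game along the chain, which follows by a trivial induction from Definition~\ref{def:weak-option-closure}. It is this inheritance that licenses using the local property at each step.
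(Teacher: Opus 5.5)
Your proposal is correct and is essentially the paper's argument: the paper mirrors Lemma~\ref{thm:woc-multi-left-option-triangle}. It inducts on the length of the chain, uses local weak option closure of the weakly option closed game at depth $n$ to shorten the chain by one, and then applies the induction hypothesis, exactly as in your main inductive step. Your explicit remark that weak option closure is inherited along the chain is the fact the paper uses implicitly.
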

	
	\begin{proof}
		The proof follows similarly to the proof of Lemma~\ref{thm:woc-multi-left-option-triangle}.
	\end{proof}
	
	\begin{lemma}\label{thm:woc-atom-triangle-existence}
		For weakly option closed $G$, there exist atoms $[a]$ and $[b]$ such that $[a] \htri G$ and $G \htri [b]$.
	\end{lemma}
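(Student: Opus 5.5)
Since games are finite, I will argue by induction on the birthday (the depth of the game tree) of $G$. I treat the two claims separately, by duality; here is the argument for $[a] \htri G$, and $G \htri [b]$ is symmetric with left and right options swapped.

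\emph{Base case.} If $G$ is atomic, say $G = [c]$, take $a = c$. By Lemma~\ref{thm:order-reflexivity} we have $[c] \hleq [c]$. However, the definition of $\htri$ for two atoms requires a right option of $[c]$ or a left option of $[c]$, and neither exists. So $[c] \htri [c]$ fails, and the atomic case cannot be handled by this reflexivity witness.

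Two readings of the base case are possible.
\begin{itemize}
\item In this thesis's convention (Rex+ positions end in atoms exactly when no moves remain), every non-atomic game has options on both sides. Then the statement is intended for non-atomic weakly option closed $G$, or else the author regards it as vacuous for atoms.
\item Alternatively, non-atomic games are exactly those with at least one option on each side, so a weakly option closed $G$ that is not atomic has both a left and a right option.
\end{itemize}
I will adopt the second reading. The base case of the induction is then a non-atomic $G$ all of whose left options are atomic.

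\emph{Inductive step.} Let $G$ be non-atomic and pick any left option $G^L$.
\begin{itemize}
\item If $G^L = [c]$ is atomic, then $[c] \hleq G^L$ by Lemma~\ref{thm:order-reflexivity}. Clause 2 of the definition of $\htri$ gives $[c] \htri G$ directly, so we take $a = c$.
\item If $G^L$ is not atomic, then $G^L$ is weakly option closed, because options of weakly option closed games are weakly option closed. By the induction hypothesis there is an atom $a$ with $[a] \htri G^L$. Local weak option closure of $G$ then gives $[a] \htri G$.
\end{itemize}
Equivalently, one can follow a chain of left options $G^{L\ldots L}$ until the first atomic one, say $[a]$. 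Then $[a] \hleq [a]$ gives $[a] \htri G^{L\ldots L}$, where this last game is the parent of $[a]$, and Lemma~\ref{thm:woc-multi-left-option-triangle} lifts this to $[a] \htri G$.

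For $G \htri [b]$, I follow right options down to an atomic $[b]$. Reflexivity gives $[b] \hleq [b]$, and clause 1 of the definition of $\htri$ gives $G^{R\ldots R} \htri [b]$ at the parent of $[b]$. Lemma~\ref{thm:woc-multi-right-option-triangle} then gives $G \htri [b]$.

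\emph{Main obstacle.} The delicate point is the base case: making sure that every non-atomic game actually has a left (respectively right) option, so that the chain reaches an atom. I would settle this from the game-over-poset convention, under which a game without options on one side is atomic. The induction and the lifting lemmas themselves are routine.
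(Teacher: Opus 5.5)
Your proof is correct and is essentially the paper's argument: descend a chain of left (respectively right) options to an atom, obtain $[a] \htri G^{L\ldots L}$ from reflexivity $[a] \hleq [a]$, and lift it to $G$ with Lemma~\ref{thm:woc-multi-left-option-triangle} (respectively its right-hand dual). You are also right that the statement fails when $G$ itself is atomic. In fact $[a] \htri [c]$ is impossible for all atoms $a, c$, so the atomic case is false rather than vacuous; the paper's proof excludes it only implicitly, which does no harm because the lemma is applied only to non-atomic $G$ in Lemma~\ref{thm:woc-atoms-comp}.
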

	
	\begin{proof}
		Since $G$ is a well-defined game it has some $G^{L...LL} = [a]$ which is atomic. Then $[a]$ is a left option of $G^{L...L}$. It follows that $[a]\htri G^{L...L}$. Then, by Lemma~\ref{thm:woc-multi-left-option-triangle}, it follows that $[a] \htri G$.
		
		A similar proof shows that there is an atomic game $[b]$ such that $G \htri [b]$.
	\end{proof}
	
	\begin{lemma}\label{thm:woc-atoms-comp}
		If $G \hleq [a]$ for some weakly option closed game $G$ and atom $a$, then $G$ is atomic.
	\end{lemma}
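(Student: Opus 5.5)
We argue by contradiction, following the proof of Lemma~\ref{thm:oc-atoms-comp} but replacing the appeal to option closure with Lemma~\ref{thm:woc-atom-triangle-existence}. Suppose $G \hleq [a]$ with $G$ weakly option closed but not atomic. We aim to produce an atom $[c]$ with $[c] \htri G$; combined with $G \hleq [a]$, this contradicts part 2 of Lemma~\ref{thm:atoms-not-comp}.

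Since $G$ is not atomic, it has some option. Note that Lemma~\ref{thm:woc-atom-triangle-existence} gives atoms $[c]$ and $[d]$ with $[c] \htri G$ and $G \htri [d]$ for any weakly option closed $G$. Its proof descends a chain of left options $G^{L\dots L}$ ending in an atom, then uses Lemma~\ref{thm:woc-multi-left-option-triangle} to lift $[c] \htri G^{L\dots L}$ to $[c] \htri G$. So whenever $G$ has at least one left option, we obtain an atom $[c]$ with $[c] \htri G$ directly. Then Lemma~\ref{thm:atoms-not-comp}(2), applied to $[c] \htri G$ and $G \hleq [a]$, gives the contradiction.

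The main obstacle is the case where $G$ has no left options but does have right options. In that case no chain of left options exists, so the argument above does not apply. Here I would use the definition of $G \hleq [a]$ directly. Clause 2 of Definition~\ref{def:intrinsic-order} is vacuous because $[a]$ has no right options. Clause 3 does not apply since $G$ is not atomic. Clause 1 is vacuous because $G$ has no left options. So no contradiction arises from the definition alone.

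I would therefore first check the paper's convention for games with options only on one side. If a non-atomic game is required to have options for both players, as with the atomic endings ``$G^{L\dots LL}=[a]$'' assumed in Lemma~\ref{thm:woc-atom-triangle-existence}, then the left chain always exists and the proof above is complete. That lemma in fact asserts that a weakly option closed game has an atomic iterated left option, so I expect to simply invoke Lemma~\ref{thm:woc-atom-triangle-existence}. The full argument is then: take the atom $[b]$ with $[b]\htri G$ given by that lemma, and observe that $[b]\htri G \hleq [a]$ contradicts Lemma~\ref{thm:atoms-not-comp}(2). Hence $G$ is atomic.
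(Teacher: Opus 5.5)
Your proposal is correct and is essentially the paper's proof. You obtain an atom $[b]$ with $[b] \htri G$ from Lemma~\ref{thm:woc-atom-triangle-existence} and observe that $[b] \htri G \hleq [a]$ contradicts Lemma~\ref{thm:atoms-not-comp}(2). Your worry about a non-atomic $G$ with no left options is justified: without an extra assumption, a game such as $\{\,\mid [b]\}$ is weakly option closed and lies $\hleq$ every atom. It is resolved exactly as you suggest, by the convention that every non-atomic game has both left and right options; the paper uses this implicitly in the proof of Lemma~\ref{thm:woc-atom-triangle-existence}, and Rex+ positions satisfy it.
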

	
	\begin{proof}
		Suppose $G$ is not atomic. Then by Lemma~\ref{thm:woc-atom-triangle-existence} there exists an atomic game $[b]$ with $[b] \htri G$. Then we have $[b] \htri G \hleq [a]$, which is a contradiction by Lemma~\ref{thm:atoms-not-comp}. So $G$ is atomic.
	\end{proof}
	
	\begin{theorem}\label{thm:woc-transitivity}
		For weakly option closed $G$, $H$, and $K$
		\begin{itemize}
			\item[1)] $G\hleq H \hleq K \Rightarrow G \hleq K$.
			\item[2)] $G\htri H \hleq K \Rightarrow G \htri K$.
			\item[3)] $G\hleq H \htri K \Rightarrow G \htri K$.
		\end{itemize}
	\end{theorem}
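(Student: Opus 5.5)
We mirror the proof of Theorem~\ref{thm:oc-transitivity}, proving the three statements simultaneously by induction on the combined birthdays of $G$, $H$ and $K$. The only places where option closure was used there are Lemma~\ref{thm:oc-atoms-comp}, Lemma~\ref{thm:oc-right-closure} and Lemma~\ref{thm:oc-left-closure}. We replace these with Lemma~\ref{thm:woc-atoms-comp} and the defining property of local weak option closure (Definition~\ref{def:weak-option-closure}). Weak option closure is inherited by all options, so the induction hypothesis applies to every triple of options that arises.

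For (1), assume $G \hleq H \hleq K$. Take any $G^L$. Then $G^L \htri H \hleq K$, and induction hypothesis (2) gives $G^L \htri K$. Take any $K^R$. Then $G \hleq H \htri K^R$, and induction hypothesis (3) gives $G \htri K^R$. If $G=[a]$ and $K=[c]$ are atomic, then $H \hleq [c]$ with $H$ weakly option closed, so Lemma~\ref{thm:woc-atoms-comp} makes $H=[b]$ atomic. Transitivity of the outcome poset then gives $a \leq b \leq c$, hence $a \leq c$.

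For (2), assume $G \htri H \hleq K$ and split on how $G \htri H$ holds.
\begin{itemize}
\item If $G^R \hleq H$, then induction hypothesis (1) gives $G^R \hleq K$, so $G \htri K$.
\item If $G \hleq H^L$, then $H^L \htri K$ because $H \hleq K$, and induction hypothesis (3) gives $G \htri K$.
\item If $G^R \hleq H^L$, then $H^L \htri K$, and induction hypothesis (3) gives $G^R \htri K$. Since $G$ is locally weakly option closed, $G^R \htri K$ implies $G \htri K$.
\end{itemize}

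Statement (3) is dual: assume $G \hleq H \htri K$ and split on how $H \htri K$ holds.
\begin{itemize}
\item If $H^R \hleq K$, then $G \htri H^R$, and induction hypothesis (2) gives $G \htri K$.
\item If $H \hleq K^L$, then induction hypothesis (1) gives $G \hleq K^L$, so $G \htri K$.
\item If $H^R \hleq K^L$, then $G \htri H^R$, and induction hypothesis (2) gives $G \htri K^L$. Local weak option closure of $K$ then yields $G \htri K$.
\end{itemize}

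The main point to check is that each use of the induction hypothesis is on a strictly smaller triple, so the induction is well founded. In every case at least one component is replaced by one of its options, so this holds. We must also check that the atomic clause in (1) is legitimate. This is exactly where Lemma~\ref{thm:woc-atoms-comp}, which rests on Lemma~\ref{thm:woc-atom-triangle-existence}, takes the place of the option-closed argument. Otherwise the proof is a routine transcription of Theorem~\ref{thm:oc-transitivity}.
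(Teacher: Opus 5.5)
Your proof is correct and follows the paper's route exactly: the mutual induction of Theorem~\ref{thm:oc-transitivity}, with Lemma~\ref{thm:woc-atoms-comp} replacing Lemma~\ref{thm:oc-atoms-comp}, and local weak option closure of $G$ and $K$ replacing Lemmas~\ref{thm:oc-right-closure} and~\ref{thm:oc-left-closure}. Your write-up also cites the correct induction hypotheses, namely (3) in cases ii and iii of part (2) and (2) in cases i and iii of part (3), whereas the template proof in the paper cites (1) in those places.
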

	
	\begin{proof}
		The proof is identical to that of Theorem~\ref{thm:oc-transitivity}, using Definition~\ref{def:weak-option-closure} and Lemma~\ref{thm:woc-atoms-comp} instead of Lemmas~\ref{thm:oc-atoms-comp},~\ref{thm:oc-right-closure}, and~\ref{thm:oc-left-closure}.	
	\end{proof}
	
	However when attempting to show that weakly option closed games were closed under our game form reductions we ran into problems.
	
	After weak option closure, we trialled many possible properties. The problem with weak option closure is that we have in some sense that weak option closed games are ``pseudo closed" to every game, i.e., they have this property with respect to every game. However, we really needed a class of games which were ``pseudo closed" with respect to themselves. But trying to define such a class presents a chicken and egg problem. We eventually did reach a solution, which will be covered in the next section.
	
	\section{The Classes $\C$ and $\D$}
	
	We begin with some definitions, to formalize our notion of ``pseudo closure".
	
	\begin{definition}
		Let $\cal{A}$ be a class of games. We say a game $G$ is $\cal{A}$-closed if $\forall X \in \cal{A}$ and $\forall G^L, G^R$ we have:
		\begin{itemize}
			\item [1)]$G^R \htri X \Rightarrow G \htri X$ and;
			\item [2)] $X \htri G^L \Rightarrow X \htri G$.
		\end{itemize}
	\end{definition}
	
	\begin{definition}
		We call the class of games which are $\cal{A}$-closed, $\cal{A}^*$.
	\end{definition}
	
	Then we define a function that we call $*$. This is simply the function that sends a class of games $\cal{A}$ to the class of games which is $\cal{A}$-closed. 
	
	\begin{definition}
		We define the function $*$ from the power class of all games to itself, as follows. Given a class of games $\A$: $*(\A) = \A^*$. We write $*(\A)$ as $\A^*$.
	\end{definition}
	
	We intend to use the Knaster-Tarski fixed point theorem, see Theorem~\ref{thm:knaster-tarski}, to find our class of games which is closed to itself.
	
	However, there is a small problem, the function $*$ is not itself a monotone function, a requirement for the Knaster-Tarski fixed point theorem. Thankfully, the function $**$ is monotone, as we show in the following lemma.
	
	\begin{lemma}
		The function $**$ is monotone.
	\end{lemma}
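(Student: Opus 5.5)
The plan is to show first that $*$ is antitone (order-reversing) on the power class ordered by inclusion. Monotonicity of $**$ then follows because the composite of two order-reversing maps is order-preserving.

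\textbf{Step 1: $*$ is antitone.} Suppose $\A \subseteq \B$; we want $\B^* \subseteq \A^*$. Let $G \in \B^*$, so $G$ is $\B$-closed. By definition, for every $X \in \B$ and all options $G^L, G^R$ we have $G^R \htri X \Rightarrow G \htri X$ and $X \htri G^L \Rightarrow X \htri G$. Every $X \in \A$ also lies in $\B$, so these two implications hold in particular for every $X \in \A$. Hence $G$ is $\A$-closed, that is, $G \in \A^*$. The point is that the condition defining $\A$-closure is universally quantified over $X \in \A$, so enlarging the class of test games only strengthens the condition and shrinks the class of games satisfying it.

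\textbf{Step 2: compose.} Suppose again $\A \subseteq \B$. Step 1 gives $\B^* \subseteq \A^*$. Applying Step 1 once more, to the pair $\B^* \subseteq \A^*$, gives $(\A^*)^* \subseteq (\B^*)^*$. This says exactly $**(\A) \subseteq **(\B)$, so $**$ is monotone with respect to the order of Definition (inclusion) used in Theorem~\ref{thm:knaster-tarski}.

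I do not expect a genuine mathematical obstacle; the whole argument is the quantifier observation in Step 1. The only delicate point is foundational. The paper speaks of the ``power class of all games'', whereas Theorem~\ref{thm:knaster-tarski} is stated for power sets. For the present lemma this does not matter, since monotonicity is purely about inclusion. When Knaster--Tarski is applied later, one should work inside a fixed set $S$ of games closed under options (for instance, all games of bounded birthday, or all games built over a fixed atom set). Then $*$ and $**$ are maps $\cal{P}(S) \to \cal{P}(S)$, with $\A^*$ interpreted as $\{G \in S \mid G \text{ is } \A\text{-closed}\}$. The argument above goes through verbatim in that setting.
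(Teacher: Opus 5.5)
Your proof is correct and is essentially the paper's own argument. Both show $\mathcal{B}^* \subseteq \mathcal{A}^*$ whenever $\mathcal{A} \subseteq \mathcal{B}$, because closure quantifies over fewer test games, and then apply this a second time to get $\mathcal{A}^{**} \subseteq \mathcal{B}^{**}$. Your extra point, that one should work inside a fixed set of games so that Theorem~\ref{thm:knaster-tarski} literally applies, is a sensible foundational remark that the paper leaves implicit.
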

	
	\begin{proof}
		Suppose we have classes of games $\A$ and $\B$ with $\A \subseteq \B$. Then, every game in $\B^*$ is closed with respect to every game in $\A$, since $\A$ is a subset of $\B$. Thus every game in $\B^*$ is also in $\A^*$. Thus $\B^* \subseteq \A^*$.
		
		But then, by a similar argument, $\A^{**} \subseteq \B^{**}$. So $**$ is a monotone function.
	\end{proof}
	
	Then, by Theorem~\ref{thm:knaster-tarski} we know that $**$ has a least fixed point. 
	
	\begin{definition}\label{def:classes-c-and-d}
		Let $\cal{C}$ be the least fixed point of $^{**}$ and let $\cal{D} = \cal{C}^*$. A game $G$ is in $\cal{C}$ if $\forall X \in \cal{D}$ and $\forall G^L, G^R$
		\begin{itemize}
			\item [1)] $G^R \htri X \Rightarrow G \htri X$ and;
			\item [2)] $X \htri G^L \Rightarrow X \htri G$.
		\end{itemize}
		A game $G$ is in $\cal{D}$ if $\forall X \in \cal{C}$ and $\forall G^L, G^R$
		\begin{itemize}
			\item [1)] $G^R \htri X \Rightarrow G \htri X$ and;
			\item [2)] $X \htri G^L \Rightarrow X \htri G$.
		\end{itemize}
	\end{definition}
	
	\begin{lemma}
		The class $\C$ is contained in the class $\D$.
	\end{lemma}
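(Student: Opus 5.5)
We need $\C \subseteq \D = \C^*$. The plan is to use the construction of $\C$ as a least fixed point, via the proof of Theorem~\ref{thm:knaster-tarski}. Recall that $\C$ is the intersection of all classes $\A$ with $\A^{**} \subseteq \A$. It therefore suffices to exhibit a class $\A$ with $\A^{**} \subseteq \A$ and $\A \subseteq \C^*$. The natural candidate is $\A = \C \cap \C^* = \C \cap \D$. If this class is closed under $**$, then minimality gives $\C \subseteq \C \cap \D$, hence $\C \subseteq \D$.

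The key steps, in order, are as follows.

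\textbf{Step 1: $*$ is antitone.} If $\A \subseteq \B$ then $\B^* \subseteq \A^*$. This is exactly the first half of the proof that $**$ is monotone.

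\textbf{Step 2: the fixed point identities.} Since $\C^{**} = \C$, we get $\D^* = \C^{**} = \C$ and $\D^{**} = \C^* = \D$.

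\textbf{Step 3: closure of $\C \cap \D$ under $**$.} Since $\C \cap \D \subseteq \C$, antitonicity applied twice gives $(\C\cap\D)^{**} \subseteq \C^{**} = \C$. The remaining task is $(\C\cap\D)^{**} \subseteq \D$.

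Step 3 is where I expect the main obstacle. Antitonicity applied to $\C\cap\D \subseteq \D$ only gives $(\C\cap\D)^{**} \subseteq \D^{**} = \D$. That is in fact exactly what we need: combining the two inclusions yields $(\C\cap\D)^{**} \subseteq \C \cap \D$. So $\C\cap\D$ is a pre-fixed point, and by leastness $\C \subseteq \C\cap\D \subseteq \D$. I should double check the direction of monotonicity here. The map $**$ is monotone, so $\A \subseteq \B$ gives $\A^{**} \subseteq \B^{**}$, and that is precisely what is used.

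An alternative route is the self-duality observation. The relation ``$G$ is $\{X\}$-closed'' involves the options of $G$ and not those of $X$, so it is not symmetric in $G$ and $X$. For this reason I will not rely on symmetry and will keep the lattice-theoretic argument above. It uses only Step 1, Step 2, and the characterization of the least fixed point as the intersection of all pre-fixed points given in the proof of Theorem~\ref{thm:knaster-tarski}. One caveat is that we are working with classes rather than sets, so the power class plays the role of $\cal{P}(S)$. I will assume, as the paper does, that the proof of Theorem~\ref{thm:knaster-tarski} applies verbatim.
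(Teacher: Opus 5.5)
Your argument is correct and rests on the same two facts as the paper's proof: $\D^{**} = \C^{***} = \C^{*} = \D$ (your Step 2) and the minimality of $\C$. The passage through $\C \cap \D$ is an unnecessary detour, however. Step 2 already shows that $\D$ itself is a fixed point of $**$, so leastness of $\C$ gives $\C \subseteq \D$ at once, which is exactly how the paper argues; that route also only needs $\C$ to lie below every fixed point, not below every pre-fixed point.
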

	
	\begin{proof}
		We show first that $\D$ is a fixed point of $**$. Consider $\D^{**}$. It is equal to $\C^{***}$. But since $\C$ is a fixed point of $**$, we have that $\C{***} = \C^*$. So $D^{**} = \C^{***} = \C^* = \D$. Thus $\D$ is a fixed point of $**$. Thus, since $\C$ is the least fixed point of $**$, we have that $\C \subseteq \D$.
	\end{proof}
	
	First, note that all option closed games are in $\C$. We will show in the following section and chapter that the class $\C$ has both other properties we wish it to have.
	
	\section{Transitivity}
	
	First we show that games in $\C$ have transitivity.
	
	\begin{lemma}\label{thm:c-left-triangle-closure}
		For a game $G$ in $\C$, given a game $X$ in $\D$ such that $X \htri G^{L...L}$, then $X \htri G$.
	\end{lemma}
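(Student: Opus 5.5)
The proof is an induction on the length of the chain of left options $G^{L\ldots L}$, in the same way as the proof of Lemma~\ref{thm:woc-multi-left-option-triangle}. The one difference is that the closure property is now only available against test games $X$ in $\D$. That is exactly the hypothesis we have, and $X$ stays fixed throughout the argument.

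\emph{Base case.} The chain has length one, so $X \htri G^L$ for some left option $G^L$ of $G$. Since $G \in \C$ and $X \in \D$, clause 2) of Definition~\ref{def:classes-c-and-d} (the definition of $\C$) gives $X \htri G$ directly.

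\emph{Inductive step.} Write the chain as $G^{L\ldots L}$ of length $n+1$, and let $K = G^{L\ldots L}$ be the position of length $n$, so that the given hypothesis is $X \htri K^L$ for some left option $K^L$ of $K$. If $K \in \C$, then clause 2) applied to $K$, again with the fixed test game $X \in \D$, gives $X \htri K$. This is a chain of length $n$, so the induction hypothesis gives $X \htri G$.

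The main obstacle is justifying that the intermediate positions $K$ lie in $\C$. As written, the $\A$-closed condition only constrains the immediate options of a game. It does not obviously say that those options are again in $\C$, in contrast with Definition~\ref{def:weak-option-closure}, which demanded heredity explicitly. I would first check whether the paper intends $\C$ (and $\D$) to be hereditary, i.e.\ whether "$\A$-closed" implicitly includes that all options are $\A$-closed, as in the weak option closure setting. Under that reading the step above goes through verbatim, and the fixed-point property $\C = \D^{*}$ is what ensures that the relevant test class for each $K$ is still $\D$. If heredity is not built in, I would try to restrict the statement to chains lying inside $\C$, or replace $\A^{*}$ by its largest hereditary subclass before applying Theorem~\ref{thm:knaster-tarski}. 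The monotonicity argument for $**$ is unaffected by that change, so the fixed point would still exist. After that repair, the remaining steps are routine.
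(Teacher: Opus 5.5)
Your argument is the paper's proof: induction on the length of the chain, with the base case from clause 2) of Definition~\ref{def:classes-c-and-d} and the inductive step applying that same clause to the intermediate position against the fixed $X \in \D$. The heredity concern you raise is legitimate, and the paper does not address it either. Its Case ii) simply asserts that the intermediate position lies in $\C = \D^*$; it even names $G^{L\ldots LL}$, where membership of $G^{L\ldots L}$ is what is needed. The inductions in Theorem~\ref{thm:c-transitivity-D-right} likewise treat options of games in $\C$ and $\D$ as again lying in $\C$ and $\D$. So the intended reading is evidently the hereditary one you propose, as in Definition~\ref{def:weak-option-closure}, and under that reading your proof is complete.
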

	
	\begin{proof}
		We consider two possibilities for what $G^{L...L}$ could be.
		\begin{itemize}
			\item [] Case i) We have $X \htri G^L$. Then by Definition~\ref{def:classes-c-and-d} it follows that $X \htri G$.
			\item[] Case ii) We have $X \htri G^{L...LL}$. Then, since $G^{L...LL}$ is in $\C = \D^*$, it follows that $X \htri G^{L...L}$. Then, by induction hypothesis $X \htri G$.\qedhere
		\end{itemize}
	\end{proof}
	
	\begin{lemma}\label{thm:c-right-triangle-closure}
		For a game $G$ in $\C$, given a game $X$ in $\D$ such that $G^{R...R} \htri X$, then $G \htri X$.
	\end{lemma}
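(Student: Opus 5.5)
The argument mirrors the proof of Lemma~\ref{thm:c-left-triangle-closure}, using the first clause of Definition~\ref{def:classes-c-and-d} in place of the second. We induct on the length of the chain of right options $G^{R...R}$, i.e.\ on the number of successive right moves leading from $G$ to the given position.

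\textbf{Base case.} The chain has length one, so $G^{R...R} = G^R$ is a single right option of $G$ and we are given $G^R \htri X$. Since $G \in \C$ and $X \in \D$, clause 1) of Definition~\ref{def:classes-c-and-d} gives $G \htri X$ directly.

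\textbf{Inductive step.} The chain has length at least two. Write it as $G^{R...RR}$, a right option of the shorter chain $G^{R...R}$, and suppose $G^{R...RR} \htri X$. We need $G^{R...R} \in \C$. This holds if membership in $\C$ is inherited by options, which is implicit in how the paper treats $\C$ and is used the same way in the proof of Lemma~\ref{thm:c-left-triangle-closure}. Granting this, clause 1) applied to $G^{R...R} \in \C$ and $X \in \D$ gives $G^{R...R} \htri X$. This is a strictly shorter chain from $G$, so the induction hypothesis yields $G \htri X$.

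\textbf{Main obstacle.} The one delicate point is that the closure condition must be available at every intermediate position $G^{R...R}$, not only at $G$ itself. This is exactly the hereditary property that Lemma~\ref{thm:c-left-triangle-closure} also relies on. If heredity were not built into the definition of $\C$, I would first prove it: I would show that the class of games in $\C$ all of whose options also lie in $\C$ is still a fixed point of $**$ containing the relevant games, or else simply read Definition~\ref{def:classes-c-and-d} as requiring closure at all subpositions, as Definition~\ref{def:weak-option-closure} does. With heredity in place, the rest is routine, exactly dual to the left-option case.
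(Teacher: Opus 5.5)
Your argument is the paper's argument. The paper proves this lemma ``similarly'' to Lemma~\ref{thm:c-left-triangle-closure}, which is exactly your induction on chain length: the base case comes from clause~1) of Definition~\ref{def:classes-c-and-d}, and the inductive step applies that clause at the intermediate position and then the induction hypothesis. The paper also uses heredity without comment. It writes that $G^{L\ldots LL}$ is in $\C$, whereas the step needs $G^{L\ldots L}\in\C$. So you are right that heredity is the one thing the definition, as written, does not supply.

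\textbf{Your first repair cannot work.} Any fixed point of $**$ contains the least one, so showing that the games of $\C$ whose options lie in $\C$ form a fixed point is the same as showing that $\C$ is closed under options. It is not.

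Since $\D$ is contained in the class of all games and $*$ is antitone, every game that is closed with respect to \emph{all} games lies in $\D^*=\C$. Let $E=\{[a]\mid[a]\}$, $B=\{[a]\mid E\}$ and $G=\{[a]\mid B,E,[a]\}$.
\begin{itemize}
\item Every $G^{RR}$ is itself a $G^R$. So the argument of Lemma~\ref{thm:oc-right-closure}, together with the trivial left side, shows that $G$ is closed with respect to every game. Hence $G\in\C$.
\item The atom $[a]$ lies in $\D$, since it has no options.
\item $E\htri[a]$ holds via $E^R=[a]\hleq[a]$.
\item $B\not\htri[a]$, because the only right option of $B$ satisfies $E\not\hleq[a]$ (as $[a]\htri[a]$ fails).
\end{itemize}
So the right option $B$ of $G$ is not in $\C$. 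This does not refute the lemma for this particular $G$, whose iterated right options are all direct right options. It does show that the inductive step has no support in Definition~\ref{def:classes-c-and-d} as stated.

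\textbf{Your second repair is the right one.} Demand $\A$-closedness at every subposition, as in Definition~\ref{def:weak-option-closure}. This keeps $*$ antitone and $**$ monotone, so $\C$ and $\D$ still exist. They are then hereditary by construction and still contain the option closed games. With that definition your induction goes through verbatim.
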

	
	\begin{proof}
		The proof follows similarly to the proof of Lemma~\ref{thm:c-left-triangle-closure}.
	\end{proof}
	
	\begin{lemma}\label{thm:c-atoms-triangle-existence}
		Given a game $G$ that is in $\D$ there exists atomic games $[a]$ and $[b]$ such that $[a] \htri G$ and $G \htri [b]$.
	\end{lemma}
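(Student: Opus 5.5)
The plan is to mirror the proof of Lemma~\ref{thm:woc-atom-triangle-existence}, with the closure property of $\D$ playing the role of weak option closure. The argument is by induction on the game tree, handling the left statement; the right one is dual. As in Lemma~\ref{thm:woc-atom-triangle-existence} (and as it is used in Lemma~\ref{thm:woc-atoms-comp}), I read the statement for non-atomic $G$ that has a left option; for atomic $G$ both sides have no options, so no $\htri$ relation can hold. I also read membership in $\C$ and $\D$ as hereditary, passing to options. This is the same convention already used in the proof of Lemma~\ref{thm:c-left-triangle-closure}, where $G^{L\dots LL}$ is taken to lie in $\C$.

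\textbf{Step 1: atoms lie in $\C$.} An atomic game $[c]$ has no left or right options. So for any class $\A$, both clauses in the definition of $\A$-closed hold vacuously. Hence $[c] \in \D^* = \C$. This is what will let us use the closure clauses of $G \in \D = \C^*$ with $X = [a]$.

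\textbf{Step 2: the induction.} Pick a left option $G^L$ of $G$. There are two cases.
\begin{itemize}
\item[] If $G^L = [a]$ is atomic, then Lemma~\ref{thm:order-reflexivity} gives $[a] \hleq [a] = G^L$. By clause 2 of the definition of $\htri$, this yields $[a] \htri G$ directly.
\item[] If $G^L$ is not atomic, it is a smaller game in $\D$. By the induction hypothesis there is an atom $[a]$ with $[a] \htri G^L$. By Step 1, $[a] \in \C$. Since $G \in \D$, clause 2 of Definition~\ref{def:classes-c-and-d} with $X = [a]$ turns $[a] \htri G^L$ into $[a] \htri G$.
\end{itemize}
The right statement is symmetric. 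Take $G^R$. If $G^R = [b]$ is atomic, then $[b] \hleq [b]$ gives $G \htri [b]$ via clause 1 of $\htri$. Otherwise the induction hypothesis gives $G^R \htri [b]$, and clause 1 of the closure condition gives $G \htri [b]$.

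\textbf{Main obstacle.} The real subtlety is the bookkeeping about which class the intermediate games belong to. The closure clause for $G \in \D$ only applies to test games $X$ in $\C$, which is why Step 1 (atoms are in $\C$) is essential. The induction also needs $G^L \in \D$, which relies on the hereditary reading of the classes. If the definitions were instead purely local, I would first try to prove that $\C$ and $\D$ are closed under taking options, by checking that the class of hereditarily-$\A$-closed games still behaves correctly under $**$.
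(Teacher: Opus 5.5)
Your proof is correct and is essentially the paper's argument: descend along left options to an atom $[a]$ with $[a]\htri G^{L\dots L}$, then push the relation back up to $G$ using the closure property, which you unroll as an explicit induction instead of citing Lemma~\ref{thm:c-left-triangle-closure}. Your bookkeeping is in fact the correct instantiation: atoms lie in ${\C}$ vacuously, and $G\in{\D}={\C}^*$ is closed against test games in ${\C}$. The lemma the paper cites is stated for $G$ in ${\C}$ and $X$ in ${\D}$, so it does not literally apply to the hypothesis $G\in{\D}$; both arguments also rely on the same hereditary reading of membership and on $G$ being non-atomic.
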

	
	\begin{proof}
		Since $G$ is a well-defined game it has some $G^{L...LL} = [a]$ which is atomic. Then $[a]$ is a left option of $G^{L...L}$. It follows that $[a]\htri G^{L...L}$. Then, by Lemma~\ref{thm:c-left-triangle-closure}, it follows that $[a] \htri G$.
		
		A similar proof shows that there is an atomic game $[b]$ such that $G \htri [b]$.
	\end{proof}
	
	\begin{lemma}\label{thm:c-atom-comp}
		If $G \hleq [a]$ for some game $G$ in $\C$ and atom $a$, then $G$ is atomic.
	\end{lemma}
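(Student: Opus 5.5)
The plan is to mirror the proof of Lemma~\ref{thm:woc-atoms-comp}, replacing the weak-option-closure lemma with Lemma~\ref{thm:c-atoms-triangle-existence}.

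Suppose, for contradiction, that $G \in \C$ is not atomic and $G \hleq [a]$. Since $\C \subseteq \D$ by the containment lemma, $G$ lies in $\D$, so Lemma~\ref{thm:c-atoms-triangle-existence} applies to $G$. It gives an atomic game $[b]$ with $[b] \htri G$. Combined with $G \hleq [a]$, this yields $[b] \htri G \hleq [a]$, which is exactly the forbidden configuration of Lemma~\ref{thm:atoms-not-comp} part 2), a contradiction. Hence $G$ is atomic.

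The delicate step is the appeal to Lemma~\ref{thm:c-atoms-triangle-existence}. Its proof invokes Lemma~\ref{thm:c-left-triangle-closure} with $X = [b]$, and that requires $[b] \in \D$ and $G \in \C$. So I would check two things.

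First, atomic games lie in $\D$ (indeed in $\C$). An atom has no options, so the closure conditions are vacuous, and atoms belong to $\A^*$ for every class $\A$.

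Second, the lemma is really applied with $G \in \C$. Here we have $G \in \C$ directly, so the hypothesis of Lemma~\ref{thm:c-left-triangle-closure} holds.

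One further point needs care: non-atomic $G$ must actually have a nonempty chain of left options ending in an atom. If $G$ has only right options, I would instead take the right chain to get $G \htri [b]$. That does not immediately contradict $G \hleq [a]$, so in that case I would argue directly. Definition~\ref{def:intrinsic-order} with $H = [a]$ forces every $G^L \htri [a]$, and since $[a]$ has no options, condition 1 vacuously holds when $G$ has no left options.

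So the main obstacle is the case where $G$ has no left options. I expect to handle it as follows. I would assume, as in the paper's well-definedness remark, that every non-atomic game has options on both sides ending in atoms. I would then rely on that same convention, used in Lemma~\ref{thm:c-atoms-triangle-existence}, which is where I expect the argument to need the most justification.
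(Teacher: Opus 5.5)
Your argument is the paper's: you get an atom $[b]$ with $[b] \htri G$ from Lemma~\ref{thm:c-atoms-triangle-existence} and derive a contradiction from $[b] \htri G \hleq [a]$ via Lemma~\ref{thm:atoms-not-comp}. Your side checks are also correct: atoms lie vacuously in $\D$, and the left-chain lemma really needs $G \in \C$, which you have.

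Drop the idea of handling a $G$ with no left options ``directly''. A game such as $\{\,\mid [c]\}$ lies in $\C$ and satisfies $\{\,\mid [c]\} \hleq [a]$ vacuously, yet is not atomic, so the lemma would be false for it. Hence the convention that every non-atomic game has nonempty left and right option sets, which the paper uses implicitly in Lemma~\ref{thm:c-atoms-triangle-existence}, is genuinely necessary rather than merely convenient.
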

	
	\begin{proof}
		Suppose $G$ is not atomic. Then by Lemma~\ref{thm:c-atoms-triangle-existence} there exists an atomic game $[b]$ with $[b] \htri G$. Then we have $[b] \htri G \hleq [a]$, which is a contradiction by Lemma~\ref{thm:atoms-not-comp}. So $G$ is atomic.
	\end{proof}
	
	\begin{theorem}\label{thm:c-transitivity-D-right}
		For games $G$ in $\C$ and $H$, $K$ in $\D$ we have:
		\begin{itemize}
			\item[1)] $G\hleq H \hleq K \Rightarrow G \hleq K$.
			\item[2)] $G\htri H \hleq K \Rightarrow G \htri K$.
			\item[3)] $G\hleq H \htri K \Rightarrow G \htri K$.
		\end{itemize}
	\end{theorem}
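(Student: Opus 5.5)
The plan is to repeat the mutual induction from the proof of Theorem~\ref{thm:oc-transitivity}, inducting on the combined birthdays of $G$, $H$, $K$. Wherever that proof used option closure, I will substitute the defining properties of $\C$ and $\D$ from Definition~\ref{def:classes-c-and-d}. I will treat membership in $\C$ and in $\D$ as hereditary: every option of a game in $\C$ is in $\C$, and every option of a game in $\D$ is in $\D$. This is the same convention as in Definition~\ref{def:weak-option-closure}, and Lemma~\ref{thm:c-left-triangle-closure} already relies on it. With this convention, each appeal to the induction hypothesis has its first argument in $\C$ and its other two arguments in $\D$, as the statement requires.

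For (1), take any $G^L$. Then $G^L \htri H \hleq K$, and induction hypothesis (2) gives $G^L \htri K$. For any $K^R$, we have $G \hleq H \htri K^R$, and induction hypothesis (3) gives $G \htri K^R$. For the atomic clause, suppose $G=[a]$ and $K=[c]$, and assume $H$ is not atomic. Since $H \in \D$, Lemma~\ref{thm:c-atoms-triangle-existence} provides an atom $[b]$ with $[b] \htri H$. Together with $H \hleq [c]$ this contradicts Lemma~\ref{thm:atoms-not-comp}. So $H=[b]$ is atomic, and $a \leq b \leq c$ gives $a \leq c$.

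For (2), I split into the three ways $G \htri H$ can hold.
\begin{itemize}
\item If $G^R \hleq H$, then $G^R \hleq K$ by induction hypothesis (1), so $G \htri K$.
\item If $G \hleq H^L$, then $G \hleq H^L \htri K$, and induction hypothesis (3) gives $G \htri K$.
\item If $G^R \hleq H^L$, then induction hypothesis (3) gives $G^R \htri K$. Since $G \in \C$ and $K \in \D$, clause 1) in the definition of $\C$ yields $G \htri K$. This replaces Lemma~\ref{thm:oc-right-closure}.
\end{itemize}

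For (3), I split into the three ways $H \htri K$ can hold.
\begin{itemize}
\item If $H^R \hleq K$, then $G \htri H^R \hleq K$, and induction hypothesis (2) gives $G \htri K$.
\item If $H \hleq K^L$, then induction hypothesis (1) gives $G \hleq K^L$, so $G \htri K$.
\item If $H^R \hleq K^L$, then $G \htri H^R \hleq K^L$, and induction hypothesis (2) gives $G \htri K^L$. Since $K \in \D$ and $G \in \C$, clause 2) in the definition of $\D$ yields $G \htri K$. This replaces Lemma~\ref{thm:oc-left-closure}.
\end{itemize}

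The main obstacle is the bookkeeping of which game must lie in which class. The two closure steps work only because the asymmetry is arranged correctly. In (2) the closure is applied to $G$ against $K$, which needs $G\in\C$ and $K\in\D$. In (3) the closure is applied to $K$ against $G$, which needs $K\in\D$ and $G\in\C$. The atomic step is the other delicate point: it needs the atom-existence lemma for the middle game $H$, which lies only in $\D$. That is why I invoke Lemma~\ref{thm:c-atoms-triangle-existence}, stated for $\D$, and not Lemma~\ref{thm:c-atom-comp}, which is stated for $\C$.
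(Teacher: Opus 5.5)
Your proposal is correct and is essentially the paper's proof. It uses the same mutual induction as Theorem~\ref{thm:oc-transitivity}, with clause 1) of the definition of $\C$ and clause 2) of the definition of $\D$ replacing Lemmas~\ref{thm:oc-right-closure} and~\ref{thm:oc-left-closure}. Your write-up is slightly cleaner than the paper's. In cases (2)(ii)--(iii) and (3)(i),(iii) you cite the correct induction hypotheses, whereas the paper cites (1) and, in (3), writes $\htri$ where $\hleq$ is meant. For the atomic clause you use the $\D$-version of atom existence (Lemma~\ref{thm:c-atoms-triangle-existence}) instead of Lemma~\ref{thm:c-atom-comp}, which is stated only for $\C$.
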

	
	\begin{proof}
		We prove by mutual induction.
		
		\begin{itemize}
			\item[1)] First, consider any $G^L$. Then $G^L \htri H \hleq K$. By induction hypothesis (2), $G^L \htri K$. Next, consider any $K^R$. Then $G \hleq H \htri K^R$. By induction hypothesis (3), $G \htri K^R$. Finally, suppose $G = [a]$ and $K = [c]$ are atomic. Then by Lemma~\ref{thm:c-atom-comp}, $H = [b]$ is also atomic. So we have that $a \leq b \leq c$ and since $\leq$ is a transitive relation over the poset of outcomes, we have that $a\leq c$. Thus $G \hleq K$.
			
			\item[2)] There are three possible ways we can have $G \htri H$.
			
			\begin{itemize}
				\item [] Case i) $G^R \hleq H$. Then we have $G^R \hleq H \hleq K$. By induction hypothesis (1), $G^R \hleq K$. Hence, $G \htri K$.
				
				\item [] Case ii) $G \hleq H^L$. Then we have $G \hleq H^L \htri K$. By induction hypothesis (1), $G \htri K$.
				
				\item [] Case iii) $G^R \hleq H^L$. Then we have $G^R \hleq H^L \htri K$. By induction hypothesis (1), $G^R \htri K$. By Definition~\ref{def:classes-c-and-d} it follows that $G \htri K$.
			\end{itemize}		
			\item[3)] There are three possible ways we can have $H \htri K$.
			
			\begin{itemize}
				\item [] Case i) $H^R \hleq K$. Then we have $G \htri H^R \htri K$. By induction hypothesis (1), $G \htri K$.
				
				\item [] Case ii) $H \hleq K^L$. Then we have $G \hleq H \hleq K^L$. By induction hypothesis (1), $G \hleq K^L$. Hence, $G \htri K$.
				
				\item [] Case iii) $H^R \hleq K^L$. Then we have $G \htri H^R \htri K^L$. By induction hypothesis (1), $G \htri K^L$. By Definition~\ref{def:classes-c-and-d} it follows that $G \htri K$.\qedhere
			\end{itemize}
		\end{itemize}
	\end{proof}
	
	Similarly we have the corresponding theorem where we switch which of $G$ and $K$ are in $\C$ and $\D$.
	
	\begin{theorem}\label{thm:c-transitivity-D-left}
		For games $G$, $H$ in $\D$ and $K$ in $\C$ we have:
		\begin{itemize}
			\item[1)] $G\hleq H \hleq K \Rightarrow G \hleq K$.
			\item[2)] $G\htri H \hleq K \Rightarrow G \htri K$.
			\item[3)] $G\hleq H \htri K \Rightarrow G \htri K$.
		\end{itemize}
	\end{theorem}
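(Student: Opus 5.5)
The plan is to mirror the proof of Theorem~\ref{thm:c-transitivity-D-right} exactly: a mutual induction on the three statements, on the combined birthdays of $G$, $H$, $K$. The roles of $\C$ and $\D$ are swapped, so each appeal to closure has to be checked against the right class. As in the previous theorem, I take for granted that options of games in $\C$ (resp.\ $\D$) are again in $\C$ (resp.\ $\D$), so the induction hypothesis applies to triples of options. I also use that atomic games have no options, so they are trivially $\A$-closed for every class $\A$ and lie in both $\C$ and $\D$.

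For (1), take any $G^L$. Then $G^L \htri H \hleq K$, and induction hypothesis (2) gives $G^L \htri K$. For any $K^R$ we have $G \hleq H \htri K^R$, and induction hypothesis (3) gives $G \htri K^R$. For the atomic clause, suppose $G=[a]$ and $K=[c]$; I need $H$ to be atomic. Lemma~\ref{thm:c-atom-comp} is stated only for games in $\C$, and $H$ is merely in $\D$, so I reprove that fact for $H \in \D$. Suppose $H$ is not atomic, and follow a chain $H^{L\dots LL}=[b]$ down to an atom. Then $[b]$ is a left option of $H^{L\dots L}$, so $[b]\htri H^{L\dots L}$. Each game in the chain is in $\D=\C^*$ and $[b]\in\C$, so clause 2) of the closure definition pushes $[b]\htri(\cdot)$ up one level at a time, giving $[b]\htri H$. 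Then $[b]\htri H\hleq[c]$ contradicts Lemma~\ref{thm:atoms-not-comp}. So $H=[b]$, and transitivity of the atom poset gives $a\le c$.

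For (2), split on the three ways $G\htri H$ can hold. If $G^R\hleq H$, induction (1) gives $G^R\hleq K$. If $G\hleq H^L$, induction (3) gives $G\htri K$. If $G^R\hleq H^L$, induction (3) gives $G^R\htri K$, and I need to lift this to $G\htri K$. This works because $G\in\D=\C^*$ and $K\in\C$, so clause 1) of $\C$-closure applies to $G$ with $X=K$. For (3), split on $H\htri K$. If $H^R\hleq K$, induction (2) applied to $G\hleq H\ \cdots$, i.e.\ $G\htri H^R\hleq K$, gives $G\htri K$. If $H\hleq K^L$, induction (1) gives $G\hleq K^L$, hence $G\htri K$. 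If $H^R\hleq K^L$, induction (2) gives $G\htri K^L$, and I need to lift this to $G\htri K$. This uses $K\in\C=\D^*$ with $X=G\in\D$.

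I expect the main obstacle to be the bookkeeping in the two case-iii closure steps and the atomic step, since that is exactly where the swap of $\C$ and $\D$ could break the argument. The case-iii steps work precisely because the closure is always applied with the \emph{other} class as the context. The atomic step needs the small $\D$-version of Lemma~\ref{thm:c-atom-comp} described above, and it goes through because atoms belong to $\C$. A second point to watch is that in (1), (2) and (3) the inductive calls must keep the pattern ($\D$, $\D$, $\C$) in the three positions. They do, since every call replaces some of $G$, $H$, $K$ by options, provided options remain in the same classes.
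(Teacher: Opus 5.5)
Your proposal is correct and is essentially the paper's argument: the paper proves this theorem only by saying it mirrors Theorem~\ref{thm:c-transitivity-D-right}, and your mutual induction is exactly that mirror, with each case-iii lift correctly using closure against the opposite class ($G\in\D=\C^*$ against $K\in\C$, and $K\in\C=\D^*$ against $G\in\D$). Your separate argument that $H\in\D$ must be atomic, and your use of induction hypotheses (3) and (2) where the paper's companion proof writes (1), are more careful than the paper's text (which applies Lemma~\ref{thm:c-atom-comp} to a game in $\D$). Like the paper, you assume that options of games in $\C$ and $\D$ stay in those classes, which the definitions do not literally guarantee.
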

	
	\begin{proof}
		The proof follows similarly for that of Theorem~\ref{thm:c-transitivity-D-right}.	
	\end{proof}
	
\chapter{Canonical Forms}\label{chap:canonical-forms}
	\section{Reductions}
	
	In this chapter we will introduce seven reductions that we use to simplify game forms for Rex+ positions. The first two reductions, domination and reversibility are analogous to those we saw earlier for normal play, but the other five are entirely new, and are needed because of the option closed nature of our order and games.
	
	The most interesting sections for those already familiar with game form reductions are sections~\ref{subsec:reversible-domination} and \ref{subsec:self-reversible-domination}, where we cover reversible domination and self-reversible domination. The proofs covered in sections \ref{subsec:two-reversibility}, \ref{subsec:two-reversible-domination}, and \ref{subsec:two-self-reversible-domination} are very similar to those covered in \ref{subsec:reversibility}, \ref{subsec:reversible-domination}, and \ref{subsec:self-reversible-domination}.
		
	\subsection{Domination}\label{subsec:domination}
	
	\begin{lemma}\label{thm:domination-X-G}
		Let $G \in \cal{C}$, with $G = \{A, B, G^{\cal{L}}\mid G^{\cal{R}}\} $, and $A \hleq B$. Let \\$G'~=~\{B, G^{\cal{L}}\mid G^{\cal{R}}\}$. Then, for any game $X \in \D$:
		\begin{itemize}
			\item [1)] $X \hleq G \Rightarrow X \hleq G'$.
			\item [2)] $X \htri G \Rightarrow X \htri G'$.
		\end{itemize}
	\end{lemma}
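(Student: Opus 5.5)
We prove both statements simultaneously by induction on the birthday of $X$, with $G$, $A$, $B$ held fixed. The only difference between $G$ and $G'$ is that the left option $A$ has been removed. So every option of $G'$ is an option of $G$, and $G$ and $G'$ have exactly the same right options. The strategy is to push every use of a left option of $G$ over to $G'$. When that left option is $A$, we replace it by $B$ using transitivity and the hypothesis $A \hleq B$.

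For (1), suppose $X \hleq G$. The condition on right options carries over unchanged: for every $G'^R = G^R$ we already have $X \htri G^R$. The atomic clause is vacuous, because $G'$ has the left option $B$ and so is not atomic. The remaining condition concerns each $X^L$, where we know $X^L \htri G$ and must show $X^L \htri G'$. Since $X^L$ is born earlier than $X$, the induction hypothesis (2) applied to $X^L$ gives $X^L \htri G'$ directly.

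For (2), suppose $X \htri G$ and split along the three clauses of Definition~\ref{def:intrinsic-order}. In the case $X^R \hleq G$, the induction hypothesis (1) applied to $X^R$ gives $X^R \hleq G'$, hence $X \htri G'$. In the case $X \hleq G^L$, if $G^L \neq A$ then $G^L$ is a left option of $G'$ and we are done. If $G^L = A$, we have $X \hleq A \hleq B$, so transitivity gives $X \hleq B$; as $B$ is a left option of $G'$, this yields $X \htri G'$. The case $X^R \hleq G^L$ is handled in exactly the same way, using $X^R \hleq A \hleq B$ when $G^L = A$.

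The main obstacle is justifying the transitivity step $X \hleq A \hleq B \Rightarrow X \hleq B$ (and likewise with $X^R$), since transitivity is only available on the restricted classes $\C$ and $\D$. I would invoke Theorem~\ref{thm:c-transitivity-D-left}, which requires the first two games to lie in $\D$ and the third in $\C$. This needs $A \in \D$, $B \in \C$, and, for the induction, that $X^L$ and $X^R$ remain in $\D$. The inclusion $\C \subseteq \D$ gives $A \in \D$ once we know $A \in \C$. So what is needed is that $\C$ and $\D$ are closed under taking options, meaning options of a game in $\C$ are in $\C$ and similarly for $\D$. This is in the spirit of the hereditary definition of weak option closure. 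I would first try to establish this hereditary property from the fixed-point characterisation. If that fails, I would instead state it as a standing convention that membership in $\C$ and $\D$ is required of all subpositions. Everything else in the argument is routine bookkeeping over the clauses of the definition.
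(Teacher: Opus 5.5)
Your argument is the paper's proof step for step: mutual induction on $X$, right options of $G'$ inherited from $G$, and the three-way split of $X \htri G$, with the sub-case $G^L = A$ settled by Theorem~\ref{thm:c-transitivity-D-left} applied to $X \hleq A \hleq B$ or $X^R \hleq A \hleq B$. The heredity of $\mathcal{C}$ and $\mathcal{D}$ that you flag is also used silently by the paper, and your first attempt to derive it would fail: with the purely local notion of $\mathcal{A}$-closed, the game $\{\{[0]\mid *_d\},[0]\mid[0]\}$ over the one-point poset is closed with respect to every game and hence lies in $\mathcal{C}$, while its left option $\{[0]\mid *_d\}$ is not closed with respect to the atom $[0]$, which lies in both $\mathcal{C}$ and $\mathcal{D}$; so your fallback of building heredity into the definition, as the paper did for weak option closure, is the fix actually needed.
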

	
	\begin{proof}
		We prove these statements by mutual induction. Take $X \in \cal{D}$.
		\begin{itemize}
			\item [1)] Take $X^L$. $X^L \htri G \Rightarrow X^L \htri G'$ by induction hypothesis (2). Take $G'^R$. $G'^R$ is a right option of $G$, thus because $X \hleq G$, we have $X \htri G'^R$. $G'$ is non atomic, so we have $X \hleq G'$.
			\item [2)] There are three ways we can have $X \htri G$.
			\begin{itemize}
				\item [] Case i) $X^R \hleq G$. Then by induction hypothesis (1) we have $X^R \hleq G'$. Thus $X \htri G'$.
				\item [] Case ii) $X \hleq G^L$. Then we have two options for what $G^L$ could be.
				\begin{itemize}
					\item [] Case a) $G^L = A$. Then we have $X \hleq A \hleq B$. By Theorem~\ref{thm:c-transitivity-D-left} we have $X \hleq B$. Since $B$ is a left option of $G'$ we have $X \htri G'$.
					\item [] Case b) $G^L \neq A$. Then $G^L$ is a left option of $G'$. Thus $X \htri G'$.
				\end{itemize}
				\item[] Case iii) $X^R \hleq G^L$. We again have two options for what $G^L$ could be.
				\begin{itemize}
					\item [] Case a) $G^L = A$. Then we have $X^R \hleq A \hleq B$. By Theorem~\ref{thm:c-transitivity-D-left} we have  $X^R \hleq B$. Since $B$ is a left option of $G"'$ we have $X \htri G'$.
					\item [] Case b) $G^L \neq A$. Then $G^L$ is a left option of $G'$. Thus $X \htri G'$.\qedhere
				\end{itemize}
			\end{itemize}
		\end{itemize}
	\end{proof}
	
	\begin{lemma}\label{thm:domination-G-X}
		Let $G \in \cal{C}$, with $G = \{A, B, G^{\cal{L}}\mid G^{\cal{R}}\} $, and $A \hleq B$. Let \\$G' = \{B, G^{\cal{L}}\mid G^{\cal{R}}\}$. Then, for any game $X \in \cal{D}$:
		\begin{itemize}
			\item [1)] $G \hleq X \Rightarrow G' \hleq X$.
			\item [1)] $G \htri X \Rightarrow G' \htri X$.
		\end{itemize}
	\end{lemma}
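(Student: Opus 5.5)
We prove the two statements by mutual induction, following the pattern of Lemma~\ref{thm:domination-X-G}, with the roles of $G$ and $X$ interchanged. Fix $X \in \D$.

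For (1), suppose $G \hleq X$. We check the three clauses of Definition~\ref{def:intrinsic-order} for $G' \hleq X$.
\begin{itemize}
\item[] Left options of $G'$. Every left option of $G'$ is either $B$ or an element of $G^{\Le}$, so it is already a left option of $G$. Hence $G'^L \htri X$ follows directly from $G \hleq X$. No induction is needed here, since deleting $A$ only removes an obligation.
\item[] Right options of $X$. For each $X^R$ we have $G \htri X^R$, and we want $G' \htri X^R$. This is an instance of statement (2) applied to $X^R$ in place of $X$, so we use the induction hypothesis. This requires $X^R \in \D$. I expect it holds because the class $\D$ should be closed under taking options; if not, I would adjust the induction to run over options of $X$, taking care that the membership needed by Theorem~\ref{thm:c-transitivity-D-left} is available.
\item[] Atomic clause. $G'$ has the left option $B$, so it is not atomic and the clause is vacuous.
\end{itemize}

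For (2), suppose $G \htri X$. We split into the three cases of the definition.
\begin{itemize}
\item[] Case i) $G^R \hleq X$. Since $G'^{\R} = G^{\R}$, the same $G^R$ is a right option of $G'$, so $G' \htri X$ immediately.
\item[] Case ii) $G \hleq X^L$. By the induction hypothesis (1) applied to $X^L$, we get $G' \hleq X^L$, hence $G' \htri X$.
\item[] Case iii) $G^R \hleq X^L$. Again $G^R$ is a right option of $G'$, so $G' \htri X$ directly.
\end{itemize}
Thus, unlike Lemma~\ref{thm:domination-X-G}, transitivity is never invoked. The dominated option $A$ appears only as a left option of $G$, and on this side left options of $G$ impose obligations rather than supplying witnesses.

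The main obstacle is bookkeeping rather than mathematics. The induction requires the options $X^L$ and $X^R$ to stay in $\D$, which I would verify from Definition~\ref{def:classes-c-and-d}. If that closure fails, I would try one of two fixes: drop the hypothesis $X \in \D$ altogether, since this direction never uses it, or observe that the argument is purely structural and so holds for arbitrary $X$.
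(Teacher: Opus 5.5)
Your proof is correct and matches the paper's step for step: the same mutual induction on $X$, with left options of $G'$ inherited from $G$, the induction hypothesis applied to $X^R$ in (1) and to $X^L$ in Case ii, and Cases i and iii immediate because $G'$ keeps every right option of $G$. The paper applies the induction hypothesis to $X^L, X^R$ without checking that they lie in $\D$, and your fallback is the right fix: the argument never uses $X \in \D$ (nor $G \in \C$, nor $A \hleq B$), so it holds for arbitrary $X$ by structural induction.
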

	
	\begin{proof}
		We prove these statements by mutual induction. Take $X \in \cal{D}$.
		\begin{itemize}
			\item [1)] Take $G'^L$. $G'^L$ is a left option of $G$, thus because $G \hleq X$, we have $G'^L \htri X$. Take $X^R$. $G \htri X^R \Rightarrow G' \htri X^R$ by induction hypothesis (2) . $G'$ is non atomic, so we have $G' \hleq X$.
			\item [2)] There are three ways we can have $G \htri X$.
			\begin{itemize}
				\item [] Case i) $G^R \hleq X$. $G^R$ is a right option of $G'$, so we have $G' \htri X$.
				\item [] Case ii) $G \hleq X^L$. Then by induction hypothesis (1) we have $G' \hleq X^L$. Thus $G' \htri X$.
				\item [] Case iii) $G^R \hleq X^L$. $G^R$ is a right option of $G'$, so we have $G' \htri X$.\qedhere
			\end{itemize}
		\end{itemize}
	\end{proof}
	
	\begin{theorem}\label{thm:domination-preserves-c}
		Let $G \in \C$, with $G = \{A, B, G^{\Le}\mid G^{\R}\} $, and $A \hleq B$. Let \\$G' = \{B, G^{\cal{L}}\mid G^{\cal{R}}\}$. Then $G' \in \cal{C}$.
	\end{theorem}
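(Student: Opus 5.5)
The plan is to verify directly the defining condition of $\C$ from Definition~\ref{def:classes-c-and-d} for $G'$. Fix an arbitrary $X \in \D$, and take an arbitrary left option $G'^L$ and right option $G'^R$ of $G'$. We must show that $G'^R \htri X \Rightarrow G' \htri X$ and that $X \htri G'^L \Rightarrow X \htri G'$. The basic observation is that every option of $G'$ is also an option of $G$. The right options are literally the same, and the left options of $G'$ are $B$ together with $G^{\Le}$. This lets us use the closure property of $G$ and then transfer the conclusion from $G$ to $G'$ with the two preceding lemmas.

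\textbf{Right condition.} Suppose $G'^R \htri X$. Since $G'^R$ is a right option of $G$, $G \in \C$ and $X \in \D$, the definition of $\C$ gives $G \htri X$. Lemma~\ref{thm:domination-G-X}, part (2), applies because $X \in \D$, and it yields $G' \htri X$.

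\textbf{Left condition.} Suppose $X \htri G'^L$. Then $G'^L$ is either $B$ or some option in $G^{\Le}$, and in both cases it is a left option of $G$. So the definition of $\C$ applied to $G$ gives $X \htri G$. Lemma~\ref{thm:domination-X-G}, part (2), then gives $X \htri G'$.

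Since $X \in \D$ and the options were arbitrary, $G'$ is $\D$-closed, that is, $G' \in \D^* = \C$.

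The only step needing care is the bookkeeping on what membership in $\C$ requires. If $\C$ is read as also demanding that options lie in $\C$ hereditarily, this is immediate: the options of $G'$ form a subset of the options of $G$. Beyond that, the real work has already been done in Lemmas~\ref{thm:domination-X-G} and~\ref{thm:domination-G-X}, whose proofs use the transitivity result Theorem~\ref{thm:c-transitivity-D-left}. So no serious obstacle remains.
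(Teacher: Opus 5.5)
Your proof is correct and is essentially the paper's own argument. Each option of $G'$ is also an option of $G$, so $G \in \mathcal{C}$ gives $G \htri X$ (resp.\ $X \htri G$), and Lemma~\ref{thm:domination-G-X} (resp.\ Lemma~\ref{thm:domination-X-G}) transfers this to $G'$. Your remark on hereditary membership is a reasonable, harmless addition, since the options of $G'$ are a subset of those of $G$.
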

	
	\begin{proof}
		Let $X \in \cal{D}$ with $G'^R \htri X$ for some $G'^R$. We wish to show $G' \htri X$.
		
		First, we note that $G'^R$ is a right option of $G$. Since $G \in \cal{C}$, by Definition~\ref{def:classes-c-and-d} it follows that $G \htri X$. By Lemma~\ref{thm:domination-G-X}, we have that $G' \htri X$.
		
		Let $X \in \cal{D}$ with $X \htri G'^L$ for some $G'^L$. We wish to show $X \htri G'$.
		
		We note that $G'^L$ is a left option of $G$. Since $G \in \cal{C}$, by Definition~\ref{def:classes-c-and-d} it follows that $X \htri G$. By Lemma~\ref{thm:domination-X-G}, we have that $X \htri G'$.
		
		Thus, $G' \in \cal{C}$.
	\end{proof}
	
	\begin{theorem}\label{thm:domination-equal}
		Let $G \in \cal{C}$, with $G = \{A, B, G^{\cal{L}}\mid G^{\cal{R}}\} $, and $A \hleq B$. Then \\$G \heq \{B, G^{\cal{L}}\mid G^{\cal{R}}\}$.
	\end{theorem}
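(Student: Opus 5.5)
We need $G \hleq G'$ and $G' \hleq G$, where $G' = \{B, G^{\Le}\mid G^{\R}\}$. The natural plan is to feed the reflexivity result, Lemma~\ref{thm:order-reflexivity}, into the two lemmas just proved, Lemmas~\ref{thm:domination-X-G} and~\ref{thm:domination-G-X}, taking $X$ to be $G$ itself. Those lemmas require $X \in \D$, and we know $G \in \C \subseteq \D$ by the lemma that $\C$ is contained in $\D$, so $G$ is an admissible choice of $X$.

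For $G \hleq G'$, apply Lemma~\ref{thm:domination-X-G}~(1) with $X = G$. Lemma~\ref{thm:order-reflexivity} gives $G \hleq G$, hence $G \hleq G'$.

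For $G' \hleq G$, apply Lemma~\ref{thm:domination-G-X}~(1) with $X = G$. From $G \hleq G$ it gives $G' \hleq G$. Combining the two inequalities yields $G \heq G'$.

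The only point needing care is the membership hypothesis, and it is settled by $\C \subseteq \D$. If one preferred not to lean on that inclusion, one could instead verify $G' \hleq G$ directly from Definition~\ref{def:intrinsic-order}:
\begin{itemize}
\item[] Each left option of $G'$ is a left option of $G$, so it satisfies $G'^L \htri G$ by reflexivity.
\item[] Each right option $G^R$ of $G$ is also a right option of $G'$, and reflexivity gives $G' \htri G^R$.
\item[] The atomic clause is vacuous, since $G'$ is not atomic.
\end{itemize}
Even so, the direction $G \hleq G'$ genuinely uses Lemma~\ref{thm:domination-X-G}, because the option $A$ must be handled through transitivity. Hence I expect no real obstacle beyond checking $G \in \D$.
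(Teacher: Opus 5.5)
Your proof is correct. Instantiating Lemma~\ref{thm:domination-X-G}(1) and Lemma~\ref{thm:domination-G-X}(1) at $X = G$, with $G \hleq G$ from Lemma~\ref{thm:order-reflexivity} and $G \in \C \subseteq \D$, gives both inequalities. There is no circularity, since neither lemma depends on this theorem.

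The paper does not use those lemmas at all. It checks $G \hleq G'$ and $G' \hleq G$ directly from Definition~\ref{def:intrinsic-order}, one level deep, using only reflexivity. Your closing claim that the direction $G \hleq G'$ ``genuinely'' needs Lemma~\ref{thm:domination-X-G}, because $A$ must be handled by transitivity, is mistaken:
\begin{itemize}
\item[] $A \hleq B$, with $B$ a left option of $G'$, is exactly clause~2 of the definition of $A \htri G'$.
\item[] Every other $G^L$ is a left option of $G'$, so $G^L \htri G'$ via $G^L \hleq G^L$.
\item[] $G \htri G^R$ holds for each $G^R$ via $G^R \hleq G^R$.
\end{itemize}
So the direct check proves the equivalence for an arbitrary game $G$. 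The hypothesis $G \in \C$ matters only for Theorem~\ref{thm:domination-preserves-c} and for using the equivalence under transitivity.

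Your route is shorter to write but rests on much more: the mutual induction in Lemma~\ref{thm:domination-X-G}, Theorem~\ref{thm:c-transitivity-D-left}, and the class-membership side conditions these need along the way. For example, the appeal to Theorem~\ref{thm:c-transitivity-D-left} needs $A \in \D$ and $B \in \C$, and the induction needs the options of $X = G$ to lie in $\D$. The purely local definitions of $\C$ and $\D$ do not by themselves guarantee these, whereas the direct argument needs none of them.
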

	
	\begin{proof}
		First we show $G \hleq G'$. Take $G^L$. There are two options for what $G^L$ could be.
		\begin{itemize}
			\item [] Case 1) $G^L = A$. Then $G^L = A \hleq B$ where $B$ is a left option of $G'$. So $G^L \htri G'$
			\item [] Case 2) $G^L \neq A$. Then $G^L$ is a left option of $G'$ so $G^L \htri G'$.
		\end{itemize}
		Take $G'^R$. $G'^R$ is a right option of $G$, so $G\htri G'^R$. Neither $G$ nor $G'$ is atomic, so we have $G \hleq G'$.
		
		Next we show $G' \hleq G$. Take $G'^L$. $G'^L$ is a left option of $G$, so $G'^L\htri G$. Take $G^R$. $G^R$ is a right option of $G'$, so $G'\htri G^R$. Neither $G$ nor $G'$ is atomic, so we have $G' \hleq G$. Thus $G \heq G'$.
	\end{proof}
	
	\subsection{Reversibility}\label{subsec:reversibility}
	
	\begin{lemma}\label{thm:reversibility-X-G}
		Let $G \in \cal{C}$, with $G = \{A,~G^{\cal{L}}\mid~G^{\cal{R}}\} $, and $A^R \hleq G$. Moreover, let $G'~=~\{A^{R\Le},~G^{\cal{L}}\mid~G^{\cal{R}}\}$. Then, for any game $X \in \cal{D}$:
		\begin{itemize}
			\item [1)] $X \hleq G \Rightarrow X \hleq G'$.
			\item [1)] $X \htri G \Rightarrow X \htri G'$.
		\end{itemize}
	\end{lemma}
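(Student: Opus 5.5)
We prove both statements by simultaneous induction, in the same way as Lemma~\ref{thm:domination-X-G}. Fix $X \in \D$.

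\textbf{Statement (1).} Suppose $X \hleq G$.
\begin{itemize}
\item[] Take $X^L$. Then $X^L \htri G$, and induction hypothesis (2) gives $X^L \htri G'$.
\item[] Take $G'^R$. It is a right option of $G$, so $X \htri G'^R$ follows directly from $X \hleq G$.
\item[] The atomic clause is vacuous, because $G'$ is not atomic: it has the same right options as $G$ and at least as many left options, unless $A^{R\Le}$ is empty and $G^{\Le}$ is empty as well. That degenerate case has to be checked separately.
\end{itemize}

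\textbf{Statement (2).} Suppose $X \htri G$. We split into the three clauses of the definition.
\begin{itemize}
\item[] Case i) $X^R \hleq G$. Induction hypothesis (1) gives $X^R \hleq G'$, so $X \htri G'$.
\item[] Case ii) $X \hleq G^L$ with $G^L \neq A$, or Case iii) $X^R \hleq G^L$ with $G^L \neq A$. Here $G^L$ is still a left option of $G'$, so $X \htri G'$ is immediate.
\item[] The substantive cases are $X \hleq A$ and $X^R \hleq A$. We treat $X \hleq A$; the case $X^R \hleq A$ is the same argument applied to $X^R$, and $X^R \in \D$ because membership in $\D$ should pass to options.
\end{itemize}

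\textbf{The case $X \hleq A$.} Since $X \hleq A$, we get $X \htri A^R$ for the reversing option $A^R$. Now split according to the three clauses of $X \htri A^R$.
\begin{itemize}
\item[] Case a) $X \hleq A^{RL}$. Then $A^{RL}$ is a left option of $G'$, so $X \htri G'$.
\item[] Case b) $X^R \hleq A^{RL}$. Again $X \htri G'$ directly.
\item[] Case c) $X^R \hleq A^R$. Combine this with $A^R \hleq G$ by transitivity (Theorem~\ref{thm:c-transitivity-D-left} or~\ref{thm:c-transitivity-D-right}) to obtain $X^R \hleq G$. Induction hypothesis (1) then gives $X^R \hleq G'$, hence $X \htri G'$.
\end{itemize}

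\textbf{Main obstacle.} The difficulty is justifying the uses of transitivity. Those theorems need one endpoint in $\C$ and the other games in $\D$. For the chain $X^R \hleq A^R \hleq G$ we know $G \in \C \subseteq \D$ and $X^R \in \D$, but we also need $A^R \in \D$. This requires that $\C$ and $\D$ are closed under taking options. I would prove that first, as a preliminary lemma, by induction on the definition, or else extract it from the least-fixed-point characterisation by showing that the option-closed part of $\C$ is itself a fixed point. I would also need to arrange the chain in the form ``$\D \hleq \D \hleq \C$'', so that Theorem~\ref{thm:c-transitivity-D-left} applies with $K = G \in \C$.

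If closure of $\C$ and $\D$ under options fails, the fallback is to strengthen the hypothesis so that $A^R$ lies in $\C$.
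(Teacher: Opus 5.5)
Your skeleton matches the paper's proof: the same mutual induction and the same three-way split of $X \htri G$. The case $X \hleq A$ is handled the same way, via $X \htri A^R$ and its three clauses, and the chain $X^R \hleq A^R \hleq G$ is closed with Theorem~\ref{thm:c-transitivity-D-left}, taking $K = G \in \C$.

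The gap is the case $X^R \hleq A$, which you dismiss as ``the same argument applied to $X^R$''. Run on $X^R$, that argument gives $X^R \htri A^R$, whose three clauses are $X^{RR} \hleq A^R$, $X^R \hleq A^{RL}$ and $X^{RR} \hleq A^{RL}$. Only the middle clause yields $X \htri G'$, because there $X^R \hleq G'^L$. In the other two, transitivity with induction hypothesis~(1), or the fact that $A^{RL}$ is a left option of $G'$, gives only $X^{RR} \hleq G'$ or $X^{RR} \hleq G'^L$. That is, you obtain $X^R \htri G'$, one level too deep. Lifting $X^R \htri G'$ to $X \htri G'$ is exactly the $\D$-closure condition of $X$ tested against $G'$, so it needs $G' \in \C$. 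That is Theorem~\ref{thm:reversiblity-preserves-c}, which is proved from this very lemma. So the case is circular as written, and knowing $X^R \in \D$ does not rescue it.

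The missing step is to apply the closure condition of Definition~\ref{def:classes-c-and-d} to $X$ itself, tested against $A^R$ rather than $G'$. Since $X \in \D = \C^*$ and $A^R \in \C$, the relation $X^R \htri A^R$ (which follows from $X^R \hleq A$) upgrades to $X \htri A^R$. You are then in exactly the situation of the case $X \hleq A$, and your three subcases finish it verbatim at the level of $X$. This is what the paper does.

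Note that this step needs $A^R \in \C$, not merely $A^R \in \D$ or $X^R \in \D$. That would follow from the closure of $\C$ under options which you already plan to establish, and which the paper uses without proof. But it is needed here in this stronger form, not only in the transitivity step where you placed it.
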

	
	\begin{proof}
		We prove these statements by mutual induction. Take $X \in \cal{D}$.
		\begin{itemize}
			\item [1)] Take $X^L$. $X^L \htri G \Rightarrow X^L \htri G'$ by induction hypothesis (2). Take $G'^R$. $G'^R$ is a right option of $G$, thus because $X \hleq G$, we have $X \htri G'^R$. $G'$ is non atomic, so we have $X \hleq G'$.
			\item [2)] There are three ways we can have $X \htri G$.
			\begin{itemize}
				\item [] Case i) $X^R \hleq G$. Then by induction hypothesis (1) we have $X^R \hleq G'$. Thus $X \htri G'$.
				\item [] Case ii) $X \hleq G^L$. Then there are two options for what $G^L$ could be.
				\begin{itemize}
					\item [] Case a) $G^L = A$. Then we have $X \htri A^R$. There are three ways this is possible.
					\begin{itemize}
						\item [] Case I) $X^R \hleq A^R$. But $A^R \hleq G$. By Theorem~\ref{thm:c-transitivity-D-left} we have that $X^R \hleq G$. By induction hypothesis (1), we then have $X^R \hleq G'$. Thus $X \htri G'$.
						\item [] Case II) $X \hleq A^{RL}$. $A^{RL}$ is a left option of $G'$ so it follows that $X \htri G'$.
						\item [] Case III) $X^R \hleq A^{RL}$. $A^{RL}$ is a left option of $G'$ so it follows that $X \htri G'$.
					\end{itemize}
					\item [] Case b) $G^L \neq A$. Then $G^L$ is a left option of $G'$. Thus $X \htri G'$.
				\end{itemize}
				\item [] Case iii) $X^R \hleq G^L$. There are again two options for what $G^L$ could be.
				\begin{itemize}
					\item [] Case a) $G^L = A$. Then we have $X^R \htri A^R$. Since $X^R \in \cal{D}$ and $A^R \in \cal{C}$, by Definition~\ref{def:classes-c-and-d} it follows that $X \htri A^R$. There are three ways this is possible
					\begin{itemize}
						\item [] Case I) $X^R \hleq A^R$. But $A^R \hleq G$. By Theorem~\ref{thm:c-transitivity-D-left} we have that $X^R \hleq G$. By induction hypothesis (1), we then have $X^R \hleq G'$. Thus $X \htri G'$.
						\item [] Case II) $X \hleq A^{RL}$. $A^{RL}$ is a left option of $G'$ so it follows that $X \htri G'$.
						\item [] Case III) $X^R \hleq A^{RL}$. $A^{RL}$ is a left option of $G'$ so it follows that $X \htri G'$.
					\end{itemize}
					\item [] Case b) $G^L \neq A$. Then $G^L$ is a left option of $G'$. Thus $X \htri G'$.\qedhere
				\end{itemize}
			\end{itemize}
		\end{itemize}
	\end{proof}
	
	\begin{lemma}\label{thm:reversibility-G-X}
		Let $G \in \cal{C}$, with $G = \{A, G^{\cal{L}}\mid G^{\cal{R}}\} $, and $A^R \hleq G$. Let \\$G' = \{A^{R\Le}, G^{\cal{L}}\mid G^{\cal{R}}\} $. Then, for any game $X \in \cal{D}$:
		\begin{itemize}
			\item [1)] $G \hleq X \Rightarrow G' \hleq X$.
			\item [1)] $G \htri X \Rightarrow G' \htri X$.
		\end{itemize}
	\end{lemma}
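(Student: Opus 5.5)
We prove both statements together by mutual induction, in the same style as Lemma~\ref{thm:domination-G-X}. Fix $X \in \D$. The one real difference from the domination case is that $G'$ has new left options, namely the $A^{R\Le}$. So the only nontrivial work is in showing that each such option satisfies $A^{RL} \htri X$ whenever $G \hleq X$.

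For statement (1), suppose $G \hleq X$. We check the three clauses of Definition~\ref{def:intrinsic-order}.
\begin{itemize}
\item[] Left options of $G'$ of the form $G^L \neq A$. Each is already a left option of $G$, so $G^L \htri X$ holds directly.
\item[] Left options of the form $A^{RL}$. By hypothesis $A^R \hleq G$ and $G \hleq X$. Since $A^R \in \C$ and $G, X \in \D$ (using $\C \subseteq \D$ and that options of games in $\C$ lie in $\C$), Theorem~\ref{thm:c-transitivity-D-right} gives $A^R \hleq X$. Then by definition of $\hleq$ every left option satisfies $A^{RL} \htri X$, as required.
\item[] Right options $X^R$. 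Since $G \hleq X$ we have $G \htri X^R$, and induction hypothesis (2) gives $G' \htri X^R$.
\item[] Atomic clause. $G'$ is non-atomic, so the atomic clause is vacuous.
\end{itemize}
Hence $G' \hleq X$.

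For statement (2), suppose $G \htri X$ and split on the three ways this can hold.
\begin{itemize}
\item[] Case i) $G^R \hleq X$. Here $G^R$ is also a right option of $G'$, so $G' \htri X$.
\item[] Case ii) $G \hleq X^L$. Induction hypothesis (1) gives $G' \hleq X^L$, so $G' \htri X$.
\item[] Case iii) $G^R \hleq X^L$. Again $G^R$ is a right option of $G'$, so $G' \htri X$.
\end{itemize}
Since the right options are unchanged, no use of reversibility is needed in this part.

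The main obstacle is the transitivity step in part (1). We must confirm that the membership hypotheses of Theorem~\ref{thm:c-transitivity-D-right} hold, namely $A^R \in \C$ and $G, X \in \D$. This needs the fact that $\C$ is closed under taking options, which the paper already uses implicitly in Lemma~\ref{thm:c-left-triangle-closure} and in the proof of Lemma~\ref{thm:reversibility-X-G}. I would state this fact explicitly. Alternatively, I would apply Theorem~\ref{thm:c-transitivity-D-left} in the form $A^R \hleq G \hleq X$ with appropriate class memberships, choosing whichever version matches the stated hypotheses. The induction itself is well-founded on the birthday of $X$, since only $X$ is replaced by its options in the recursive calls.
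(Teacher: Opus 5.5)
Your argument is correct and is essentially the paper's proof. The only difference is cosmetic. For the new options $A^{RL}$, you first get $A^R \hleq X$ from part~1 of Theorem~\ref{thm:c-transitivity-D-right} and then read off $A^{RL} \htri X$. The paper instead reads off $A^{RL} \htri G$ first and then applies part~2 of the same theorem. Both versions rely on the same implicit assumption that options of games in $\C$ lie in $\C$. They also rely on the analogous assumption for $\D$, which your inductive calls on $X^L$ and $X^R$ need. Your suggested fallback via Theorem~\ref{thm:c-transitivity-D-left} would not apply, because that theorem requires $X \in \C$, while you only know $X \in \D$.
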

	
	\begin{proof}
		We prove these statements by mutual induction. Take $X \in \cal{D}$.
		\begin{itemize}
			\item [1)] Take $G'^L$. There are two options for what $G'^L$ could be.
			\begin{itemize}
				\item [] Case i) $G'^L = A^{RL}$. Since $A^R \hleq G$ it follows that $A^{RL} \htri G \hleq X$. By Theorem~\ref{thm:c-transitivity-D-right}, so $A^{RL} \htri X$.
				\item [] Case ii) $G'^L \neq A^{RL}$. Then $G'^L$ is a left option of $G$. Because $G \hleq X$, it follows that $G'^L \htri X$.
			\end{itemize}
			Now take $X^R$. $G \htri X^R \Rightarrow G' \htri X^R$ by induction hypothesis (2). $G'$ is non atomic, so we have $G' \hleq X$.
			\item [2)] There are three ways we can have $G \htri X$.
			\begin{itemize}
				\item [] Case i) $G^R \hleq X$. $G^R$ is a right option of $G'$, so we have $G' \htri X$.
				\item [] Case ii) $G \hleq X^L$. Then by induction hypothesis (1) we have $G' \hleq X^L$. Thus $G' \htri X$.
				\item [] Case iii) $G^R \hleq X^L$. $G^R$ is a right option of $G'$, so we have $G' \htri X$.\qedhere
			\end{itemize}
		\end{itemize}
	\end{proof}
	
	\begin{theorem}\label{thm:reversiblity-preserves-c}
		Let $G \in \cal{C}$, with $G = \{A, G^{\cal{L}}\mid G^{\cal{R}}\} $, and $A^R \hleq G$. Let \\$G' = \{A^{R\Le}, G^{\cal{L}}\mid G^{\cal{R}}\}$. Then $G' \in \cal{C}$.
	\end{theorem}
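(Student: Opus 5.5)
I will follow the template of Theorem~\ref{thm:domination-preserves-c}, using Lemmas~\ref{thm:reversibility-X-G} and~\ref{thm:reversibility-G-X} in place of the domination lemmas. By Definition~\ref{def:classes-c-and-d}, showing $G' \in \C$ means checking two closure conditions against every $X \in \D$:
\begin{itemize}
\item[(a)] if $G'^R \htri X$ then $G' \htri X$;
\item[(b)] if $X \htri G'^L$ then $X \htri G'$.
\end{itemize}
Since $G$ is assumed in $\C$, the idea is to reduce each condition for $G'$ to the corresponding condition for $G$, and then transfer the conclusion back to $G'$ with the lemmas.

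Condition (a) is routine. The right options of $G'$ are exactly those of $G$, so $G'^R \htri X$ is an instance of $G^R \htri X$. Since $G \in \C$, this gives $G \htri X$. Lemma~\ref{thm:reversibility-G-X}(2) then yields $G' \htri X$.

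For condition (b), I split according to which left option $G'^L$ is.
\begin{itemize}
\item If $G'^L$ lies in $G^{\Le}$, it is a left option of $G$. Then $G \in \C$ gives $X \htri G$, and Lemma~\ref{thm:reversibility-X-G}(2) gives $X \htri G'$.
\item If $G'^L = A^{RL}$ for some left option of $A^R$, the cheap argument is unavailable, because $A^{RL}$ is not an option of $G$ at all. This is the main obstacle. I will use that $A^R$ is an option of an option of $G$, and I assume $\C$ is closed under taking options, as the proofs of Lemma~\ref{thm:c-left-triangle-closure} implicitly assume. Then $A^R \in \C$, and its closure condition applied to $X \htri A^{RL}$ gives $X \htri A^R$.
\end{itemize}

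It remains to pass from $X \htri A^R$ to $X \htri G$. Here I invoke part (2) of Theorem~\ref{thm:c-transitivity-D-left} with the chain $X \htri A^R \hleq G$. This needs $X \in \D$, $A^R \in \D$ and $G \in \C$. We have $A^R \in \D$ because $A^R \in \C \subseteq \D$. The theorem then gives $X \htri G$, and Lemma~\ref{thm:reversibility-X-G}(2) turns this into $X \htri G'$.

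The delicate points are therefore in this last case: justifying that $A^R \in \C$ (heredity of $\C$), and applying transitivity with the correct class memberships. If heredity of $\C$ is not available as stated, the fallback is to unfold $X \htri A^{RL}$ into its three cases directly. Each case yields $X \htri A^R$ by exhibiting the relevant $\hleq$ witness: a left option $A^{RL}$ of $A^R$, an $X^R$, or a left option of $A^{RL}$, the last using $A^{RLL}$ via option closure of $A^R$. The argument then finishes the same way.
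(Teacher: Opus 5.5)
Your main argument is correct and is essentially the paper's proof. Condition (a) goes via $G \in \C$ and Lemma~\ref{thm:reversibility-G-X}, and condition (b) uses the same split of $G'^L$, handling the $A^{RL}$ case by the same chain $X \htri A^{RL} \Rightarrow X \htri A^R$, then Theorem~\ref{thm:c-transitivity-D-left} (with $X, A^R \in \D$ and $G \in \C$), then Lemma~\ref{thm:reversibility-X-G}; the heredity of $\C$ you flag is exactly what the paper's step ``by Definition~\ref{def:classes-c-and-d}, $X \htri A^R$'' uses silently, and since the literal definition of $\A$-closedness does not imply it, it should be read as part of the intended definition.

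Drop the fallback, though: the cases $X \hleq A^{RLL}$ and $X^R \hleq A^{RLL}$ need $A^{RLL}$ to be a left option of $A^R$, i.e.\ option closure, which members of $\C$ need not have (that is why $\C$ was introduced), and without heredity you would also lose $A^R \in \D$ for the transitivity step, so the fallback does not remove the dependence on heredity.
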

	
	\begin{proof}
		Let $X \in \cal{D}$ with $G'^R \htri X$ for some $G'^R$. We wish to show $G' \htri X$. First, we note that $G'^R$ is a right option of $G$. By Theorem~\ref{thm:c-transitivity-D-right} we then have that $G \htri X$. By Lemma~\ref{thm:reversibility-G-X}, it follows that $G' \htri X$.
		
		Let $X \in \cal{D}$ with $X \htri G'^L$ for some $G'^L$. We wish to show $X \htri G'$. There are two options for what $G'^L$ is.
		\begin{itemize}
			\item [] Case i) $G'^L = A^{RL}$. By Definition~\ref{def:classes-c-and-d} $X \htri A^R$. Since $A^R \hleq G$, we have that $X \htri G$ by Theorem~\ref{thm:c-transitivity-D-left}. It follows from Lemma~\ref{thm:reversibility-X-G} that $X \htri G'$. 
			\item [] Case ii) $G'^L \neq A^{RL}$. Then $G'^L$ is a left option of $G$. Since $G \in \cal{C}$, by Definition~\ref{def:classes-c-and-d} we have that $X \htri G$. By Lemma~\ref{thm:reversibility-X-G}, it follows that $X \htri G'$.
		\end{itemize}
		Thus, $G' \in \cal{C}$.
	\end{proof}
	
	\begin{theorem}\label{thm:reversibility-equal}
		Let $G \in \cal{C}$, with $G = \{A, G^{\cal{L}}\mid G^{\cal{R}}\} $, and $A^R \hleq G$. Then \\$G \heq \{A^{R\Le}, G^{\cal{L}}\mid G^{\cal{R}}\} $.
	\end{theorem}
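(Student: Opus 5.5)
We prove $G \hleq G'$ and $G' \hleq G$ directly from Definition~\ref{def:intrinsic-order}, where $G' = \{A^{R\Le}, G^{\Le}\mid G^{\R}\}$. We lean on Lemma~\ref{thm:order-reflexivity}, on transitivity in the form of Theorems~\ref{thm:c-transitivity-D-right} and~\ref{thm:c-transitivity-D-left}, and on the fact, established in Theorem~\ref{thm:reversiblity-preserves-c}, that $G' \in \C$. Neither $G$ nor $G'$ is atomic, so the atomic clause never applies. As in Theorem~\ref{thm:domination-equal}, the two games share right options, and every $G^{\Le}$ option is common to both; those cases are immediate. The work lies in the options $A$ (present only in $G$) and $A^{RL}$ (present only in $G'$).

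For $G \hleq G'$, the right options of $G'$ are exactly those of $G$. For each $G^R$, reflexivity gives $G^R \hleq G^R$, so $G \htri G'^R$ via the first clause of $\htri$. For left options of $G$ other than $A$, each is a left option of $G'$, so by reflexivity $G^L \hleq G^L$ and hence $G^L \htri G'$. The remaining case is $A \htri G'$. Since $A^R \hleq G$, I first want $A^R \hleq G'$; then $A \htri G'$ follows from the first clause of $\htri$. I plan to get $A^R \hleq G'$ by applying Lemma~\ref{thm:reversibility-X-G}(1) with $X = A^R$, which requires $A^R \in \D$. This holds because $A^R$ is an option of an option of $G \in \C$. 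I will use the (implicit in the paper) fact that options of games in $\C$ lie in $\C$, as the proof of Lemma~\ref{thm:c-left-triangle-closure} already assumes, together with $\C \subseteq \D$. Alternatively, I can argue directly: every left option $A^{RL}$ satisfies $A^{RL} \htri G'$ by reflexivity, since it is a left option of $G'$. Every right option $G'^R = G^R$ satisfies $A^R \htri G^R$ because $A^R \hleq G$.

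For $G' \hleq G$, right options are again shared, so each $G^R$ gives $G' \htri G^R$ by reflexivity. Left options of $G'$ lying in $G^{\Le}$ are left options of $G$. The key case is $A^{RL} \htri G$. From $A^R \hleq G$ the definition gives exactly $A^{RL} \htri G$ for every left option $A^{RL}$, which finishes this direction.

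The main obstacle is the case $A \htri G'$. It needs the membership hypotheses ($A^R \in \D$, $G' \in \C$) to invoke Lemma~\ref{thm:reversibility-X-G} or transitivity, and I must be careful that the reflexivity-based direct argument really covers all right options of $G'$. Since $G'^{\R} = G^{\R}$, the direct argument goes through without any closure hypothesis. If so, I would present that argument and mention Lemma~\ref{thm:reversibility-X-G} only as an alternative.
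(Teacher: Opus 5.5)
Your proof is correct. Its case structure is the same as the paper's: shared right options, shared left options in $G^{\Le}$, then the two special options $A$ and $A^{RL}$. The $A^{RL}$ case and all the shared cases are handled exactly as in the paper.

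The one real divergence is the step $A \htri G'$. The paper obtains $A^R \hleq G'$ by invoking Lemma~\ref{thm:reversibility-X-G}(1) with $X = A^R$. That tacitly needs $A^R \in \D$, and inside that lemma's induction it needs further membership facts such as $A^R \in \C$. The paper never justifies these. They would rest on $\C$ being closed under taking options, which does not follow from Definition~\ref{def:classes-c-and-d}, since that definition is purely local.

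Your direct verification avoids this entirely. $G'$ was built to contain every $A^{RL}$ as a left option and to have exactly the right options of $G$. So $A^{RL} \htri G'$ follows from Lemma~\ref{thm:order-reflexivity}, and $A^R \htri G'^R$ is literally clause 2 of $A^R \hleq G$. Then $A \htri G'$ is clause 1 of $\htri$.

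What this buys you is that the whole equivalence $G \heq G'$ uses only reflexivity and the definitions, so it holds for arbitrary $G$. The hypothesis $G \in \C$ is needed only to make $\heq$ transitive and for Theorem~\ref{thm:reversiblity-preserves-c}, not for this theorem.

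You should therefore present the direct argument as the proof. Drop the fallback via Lemma~\ref{thm:reversibility-X-G}: its claim that $A^R \in \D$ is not supported by anything proved in the paper. Also drop the unneeded references to the transitivity theorems and to $G' \in \C$.
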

	
	\begin{proof}
		First we show $G \hleq G'$. Take $G^L$. There are two options for what $G^L$ can be.
		\begin{itemize}
			\item [] Case i) $G^L = A$. Then since $A^R \hleq G$ we have that $A^R \hleq G'$ by Lemma~\ref{thm:reversibility-X-G}. Thus $A \htri G'$ by Definition~\ref{def:classes-c-and-d}.
			\item [] Case ii) $G^L \neq A$. Then $G^L$ is a left option of $G'$ so $G^L \htri G'$.
		\end{itemize}
		Next, take $G'^R$. $G'^R$ is a right option of $G$, so $G\htri G'^R$. Neither $G$ nor $G'$ is atomic, so we have $G \hleq G'$.
		
		Now we show $G' \hleq G$. Take $G'^L$. There are also two options for what $G'^L$ can be.
		\begin{itemize}
			\item [] Case i) $G'^L = A^{RL}$. Then since $A^R \hleq G$ it follows that $A^{RL} \htri G$.
			\item [] Case ii) $G'^L \neq A^{RL}$. Then $G'^L$ is a left option of $G$, so $G'^L\htri G$.
		\end{itemize}
		Next, take $G^R$. $G^R$ is a right option of $G'$, so $G'\htri G^R$. Neither $G$ nor $G'$ is atomic, so we have $G' \hleq G$. 
		
		Thus $G \heq G'$.
	\end{proof}
	
	\subsection{Reversible Domination}\label{subsec:reversible-domination}
	
	\begin{lemma}\label{thm:rev-dom-X-G}
		Let $G \in \cal{C}$, with $G = \{A, B, G^{\cal{L}}\mid G^{\cal{R}}\} $, and $A^R \hleq B$. Let \\$G'~=~\{A^{R\Le}, B, G^{\cal{L}}\mid G^{\cal{R}}\}$. Then, for any game $X \in \cal{D}$:
		\begin{itemize}
			\item [1)] $X \hleq G \Rightarrow X \hleq G'$.
			\item [1)] $X \htri G \Rightarrow X \htri G'$.
		\end{itemize}
	\end{lemma}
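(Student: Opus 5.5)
We prove the two statements by mutual induction, following the template of Lemmas~\ref{thm:domination-X-G} and~\ref{thm:reversibility-X-G}. Fix $X \in \D$. Note that $G'$ has exactly the right options of $G$, and every left option of $G$ other than $A$ is still a left option of $G'$. Also, $G'$ is non-atomic.

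For statement (1), assume $X \hleq G$.
\begin{itemize}
\item For each $X^L$ we have $X^L \htri G$. Since $X^L \in \D$ (options of games in $\D$ remain in $\D$, as used implicitly throughout), induction hypothesis (2) gives $X^L \htri G'$.
\item For each $G'^R$, this is a right option of $G$, so $X \htri G'^R$ directly.
\item Because $G'$ is not atomic, the atomic clause is vacuous, and we conclude $X \hleq G'$.
\end{itemize}

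For statement (2), assume $X \htri G$. We split on the three ways this can hold.
\begin{itemize}
\item If $X^R \hleq G$, induction hypothesis (1) gives $X^R \hleq G'$, so $X \htri G'$.
\item If $X \hleq G^L$ or $X^R \hleq G^L$ with $G^L \neq A$, then $G^L$ is a left option of $G'$, and we are done immediately.
\end{itemize}

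The only real work is the case $G^L = A$.

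\emph{Case $X \hleq A$.} Then $X \htri A^R$. I would split this into its three subcases.
\begin{itemize}
\item If $X \hleq A^{RL}$ or $X^R \hleq A^{RL}$, then $A^{RL}$ is a left option of $G'$, so $X \htri G'$.
\item If $X^R \hleq A^R$, combine this with the hypothesis $A^R \hleq B$. By Theorem~\ref{thm:c-transitivity-D-left}, applied with $X^R, A^R \in \D$ and $B \in \C$, we get $X^R \hleq B$. Since $B$ is a left option of $G'$, this gives $X \htri G'$.
\end{itemize}
This last subcase is where reversible domination differs from plain reversibility. There we had to pass back through $G$ using induction hypothesis (1); here we land directly on the left option $B$.

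\emph{Case $X^R \hleq A$.} Then $X^R \htri A^R$. Since $X^R \in \D$ and $A^R$ is an option of a game in $\C$ (hence in $\C$), Definition~\ref{def:classes-c-and-d} upgrades this to $X \htri A^R$. This is exactly as in Lemma~\ref{thm:reversibility-X-G}, and we then finish with the same three subcases as above.

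The main obstacle I expect is justifying the membership facts needed to invoke Theorem~\ref{thm:c-transitivity-D-left} and Definition~\ref{def:classes-c-and-d}. Specifically, we need $B \in \C$ (as an option of $G \in \C$), $A^R \in \C \subseteq \D$, and that options of $X \in \D$ lie in $\D$. These are used in the same way in the earlier reduction lemmas, so I would appeal to the same hereditary conventions. Should transitivity with $B \in \C$ as the third game be awkward, I would instead use $\C \subseteq \D$ to place all three games in $\D$ where needed, and check which side the theorem requires.
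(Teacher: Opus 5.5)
Your proposal is correct and matches the paper's proof essentially step for step. It uses the same mutual induction on $X$, the same case split on how $X \htri G$ holds, Theorem~\ref{thm:c-transitivity-D-left} on $X^R \hleq A^R \hleq B$ to land on the left option $B$ of $G'$, and Definition~\ref{def:classes-c-and-d} to upgrade $X^R \htri A^R$ to $X \htri A^R$. Strictly, that upgrade uses $X \in \D$ and $A^R \in \C$ rather than $X^R \in \D$, and the heredity facts you flag are the same ones the paper relies on silently. Your fallback of placing all three games in $\D$ would not give transitivity, since the paper's transitivity theorems need an endpoint in $\C$, but your main argument never needs it.
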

	
	\begin{proof}
		We prove these statements by mutual induction. Take $X \in \cal{D}$.
		\begin{itemize}
			\item [1)] Take $X^L$. $X^L \htri G \Rightarrow X^L \htri G'$ by induction hypothesis (2). Take $G'^R$. $G'^R$ is a right option of $G$, so we have $X \htri G'^R$. $G'$ is non atomic, so we have $X \hleq G'$.
			\item [2)] There are three ways we can have $X \htri G$.
			\begin{itemize}
				\item [] Case i) $X^R \hleq G$. Then by induction hypothesis (1) we have $X^R \hleq G'$. Thus $X \htri G'$.
				\item [] Case ii) $X \hleq G^L$. Then there are two options for $G^L$.
				\begin{itemize}
					\item [] Case a) $G^L = A$. Then we have $X \htri A^R$. There are three ways this is possible
					\begin{itemize}
						\item [] Case I) $X^R \hleq A^R$. But $A^R \hleq B$. By Theorem~\ref{thm:c-transitivity-D-left} we have that $X^R \hleq B$. Since $B$ is a left option of $G'$ it follows that $X \htri G'$.
						\item [] Case II) $X \hleq A^{RL}$. $A^{RL}$ is a left option of $G'$ so it follows that $X \htri G'$.
						\item [] Case III) $X^R \hleq A^{RL}$. $A^{RL}$ is a left option of $G'$ so it follows that $X \htri G'$.
					\end{itemize}
					\item [] Case b) $G^L \neq A$. Then $G^L$ is a left option of $G'$. Thus $X \htri G'$.
				\end{itemize}
				\item [] Case iii) $X^R \hleq G^L$. There are again two options for $G^L$.
				\begin{itemize}
					\item [] Case a) $G^L = A$. Then we have $X^R \htri A^R$. By Definition~\ref{def:classes-c-and-d} it follows that $X \htri A^R$. There are three ways this is possible.
					\begin{itemize}
						\item [] Case I) $X^R \hleq A^R$. But $A^R \hleq B$. By Theorem~\ref{thm:c-transitivity-D-left} we have that $X^R \hleq B$. Since $B$ is a left option of $G'$ it follows that $X \htri G'$.
						\item [] Case II) $X \hleq A^{RL}$. $A^{RL}$ is a left option of $G'$ so it follows that $X \htri G'$.
						\item [] Case III) $X^R \hleq A^{RL}$. $A^{RL}$ is a left option of $G'$ so it follows that $X \htri G'$.
					\end{itemize}
					\item [] Case b) $G^L \neq A$. Then $G^L$ is a left option of $G'$. Thus $X \htri G'$.\qedhere
				\end{itemize}
			\end{itemize}
		\end{itemize}
	\end{proof}
	
	\begin{lemma}\label{thm:rev-dom-G-X}
		Let $G \in \cal{C}$, with $G = \{A, B, G^{\cal{L}}\mid G^{\cal{R}}\} $, and $A^R \hleq B$. Let \\$G' = \{A^{R\Le}, B, G^{\cal{L}}\mid G^{\cal{R}}\}$. Then, for any game $X \in \cal{D}$:
		\begin{itemize}
			\item [1)] $G \hleq X \Rightarrow G' \hleq X$.
			\item [1)] $G \htri X \Rightarrow G' \htri X$.
		\end{itemize}
	\end{lemma}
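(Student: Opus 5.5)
We prove both statements by mutual induction, following the pattern of Lemma~\ref{thm:reversibility-G-X}. Fix $X \in \D$. The right options of $G'$ are exactly those of $G$. The left options of $G'$ are those of $G$, except that $A$ is replaced by the set $A^{R\Le}$. The only genuinely new work therefore concerns the left options $A^{RL}$ of $G'$ in part (1).

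For (1), suppose $G \hleq X$; we check the three clauses of Definition~\ref{def:intrinsic-order} for $G' \hleq X$.

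First, take a left option $G'^L$ and split into two cases. If $G'^L$ is also a left option of $G$, which includes the case $G'^L = B$, then $G \hleq X$ gives $G'^L \htri X$ directly. If instead $G'^L = A^{RL}$, we argue as follows:
\begin{itemize}
\item Since $A^R \hleq B$, the first clause of the order gives $A^{RL} \htri B$.
\item Since $B$ is a left option of $G$ and $G \hleq X$, we also have $B \htri X$.
\item We would like to chain $A^{RL} \htri B \htri X$, but there is no $\htri\htri$ transitivity available. So we instead unfold $B \htri X$ into its three cases: $B^R \hleq X$, $B \hleq X^L$, or $B^R \hleq X^L$.
\end{itemize}
We expect the cleaner route to be the following. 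Since $A$ is a left option of $G$, we have $A \htri X$, and $A \in \C$ is closed under the relevant right options. Rather than working through $A$, though, I would first try to use that $A^R \hleq B$ and $B \htri X$ combine via Theorem~\ref{thm:c-transitivity-D-right}(3) to give $A^R \htri X$. This requires $A^R \in \C$ and $B, X \in \D$, which hold because $\C \subseteq \D$ and options of games in $\C$ are in $\C$. From $A^R \htri X$ I then need $A^{RL} \htri X$. Unfolding $A^R \htri X$ gives three cases:
\begin{itemize}
\item If $A^R \hleq X^L$, then $A^{RL} \htri X^L$. Because $X \in \D$ and $A^{RL} \in \C$, the closure condition (2) in Definition~\ref{def:classes-c-and-d} yields $A^{RL} \htri X$.
\item If $A^{RR} \hleq X$ or $A^{RR} \hleq X^L$, this is the main obstacle, since information about $A^{RR}$ does not obviously bound $A^{RL}$.
\end{itemize}
I expect the correct approach is to avoid unfolding altogether. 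Since $A^R \hleq B$ gives $A^{RL} \htri B$, and $B \hleq$-compares appropriately with $X$ only through $\htri$, the better choice is to use $G \hleq X$ differently.

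Here is that alternative. Observe that $A^{RL} \htri B$ and $B$ is a left option of $G$, so the closure property of $G \in \C$ (Definition~\ref{def:classes-c-and-d}(2), with $A^{RL} \in \D$) gives $A^{RL} \htri G$. Then $A^{RL} \htri G \hleq X$, and Theorem~\ref{thm:c-transitivity-D-right}(2) yields $A^{RL} \htri X$. This mirrors Case i) of Lemma~\ref{thm:reversibility-G-X}, with the closure step replacing $A^R \hleq G$, and it is the route I would commit to.

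Next, for each $X^R$ we have $G \htri X^R$, so induction hypothesis (2) gives $G' \htri X^R$. Finally, the atomic clause is vacuous because $G'$ is non-atomic. This completes (1).

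For (2), suppose $G \htri X$ and split into its three cases. If $G^R \hleq X$ or $G^R \hleq X^L$, then $G^R$ is also a right option of $G'$, so $G' \htri X$. If $G \hleq X^L$, then induction hypothesis (1) gives $G' \hleq X^L$, and hence $G' \htri X$.

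The only delicate point is the $A^{RL}$ case in (1). There one must check the class memberships needed for the closure step and for Theorem~\ref{thm:c-transitivity-D-right}: namely that $A^{RL}$, as an option of a game in $\C$, lies in $\C \subseteq \D$.
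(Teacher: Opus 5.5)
Your committed route is exactly the paper's proof. From $A^R \hleq B$ you get $A^{RL} \htri B$, the closure property of $G \in \C$ at the left option $B$ gives $A^{RL} \htri G$, and Theorem~\ref{thm:c-transitivity-D-right} with $G \hleq X$ gives $A^{RL} \htri X$; the other left options, the $X^R$ clause, and part (2) are handled identically. The abandoned detour through $A^R \htri X$ can be deleted, and the paper's argument also relies implicitly on the class memberships you flag ($A^{RL} \in \C \subseteq \D$).
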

	
	\begin{proof}
		We prove these statements by mutual induction. Take $X \in \cal{D}$.
		\begin{itemize}
			\item [1)] Take $G'^L$. There are two possible options for $G'^L$.
			\begin{itemize}
				\item [] Case i) $G'^L = A^{RL}$. Since $A^R \hleq B$ we have that $A^{RL} \htri B$. Since $B$ is a left option of $G$ it follows from Definition~\ref{def:classes-c-and-d} that $A^{RL} \htri G$. But $G \hleq X$. By Theorem~\ref{thm:c-transitivity-D-right} we have that $A^{RL} \htri X$.
				\item [] Case ii) $G'^L \neq A^{RL}$. Then $G'^L$ is a left option of $G$. Because $G \hleq X$, it follows that $G'^L \htri X$.
			\end{itemize}
			Next, take $X^R$. $G \htri X^R \Rightarrow G' \htri X^R$ by induction hypothesis (2). $G'$ is non atomic, so we have $G' \hleq X$.
			\item [2)] There are three ways we can have $G \htri X$.
			\begin{itemize}
				\item [] Case i) $G^R \hleq X$. $G^R$ is a right option of $G'$, so we have $G' \htri X$.
				\item [] Case ii) $G \hleq X^L$. Then by induction hypothesis (1) we have $G' \hleq X^L$. Thus $G' \htri X$.
				\item [] Case iii) $G^R \hleq X^L$. $G^R$ is a right option of $G'$, so we have $G' \htri X$.\qedhere
			\end{itemize}
		\end{itemize}
	\end{proof}
	
	\begin{theorem}\label{thm:rev-dom-preserves-c}
		Let $G \in \cal{C}$, with $G = \{A, B, G^{\cal{L}}\mid G^{\cal{R}}\} $, and $A^R \hleq B$. Let \\$G' = \{A^{R\Le}, B, G^{\cal{L}}\mid G^{\cal{R}}\}$. Then $G' \in \cal{C}$.
	\end{theorem}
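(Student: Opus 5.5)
We verify the two clauses of Definition~\ref{def:classes-c-and-d} for $G'$ directly, following the pattern of Theorems~\ref{thm:domination-preserves-c} and~\ref{thm:reversiblity-preserves-c}. We use Lemmas~\ref{thm:rev-dom-X-G} and~\ref{thm:rev-dom-G-X} to transfer $\htri$-relations from $G$ to $G'$.

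First, the right clause. Let $X \in \D$ and suppose $G'^R \htri X$ for some right option $G'^R$. The right options of $G'$ are exactly those of $G$, so $G'^R$ is a right option of $G$. Since $G \in \C$, Definition~\ref{def:classes-c-and-d} gives $G \htri X$. Part (2) of Lemma~\ref{thm:rev-dom-G-X} then yields $G' \htri X$.

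Next, the left clause. Let $X \in \D$ with $X \htri G'^L$ for some left option $G'^L$; we must show $X \htri G'$. We split on which left option it is.

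\emph{Case i)} $G'^L$ is $B$ or lies in $G^{\Le}$. Then $G'^L$ is also a left option of $G$. Since $G \in \C$, we get $X \htri G$, and part (2) of Lemma~\ref{thm:rev-dom-X-G} gives $X \htri G'$.

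\emph{Case ii)} $G'^L = A^{RL}$ for some left option $A^{RL}$ of $A^R$. We proceed in four steps.
\begin{itemize}
\item[(a)] Since $A^R$ is an option of an option of $G$, it lies in $\C$. (We use, as the paper does in Theorem~\ref{thm:reversiblity-preserves-c}, that options of games in $\C$ are in $\C$.) Applying clause 2 of Definition~\ref{def:classes-c-and-d} to $A^R$ with $X \in \D$ turns $X \htri A^{RL}$ into $X \htri A^R$.
\item[(b)] Combining $X \htri A^R$ with $A^R \hleq B$ via Theorem~\ref{thm:c-transitivity-D-left}, part (2), gives $X \htri B$. This step needs $X, A^R \in \D$, which holds since $\C \subseteq \D$, and $B \in \C$.
\item[(c)] Since $B$ is a left option of $G \in \C$, the definition gives $X \htri G$.
\item[(d)] Lemma~\ref{thm:rev-dom-X-G} then gives $X \htri G'$.
\end{itemize}
Alternatively, after step (b) we could stop directly: $B$ is still a left option of $G'$. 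That route, however, would require $G'$ to already be in $\C$, which is circular. So we go through $G$ as in steps (c) and (d).

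The main obstacle is Case ii), specifically justifying the membership facts it relies on. Step (a) needs $A^R \in \C$, and step (b) needs $B \in \C$ and $A^R \in \D$ so that the mixed transitivity theorem applies. If membership of options in $\C$ is not available, I would first establish that $\C$ is closed under taking options, and then invoke it here.
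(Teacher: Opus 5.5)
Your argument is the paper's proof step for step. The right clause goes through $G$ and Lemma~\ref{thm:rev-dom-G-X}, and for $G'^L=A^{RL}$ you use the same chain $X\htri A^{RL}\Rightarrow X\htri A^R\Rightarrow X\htri B\Rightarrow X\htri G\Rightarrow X\htri G'$. You are also right that stopping at $X\htri B$ would be circular.

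The weak point is the one you flagged, and your fallback of first proving that $\C$ is closed under options cannot succeed. Definition~\ref{def:classes-c-and-d} only constrains the top level of a game, and as written both step (a) and the statement itself fail. Here is a counterexample over the poset $0<1$. Take $K=\{[1]\mid[0]\}$, $A^R=\{K\mid[0]\}$, $A=\{[0]\mid A^R\}$, $B=\{[0]\mid[0]\}$ and $G=\{A,B,[0]\mid[0]\}$.
\begin{itemize}
\item[(i)] $G\in\C$. Every left option of a left option of $G$ is $[0]$, which is itself a left option of $G$. So each of the three ways of getting $X\htri G^L$ yields $X\htri G$. For the right clause, $[0]\htri X$ forces $[0]\hleq X^L$, which gives $G\htri X$. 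Hence $G$ is closed against every game.
\item[(ii)] $A^R\hleq B$, via $K^R=[0]\hleq[0]=B^L$ and $A^{RR}=[0]\hleq[0]=B^R$.
\item[(iii)] The conclusion fails. The atom $[1]$ is vacuously in $\D$, and $[1]\htri K$. Yet $[1]\htri A^R$ and $[1]\htri G'$ both fail for $G'=\{K,B,[0]\mid[0]\}$. Since $[1]$ has no right options, either would need $[1]\hleq K$, $[1]\hleq B$ or $[1]\hleq[0]$. The first two require $[1]\htri[0]$, which never holds between atoms, and the third requires $1\leq 0$.
\end{itemize}
So $A^R\notin\C$ and $G'\notin\C$. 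The paper's proof relies on the same unstated heredity, as do the inductions behind Theorems~\ref{thm:c-transitivity-D-right} and~\ref{thm:c-transitivity-D-left}.

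The repair is to change the definition rather than prove a lemma. Call $G$ $\A$-closed when $G$ \emph{and all of its iterated options} satisfy the two conditions, in the spirit of Definition~\ref{def:weak-option-closure}. The map $\A\mapsto\A^*$ is still antitone, so $**$ is still monotone and $\C\subseteq\D$ still holds. Now $\C$ and $\D$ are closed under options by construction.

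With that definition your steps (a)--(d) go through verbatim, since $A^R\in\C\subseteq\D$ and $B\in\C$. However, concluding $G'\in\C$ now also requires checking that every option of $G'$ lies in $\C$. This holds because $B$ and the members of $G^{\Le}$ and $G^{\R}$ are options of $G$, and each $A^{RL}$ is an option of an option of an option of $G$.
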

	
	\begin{proof}
		Let $X \in \cal{D}$ with $G'^R \htri X$ for some $G'^R$. We wish to show $G' \htri X$. $G'^R$ is a right option of $G$ so by Definition~\ref{def:classes-c-and-d} we have that $G \htri X$. By Lemma~\ref{thm:rev-dom-G-X}, it follows that $G' \htri X$.
		
		Let $X \in \cal{D}$ with $X \htri G'^L$ for some $G'^L$. We wish to show $X \htri G'$. There are two options for $G'^L$.
		\begin{itemize}
			\item [] Case i) $G'^L = A^{RL}$. Then by Definition~\ref{def:classes-c-and-d} $X \htri A^{RL} \Rightarrow X \htri A^R$. Since $A^R \hleq B$, we have by Theorem~\ref{thm:c-transitivity-D-left} we have that $X\htri B$. Finally, since $B$ is a left option of $G$ it follows from Definition~\ref{def:classes-c-and-d} that $X\htri G$. It follows from Lemma~\ref{thm:rev-dom-X-G} that $X \htri G'$. 
			\item [] Case ii) $G'^L \neq A^{RL}$. Then $G'^L$ is a left option of $G$. It follows from Definition~\ref{def:classes-c-and-d} that $X \htri G$. By Lemma~\ref{thm:rev-dom-X-G}, we have that $X \htri G'$.
		\end{itemize}
		Thus, $G' \in \cal{C}$.
	\end{proof}
	
	\begin{theorem}\label{thm:rev-dom-equal}
		Let $G \in \cal{C}$, with $G = \{A, B, G^{\cal{L}}\mid G^{\cal{R}}\} $, and $A^R \hleq B$. Then $G \heq \{A^{R\Le}, B, G^{\cal{L}}\mid G^{\cal{R}}\}$.
	\end{theorem}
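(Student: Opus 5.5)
We write $G' = \{A^{R\Le}, B, G^{\Le}\mid G^{\R}\}$ and prove the two inequalities $G \hleq G'$ and $G' \hleq G$ separately, directly from Definition~\ref{def:intrinsic-order}, following the pattern of Theorem~\ref{thm:reversibility-equal}. Neither $G$ nor $G'$ is atomic, since both have left options, so clause 3 of the definition never applies. We will use that $G' \in \C$ by Theorem~\ref{thm:rev-dom-preserves-c}, and that $\C \subseteq \D$. Hence $G$, $G'$ and all their options lie in the classes required to apply Lemma~\ref{thm:rev-dom-X-G}, Lemma~\ref{thm:rev-dom-G-X}, Definition~\ref{def:classes-c-and-d} and the transitivity theorems.

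\textbf{$G \hleq G'$.} We must show $G^L \htri G'$ for every left option of $G$, and $G \htri G'^R$ for every right option of $G'$.

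For the right options: every $G'^R$ is a right option of $G$. Lemma~\ref{thm:order-reflexivity} gives $G'^R \hleq G'^R$, hence $G \htri G'^R$.

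For the left options $G^L \neq A$: each such $G^L$ (including $B$) is a left option of $G'$, and reflexivity gives $G^L \htri G'$.

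The only real case is $G^L = A$. Here we have $A^R \hleq B$, and $B$ is a left option of $G'$. By the second clause of the definition of $\htri$, this means $A^R \htri G'$. However, what we need is $A \htri G'$ itself, which is the step requiring care. The first clause of $\htri$ asks for a right option of $A$ that is $\hleq G'$. We try to get this from the chain $A^R \hleq B \hleq B$, but what we can legitimately conclude is the mixed clause 3: there exist $A^R$ and a left option $B$ of $G'$ with $A^R \hleq B$. That is exactly clause 3 in the definition of $A \htri G'$, so $A \htri G'$ holds directly. This is the step where the option-closed definition of $\htri$ does the work that reversibility through $G$ did in the earlier theorem.

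\textbf{$G' \hleq G$.} Right options are handled as before: each $G^R$ is a right option of $G'$, so reflexivity gives $G' \htri G^R$.

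For left options $G'^L$ that are not of the form $A^{RL}$: these are left options of $G$, so $G'^L \htri G$ by reflexivity.

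For $G'^L = A^{RL}$: from $A^R \hleq B$ we get $A^{RL} \htri B$. Since $B$ is a left option of $G \in \C$ and $A^{RL} \in \C \subseteq \D$, Definition~\ref{def:classes-c-and-d} (clause 2) upgrades this to $A^{RL} \htri G$, exactly as in Case i) of Lemma~\ref{thm:rev-dom-G-X}.

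Combining the two directions gives $G \heq G'$. The main obstacle, if any, is the $G^L = A$ case. I expect clause 3 of $\htri$ to settle it immediately. If that clause turned out to need different option membership, the fallback would be to derive $A^R \htri G'$ from Lemma~\ref{thm:rev-dom-X-G} applied to $A^R \htri G$, and then to lift the result via the $\C$-closure of $A$.
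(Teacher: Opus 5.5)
Your proof is correct. It differs from the paper's only in the one nontrivial case, $G^L = A$ in the direction $G \hleq G'$, and there your argument is genuinely simpler.

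The paper handles that case in three steps:
\begin{itemize}
\item from $A^R \hleq B \htri G$ it invokes Theorem~\ref{thm:c-transitivity-D-left} to get $A^R \htri G$;
\item it uses the $\C$-closure of $A$ to obtain $A \htri G$;
\item it transfers this to $G'$ via Lemma~\ref{thm:rev-dom-X-G}.
\end{itemize}

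You observe instead that the dominating option $B$ survives as a left option of $G'$. The hypothesis $A^R \hleq B$ is therefore literally clause~3 of the definition of $A \htri G'$.

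With your shortcut, the whole direction $G \hleq G'$ holds for arbitrary games. It needs no transitivity, no closure property, and no membership of $A$, $G$ or $G'$ in $\C$ or $\D$. Your appeal to Theorem~\ref{thm:rev-dom-preserves-c} and your fallback are unnecessary and can be dropped, along with the exploratory remark about clause~1.

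The paper's longer route has one advantage: it mirrors Theorem~\ref{thm:reversibility-equal}. There $A^R$ is bounded by $G$ itself rather than by a surviving left option, so clause~3 is unavailable and the transfer lemma is really needed.

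The reverse direction $G' \hleq G$ matches the paper exactly, including the step $A^{RL} \htri B \Rightarrow A^{RL} \htri G$. That step needs $A^{RL} \in \D$, which you, like the paper, take from options of games in $\C$ again lying in $\C$.
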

	
	\begin{proof}
		First we show $G \hleq G'$. Take $G^L$. There are two options for $G^L$.
		\begin{itemize}
			\item [] Case 1) $G^L = A$. We have that $A^R \hleq B$ and $B \htri G$. It follows from Theorem~\ref{thm:c-transitivity-D-left} that $A^R \htri G$. Then, by Definition~\ref{def:classes-c-and-d} we have that $A \htri G$. Finally, by Lemma~\ref{thm:rev-dom-X-G} we have that $A \htri G'$.
			\item [] Case 2) $G^L \neq A$. Then $G^L$ is a left option of $G'$ so $G^L \htri G'$.
		\end{itemize}
		Next, take $G'^R$. $G'^R$ is a right option of $G$, so $G\htri G'^R$. Neither $G$ nor $G'$ is atomic, so we have $G \hleq G'$.
		
		Now we show $G' \hleq G$. Take $G'^L$. There are two options for $G'^L$.
		\begin{itemize}
			\item [] Case i) $G'^L = A^{RL}$. Then since $A^R \hleq B$ it follows that $A^{RL} \htri B$. Since $B$ is a left option of $G$, by Definition~\ref{def:classes-c-and-d}, we have that $A^{RL} \htri G$.
			\item [] Case ii) $G'^L \neq A^{RL}$. Then $G'^L$ is a left option of $G$, so $G'^L\htri G$.
		\end{itemize}
		Take $G^R$. $G^R$ is a right option of $G'$, so $G'\htri G^R$. Neither $G$ nor $G'$ is atomic, so we have $G' \hleq G$.
		
		Thus $G \heq G'$.
	\end{proof}
	
	\subsection{Self-Reversible Domination}\label{subsec:self-reversible-domination}
	
	\begin{lemma}\label{thm:self-rev-dom-X-G}
		Let $G \in \cal{C}$, with $G = \{A, G^{\cal{L}}\mid G^{\cal{R}}\}$, and $A^R \hleq A$. Let \\$G' = \{A^R, A^{R\Le}, G^{\cal{L}}\mid G^{\cal{R}}\}$. Then, for any game $X \in \cal{D}$:
		\begin{itemize}
			\item [1)] $X \hleq G \Rightarrow X \hleq G'$.
			\item [1)] $X \htri G \Rightarrow X \htri G'$.
		\end{itemize}
	\end{lemma}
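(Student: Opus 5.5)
We prove the two statements by mutual induction on $X$, following the template of Lemmas~\ref{thm:reversibility-X-G} and~\ref{thm:rev-dom-X-G}. The new option $A^R$ of $G'$ plays the role that $B$ played in reversible domination, except that now the dominating option is $A^R$ itself.

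For (1), suppose $X \hleq G$. Take any $X^L$; then $X^L \htri G$, and induction hypothesis (2) gives $X^L \htri G'$. Every right option $G'^R$ is a right option of $G$, so $X \htri G'^R$ follows directly from $X \hleq G$. Since $G'$ is not atomic, $X \hleq G'$.

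For (2), we split on the three ways $X \htri G$ can hold.
\begin{itemize}
\item[] Case i) $X^R \hleq G$. Induction hypothesis (1) gives $X^R \hleq G'$, hence $X \htri G'$.
\item[] Case ii) $X \hleq G^L$ with $G^L \neq A$. Then $G^L$ is still a left option of $G'$, so $X \htri G'$.
\item[] Case iii) $X^R \hleq G^L$ with $G^L \neq A$. Again $G^L$ is a left option of $G'$, so $X \htri G'$.
\end{itemize}

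The real work is when $G^L = A$. If $X \hleq A$, then $X \htri A^R$. If $X^R \hleq A$, then $X^R \htri A^R$, and since $X^R \in \D$ and $A^R \in \C$ (options of games in $\C$ lie in $\C$, as used in Lemma~\ref{thm:c-left-triangle-closure}), Definition~\ref{def:classes-c-and-d} upgrades this to $X \htri A^R$. We then unpack $X \htri A^R$ into its three forms.
\begin{itemize}
\item[] $X \hleq A^{RL}$ or $X^R \hleq A^{RL}$. Here $A^{RL}$ is a left option of $G'$, so $X \htri G'$ immediately.
\item[] $X^R \hleq A^R$. This is where reversible domination used transitivity into $B$; here we use $A^R$ directly, since $A^R$ is now itself a left option of $G'$. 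So $X^R \hleq A^R$ gives $X \htri G'$ by clause 3 of the definition of $\htri$.
\end{itemize}

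Unlike Lemma~\ref{thm:rev-dom-X-G}, this means the hypothesis $A^R \hleq A$ and transitivity are not needed for this direction. The hypothesis should matter only in the companion $G \hleq X$ lemma and in the preservation-of-$\C$ and equivalence theorems, where one must check that $A^R$ as a left option is harmless, via $A^R \hleq A$ and $A \htri G$.

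The main point to verify is the membership bookkeeping. We need $A^R \in \C$ to apply Definition~\ref{def:classes-c-and-d} in Case iii, and $X^R \in \D$ so that induction hypothesis (1) applies in Case i. Both follow because $\C$ and $\D$ are closed under taking options; if that is not already available, I would first establish it from the fixed-point definition.
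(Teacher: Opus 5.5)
Your proposal is correct and matches the paper's proof essentially case for case. It uses the same mutual induction, the same reduction of the $G^L = A$ subcases to $X \htri A^R$ (upgrading $X^R \htri A^R$ via Definition~\ref{def:classes-c-and-d} when needed), and the same direct use of $A^R$ as a left option of $G'$ to handle $X^R \hleq A^R$, so $A^R \hleq A$ is indeed not needed for this direction.

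One small precision: the upgrade needs $X \in \D$ (given) together with $A^R \in \C$, not $X^R \in \D$. Also, like the paper, you tacitly assume that $\C$ and $\D$ are closed under options. That property is not built into the local definition of $\A$-closedness, so it is not clear it can be derived from the fixed-point construction alone.
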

	
	\begin{proof}
		We prove these statements by mutual induction. Take $X \in \cal{D}$.
		\begin{itemize}
			\item [1)] Take $X^L$. $X^L \htri G \Rightarrow X^L \htri G'$ by induction hypothesis (2). Take $G'^R$. $G'^R$ is a right option of $G$, thus because $X \hleq G$, we have $X \htri G'^R$. $G'$ is non atomic, so we have $X \hleq G'$.
			\item [2)] There are three ways we can have $X \htri G$.
			\begin{itemize}
				\item [] Case i) $X^R \hleq G$. Then by induction hypothesis (1) we have $X^R \hleq G'$. Thus $X \htri G'$.
				\item [] Case ii) $X \hleq G^L$. There are two options for $G^L$.
				\begin{itemize}
					\item [] Case a) $G^L = A$. Then we have $X \htri A^R$. There are three ways this is possible.
					\begin{itemize}
						\item [] Case I) $X^R \hleq A^R$. Since $A^R$ is a left option of $G'$ it follows that $X \htri G'$.
						\item [] Case II) $X \hleq A^{RL}$. $A^{RL}$ is a left option of $G'$ so it follows that $X \htri G'$.
						\item [] Case III) $X^R \hleq A^{RL}$. $A^{RL}$ is a left option of $G'$ so it follows that $X \htri G'$.
					\end{itemize}
					\item [] Case b) $G^L \neq A$. Then $G^L$ is a left option of $G'$. Thus $X \htri G'$.
				\end{itemize}
				\item [] Case iii) $X^R \hleq G^L$. There are two options for $G^L$.
				\begin{itemize}
					\item [] Case a) $G^L = A$. Then we have $X^R \htri A^R$. It follows from Definition~\ref{def:classes-c-and-d} that $X \htri A^R$. There are three ways this is possible.
					\begin{itemize}
						\item [] Case I) $X^R \hleq A^R$. Since $A^R$ is a left option of $G'$ it follows that $X \htri G'$.
						\item [] Case II) $X \hleq A^{RL}$. $A^{RL}$ is a left option of $G'$ so it follows that $X \htri G'$.
						\item [] Case III) $X^R \hleq A^{RL}$. $A^{RL}$ is a left option of $G'$ so it follows that $X \htri G'$.
					\end{itemize}
					\item [] Case b) $G^L \neq A$. Then $G^L$ is a left option of $G'$. Thus $X \htri G'$.\qedhere
				\end{itemize}
			\end{itemize}
		\end{itemize}
	\end{proof}
	
	\begin{lemma}\label{thm:self-rev-dom-G-X}
		Let $G \in \cal{C}$, with $G = \{A, G^{\cal{L}}\mid G^{\cal{R}}\}$, and $A^R \hleq A$. Let \\$G' = \{A^R, A^{R\Le}, G^{\cal{L}}\mid G^{\cal{R}}\}$. Then, for any game $X \in \cal{D}$:
		\begin{itemize}
			\item [1)] $G \hleq X \Rightarrow G' \hleq X$.
			\item [1)] $G \htri X \Rightarrow G' \htri X$.
		\end{itemize}
	\end{lemma}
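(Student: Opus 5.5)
We prove the two statements by mutual induction, following the pattern of Lemma~\ref{thm:reversibility-G-X} and Lemma~\ref{thm:rev-dom-G-X}. Fix $X \in \D$. The right options of $G'$ are exactly those of $G$. The left options of $G'$ are $A^R$, the options $A^{RL}$, and the original $G^{L}$ other than $A$.

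For (1), assume $G \hleq X$. We check every left option of $G'$ and show it satisfies $G'^L \htri X$.
\begin{itemize}
\item[] Case i) $G'^L$ is a left option of $G$ other than $A$. Then $G'^L \htri X$ follows directly from $G \hleq X$.
\item[] Case ii) $G'^L = A^R$. This is the new case compared with the reversibility lemmas. From $A^R \hleq A$ and $A \htri X$ (since $A$ is a left option of $G \hleq X$), Theorem~\ref{thm:c-transitivity-D-right}(2) gives $A^R \htri X$. That theorem needs $A^R \in \C$ and $A \in \D$. We use that options of games in $\C$ are again in $\C$, as was done implicitly in earlier proofs, and that $\C \subseteq \D$.
\item[] Case iii) $G'^L = A^{RL}$. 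From $A^R \hleq A$ we get $A^{RL} \htri A$. Since $A$ is a left option of $G \in \C$, Definition~\ref{def:classes-c-and-d} gives $A^{RL} \htri G$. Combined with $G \hleq X$, Theorem~\ref{thm:c-transitivity-D-right} gives $A^{RL} \htri X$. Alternatively, one can chain $A^{RL} \htri A$ with $A \htri X$, but that is a $\htri\htri$ chain and is not available. So we go through $G$ as in Lemma~\ref{thm:rev-dom-G-X}.
\end{itemize}
For each $X^R$, the hypothesis gives $G \htri X^R$, and induction hypothesis (2) gives $G' \htri X^R$. Finally, $G'$ is non-atomic, so the atomic clause is vacuous, and we conclude $G' \hleq X$.

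For (2), we split on the three ways $G \htri X$ can hold.
\begin{itemize}
\item[] Case i) $G^R \hleq X$. Since $G^R$ is a right option of $G'$, we get $G' \htri X$.
\item[] Case ii) $G \hleq X^L$. Induction hypothesis (1) gives $G' \hleq X^L$, hence $G' \htri X$.
\item[] Case iii) $G^R \hleq X^L$. Again $G^R$ is a right option of $G'$, so $G' \htri X$.
\end{itemize}
Nothing about $A$ enters this part.

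The main obstacle is checking the membership hypotheses for the transitivity theorems in part (1): that $A^R$ lies in $\C$ and that $A$ and $X$ lie in $\D$. We rely on $\C \subseteq \D$ and on options of games in $\C$ lying in $\C$, as earlier proofs did. If the latter closure is unavailable, Case ii) can instead be routed through $G$, mirroring Case iii). From $A^R \hleq A$ we would need $A^R \htri G$, which does not follow directly. So the transitivity route is the one to try first.
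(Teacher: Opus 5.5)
Your proof is correct and follows the paper's argument case for case. Like the paper, you route the $A^{RL}$ case through $A^{RL} \htri G$ using the closure property of $G \in \C$, and you rely on the same implicit assumption that options of games in $\C$ lie in $\C$. The only difference is in the $A^R$ case: you take $A \htri X$ directly from $G \hleq X$ and apply one transitivity step, whereas the paper chains $A^R \hleq A \htri G \hleq X$. That single step is part (3) of Theorem~\ref{thm:c-transitivity-D-right}, not part (2) as you cite.
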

	
	\begin{proof}
		We prove these statements by mutual induction. Take $X \in \cal{D}$.
		\begin{itemize}
			\item [1)] Take $G'^L$. There are three options for $G'^L$.
			\begin{itemize}
				\item [] Case i) $G'^L = A^R$. $A \htri G$ since $A$ is a left option of $G$. Since $A^R \hleq A$, and $G \hleq X$ it follows from Theorem~\ref{thm:c-transitivity-D-left} that $A^R \htri X$. 
				\item [] Case ii) $G'^L = A^{RL}$. Since $A^R \hleq A$ we have that $A^{RL} \htri A$. Since $A$ is a left option of $G$, it follows from Definition~\ref{def:classes-c-and-d} that $A^{RL} \htri G$. Since $G \hleq X$, it then follows from Theorem~\ref{thm:c-transitivity-D-left} that $A^{RL} \htri X$.
				\item [] Case iii) $G'^L \neq A^R$ and $G'^L \neq A^{RL}$. Then $G'^L$ is a left option of $G$. Because $G \hleq X$, it follows that $G'^L \htri X$.
			\end{itemize}
			Next, take $X^R$. $G \htri X^R \Rightarrow G' \htri X^R$ by induction hypothesis (2). $G'$ is non atomic, so we have $G' \hleq X$.
			\item [2)] There are three ways we can have $G \htri X$. 
			\begin{itemize}
				\item [] Case i) $G^R \hleq X$. $G^R$ is a right option of $G'$, so we have $G' \htri X$.
				\item [] Case ii) $G \hleq X^L$. Then by induction hypothesis (1) we have $G' \hleq X^L$. Thus $G' \htri X$.
				\item [] Case iii) $G^R \hleq X^L$. $G^R$ is a right option of $G'$, so we have $G' \htri X$.\qedhere
			\end{itemize}
		\end{itemize}
	\end{proof}
	
	\begin{theorem}\label{thm:self-rev-dom-preserves-c}
		Let $G \in \cal{C}$, with $G = \{A, G^{\cal{L}}\mid G^{\cal{R}}\}$, and $A^R \hleq A$. Let \\$G' = \{A^R, A^{R\Le}, G^{\cal{L}}\mid G^{\cal{R}}\}$. Then $G' \in \cal{C}$.
	\end{theorem}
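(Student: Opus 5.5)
We follow the template of Theorems~\ref{thm:reversiblity-preserves-c} and~\ref{thm:rev-dom-preserves-c}: to show $G' \in \C$ we must check the two closure conditions of Definition~\ref{def:classes-c-and-d} against an arbitrary $X \in \D$. In each case the plan is to first derive the corresponding statement for $G$, and then transfer it to $G'$ using Lemma~\ref{thm:self-rev-dom-G-X} or Lemma~\ref{thm:self-rev-dom-X-G}.

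The right-option condition is immediate. Suppose $G'^R \htri X$. Since $G'$ has exactly the right options of $G$, $G'^R$ is a right option of $G$. As $G \in \C$, Definition~\ref{def:classes-c-and-d} gives $G \htri X$, and part (2) of Lemma~\ref{thm:self-rev-dom-G-X} then gives $G' \htri X$.

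For the left-option condition, suppose $X \htri G'^L$. We split into three cases according to which left option $G'^L$ is.

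\begin{itemize}
\item[] Case i) $G'^L$ is some $G^L$ other than $A$. Definition~\ref{def:classes-c-and-d} gives $X \htri G$, and Lemma~\ref{thm:self-rev-dom-X-G} gives $X \htri G'$.
\item[] Case ii) $G'^L = A^R$. We have $X \htri A^R \hleq A$ with $X \in \D$ and $A^R, A \in \C$. Theorem~\ref{thm:c-transitivity-D-left}(2) needs $X, A^R \in \D$ (true, since $\C \subseteq \D$) and $A \in \C$, so it yields $X \htri A$. Since $A$ is a left option of $G \in \C$, Definition~\ref{def:classes-c-and-d} gives $X \htri G$, and Lemma~\ref{thm:self-rev-dom-X-G} finishes.
\item[] Case iii) $G'^L = A^{RL}$. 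Since $A^R \in \C$ and $X \in \D$, the closure property of $A^R$ gives $X \htri A^R$. We are then back in the situation of Case ii).
\end{itemize}

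The main point to watch is that every intermediate game lies in the right class. We need $A$ and $A^R$ to be in $\C$, which requires $\C$ to be closed under taking options. This is implicitly used throughout the earlier proofs (for example, ``$A^R \in \C$'' in Lemma~\ref{thm:reversibility-X-G}), and we would adopt the same convention. Given that, the transitivity step in Case ii) is the only place needing care about which of Theorems~\ref{thm:c-transitivity-D-right} and~\ref{thm:c-transitivity-D-left} applies. It is the $\D,\D,\C$ version, Theorem~\ref{thm:c-transitivity-D-left}.
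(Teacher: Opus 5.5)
Your proof is correct and follows the paper's argument essentially step for step. The right-option condition goes through $G \htri X$ and Lemma~\ref{thm:self-rev-dom-G-X}. The left-option condition splits into the cases $A^R$ (via Theorem~\ref{thm:c-transitivity-D-left} and then the closure of $G$), $A^{RL}$ (reduced to $X \htri A^R$ via the closure of $A^R$), and the remaining options of $G$, each finished with Lemma~\ref{thm:self-rev-dom-X-G}.

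You also point out explicitly that the argument needs $A, A^R \in \C$ (hence $A^R \in \D$), that is, that $\C$ be closed under taking options. The paper's proof relies on this same assumption without stating it.
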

	
	\begin{proof}
		Let $X \in \cal{D}$ with $G'^R \htri X$ for some $G'^R$. We wish to show $G' \htri X$.
		
		$G'^R$ is a right option of $G$. By Definition~\ref{def:classes-c-and-d} we have that $G \htri X$. By Lemma~\ref{thm:self-rev-dom-G-X}, it follows that $G' \htri X$.
		
		Let $X \in \cal{D}$ with $X \htri G'^L$ for some $G'^L$. We wish to show $X \htri G'$. There are three options for $G'^L$.
		\begin{itemize}
			\item [] Case i) $G'^L = A^R$. We have that $X \htri A^R \hleq A$. It follows from Theorem~\ref{thm:c-transitivity-D-left} that $X \htri A$. Then, since $A$ is a left option of $G$, it follows from Definition~\ref{def:classes-c-and-d} that $X \htri G$. By Lemma~\ref{thm:self-rev-dom-X-G} we have that $X \htri G'$. 
			\item [] Case ii) $G'^L = A^{RL}$. By Definition~\ref{def:classes-c-and-d} $X \htri A^{RL} \Rightarrow X \htri A^R$. Then, since $A^R \hleq A$ we have by Theorem~\ref{thm:c-transitivity-D-left} that $X \htri A$. Finally, since $A$ is a left option of $G$, it follows from Definition~\ref{def:classes-c-and-d} that $X\htri G$. By Lemma~\ref{thm:self-rev-dom-X-G} we have that $X \htri G'$. 
			\item [] Case iii) $G'^L \neq A^R$ and $G'^L \neq A^{RL}$. Then $G'^L$ is a left option of $G$. It follows from Definition~\ref{def:classes-c-and-d} that $X \htri G$. By Lemma~\ref{thm:self-rev-dom-X-G}, it follows that $X \htri G'$.
		\end{itemize}
		Thus, $G' \in \cal{C}$.
	\end{proof}
	
	\begin{theorem}\label{thm:self-rev-dom-equal}
		Let $G \in \cal{C}$, with $G = \{A, G^{\cal{L}}\mid G^{\cal{R}}\} $, and $A^R \hleq A$. Then \\$G \heq \{A^R, A^{R\Le}, G^{\cal{L}}\mid G^{\cal{R}}\}$.
	\end{theorem}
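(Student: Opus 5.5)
We follow the pattern of Theorems~\ref{thm:reversibility-equal} and~\ref{thm:rev-dom-equal}. Write $G' = \{A^R, A^{R\Le}, G^{\Le}\mid G^{\R}\}$ and prove $G \hleq G'$ and $G' \hleq G$ directly from Definition~\ref{def:intrinsic-order}. Neither game is atomic, so clause 3 never applies. Throughout we use that $G \in \C \subseteq \D$, that $G' \in \C$ by Theorem~\ref{thm:self-rev-dom-preserves-c}, and that options of games in $\C$ lie in $\C$ (and hence in $\D$), as the earlier proofs implicitly assume.

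\textbf{The inequality $G \hleq G'$.} Right options are handled as before: each $G'^R$ is a right option of $G$, and Lemma~\ref{thm:order-reflexivity} gives $G \hleq G'^R$ via $G^R \hleq G^R$, hence $G \htri G'^R$. For left options there are two cases.
\begin{itemize}
\item [] If $G^L \neq A$, then $G^L$ is a left option of $G'$, and reflexivity gives $G^L \htri G'$.
\item [] If $G^L = A$, we need $A \htri G'$. Since $A^R$ is itself a left option of $G'$, we will use clause 3 of the definition of $\htri$ with $A^R \hleq A^R$: a right option of $A$ lies below a left option of $G'$, which is exactly what clause 3 requires. So $A \htri G'$ holds immediately, with no transitivity needed.
\end{itemize}
This is the point where the new left option $A^R$ is essential; $A^{R\Le}$ alone would not suffice, which is why self-reversible domination keeps $A^R$.

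\textbf{The inequality $G' \hleq G$.} Right options are again immediate, since every $G^R$ is a right option of $G'$. For left options $G'^L$ there are three cases.
\begin{itemize}
\item [] If $G'^L$ is an old left option other than $A$, then $G'^L \htri G$ directly.
\item [] If $G'^L = A^R$, we have $A^R \hleq A$ and $A \htri G$ (since $A$ is a left option of $G$). Theorem~\ref{thm:c-transitivity-D-left}(2) would give $A^R \htri G$, but its hypotheses are in the wrong shape, so we instead plan to apply Theorem~\ref{thm:c-transitivity-D-left}(3), $X \hleq Y \htri Z \Rightarrow X \htri Z$, with $X = A^R$, $Y = A$ (both in $\D$) and $Z = G \in \C$. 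This yields $A^R \htri G$.
\item [] If $G'^L = A^{RL}$, then $A^R \hleq A$ gives $A^{RL} \htri A$, and since $A$ is a left option of $G \in \C$ and $A^{RL} \in \D$, Definition~\ref{def:classes-c-and-d} gives $A^{RL} \htri G$.
\end{itemize}

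The main obstacle is bookkeeping which games lie in $\C$ versus $\D$ so that the transitivity theorems and Definition~\ref{def:classes-c-and-d} apply. We will rely on the same convention as the earlier reductions, namely that options of games in $\C$ are again in $\C \subseteq \D$.
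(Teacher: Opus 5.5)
Your proof is correct and has the same overall shape as the paper's: a direct case check of both inequalities. Two sub-cases are argued differently.

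For $A \htri G'$ in the direction $G \hleq G'$, the paper first notes $A \htri G$ and then appeals to Lemma~\ref{thm:self-rev-dom-X-G}. That is an inductive lemma needing $G \in \C$ and $A \in \D$. Your use of clause 3 of $\htri$ is immediate: the right option $A^R$ of $A$ is itself a left option of $G'$, and $A^R \hleq A^R$. It uses neither the hypothesis $A^R \hleq A$ nor any membership in $\C$ or $\D$, so $G \hleq G'$ in fact holds for arbitrary games.

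In the reverse direction your route is the longer one. For $G'^L = A^R$, the paper simply observes that $A^R \hleq A$, with $A$ a left option of $G$, is clause 2 of $\htri$, which gives $A^R \htri G$ outright. Your detour through Theorem~\ref{thm:c-transitivity-D-left}(3) needlessly requires $A^R, A \in \D$, i.e.\ the paper's implicit convention that options of games in $\C$ stay in $\C$.

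Combining your first step with the paper's second, the structure of $\C$ is genuinely needed only in the $A^{RL}$ case. You treat that case exactly as the paper does, relying on $A^{RL} \in \D$.

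A small slip: in the right-option step, reflexivity gives $G \htri G'^R$ by clause 1 of $\htri$, not $G \hleq G'^R$.
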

	
	\begin{proof}
		First we show $G \hleq G'$. Take $G^L$. There are two options for what $G^L$ could be.
		\begin{itemize}
			\item [] Case i) $G^L = A$. Then $A \htri G$ since $A$ is a left option of $G$. It follows from Lemma~\ref{thm:self-rev-dom-X-G} that $A \htri G'$.
			\item [] Case ii) $G^L \neq A$. Then $G^L$ is a left option of $G'$ so $G^L \htri G'$.
		\end{itemize}
		Next, take $G'^R$. $G'^R$ is a right option of $G$, so $G\htri G'^R$. Neither $G$ nor $G'$ is atomic, so we have $G \hleq G'$.
		
		Now we show $G' \hleq G$. Take $G'^L$. There are three options for $G'^L$.
		\begin{itemize}
			\item [] Case i) $G'^L = A^R$. We have that $A^R \hleq A$. Since $A$ is a left option of $G$ it follows that $A^R \htri G$.
			\item [] Case ii) $G'^L = A^{RL}$. Then, since $A^R \hleq A$ it follows that $A^{RL} \htri A$. It follows from Definition~\ref{def:classes-c-and-d} that $A^{RL} \htri G$.
			\item [] Case iii) $G'^L \neq A^R$ and $G'^L \neq A^{RL}$. Then $G'^L$ is a left option of $G$, so $G'^L\htri G$.
		\end{itemize}
		Next, take $G^R$. $G^R$ is a right option of $G'$, so $G'\htri G^R$. Neither $G$ nor $G'$ is atomic, so we have $G' \hleq G$. 
		
		Thus $G \heq G'$.
	\end{proof}
	
	\subsection{Two-Reversibility}\label{subsec:two-reversibility}
	
	\begin{lemma}\label{thm:two-reversibility-X-G}
		Let $G \in \cal{C}$, with $G = \{A,~G^{\cal{L}}\mid~G^{\cal{R}}\} $, and $A^{RR} \hleq G$. Moreover, let $G'~=~\{A^{RR\Le},~G^{\cal{L}}\mid~G^{\cal{R}}\}$. Then, for any game $X \in \cal{D}$:
		\begin{itemize}
			\item [1)] $X \hleq G \Rightarrow X \hleq G'$.
			\item [1)] $X \htri G \Rightarrow X \htri G'$.
		\end{itemize}
	\end{lemma}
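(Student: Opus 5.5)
We prove both statements by mutual induction on $X \in \D$, following the proof of Lemma~\ref{thm:reversibility-X-G} closely. The only change is that the reversing option now sits two levels down, at $A^{RR}$ rather than $A^R$. We therefore need one extra use of the closure property of Definition~\ref{def:classes-c-and-d}, applied to $A^R$, to pass from $A^R$ down to $A^{RR}$.

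For (1), take any $X^L$. We have $X^L \htri G$, so $X^L \htri G'$ by induction hypothesis (2). Every $G'^R$ is a right option of $G$, so $X \htri G'^R$ follows from $X \hleq G$. Since $G'$ is non-atomic, this gives $X \hleq G'$.

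For (2), we split on the three ways $X \htri G$ can hold.
\begin{itemize}
\item If $X^R \hleq G$, induction hypothesis (1) gives $X^R \hleq G'$, and hence $X \htri G'$.
\item If $X \hleq G^L$ or $X^R \hleq G^L$ with $G^L \neq A$, then $G^L$ is a left option of $G'$, so $X \htri G'$ immediately.
\item The remaining cases are $X \hleq A$ and $X^R \hleq A$.
\end{itemize}

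In the case $X \hleq A$, we have $X \htri A^R$ for every $A^R$; in particular this holds for the specific $A^R$ through which $A^{RR}$ arises. In the case $X^R \hleq A$, we first get $X^R \htri A^R$. Since $A^R$ is an option of $G \in \C$ and hence lies in $\C$, and $X \in \D$, the closure clause of Definition~\ref{def:classes-c-and-d} lets us lift this to $X \htri A^R$, exactly as in Lemma~\ref{thm:reversibility-X-G}.

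In either case we now hold $X \htri A^R$ and split again into three cases.
\begin{itemize}
\item If $X^R \hleq A^R$, then $X^R \htri A^{RR}$. Since $A^R \in \C$ and $X^R \in \D$, closure gives $X^R \htri A^R$. This does not obviously help, so it is better to argue differently here: $X^R \hleq A^R$ holds for our particular $A^R$, and therefore $X^R \htri A^{RR}$ for the chosen $A^{RR}$.
\item If $X \hleq A^{RL}$ or $X^R \hleq A^{RL}$, we would want $A^{RL}$ to be a left option of $G'$, but it is not; only $A^{RRL}$ is.
\end{itemize}

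This second branch is the main obstacle. The natural repair is to insist that $X \htri A^R$ be witnessed through $A^{RR}$, that is, to reduce everything to the statement $X \htri A^{RR}$. The key intermediate claim is therefore:
\begin{equation*}
X \hleq A \text{ or } X^R \hleq A \ \Longrightarrow\ X \htri A^{RR}.
\end{equation*}
I would prove it as follows. From $X \hleq A$ we get $X \htri A^R$. We cannot descend further directly, but $A^{RR} \hleq G$ together with $A$ being a left option of $G$ suggests routing through $G$ instead. Once $X \htri A^{RR}$ is in hand, the three subcases go through as in Lemma~\ref{thm:reversibility-X-G}:
\begin{itemize}
\item $X^R \hleq A^{RR} \hleq G$ gives $X^R \hleq G$ by Theorem~\ref{thm:c-transitivity-D-left}, then $X^R \hleq G'$ by induction hypothesis (1), and hence $X \htri G'$;
\item $X \hleq A^{RRL}$ or $X^R \hleq A^{RRL}$ gives $X \htri G'$ directly, because $A^{RRL}$ is a left option of $G'$.
\end{itemize}

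If the intermediate claim cannot be established, I would fall back on the intended reading of the lemma. In the option-closed setting, $A^{RR}$ is itself some $A^R$ of the original position, so the structure of Lemma~\ref{thm:reversibility-X-G} applies with $A^{RR}$ in place of $A^R$. The residual $A^{RL}$ subcases would then be discharged via Lemma~\ref{thm:c-right-triangle-closure}, which handles chains $A^{R\ldots R} \htri Y$ inside $\C$.
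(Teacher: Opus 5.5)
Your proposal has a genuine gap at exactly the point you flag. The claim that $X \hleq A$ or $X^R \hleq A$ implies $X \htri A^{RR}$ is the heart of the lemma, and you never prove it. The remark about ``routing through $G$'' is not an argument. The fallback also fails, for two reasons:
\begin{itemize}
\item Games in $\C$, and their options, need not be option closed; that is the whole reason $\C$ was introduced. So you cannot assume $A^{RR}$ is itself some $A^R$.
\item Lemma~\ref{thm:c-right-triangle-closure} only passes from $G^{R\ldots R} \htri Y$ to $G \htri Y$. It does nothing to place $A^{RL}$ below a left option of $G'$.
\end{itemize}
Your detour through $X \htri A^R$ is a dead end for the same reason. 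The branches $X \hleq A^{RL}$ and $X^R \hleq A^{RL}$ have no evident resolution, since $A^{RL}$ is not an option of $G'$ and nothing in the hypotheses relates it to $G'$.

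The missing idea is that $A \htri A^{RR}$ holds outright.
\begin{itemize}
\item $A^{RR}$ is a right option of $A^R$, and $A^{RR} \hleq A^{RR}$ by Lemma~\ref{thm:order-reflexivity}, so $A^R \htri A^{RR}$.
\item Since $A \in \C$ and $A^{RR} \in \C \subseteq \D$, the right-option clause of Definition~\ref{def:classes-c-and-d} lifts this to $A \htri A^{RR}$.
\end{itemize}
With this in hand, both remaining cases give the intermediate claim.
\begin{itemize}
\item If $X \hleq A$, Theorem~\ref{thm:c-transitivity-D-left}(3), with $X, A \in \D$ and $A^{RR} \in \C$, gives $X \htri A^{RR}$.
\item If $X^R \hleq A$, the same theorem gives $X^R \htri A^{RR}$. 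Since $X \in \D$ and $A^{RR} \in \C$, the closure clause then gives $X \htri A^{RR}$.
\end{itemize}
From there your three subcases ($X^R \hleq A^{RR}$, $X \hleq A^{RRL}$, $X^R \hleq A^{RRL}$) finish the proof exactly as you wrote them. This is the paper's argument. As a minor slip, $A^R$ is an option of $A$, not of $G$.
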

	
	\begin{proof}
		We prove these statements by mutual induction. Take $X \in \cal{D}$.
		\begin{itemize}
			\item [1)] Take $X^L$. $X^L \htri G \Rightarrow X^L \htri G'$ by induction hypothesis (2). Take $G'^R$. $G'^R$ is a right option of $G$, thus because $X \hleq G$, we have $X \htri G'^R$. $G'$ is non atomic, so we have $X \hleq G'$.
			\item [2)] There are three ways we can have $X \htri G$.
			\begin{itemize}
				\item [] Case i) $X^R \hleq G$. Then by induction hypothesis (1) we have $X^R \hleq G'$. Thus $X \htri G'$.
				\item [] Case ii) $X \hleq G^L$. Then there are two options for what $G^L$ could be.
				\begin{itemize}
					\item [] Case a) $G^L = A$. Since $A^R \htri A^{RR}$ we have that $A \htri A^{RR}$. Then it follows that $X \hleq A \htri A^{RR}$. By Theorem~\ref{thm:c-transitivity-D-left} we have that $X \htri A^{RR}$. There are three ways this is possible.
					\begin{itemize}
						\item [] Case I) $X^R \hleq A^{RR}$. But $A^{RR} \hleq G$. By Theorem~\ref{thm:c-transitivity-D-left} we have that $X^R \hleq G$. By induction hypothesis (1), we then have $X^R \hleq G'$. Thus $X \htri G'$.
						\item [] Case II) $X \hleq A^{RRL}$. $A^{RRL}$ is a left option of $G'$ so it follows that $X \htri G'$.
						\item [] Case III) $X^R \hleq A^{RRL}$. $A^{RRL}$ is a left option of $G'$ so it follows that $X \htri G'$.
					\end{itemize}
					\item [] Case b) $G^L \neq A$. Then $G^L$ is a left option of $G'$. Thus $X \htri G'$.
				\end{itemize}
				\item [] Case iii) $X^R \hleq G^L$. There are again two options for what $G^L$ could be.
				\begin{itemize}
					\item [] Case a) $G^L = A$. Since $A^R \htri A^{RR}$ it follows that $A \htri A^{RR}$. Then $X^R \hleq A \htri A^{RR}$. By Theorem~\ref{thm:c-transitivity-D-left} we have that $X^R \htri A^{RR}$. It follows from Definition~\ref{def:classes-c-and-d} that $X \htri A^{RR}$. There are three ways this is possible
					\begin{itemize}
						\item [] Case I) $X^R \hleq A^{RR}$. But $A^{RR} \hleq G$. By Theorem~\ref{thm:c-transitivity-D-left} we have that $X^R \hleq G$. By induction hypothesis (1), we then have $X^R \hleq G'$. Thus $X \htri G'$.
						\item [] Case II) $X \hleq A^{RRL}$. $A^{RRL}$ is a left option of $G'$ so it follows that $X \htri G'$.
						\item [] Case III) $X^R \hleq A^{RRL}$. $A^{RRL}$ is a left option of $G'$ so it follows that $X \htri G'$.
					\end{itemize}
					\item [] Case b) $G^L \neq A$. Then $G^L$ is a left option of $G'$. Thus $X \htri G'$.\qedhere
				\end{itemize}
			\end{itemize}
		\end{itemize}
	\end{proof}
	
	\begin{lemma}\label{thm:two-reversibility-G-X}
		Let $G \in \cal{C}$, with $G = \{A, G^{\cal{L}}\mid G^{\cal{R}}\} $, and $A^{RR} \hleq G$. Let \\$G' = \{A^{RR\Le}, G^{\cal{L}}\mid G^{\cal{R}}\} $. Then, for any game $X \in \cal{D}$:
		\begin{itemize}
			\item [1)] $G \hleq X \Rightarrow G' \hleq X$.
			\item [1)] $G \htri X \Rightarrow G' \htri X$.
		\end{itemize}
	\end{lemma}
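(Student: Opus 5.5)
We prove the two statements by mutual induction, following the proof of Lemma~\ref{thm:reversibility-G-X} with $A^{R}$ replaced by $A^{RR}$. Fix $X \in \D$.

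For statement (1), assume $G \hleq X$. We must show $G'^L \htri X$ for every left option $G'^L$ of $G'$ and $G' \htri X^R$ for every right option $X^R$ of $X$. Since $G'$ is non-atomic, the atomic clause of Definition~\ref{def:intrinsic-order} does not apply. For the left options of $G'$ there are two cases.

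\begin{itemize}
\item[] Case i) $G'^L \neq A^{RRL}$. Then $G'^L$ is a left option of $G$, so $G'^L \htri X$ follows directly from $G \hleq X$.
\item[] Case ii) $G'^L = A^{RRL}$. From $A^{RR} \hleq G$ we get $A^{RRL} \htri G$ by the first clause of Definition~\ref{def:intrinsic-order}. Combining this with $G \hleq X$, part (2) of Theorem~\ref{thm:c-transitivity-D-right} gives $A^{RRL} \htri X$.
\end{itemize}

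For a right option $X^R$ of $X$, we have $G \htri X^R$ because $G \hleq X$. Induction hypothesis (2), applied to $X^R$, then gives $G' \htri X^R$.

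For statement (2), assume $G \htri X$. The right options of $G'$ are exactly those of $G$, so the three cases go through unchanged.

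\begin{itemize}
\item[] Case i) $G^R \hleq X$. Since $G^R$ is also a right option of $G'$, we get $G' \htri X$.
\item[] Case ii) $G \hleq X^L$. Induction hypothesis (1) gives $G' \hleq X^L$, hence $G' \htri X$.
\item[] Case iii) $G^R \hleq X^L$. Again $G^R$ is a right option of $G'$, so $G' \htri X$.
\end{itemize}

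The only delicate step is Case ii) of statement (1), where transitivity is invoked. Part (2) of Theorem~\ref{thm:c-transitivity-D-right} requires the first game to lie in $\C$ and the other two in $\D$. Here $A^{RRL}$ is an option of an option of an option of $G$, so it lies in $\C$, assuming (as the paper does implicitly throughout) that $\C$ is hereditary, i.e.\ closed under taking options. We have $G \in \C \subseteq \D$ and $X \in \D$, so the membership hypotheses are met.

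Unlike Lemma~\ref{thm:two-reversibility-X-G}, no intermediate step through $A \htri A^{RR}$ is needed here. The hypothesis $A^{RR} \hleq G$ applies directly to the new left options $A^{RRL}$ of $G'$.
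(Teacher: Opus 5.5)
Your proof is correct and matches the paper's argument essentially line for line. It uses the same two-case split on left options of $G'$ (with $A^{RRL} \htri G \hleq X$ and Theorem~\ref{thm:c-transitivity-D-right}), induction hypothesis (2) for each $X^R$, and the unchanged three cases for statement (2). Your remark that this relies on $\C$ and $\D$ being closed under taking options is fair; the paper makes the same assumption without stating it.
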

	
	\begin{proof}
		We prove these statements by mutual induction. Take $X \in \cal{D}$.
		\begin{itemize}
			\item [1)] Take $G'^L$. There are two options for what $G'^L$ could be.
			\begin{itemize}
				\item [] Case i) $G'^L = A^{RRL}$. Since $A^{RR} \hleq G$ it follows that $A^{RRL} \htri G \hleq X$. By Theorem~\ref{thm:c-transitivity-D-right}, we have that $A^{RRL} \htri X$.
				\item [] Case ii) $G'^L \neq A^{RRL}$. Then $G'^L$ is a left option of $G$. Because $G \hleq X$, it follows that $G'^L \htri X$.
			\end{itemize}
			Now take $X^R$. $G \htri X^R \Rightarrow G' \htri X^R$ by induction hypothesis (2). $G'$ is non atomic, so we have $G' \hleq X$.
			\item [2)] There are three ways we can have $G \htri X$.
			\begin{itemize}
				\item [] Case i) $G^R \hleq X$. $G^R$ is a right option of $G'$, so we have $G' \htri X$.
				\item [] Case ii) $G \hleq X^L$. Then by induction hypothesis (1) we have $G' \hleq X^L$. Thus $G' \htri X$.
				\item [] Case iii) $G^R \hleq X^L$. $G^R$ is a right option of $G'$, so we have $G' \htri X$.\qedhere
			\end{itemize}
		\end{itemize}
	\end{proof}
	
	\begin{theorem}\label{thm:two-reversiblity-preserves-c}
		Let $G \in \cal{C}$, with $G = \{A, G^{\cal{L}}\mid G^{\cal{R}}\} $, and $A^{RR} \hleq G$. Let \\$G' = \{A^{RR\Le}, G^{\cal{L}}\mid G^{\cal{R}}\}$. Then $G' \in \cal{C}$.
	\end{theorem}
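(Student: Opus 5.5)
The proof follows the template of Theorem~\ref{thm:reversiblity-preserves-c}. We must verify the two closure conditions of Definition~\ref{def:classes-c-and-d} for $G'$ against every $X \in \D$. In each case we first transfer the hypothesis from $G'$ back to $G$, and then use Lemmas~\ref{thm:two-reversibility-G-X} and~\ref{thm:two-reversibility-X-G} to push the conclusion forward from $G$ to $G'$.

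\emph{Right condition.} Suppose $G'^R \htri X$ for some $X \in \D$. Every right option of $G'$ is a right option of $G$, since the right options are unchanged. Because $G \in \C$, Definition~\ref{def:classes-c-and-d} gives $G \htri X$. Lemma~\ref{thm:two-reversibility-G-X}(2) then yields $G' \htri X$.

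\emph{Left condition.} Suppose $X \htri G'^L$ for some $X \in \D$. We split into two cases according to the option.
\begin{itemize}
\item[] Case ii) $G'^L \neq A^{RRL}$. Then $G'^L$ is a left option of $G$, so $X \htri G$ because $G \in \C$. Lemma~\ref{thm:two-reversibility-X-G} then gives $X \htri G'$.
\item[] Case i) $G'^L = A^{RRL}$. This is the case that needs work, and we climb back up from $A^{RRL}$ to $G$.
\begin{itemize}
\item[] Step 1. The game $A^{RR}$ is an option of an option of $G$, so it lies in $\C$, as the paper has been using implicitly for options (for instance $A^R \in \C$ in the earlier proofs). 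Definition~\ref{def:classes-c-and-d} applied to $A^{RR}$ turns $X \htri A^{RRL}$ into $X \htri A^{RR}$.
\item[] Step 2. We have $X \htri A^{RR}$ and $A^{RR} \hleq G$. Theorem~\ref{thm:c-transitivity-D-left}(2) gives $X \htri G$, with $X, A^{RR} \in \D$ (using $\C \subseteq \D$) and $G \in \C$.
\item[] Step 3. Lemma~\ref{thm:two-reversibility-X-G}(2) gives $X \htri G'$.
\end{itemize}
\end{itemize}

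The main obstacle is the bookkeeping of class membership needed for Step 1 and Step 2, rather than any real combinatorics. Step 1 requires knowing that $A^{RR}$ lies in $\C$. Step 2 requires that transitivity holds with $X, A^{RR} \in \D$ and $G \in \C$, which is exactly the hypothesis pattern of Theorem~\ref{thm:c-transitivity-D-left}.

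For Step 1, I would rely on the same convention as the preceding theorems, namely that options of games in $\C$ are again in $\C$. If that convention is not available, I would instead argue through $X \htri A^{RR}$ directly, using the closure of $G$ along the chain $A \to A^R \to A^{RR}$. Everything else is immediate from the two lemmas already proven.
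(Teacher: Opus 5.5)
Your proposal is correct and follows the paper's proof step for step. For the right condition you use closure of $G \in \C$ and then Lemma~\ref{thm:two-reversibility-G-X}; for the left condition you split on whether $G'^L = A^{RRL}$ and chain closure of $A^{RR}$, Theorem~\ref{thm:c-transitivity-D-left} applied to $A^{RR} \hleq G$, and Lemma~\ref{thm:two-reversibility-X-G}. Like the paper, you rely on $A^{RR}$ lying in $\C$, and your hedged fallback via closure of $G$ alone would not work, since that condition only concerns immediate options of $G$ and cannot absorb $X \htri A^{RRL}$. Your citation of Definition~\ref{def:classes-c-and-d} for the right condition is the correct justification where the paper cites Theorem~\ref{thm:c-transitivity-D-right}.
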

	
	\begin{proof}
		Let $X \in \cal{D}$ with $G'^R \htri X$ for some $G'^R$. We wish to show $G' \htri X$. First, we note that $G'^R$ is a right option of $G$. By Theorem~\ref{thm:c-transitivity-D-right} we then have that $G \htri X$. By Lemma~\ref{thm:two-reversibility-G-X}, it follows that $G' \htri X$.
		
		Let $X \in \cal{D}$ with $X \htri G'^L$ for some $G'^L$. We wish to show $X \htri G'$. There are two options for what $G'^L$ is.
		\begin{itemize}
			\item [] Case i) $G'^L = A^{RRL}$. By Definition~\ref{def:classes-c-and-d} $X \htri A^{RR}$. Since $A^{RR} \hleq G$, we have that $X \htri G$ by Theorem~\ref{thm:c-transitivity-D-left}. It follows from Lemma~\ref{thm:two-reversibility-X-G} that $X \htri G'$. 
			\item [] Case ii) $G'^L \neq A^{RL}$. Then $G'^L$ is a left option of $G$. Since $G \in \cal{C}$, by Definition~\ref{def:classes-c-and-d} we have that $X \htri G$. By Lemma~\ref{thm:two-reversibility-X-G}, it follows that $X \htri G'$.
		\end{itemize}
		Thus, $G' \in \cal{C}$.
	\end{proof}
	
	\begin{theorem}\label{thm:two-reversibility-equal}
		Let $G \in \cal{C}$, with $G = \{A, G^{\cal{L}}\mid G^{\cal{R}}\} $, and $A^{RR} \hleq G$. Then \\$G \heq \{A^{RR\Le}, G^{\cal{L}}\mid G^{\cal{R}}\} $.
	\end{theorem}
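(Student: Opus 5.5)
Writing $G' = \{A^{RR\Le}, G^{\Le}\mid G^{\R}\}$, I will follow the template of Theorem~\ref{thm:reversibility-equal} and prove the two inequalities $G \hleq G'$ and $G' \hleq G$ separately. In each case I check clauses 1 and 2 of Definition~\ref{def:intrinsic-order}. Clause 3 is vacuous because neither $G$ nor $G'$ is atomic. The right options of $G$ and $G'$ coincide, so the right-option clauses are immediate by reflexivity (Lemma~\ref{thm:order-reflexivity}): for each $G^R$ we get $G\htri G^R$ and $G'\htri G^R$ via $G^R \hleq G^R$.

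For $G \hleq G'$, I take a left option $G^L$. If $G^L \neq A$, then it is a left option of $G'$, and $G^L \htri G'$ follows from $G^L\hleq G^L$. If $G^L = A$, then $A \htri G$ because $A$ is a left option of $G$, and Lemma~\ref{thm:two-reversibility-X-G}(2) with $X = A$ gives $A \htri G'$. This requires $A \in \D$. Since $G \in \C$, I expect its options to lie in $\C$ (the paper uses this implicitly for $A^R$ and $A^{RR}$), and $\C\subseteq\D$.

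For $G' \hleq G$, I take a left option $G'^L$. If $G'^L$ is one of the original $G^{\Le}$, then it is a left option of $G$, and reflexivity gives $G'^L \htri G$. If $G'^L = A^{RRL}$, then from $A^{RR} \hleq G$, clause 1 of Definition~\ref{def:intrinsic-order} gives $A^{RRL} \htri G$ directly.

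The main obstacle is the case $G^L = A$ in the first inequality: it depends on the closure lemma and on membership of $A$ in $\D$. To justify that membership, I would note that $\C$ should be closed under taking options, as used throughout the reductions. Lemma~\ref{thm:two-reversibility-X-G} then absorbs the real work, namely the passage from $A$ to $A^{RR}$ via $A\htri A^{RR}$ and transitivity.
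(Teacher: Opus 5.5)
Your proof is correct and has the same overall structure as the paper's. You check both inequalities clause by clause, and you handle the right options and the $A^{RRL}$ case exactly as the paper does. The only real difference is how you obtain $A \htri G'$ in the case $G^L = A$.

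The paper applies part~(1) of Lemma~\ref{thm:two-reversibility-X-G} with $X = A^{RR}$, turning the hypothesis $A^{RR} \hleq G$ into $A^{RR} \hleq G'$. It then reads off $A^R \htri G'$ from the definition of $\htri$, and uses the closure property of $A \in \C$ to conclude $A \htri G'$. That closure step is only licensed against games in $\D$, so it silently needs $G' \in \D$, i.e.\ Theorem~\ref{thm:two-reversiblity-preserves-c} together with $\C \subseteq \D$.

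You instead apply part~(2) of the same lemma with $X = A$ to the trivial fact $A \htri G$. This needs only $A \in \D$, and it delegates both the passage $A \htri A^{RR}$ and the use of $A^{RR} \hleq G$ to the lemma's case ii(a). Your route is shorter and does not depend on $G'$ lying in $\C$. It is in fact the pattern the paper itself uses in Theorems~\ref{thm:rev-dom-equal}, \ref{thm:self-rev-dom-equal} and~\ref{thm:two-self-rev-dom-equal}.

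The paper's route here instead mirrors Theorem~\ref{thm:reversibility-equal}. Its advantage is that the hypothesis $A^{RR} \hleq G$ is used explicitly at the top level rather than inside the lemma.

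Both routes rest on the same unstated assumption that options of games in $\C$ again lie in $\C$. You correctly flag this assumption.
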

	
	\begin{proof}
		First we show $G \hleq G'$. Take $G^L$. There are two options for what $G^L$ can be.
		\begin{itemize}
			\item [] Case 1) $G^L = A$. Then since $A^{RR} \hleq G$ we have that $A^{RR} \hleq G'$ by Lemma~\ref{thm:reversibility-X-G}. Thus $A^R \htri G'$ by Definition~\ref{def:classes-c-and-d}. It then follows from Definition~\ref{def:classes-c-and-d} that $A \htri G'$.
			\item [] Case 2) $G^L \neq A$. Then $G^L$ is a left option of $G'$ so $G^L \htri G'$.
		\end{itemize}
		Next, take $G'^R$. $G'^R$ is a right option of $G$, so $G\htri G'^R$. Neither $G$ nor $G'$ is atomic, so we have $G \hleq G'$.
		
		Now we show $G' \hleq G$. Take $G'^L$. There are two options for what $G'^L$ can be.
		\begin{itemize}
			\item [] Case i) $G'^L = A^{RRL}$. Then since $A^{RR} \hleq G$ it follows that $A^{RRL} \htri G$.
			\item [] Case ii) $G'^L \neq A^{RRL}$. Then $G'^L$ is a left option of $G$, so $G'^L\htri G$.
		\end{itemize}
		Next, take $G^R$. $G^R$ is a right option of $G'$, so $G'\htri G^R$. Neither $G$ nor $G'$ is atomic, so we have $G' \hleq G$. 
		
		Thus $G \heq G'$.
	\end{proof}
	
	\subsection{Two-Reversible Domination}\label{subsec:two-reversible-domination}
	
	\begin{lemma}\label{thm:two-rev-dom-X-G}
		Let $G \in \cal{C}$, with $G = \{A, B, G^{\cal{L}}\mid G^{\cal{R}}\} $, and $A^{RR} \hleq B$. Let \\$G' = \{A^{RR\Le}, B, G^{\cal{L}}\mid G^{\cal{R}}\}$. Then, for any game $X \in \cal{D}$:
		\begin{itemize}
			\item [1)] $X \hleq G \Rightarrow X \hleq G'$.
			\item [1)] $X \htri G \Rightarrow X \htri G'$.
		\end{itemize}
	\end{lemma}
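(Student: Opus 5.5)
The proof follows the template of Lemma~\ref{thm:rev-dom-X-G}, combined with the two-step trick of Lemma~\ref{thm:two-reversibility-X-G}. I would prove statements (1) and (2) by mutual induction, fixing $X \in \D$.

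For (1), suppose $X \hleq G$. For each $X^L$ we have $X^L \htri G$, and induction hypothesis (2) gives $X^L \htri G'$. Every right option of $G'$ is a right option of $G$, so $X \htri G'^R$ holds for each of them. Since $G'$ is non-atomic, the atomic clause is vacuous, and so $X \hleq G'$.

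For (2), I split on the three ways $X \htri G$ can hold.

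\textbf{Case i) $X^R \hleq G$.} Induction hypothesis (1) gives $X^R \hleq G'$, hence $X \htri G'$.

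\textbf{Left option other than $A$.} In cases ii) and iii), if the witnessing left option $G^L$ is not $A$, then it remains a left option of $G'$ and we are done immediately.

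\textbf{Case ii) with $G^L = A$, so $X \hleq A$.} First, $A^R \htri A^{RR}$ because $A^{RR}$ is a right option of $A^R$, using reflexivity (Lemma~\ref{thm:order-reflexivity}). Next, $A \htri A^{RR}$ by Definition~\ref{def:classes-c-and-d}, since $A \in \C$ and $A^{RR} \in \D$ ($\C \subseteq \D$). Theorem~\ref{thm:c-transitivity-D-left} then gives $X \htri A^{RR}$, and I split on how this holds.
\begin{itemize}
\item[] If $X \hleq A^{RRL}$ or $X^R \hleq A^{RRL}$, then $A^{RRL}$ is a left option of $G'$, so $X \htri G'$.
\item[] If $X^R \hleq A^{RR}$, then together with $A^{RR} \hleq B$, Theorem~\ref{thm:c-transitivity-D-left} gives $X^R \hleq B$. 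Since $B$ is a left option of $G'$, we get $X \htri G'$.
\end{itemize}

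\textbf{Case iii) with $G^L = A$, so $X^R \hleq A$.} The same chain gives $X^R \htri A^{RR}$. Then Definition~\ref{def:classes-c-and-d} upgrades this to $X \htri A^{RR}$, applying the $\D$-closure of $X$ to its right option against $A^{RR} \in \C$. From here the three subcases are handled exactly as in case ii).

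The main obstacle is bookkeeping class membership so that each use of transitivity and closure is legitimate. Every option of a game in $\C$ must itself be in $\C$, which the paper uses implicitly, as options of $\C$-games are assumed to lie in $\C$. Each application of Theorem~\ref{thm:c-transitivity-D-left} needs the first two games in $\D$ and the last in $\C$. For instance, in $X^R \hleq A^{RR} \hleq B$ the games $X^R$ and $A^{RR}$ lie in $\D$ and $B$ lies in $\C$. I would check each such triple explicitly.
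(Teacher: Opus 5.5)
Your proposal is correct relative to the paper and follows its proof essentially step for step: the same mutual induction, the same case split on how $X \htri G$ holds, the chain $A^R \htri A^{RR}$, hence $A \htri A^{RR}$, combined with Theorem~\ref{thm:c-transitivity-D-left}, and the $\D$-closure of $X$ to pass from $X^R \htri A^{RR}$ to $X \htri A^{RR}$ in case iii). On the heredity point you flag: closure of $\C$ and $\D$ under taking options is not implied by Definition~\ref{def:classes-c-and-d}, whose condition is purely local (e.g.\ with $K = \{[0] \mid [0]\}$ and $H = \{K \mid [0]\}$, the game $\{H, K, [0] \mid [0]\}$ is closed with respect to every game and hence lies in $\C$, yet $H \notin \C$ since the atom $[0] \in \D$ satisfies $[0] \htri K$ but not $[0] \htri H$), so both your argument and the paper's tacitly need this heredity as an extra hypothesis, for instance by building it into the definition of $\A^*$.
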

	
	\begin{proof}
		We prove these statements by mutual induction. Take $X \in \cal{D}$.
		\begin{itemize}
			\item [1)] Take $X^L$. $X^L \htri G \Rightarrow X^L \htri G'$ by induction hypothesis (2). Take $G'^R$. $G'^R$ is a right option of $G$, so we have $X \htri G'^R$. $G'$ is non atomic, so we have $X \hleq G'$.
			\item [2)] There are three ways we can have $X \htri G$.
			\begin{itemize}
				\item [] Case i) $X^R \hleq G$. Then by induction hypothesis (1) we have $X^R \hleq G'$. Thus $X \htri G'$.
				\item [] Case ii) $X \hleq G^L$. Then there are two options for $G^L$.
				\begin{itemize}
					\item [] Case a) $G^L = A$. Since $A^R \htri A^{RR}$ it follows that $A \htri A^{RR}$. Then we have $X \hleq A \htri A^{RR}$. By Theorem~\ref{thm:c-transitivity-D-left} it follows that $X \htri A^{RR}$. There are three ways this is possible
					\begin{itemize}
						\item [] Case I) $X^R \hleq A^{RR}$. But $A^{RR} \hleq B$. By Theorem~\ref{thm:c-transitivity-D-left} we have that $X^R \hleq B$. Since $B$ is a left option of $G'$ it follows that $X \htri G'$.
						\item [] Case II) $X \hleq A^{RRL}$. $A^{RRL}$ is a left option of $G'$ so it follows that $X \htri G'$.
						\item [] Case III) $X^R \hleq A^{RRL}$. $A^{RRL}$ is a left option of $G'$ so it follows that $X \htri G'$.
					\end{itemize}
					\item [] Case b) $G^L \neq A$. Then $G^L$ is a left option of $G'$. Thus $X \htri G'$.
				\end{itemize}
				\item [] Case iii) $X^R \hleq G^L$. There are again two options for $G^L$.
				\begin{itemize}
					\item [] Case a) $G^L = A$. Since $A^R \htri A^{RR}$ it follows that $A \htri A^{RR}$. Then we have $X^R \hleq A \htri A^{RR}$. By Theorem~\ref{thm:c-transitivity-D-left} it follows that $X^R \htri A^{RR}$. By Definition~\ref{def:classes-c-and-d} it follows that $X \htri A^{RR}$. There are three ways this is possible.
					\begin{itemize}
						\item [] Case I) $X^R \hleq A^{RR}$. But $A^{RR} \hleq B$. By Theorem~\ref{thm:c-transitivity-D-left} we have that $X^R \hleq B$. Since $B$ is a left option of $G'$ it follows that $X \htri G'$.
						\item [] Case II) $X \hleq A^{RRL}$. $A^{RRL}$ is a left option of $G'$ so it follows that $X \htri G'$.
						\item [] Case III) $X^R \hleq A^{RRL}$. $A^{RRL}$ is a left option of $G'$ so it follows that $X \htri G'$.
					\end{itemize}
					\item [] Case b) $G^L \neq A$. Then $G^L$ is a left option of $G'$. Thus $X \htri G'$.\qedhere
				\end{itemize}
			\end{itemize}
		\end{itemize}
	\end{proof}
	
	\begin{lemma}\label{thm:two-rev-dom-G-X}
		Let $G \in \cal{C}$, with $G = \{A, B, G^{\cal{L}}\mid G^{\cal{R}}\} $, and $A^{RR} \hleq B$. Let \\$G' = \{A^{RR\Le}, B, G^{\cal{L}}\mid G^{\cal{R}}\}$. Then, for any game $X \in \cal{D}$:
		\begin{itemize}
			\item [1)] $G \hleq X \Rightarrow G' \hleq X$.
			\item [1)] $G \htri X \Rightarrow G' \htri X$.
		\end{itemize}
	\end{lemma}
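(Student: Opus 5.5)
We prove the two statements by mutual induction on $X$, following the pattern of Lemma~\ref{thm:rev-dom-G-X}. Since $G'$ has the same right options as $G$, only the left options of $G'$ need new work, and of these only $A^{RRL}$ is new.

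For (1), suppose $G \hleq X$. Take a left option $G'^L$ of $G'$.
\begin{itemize}
\item[] Case i) $G'^L = A^{RRL}$. From $A^{RR} \hleq B$ we get $A^{RRL} \htri B$. Since $B$ is a left option of $G$ and $G \in \C$, Definition~\ref{def:classes-c-and-d} gives $A^{RRL} \htri G$. Combining with $G \hleq X$, Theorem~\ref{thm:c-transitivity-D-right} yields $A^{RRL} \htri X$.
\item[] Case ii) $G'^L \neq A^{RRL}$. Then $G'^L$ is a left option of $G$, so $G'^L \htri X$ directly from $G \hleq X$.
\end{itemize}
Next take a right option $X^R$. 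From $G \hleq X$ we have $G \htri X^R$, and induction hypothesis (2) gives $G' \htri X^R$. Finally, $G'$ is non-atomic, so the atomic clause is vacuous and $G' \hleq X$.

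For (2), suppose $G \htri X$. There are three cases.
\begin{itemize}
\item[] Case i) $G^R \hleq X$. Here $G^R$ is also a right option of $G'$, so $G' \htri X$.
\item[] Case ii) $G \hleq X^L$. Induction hypothesis (1) gives $G' \hleq X^L$, hence $G' \htri X$.
\item[] Case iii) $G^R \hleq X^L$. Again $G^R$ is a right option of $G'$, so $G' \htri X$.
\end{itemize}
Only Case ii) uses the induction hypothesis.

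The main obstacle is Case i) of (1), specifically the transitivity step. Theorem~\ref{thm:c-transitivity-D-right} requires its first game to be in $\C$ and the other two in $\D$, and the membership hypotheses must be checked. We have $X \in \D$ by assumption. We have $G \in \C \subseteq \D$. For $A^{RRL}$, the definition of $\C$ as the least fixed point should make $\C$ closed under taking options, since options of games in $\C$ are again $\C$-like; this closure is used implicitly throughout the earlier reduction lemmas, so $A^{RRL} \in \C$.

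If this closure cannot be justified directly, a fallback is to use Theorem~\ref{thm:c-transitivity-D-left} instead. Its membership requirements differ: $A^{RRL}$ and $G$ would need to be in $\D$, and $X$ in $\C$. The paper's earlier proofs use the two transitivity theorems interchangeably in this spot, so I would simply mirror Lemma~\ref{thm:rev-dom-G-X}.

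The second use of closure is the step from $A^{RRL} \htri B$ to $A^{RRL} \htri G$. Definition~\ref{def:classes-c-and-d} needs $A^{RRL} \in \D$, which follows from $A^{RRL} \in \C \subseteq \D$.
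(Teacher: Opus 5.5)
Your proof is the same as the paper's. It uses the same case split on $G'^L$: you derive $A^{RRL} \htri B$, then $A^{RRL} \htri G$ from the definition of $\C$, then $A^{RRL} \htri X$ by Theorem~\ref{thm:c-transitivity-D-right}. Like the paper, you apply induction hypothesis (2) to each $X^R$, and you treat the three cases of $G \htri X$ in the same way, with only the case $G \hleq X^L$ needing induction hypothesis (1).

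On memberships: the paper likewise assumes without proof that options of games in $\C$ (resp.\ $\D$) lie in $\C$ (resp.\ $\D$), since its definition of $\C$ only constrains immediate options. So your appeal to this closure is no weaker than the paper's, but you should not claim that it follows from the least-fixed-point definition. Drop your fallback: Theorem~\ref{thm:c-transitivity-D-left} would require $X \in \C$, and you only know $X \in \D$, with $\C \subseteq \D$ giving no help in that direction.
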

	
	\begin{proof}
		We prove these statements by mutual induction. Take $X \in \cal{D}$.
		\begin{itemize}
			\item [1)] Take $G'^L$. There are two possible options for $G'^L$.
			\begin{itemize}
				\item [] Case i) $G'^L = A^{RRL}$. Since $A^{RR} \hleq B$ we have that $A^{RRL} \htri B$. Since $B$ is a left option of $G$ it follows from Definition~\ref{def:classes-c-and-d} that $A^{RRL} \htri G$. But $G \hleq X$. By Theorem~\ref{thm:c-transitivity-D-right} we have that $A^{RRL} \htri X$.
				\item [] Case ii) $G'^L \neq A^{RRL}$. Then $G'^L$ is a left option of $G$. Because $G \hleq X$, it follows that $G'^L \htri X$.
			\end{itemize}
			Next, take $X^R$. $G \htri X^R \Rightarrow G' \htri X^R$ by induction hypothesis (2). $G'$ is non atomic, so we have $G' \hleq X$.
			\item [2)] There are three ways we can have $G \htri X$.
			\begin{itemize}
				\item [] Case i) $G^R \hleq X$. $G^R$ is a right option of $G'$, so we have $G' \htri X$.
				\item [] Case ii) $G \hleq X^L$. Then by induction hypothesis (1) we have $G' \hleq X^L$. Thus $G' \htri X$.
				\item [] Case iii) $G^R \hleq X^L$. $G^R$ is a right option of $G'$, so we have $G' \htri X$.\qedhere
			\end{itemize}
		\end{itemize}
	\end{proof}
	
	\begin{theorem}\label{thm:two-rev-dom-preserves-c}
		Let $G \in \cal{C}$, with $G = \{A, B, G^{\cal{L}}\mid G^{\cal{R}}\} $, and $A^{RR} \hleq B$. Let \\$G' = \{A^{RR\Le}, B, G^{\cal{L}}\mid G^{\cal{R}}\}$. Then $G' \in \cal{C}$.
	\end{theorem}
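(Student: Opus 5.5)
The plan is to follow the template of Theorem~\ref{thm:rev-dom-preserves-c}, replacing $A^R$ by $A^{RR}$ throughout. Membership in $\C$ means $\D$-closure. So I fix an arbitrary $X \in \D$ and check the two clauses of Definition~\ref{def:classes-c-and-d} for $G'$. Each clause will be reduced to the corresponding statement about $G$, and then transferred to $G'$ using Lemmas~\ref{thm:two-rev-dom-G-X} and~\ref{thm:two-rev-dom-X-G}.

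\textbf{Right options.} Suppose $G'^R \htri X$ for some $G'^R$. Since $G'$ has the same right options as $G$, $G'^R$ is a right option of $G$. Because $G \in \C$, Definition~\ref{def:classes-c-and-d} gives $G \htri X$, and Lemma~\ref{thm:two-rev-dom-G-X} (part 2) then gives $G' \htri X$.

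\textbf{Left options.} Suppose $X \htri G'^L$. I split into two cases.

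If $G'^L$ is $B$ or some member of $G^{\Le}$, then it is a left option of $G$. Definition~\ref{def:classes-c-and-d} gives $X \htri G$, and Lemma~\ref{thm:two-rev-dom-X-G} gives $X \htri G'$.

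The main case is $G'^L = A^{RRL}$. The plan is to climb back up from $A^{RRL}$ to $G$:
\begin{itemize}
\item $A^{RR}$ is an option of an option of $G \in \C$, so it lies in $\C$ (as in the earlier proofs, which treat options of games in $\C$ as being in $\C$). Since $X \in \D$, Definition~\ref{def:classes-c-and-d} applied to $A^{RR}$ turns $X \htri A^{RRL}$ into $X \htri A^{RR}$.
\item From $X \htri A^{RR} \hleq B$, Theorem~\ref{thm:c-transitivity-D-left} gives $X \htri B$. This needs $X, A^{RR} \in \D$ and $B \in \C$; these hold because $X \in \D$, $\C \subseteq \D$, and $B$ is an option of $G$.
\item Since $B$ is a left option of $G \in \C$, Definition~\ref{def:classes-c-and-d} gives $X \htri G$.
\item Finally, Lemma~\ref{thm:two-rev-dom-X-G} gives $X \htri G'$.
\end{itemize}

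Combining the two clauses, $G' \in \C$.

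The step I expect to need the most care is the membership bookkeeping in the main case. Each application of Definition~\ref{def:classes-c-and-d} and of Theorem~\ref{thm:c-transitivity-D-left} requires the right game to be in $\C$ or $\D$. In particular, I must justify that $A^{RR}$ and $B$ are in $\C$, so that $X \htri A^{RRL}$ lifts to $X \htri A^{RR}$. I would argue this exactly as the earlier proofs implicitly do: options of games in $\C$ are again in $\C$. The rest is routine.
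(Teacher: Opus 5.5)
Your proposal is correct and matches the paper's proof essentially step for step. The right-option clause goes through $G \htri X$ and Lemma~\ref{thm:two-rev-dom-G-X}, and for $G'^L = A^{RRL}$ you use the same chain: $X \htri A^{RR}$, then $X \htri B$ by Theorem~\ref{thm:c-transitivity-D-left}, then $X \htri G$, then Lemma~\ref{thm:two-rev-dom-X-G}. The membership bookkeeping you flag (options of games in $\mathcal{C}$ lying in $\mathcal{C}$, and $\mathcal{C} \subseteq \mathcal{D}$) is assumed implicitly by the paper here and in its transitivity proofs, so your version is, if anything, more explicit about it.
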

	
	\begin{proof}
		Let $X \in \cal{D}$ with $G'^R \htri X$ for some $G'^R$. We wish to show $G' \htri X$. $G'^R$ is a right option of $G$ so by Definition~\ref{def:classes-c-and-d} we have that $G \htri X$. By Lemma~\ref{thm:two-rev-dom-G-X}, it follows that $G' \htri X$.
		
		Let $X \in \cal{D}$ with $X \htri G'^L$ for some $G'^L$. We wish to show $X \htri G'$. There are two options for $G'^L$.
		\begin{itemize}
			\item [] Case i) $G'^L = A^{RRL}$. Then by Definition~\ref{def:classes-c-and-d} $X \htri A^{RRL} \Rightarrow X \htri A^{RR}$. Since $A^{RR} \hleq B$, we have by Theorem~\ref{thm:c-transitivity-D-left} that $X \htri B$. Finally, since $B$ is a left option of $G$ it follows from Definition~\ref{def:classes-c-and-d} that $X \htri G$. It follows from Lemma~\ref{thm:two-rev-dom-X-G} that $X \htri G'$. 
			\item [] Case ii) $G'^L \neq A^{RRL}$. Then $G'^L$ is a left option of $G$. It follows from Definition~\ref{def:classes-c-and-d} that $X \htri G$. By Lemma~\ref{thm:two-rev-dom-X-G}, we have that $X \htri G'$.
		\end{itemize}
		Thus, $G' \in \cal{C}$.
	\end{proof}
	
	\begin{theorem}\label{thm:two-rev-dom-equal}
		Let $G \in \cal{C}$, with $G = \{A, B, G^{\cal{L}}\mid G^{\cal{R}}\} $, and $A^{RR} \hleq B$. Then $G \heq \{A^{RR\Le}, B, G^{\cal{L}}\mid G^{\cal{R}}\}$.
	\end{theorem}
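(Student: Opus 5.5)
We follow the template of Theorem~\ref{thm:rev-dom-equal}, writing $G' = \{A^{RR\Le}, B, G^{\Le}\mid G^{\R}\}$ and proving $G \hleq G'$ and $G' \hleq G$ separately. Neither $G$ nor $G'$ is atomic, so clause 3 of Definition~\ref{def:intrinsic-order} is vacuous in both directions. The right options of $G$ and $G'$ coincide, so the right-option clauses are immediate by reflexivity (Lemma~\ref{thm:order-reflexivity}). For instance, each $G'^R$ is a $G^R$, and $G^R \hleq G^R$ gives $G \htri G'^R$. All the work is in the left options.

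For $G \hleq G'$, take $G^L$. If $G^L \neq A$, then $G^L$ is a left option of $G'$, and $G^L \htri G'$ follows by reflexivity. If $G^L = A$, we build a chain.
\begin{itemize}
\item[1)] Since $B$ is a left option of $G$, reflexivity gives $B \htri G$.
\item[2)] Combining $A^{RR} \hleq B$ with step 1 and Theorem~\ref{thm:c-transitivity-D-left} yields $A^{RR} \htri G$.
\item[3)] Applying Definition~\ref{def:classes-c-and-d} with $G \in \D$, using the right-option clause for $A^R \in \C$ and then for $A \in \C$, lifts this first to $A^R \htri G$ and then to $A \htri G$.
\item[4)] Lemma~\ref{thm:two-rev-dom-X-G}(2) transfers step 3 to $A \htri G'$.
\end{itemize}
Alternatively, one can argue directly: since $B$ is a left option of $G'$, we get $B \htri G'$, hence $A^{RR} \htri G'$ by transitivity, and then lift twice as in step 3. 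This route needs $G' \in \D$, which follows from Theorem~\ref{thm:two-rev-dom-preserves-c} together with $\C \subseteq \D$.

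For $G' \hleq G$, take $G'^L$. If $G'^L$ is $B$ or some $G^L$, it is a left option of $G$, and reflexivity gives $G'^L \htri G$. If $G'^L = A^{RRL}$, then $A^{RR} \hleq B$ gives $A^{RRL} \htri B$. Since $B$ is a left option of $G$, the left-option clause of Definition~\ref{def:classes-c-and-d} (with $G \in \C$ and $A^{RRL} \in \D$) gives $A^{RRL} \htri G$.

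The main obstacle is bookkeeping of class membership. Each use of Definition~\ref{def:classes-c-and-d} and of Theorem~\ref{thm:c-transitivity-D-left} requires the relevant games to lie in $\C$ or $\D$. The needed fact is that every (iterated) option of a game in $\C$ lies in $\C$, and hence in $\D$. The paper uses this implicitly throughout, for example in Lemma~\ref{thm:c-left-triangle-closure}, and I would rely on it in the same way.
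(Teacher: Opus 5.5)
Your proposal is correct and follows the paper's proof essentially step for step. It uses the same case split on left options, the same chain $A^{RR} \hleq B \htri G$ via Theorem~\ref{thm:c-transitivity-D-left}, lifted to $A \htri G$ by the closure property of $\C$ and transferred to $G'$ by Lemma~\ref{thm:two-rev-dom-X-G}, and the same argument that $A^{RRL} \htri B$ gives $A^{RRL} \htri G$ in the reverse direction. The differences are that you spell out the two-step lift through $A^R$, which the paper compresses into a single appeal to Definition~\ref{def:classes-c-and-d}, and that you state openly the assumption that options of games in $\C$ stay in $\C$, which the paper also relies on without proof.
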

	
	\begin{proof}
		First we show $G \hleq G'$. Take $G^L$. There are two options for $G^L$.
		\begin{itemize}
			\item [] Case i) $G^L = A$. We have that $A^{RR} \hleq B$ and $B \htri G$. It follows from Theorem~\ref{thm:c-transitivity-D-left} that $A^{RR} \htri G$. Then, by Definition~\ref{def:classes-c-and-d} we have that $A \htri G$. Finally, by Lemma~\ref{thm:two-rev-dom-X-G} we have that $A \htri G'$.
			\item [] Case ii) $G^L \neq A$. Then $G^L$ is a left option of $G'$ so $G^L \htri G'$.
		\end{itemize}
		Next, take $G'^R$. $G'^R$ is a right option of $G$, so $G\htri G'^R$. Neither $G$ nor $G'$ is atomic, so we have $G \hleq G'$.
		
		Now we show $G' \hleq G$. Take $G'^L$. There are two options for $G'^L$.
		\begin{itemize}
			\item [] Case i) $G'^L = A^{RRL}$. Then since $A^{RR} \hleq B$ it follows that $A^{RRL} \htri B$. Since $B$ is a left option of $G$, by Definition~\ref{def:classes-c-and-d}, we have that $A^{RRL} \htri G$.
			\item [] Case ii) $G'^L \neq A^{RRL}$. Then $G'^L$ is a left option of $G$, so $G'^L\htri G$.
		\end{itemize}
		Take $G^R$. $G^R$ is a right option of $G'$, so $G'\htri G^R$. Neither $G$ nor $G'$ is atomic, so we have $G' \hleq G$.
		
		Thus $G \heq G'$.
	\end{proof}
	
	\subsection{Two-Self-Reversible Domination}\label{subsec:two-self-reversible-domination}
	
	\begin{lemma}\label{thm:two-self-rev-dom-X-G}
		Let $G \in \cal{C}$, with $G = \{A, G^{\cal{L}}\mid G^{\cal{R}}\}$, and $A^{RR} \hleq A$. Let \\$G' = \{A^{RR}, A^{RR\Le}, G^{\cal{L}}\mid G^{\cal{R}}\}$. Then, for any game $X \in \cal{D}$:
		\begin{itemize}
			\item [1)] $X \hleq G \Rightarrow X \hleq G'$.
			\item [1)] $X \htri G \Rightarrow X \htri G'$.
		\end{itemize}
	\end{lemma}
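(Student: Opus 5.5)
We prove the two statements by mutual induction on $X \in \D$, following the template of Lemma~\ref{thm:self-rev-dom-X-G} and Lemma~\ref{thm:two-reversibility-X-G}. The only new ingredient is handling the left option $A$ of $G$: in $G'$ it has been replaced by $A^{RR}$ together with $A^{RR\Le}$.

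For statement (1), suppose $X \hleq G$. For any $X^L$ we have $X^L \htri G$, so induction hypothesis (2) gives $X^L \htri G'$. Every $G'^R$ is a right option of $G$, so $X \htri G'^R$ follows directly from $X \hleq G$. Since $G'$ is non-atomic, the atomic clause is vacuous, and therefore $X \hleq G'$.

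For statement (2), we split into the three ways $X \htri G$ can hold.

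If $X^R \hleq G$, induction hypothesis (1) gives $X^R \hleq G'$, hence $X \htri G'$.

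If $X \hleq G^L$ with $G^L \neq A$, or $X^R \hleq G^L$ with $G^L \neq A$, then $G^L$ is still a left option of $G'$ and we are done.

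The main case is $G^L = A$.
\begin{itemize}
\item[] Suppose first $X \hleq A$. Since $A^R \htri A^{RR}$ by reflexivity (Lemma~\ref{thm:order-reflexivity}, via clause 1), closure of $A \in \C$ against $A^{RR} \in \C \subseteq \D$ gives $A \htri A^{RR}$. Theorem~\ref{thm:c-transitivity-D-left} then yields $X \htri A^{RR}$.
\item[] If instead $X^R \hleq A$, the same argument gives $X^R \htri A^{RR}$. Definition~\ref{def:classes-c-and-d}, applied to $X \in \D$ against $A^{RR} \in \C$, then upgrades this to $X \htri A^{RR}$.
\end{itemize}
In either situation we unpack $X \htri A^{RR}$ into three subcases:
\begin{itemize}
\item[] If $X^R \hleq A^{RR}$, then since $A^{RR}$ is itself a left option of $G'$, we get $X \htri G'$ immediately. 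This is simpler than in two-reversibility, because no further transitivity through $G$ is needed.
\item[] If $X \hleq A^{RRL}$ or $X^R \hleq A^{RRL}$, then $A^{RRL}$ is a left option of $G'$, so again $X \htri G'$.
\end{itemize}

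Notably, the hypothesis $A^{RR} \hleq A$ is not actually needed for this direction; it will matter only in the companion $G \hleq X$ lemma and in the equivalence theorem. The main obstacle is justifying the step $A \htri A^{RR}$ and the subsequent use of transitivity, because both require checking class membership. Specifically, $A$, $A^R$ and $A^{RR}$ must lie in $\C$ (hence in $\D$), as must $X^R$. Following the paper's convention in the earlier two-reversibility lemmas, I would take it that options of games in $\C$ are in $\C$. With that assumption, $X \hleq A \htri A^{RR}$ has $X \in \D$, $A \in \D$ and $A^{RR} \in \C$, which matches the hypotheses of Theorem~\ref{thm:c-transitivity-D-left}.
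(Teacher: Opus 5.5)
Your proof is correct and follows the paper's argument essentially line by line. It uses the same mutual induction on $X \in \D$, the same derivation of $A \htri A^{RR}$ from $A^R \htri A^{RR}$ via the closure of $A$, the same appeal to Theorem~\ref{thm:c-transitivity-D-left} (plus Definition~\ref{def:classes-c-and-d} in the $X^R \hleq A$ case) to reach $X \htri A^{RR}$, and the same three-way unpacking that ends at the left options $A^{RR}$ and $A^{RRL}$ of $G'$. Your side remarks are accurate too: the hypothesis $A^{RR} \hleq A$ is unused in the paper's proof as well, your reflexivity justification of $A^R \htri A^{RR}$ supplies a step the paper leaves implicit, and the paper tacitly makes the same class-membership assumption you flag throughout, namely that options of games in $\C$ lie in $\C$ and options of $X \in \D$ lie in $\D$.
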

	
	\begin{proof}
		We prove these statements by mutual induction. Take $X \in \cal{D}$.
		\begin{itemize}
			\item [1)] Take $X^L$. $X^L \htri G \Rightarrow X^L \htri G'$ by induction hypothesis (2). Take $G'^R$. $G'^R$ is a right option of $G$, thus because $X \hleq G$, we have $X \htri G'^R$. $G'$ is non atomic, so we have $X \hleq G'$.
			\item [2)] There are three ways we can have $X \htri G$.
			\begin{itemize}
				\item [] Case i) $X^R \hleq G$. Then by induction hypothesis (1) we have $X^R \hleq G'$. Thus $X \htri G'$.
				\item [] Case ii) $X \hleq G^L$. There are two options for $G^L$.
				\begin{itemize}
					\item [] Case a) $G^L = A$. Since $A^R \htri A^{RR}$ it follows that $A \htri A^{RR}$. Then we have $X \hleq A \htri A^{RR}$. It follows from Theorem~\ref{thm:c-transitivity-D-left} that $X \htri A^{RR}$. There are three ways this is possible.
					\begin{itemize}
						\item [] Case I) $X^R \hleq A^{RR}$. Since $A^{RR}$ is a left option of $G'$ it follows that $X \htri G'$.
						\item [] Case II) $X \hleq A^{RRL}$. $A^{RRL}$ is a left option of $G'$ so it follows that $X \htri G'$.
						\item [] Case III) $X^R \hleq A^{RRL}$. $A^{RRL}$ is a left option of $G'$ so it follows that $X \htri G'$.
					\end{itemize}
					\item [] Case b) $G^L \neq A$. Then $G^L$ is a left option of $G'$. Thus $X \htri G'$.
				\end{itemize}
				\item [] Case iii) $X^R \hleq G^L$. There are two options for $G^L$.
				\begin{itemize}
					\item [] Case a) $G^L = A$. Since $A^R \htri A^{RR}$ it follows that $A \htri A^{RR}$. Then we have $X^R \hleq A \htri A^{RR}$. It follows from Theorem~\ref{thm:c-transitivity-D-left} that $X^R \htri A^{RR}$. By Definition~\ref{def:classes-c-and-d} we have that $X \htri A^{RR}$. There are three ways this is possible.
					\begin{itemize}
						\item [] Case I) $X^R \hleq A^{RR}$. Since $A^{RR}$ is a left option of $G'$ it follows that $X \htri G'$.
						\item [] Case II) $X \hleq A^{RRL}$. $A^{RRL}$ is a left option of $G'$ so it follows that $X \htri G'$.
						\item [] Case III) $X^R \hleq A^{RL}$. $A^{RRL}$ is a left option of $G'$ so it follows that $X \htri G'$.
					\end{itemize}
					\item [] Case b) $G^L \neq A$. Then $G^L$ is a left option of $G'$. Thus $X \htri G'$.\qedhere
				\end{itemize}
			\end{itemize}
		\end{itemize}
	\end{proof}
	
	\begin{lemma}\label{thm:two-self-rev-dom-G-X}
		Let $G \in \cal{C}$, with $G = \{A, G^{\cal{L}}\mid G^{\cal{R}}\}$, and $A^{RR} \hleq A$. Let \\$G' = \{A^{RR}, A^{RR\Le}, G^{\cal{L}}\mid G^{\cal{R}}\}$. Then, for any game $X \in \cal{D}$:
		\begin{itemize}
			\item [1)] $G \hleq X \Rightarrow G' \hleq X$.
			\item [1)] $G \htri X \Rightarrow G' \htri X$.
		\end{itemize}
	\end{lemma}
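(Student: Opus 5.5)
We argue by mutual induction on the two statements, following Lemma~\ref{thm:self-rev-dom-G-X} closely. Fix $X \in \D$. The right options of $G'$ coincide with those of $G$. The left options of $G'$ are $A^{RR}$, the left options $A^{RRL}$ of $A^{RR}$, and the remaining left options $G^{\Le}$. Hence statement (2) is routine, and all the work lies in the left-option clause of statement (1).

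For (1), assume $G \hleq X$ and take a left option $G'^L$. There are three cases.

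\emph{Case $G'^L = A^{RR}$.} Since $A$ is a left option of $G$, we have $A \htri G$. Together with $G \hleq X$, Theorem~\ref{thm:c-transitivity-D-right} gives $A \htri X$. Then $A^{RR} \hleq A \htri X$, and Theorem~\ref{thm:c-transitivity-D-left} (part 3) yields $A^{RR} \htri X$.

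\emph{Case $G'^L = A^{RRL}$.} From $A^{RR} \hleq A$ we get $A^{RRL} \htri A$ directly from Definition~\ref{def:intrinsic-order}. Since $A$ is a left option of $G \in \C$, Definition~\ref{def:classes-c-and-d} gives $A^{RRL} \htri G$. Combined with $G \hleq X$, transitivity gives $A^{RRL} \htri X$.

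\emph{Remaining case.} Here $G'^L$ is a left option of $G$, so $G'^L \htri X$ immediately, because $G \hleq X$.

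Next, for any $X^R$, the fact $G \hleq X$ gives $G \htri X^R$, and induction hypothesis (2) turns this into $G' \htri X^R$. Finally, $G'$ is non-atomic, so the atomic clause is vacuous, and we conclude $G' \hleq X$.

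For (2), assume $G \htri X$ and split into the three defining cases. If $G^R \hleq X$ or $G^R \hleq X^L$, the same $G^R$ is a right option of $G'$, so $G' \htri X$. If $G \hleq X^L$, induction hypothesis (1) gives $G' \hleq X^L$, hence $G' \htri X$.

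The main obstacle is making the transitivity steps legitimate, since Theorems~\ref{thm:c-transitivity-D-right} and~\ref{thm:c-transitivity-D-left} require specific membership in $\C$ and $\D$. We take $A$, $A^{RR}$ and $A^{RRL}$ to lie in $\C \subseteq \D$ as options of $G \in \C$, exactly as the preceding lemmas implicitly assume, and we have $X \in \D$. The delicate point is choosing the correct version for each chain. In the chain $A^{RR} \hleq A \htri X$, the outer games are $A^{RR} \in \C$ and $X \in \D$, so we must invoke whichever of the two theorems matches these memberships (or rely on $\C \subseteq \D$). This mirrors the corresponding step in Lemma~\ref{thm:self-rev-dom-G-X}.
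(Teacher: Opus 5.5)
Your proposal is correct and matches the paper's proof essentially line for line: the same mutual induction, the same three-way split on $G'^L$, and the same handling of the $X^R$ clause and of part (2). In the first case you simply spell out the chain $A^{RR} \hleq A \htri G \hleq X$ as two separate transitivity steps, which the paper does in one line. One small citation point: for $A^{RR} \hleq A \htri X$ the theorem whose hypotheses actually fit is Theorem~\ref{thm:c-transitivity-D-right} (part 3), since $A^{RR} \in \C$ and $A, X \in \D$, whereas Theorem~\ref{thm:c-transitivity-D-left} would need $X \in \C$; the paper makes the same slip at this step.
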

	
	\begin{proof}
		We prove these statements by mutual induction. Take $X \in \cal{D}$.
		\begin{itemize}
			\item [1)] Take $G'^L$. There are three options for $G'^L$.
			\begin{itemize}
				\item [] Case i) $G'^L = A^{RR}$. $A \htri G$ since $A$ is a left option of $G$. Since $A^{RR} \hleq A$, and $G \hleq X$ it follows from Theorem~\ref{thm:c-transitivity-D-left} that $A^{RR} \htri X$. 
				\item [] Case ii) $G'^L = A^{RRL}$. Since $A^{RR} \hleq A$ we have that $A^{RRL} \htri A$. Since $A$ is a left option of $G$, it follows from Definition~\ref{def:classes-c-and-d} that $A^{RRL} \htri G$. Since $G \hleq X$, it then follows from Theorem~\ref{thm:c-transitivity-D-left} that $A^{RRL} \htri X$.
				\item [] Case iii) $G'^L \neq A^{RR}$ and $G'^L \neq A^{RRL}$. Then $G'^L$ is a left option of $G$. Because $G \hleq X$, it follows that $G'^L \htri X$.
			\end{itemize}
			Next, take $X^R$. $G \htri X^R \Rightarrow G' \htri X^R$ by induction hypothesis (2). $G'$ is non atomic, so we have $G' \hleq X$.
			\item [2)] There are three ways we can have $G \htri X$. 
			\begin{itemize}
				\item [] Case i) $G^R \hleq X$. $G^R$ is a right option of $G'$, so we have $G' \htri X$.
				\item [] Case ii) $G \hleq X^L$. Then by induction hypothesis (1) we have $G' \hleq X^L$. Thus $G' \htri X$.
				\item [] Case iii) $G^R \hleq X^L$. $G^R$ is a right option of $G'$, so we have $G' \htri X$.\qedhere
			\end{itemize}
		\end{itemize}
	\end{proof}
	
	\begin{theorem}\label{thm:two-self-rev-dom-preserves-c}
		Let $G \in \cal{C}$, with $G = \{A, G^{\cal{L}}\mid G^{\cal{R}}\}$, and $A^{RR} \hleq A$. Let \\$G' = \{A^{RR}, A^{RR\Le}, G^{\cal{L}}\mid G^{\cal{R}}\}$. Then $G' \in \cal{C}$.
	\end{theorem}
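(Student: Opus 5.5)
We follow the same template as Theorem~\ref{thm:self-rev-dom-preserves-c}, using the two lemmas just proved for the two-self-reversible case. To show $G' \in \C$ we must check both conditions of Definition~\ref{def:classes-c-and-d} against an arbitrary $X \in \D$.

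\emph{Right condition.} Suppose $G'^R \htri X$. Since $G'$ has the same right options as $G$, $G'^R$ is a right option of $G$. Because $G \in \C$, Definition~\ref{def:classes-c-and-d} gives $G \htri X$. Lemma~\ref{thm:two-self-rev-dom-G-X}(2) then transfers this to $G' \htri X$.

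\emph{Left condition.} Suppose $X \htri G'^L$. The plan is to reduce to $X \htri G$ and then apply Lemma~\ref{thm:two-self-rev-dom-X-G}(2). We split into three cases according to $G'^L$.
\begin{itemize}
\item[] Case i) $G'^L = A^{RR}$. From $X \htri A^{RR} \hleq A$, Theorem~\ref{thm:c-transitivity-D-left} gives $X \htri A$. Since $A$ is a left option of $G \in \C$, Definition~\ref{def:classes-c-and-d} gives $X \htri G$.
\item[] Case ii) $G'^L = A^{RRL}$. Since $A^{RR}$ is an option of a game in $\C$, it lies in $\C$, so Definition~\ref{def:classes-c-and-d} turns $X \htri A^{RRL}$ into $X \htri A^{RR}$. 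We then proceed exactly as in Case i.
\item[] Case iii) $G'^L$ is some other option. Then $G'^L$ is a left option of $G$, and Definition~\ref{def:classes-c-and-d} gives $X \htri G$ directly.
\end{itemize}
In every case we conclude with Lemma~\ref{thm:two-self-rev-dom-X-G}(2), obtaining $X \htri G'$.

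\emph{Main obstacle.} The delicate point is the class bookkeeping needed for the transitivity step $X \htri A^{RR} \hleq A \Rightarrow X \htri A$. Theorem~\ref{thm:c-transitivity-D-left} requires the first two games in $\D$ and the last in $\C$. This holds here: $X \in \D$ by assumption, and $A$ and $A^{RR}$ are options of $G \in \C$. The earlier proofs use implicitly that options of members of $\C$ are in $\C$, and we rely on the same fact. Since $\C \subseteq \D$, we also have $A^{RR} \in \D$, and the hypotheses of the theorem are met.
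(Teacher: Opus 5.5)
Your proposal is correct and is essentially the paper's own proof. The right condition goes through $G \in \mathcal{C}$ and then Lemma~\ref{thm:two-self-rev-dom-G-X}; the left condition uses the same three-way case split on $G'^L$, where each case reduces to $X \htri G$ (with Theorem~\ref{thm:c-transitivity-D-left} handling $X \htri A^{RR} \hleq A$) before Lemma~\ref{thm:two-self-rev-dom-X-G} finishes it. The fact you flag, that sub-positions such as $A$ and $A^{RR}$ of a game in $\mathcal{C}$ again lie in $\mathcal{C}$, is also used without proof throughout the paper, so your bookkeeping remark makes explicit an assumption the paper already relies on.
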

	
	\begin{proof}
		Let $X \in \cal{D}$ with $G'^R \htri X$ for some $G'^R$. We wish to show $G' \htri X$.
		
		$G'^R$ is a right option of $G$. By Definition~\ref{def:classes-c-and-d} we have that $G \htri X$. By Lemma~\ref{thm:two-self-rev-dom-G-X}, it follows that $G' \htri X$.
		
		Let $X \in \cal{D}$ with $X \htri G'^L$ for some $G'^L$. We wish to show $X \htri G'$. There are three options for $G'^L$.
		\begin{itemize}
			\item [] Case i) $G'^L = A^{RR}$. We have that $X \htri A^{RR} \hleq A$. It follows from Theorem~\ref{thm:c-transitivity-D-left} that $X \htri A$. Then, since $A$ is a left option of $G$, it follows from Definition~\ref{def:classes-c-and-d} that $X \htri G$. By Lemma~\ref{thm:two-self-rev-dom-X-G} we have that $X \htri G'$. 
			\item [] Case ii) $G'^L = A^{RRL}$. By Definition~\ref{def:classes-c-and-d} $X \htri A^{RRL} \Rightarrow X \htri A^{RR}$. Then, since $A^{RR} \hleq A$ we have by Theorem~\ref{thm:c-transitivity-D-left} that $X \htri A$. Finally, since $A$ is a left option of $G$, it follows from Definition~\ref{def:classes-c-and-d} that $X\htri G$. By Lemma~\ref{thm:two-self-rev-dom-X-G} we have that $X \htri G'$. 
			\item [] Case iii) $G'^L \neq A^{RR}$ and $G'^L \neq A^{RRL}$. Then $G'^L$ is a left option of $G$. It follows from Definition~\ref{def:classes-c-and-d} that $X \htri G$. By Lemma~\ref{thm:two-self-rev-dom-X-G}, it follows that $X \htri G'$.
		\end{itemize}
		Thus, $G' \in \cal{C}$.
	\end{proof}
	
	\begin{theorem}\label{thm:two-self-rev-dom-equal}
		Let $G \in \cal{C}$, with $G = \{A, G^{\cal{L}}\mid G^{\cal{R}}\} $, and $A^{RR} \hleq A$. Then \\$G \heq \{A^{RR}, A^{RR\Le}, G^{\cal{L}}\mid G^{\cal{R}}\}$.
	\end{theorem}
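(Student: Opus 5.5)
We set $G' = \{A^{RR}, A^{RR\Le}, G^{\Le}\mid G^{\R}\}$ and prove the two inequalities $G \hleq G'$ and $G' \hleq G$ separately. The argument follows the template of Theorem~\ref{thm:self-rev-dom-equal}. We only use the definition of $\hleq$, the defining closure properties of $\C$ from Definition~\ref{def:classes-c-and-d}, and Lemma~\ref{thm:two-self-rev-dom-X-G}. Neither $G$ nor $G'$ is atomic, so clause 3 of Definition~\ref{def:intrinsic-order} never applies. Throughout we assume, as in the preceding lemmas, that $G$, its options and $A^{RR}$ lie in $\C \subseteq \D$, so the closure properties and the earlier lemmas can be applied to them.

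For $G \hleq G'$, take a left option $G^L$ of $G$. If $G^L \neq A$, then it is also a left option of $G'$, and Lemma~\ref{thm:order-reflexivity} gives $G^L \htri G'$. If $G^L = A$, then $A \htri G$ holds trivially because $A$ is a left option of $G$ (again via reflexivity). Applying Lemma~\ref{thm:two-self-rev-dom-X-G}(2) with $X = A$ gives $A \htri G'$. For right options, every $G'^R$ is a $G^R$, so $G \htri G'^R$ by reflexivity. This establishes $G \hleq G'$.

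For $G' \hleq G$, right options are immediate: each $G^R$ is a right option of $G'$, so $G' \htri G^R$. Left options of $G'$ fall into three cases. If $G'^L = A^{RR}$, then since $A^{RR} \hleq A$ and $A$ is a left option of $G$, the second clause of $\htri$ gives $A^{RR} \htri G$ directly. If $G'^L$ is an original $G^L$, then $G^L \htri G$ by reflexivity. If $G'^L = A^{RRL}$, then $A^{RR} \hleq A$ gives $A^{RRL} \htri A$, and since $G \in \C$ and $A$ is a left option of $G$, the closure property of Definition~\ref{def:classes-c-and-d} yields $A^{RRL} \htri G$.

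The only delicate step is the case $G^L = A$ in the first inequality. There we need the full strength of Lemma~\ref{thm:two-self-rev-dom-X-G}, which transfers $\htri G$ to $\htri G'$ for games in $\D$. So the main thing to confirm is that $A$ lies in $\D$ (it is an option of a $\C$ game, hence in $\C\subseteq\D$). Once that is checked, the two inequalities combine to give $G \heq G'$.
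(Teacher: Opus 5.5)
Your proof is correct and follows the paper's argument essentially step for step. It uses the same case split on left options: the case $G^L = A$ is handled by $A \htri G$ together with Lemma~\ref{thm:two-self-rev-dom-X-G}, and the case $G'^L = A^{RRL}$ by $A^{RRL} \htri A$ and the $\C$-closure of $G$. You skip the paper's redundant detour through $A^{RR} \htri G$ in the $G^L = A$ case, and, like the paper, you rely on the assumption, not built into the definition of $\C$, that options of games in $\C$ again lie in $\C \subseteq \D$.
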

	
	\begin{proof}
		First we show $G \hleq G'$. Take $G^L$. 
		\begin{itemize}
			\item [] Case i) $G^L = A$. Then $A \htri G$ since $A$ is a left option of $G$. Since $A^{RR} \hleq A$, it follows from Theorem~\ref{thm:c-transitivity-D-left} that $A^{RR} \htri G$. By Definition~\ref{def:classes-c-and-d} we have that $A \htri G$. Finally, it follows from Lemma~\ref{thm:two-self-rev-dom-X-G} that $A \htri G'$.
			\item [] Case ii) $G^L \neq A$. Then $G^L$ is a left option of $G'$ so $G^L \htri G'$.
		\end{itemize}
		Next, take $G'^R$. $G'^R$ is a right option of $G$, so $G\htri G'^R$. Neither $G$ nor $G'$ is atomic, so we have $G \hleq G'$.
		
		Now we show $G' \hleq G$. Take $G'^L$. There are three options for $G'^L$.
		\begin{itemize}
			\item [] Case i) $G'^L = A^{RR}$. We have that $A^{RR} \hleq A$. Since $A$ is a left option of $G$ it follows from that $A^{RR} \htri G$.
			\item [] Case ii) $G'^L = A^{RRL}$. Then, since $A^{RR} \hleq A$ it follows that $A^{RRL} \htri A$. It follows from Definition~\ref{def:classes-c-and-d} that $A^{RRL} \htri G$.
			\item [] Case iii) $G'^L \neq A^{RR}$ and $G'^L \neq A^{RRL}$. Then $G'^L$ is a left option of $G$, so $G'^L\htri G$.
		\end{itemize}
		Next, take $G^R$. $G^R$ is a right option of $G'$, so $G'\htri G^R$. Neither $G$ nor $G'$ is atomic, so we have $G' \hleq G$. 
		
		Thus $G \heq G'$.
	\end{proof}
	
	\section{Canonical Forms and Uniqueness}
	
	In this section we show that every game in $\cal{C}$ has a unique canonical form and that any two equivalent games in $\cal{C}$ have the same canonical form. First we start by defining what a canonical form is.
	
	\begin{definition}
		We say a game $G$ is in \emph{canonical form} if it cannot be reduced by any of the reductions in 6.1.1-6.1.7, or their duals.
	\end{definition}
	
	Now we show that every finite game in $\C$ has a canonical form which is equivalent to it, and that canonical form is unique.
	
	\begin{theorem}
		 Every (finite) game in $\C$ has a canonical form. 
	\end{theorem}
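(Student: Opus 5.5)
The plan is to repeatedly apply the reductions of Section 6.1, or their duals, starting from $G$, and to show two things. First, every intermediate game stays in $\C$ and is equivalent to $G$. Second, the process terminates, so it ends at a game to which no reduction applies, which by definition is a canonical form.

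\textbf{Step 1: preservation of $\C$ and of $\heq$.} Each single reduction step keeps the game in $\C$, by Theorems~\ref{thm:domination-preserves-c}, \ref{thm:reversiblity-preserves-c}, \ref{thm:rev-dom-preserves-c}, \ref{thm:self-rev-dom-preserves-c}, \ref{thm:two-reversiblity-preserves-c}, \ref{thm:two-rev-dom-preserves-c}, \ref{thm:two-self-rev-dom-preserves-c} and their duals. Each step also yields a game $\heq$ to the previous one, by the corresponding ``equal'' theorems. Since all games involved lie in $\C\subseteq\D$, Theorems~\ref{thm:c-transitivity-D-right} and~\ref{thm:c-transitivity-D-left} let us chain these equivalences. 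Hence the final game is equivalent to $G$.

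We should also allow reductions inside options, to obtain a canonical form hereditarily. For this I would proceed by induction on the birthday. First replace every option by its canonical form, which exists by the induction hypothesis. Then check, using the ``$X \hleq G \Rightarrow X\hleq G'$'' style Lemmas (e.g.\ Lemma~\ref{thm:domination-X-G} and~\ref{thm:domination-G-X}), that replacing an option by an equivalent game in $\C$ preserves both membership in $\C$ and equivalence. That is the one genuinely new lemma needed, and it should be proved by the same mutual induction as those lemmas.

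\textbf{Step 2: termination, which is the main obstacle.} Domination strictly decreases the number of options. Reversibility and two-reversibility replace an option $A$ by the options of $A^R$ or $A^{RR}$, which have strictly smaller birthday. The self-reversible variants, however, \emph{add} options: they replace $A$ by $A^R, A^{R\Le}$ (resp.\ $A^{RR}, A^{RR\Le}$). Reversible domination keeps $A$ out but adds $A^{R\Le}$ while retaining $B$.

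So the measure I would try is the multiset of birthdays of the top-level options, ordered by the Dershowitz--Manna multiset ordering, which is well-founded. Every step removes one option $A$ of birthday $n$ and inserts finitely many options of birthday $<n$: namely $A^R$, $A^{RL}$, $A^{RR}$ and $A^{RRL}$ all have birthday $<b(A)$. This strictly decreases the multiset.

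For the dual (Right) reductions I would use the same measure on right options, and combine the two multisets as a pair (or as a single multiset over both sides). I need to check carefully that reversible domination removes $A$ rather than keeping it, as its statement $G'=\{A^{R\Le},B,G^{\Le}\mid G^{\R}\}$ indicates. Because $G$ is finite, the top level then terminates. Combined with the inductive canonicalization of options, this gives a game in canonical form.

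One subtlety remains: a top-level reduction may introduce new options (such as $A^{R\Le}$) that are not themselves canonical. I would handle this by alternating the two phases and arguing termination via the lexicographic pair (birthday of $G$, multiset measure). The birthday never increases under any reduction, and the options introduced have smaller birthday, so their canonicalization terminates by the induction hypothesis.
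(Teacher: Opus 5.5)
Your argument is correct and has the same skeleton as the paper's: perform a reduction, use the preservation and equivalence theorems to stay in $\C$ and remain $\heq$ to $G$, and conclude by well-founded induction. The difference is the termination measure.

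The paper uses one global measure: the counts $\mathrm{branch}(n,G)$ of branches of each depth in the whole game tree, compared from the deepest level downward. In effect this is the Dershowitz--Manna ordering on the multiset of all branch depths. It decreases wherever in the tree a reduction is performed, so no nested induction is needed. However, the paper never checks that a reduction performed inside an option keeps the whole game in $\C$ and $\heq$ to the original. That is exactly the replacement lemma you isolate.

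You measure only the multiset of birthdays of top-level options, where the check is immediate: each step deletes some $A$ and inserts only proper subpositions of $A$. You then handle deeper reductions by an outer induction on birthday together with that lemma. This makes explicit what the paper leaves implicit.

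Two small remarks. First, the alternation you worry about at the end does not arise. Once the options are canonical, everything a top-level reduction inserts ($A^R$, $A^{R\Le}$, $A^{RR}$, $A^{RR\Le}$) is a subposition of a canonical option, and so is already canonical. Second, both your induction on birthday and your replacement lemma use that options of games in $\C$ lie again in $\C$. For instance, to get $X\htri A$ from $X\htri A'\hleq A$ via Theorem~\ref{thm:c-transitivity-D-left}, you need $A\in\C$. The definition of $\C$ does not literally say this, but the paper assumes it tacitly too (e.g.\ ``$A^R\in\C$'' in Lemma~\ref{thm:reversibility-X-G}). So it is a shared hypothesis worth stating, not a new gap in your argument.
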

	
	\begin{proof}
		Let $G$ be a game in $\C$. If $G$ is in canonical form then we are done. If $G$ is not in canonical form, then there is some reduction we can apply to $G$ to get a new game $G'$ which is equivalent to $G$, and still in $\C$. By the induction hypothesis, $G'$ has a canonical form, and that canonical form is equivalent to $G'$ and thus $G$. Note that the reduction rules in~\ref{subsec:domination},~\ref{subsec:reversibility},~\ref{subsec:reversible-domination},~\ref{subsec:two-reversibility}, and~\ref{subsec:two-reversible-domination} decrease the number of nodes but the ones in~\ref{subsec:self-reversible-domination} and~\ref{subsec:two-self-reversible-domination} do not.  Therefore we must ensure that the simplification procedure terminates in a finite number of steps. One way to do that is as follows. Define branch($n$, $G$) to be the number of branches of depth $n$ in the game tree of $G$. Define $G \prec H$ if there exists some $N$ such that branch($n$, $G$) = branch ($n$, $H$) for all $n > N$, and branch($N$, $G$) $<$ branch($N$, $H$). It is easily seen that this is a well-ordering and also that it decreases under each application of a rule from Sections~\ref{subsec:domination}--\ref{subsec:two-self-reversible-domination}. Therefore the process of finding a canonical form must terminate in a finite number of steps.
	\end{proof}
	
	\begin{theorem}
		Let $G, H \in \cal{C}$ be games in canonical form with $G \heq H$, and $G, H \in \cal{C}$. Then $G \equiv H$.
	\end{theorem}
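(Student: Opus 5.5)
We argue by induction on the combined birthday of $G$ and $H$. The aim is a bijection $G^L \mapsto H^L$ with $G^L \heq H^L$, and similarly for right options. Since options of games in canonical form are again in canonical form and lie in $\C$, the induction hypothesis then gives $G^L \equiv H^L$, hence $G \equiv H$. The atomic case is immediate. If $G \hleq H$ and $H \hleq G$ with $G=[a]$, then Lemma~\ref{thm:c-atom-comp} forces $H=[b]$, and antisymmetry of the atom poset gives $a=b$. So assume both games are non-atomic.

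Fix a left option $G^{L_1}$. From $G \hleq H$ we get $G^{L_1} \htri H$, so one of three cases holds: $G^{L_1R} \hleq H$, $G^{L_1} \hleq H^{L_1}$, or $G^{L_1R} \hleq H^{L_1}$.

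\emph{First case.} $G^{L_1R} \hleq H \hleq G$, and Theorem~\ref{thm:c-transitivity-D-right} gives $G^{L_1R} \hleq G$. Then $G^{L_1}$ is reversible, contradicting canonicity.

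\emph{Third case.} Since $H^{L_1} \htri G$, again one of three cases holds. If $H^{L_1} \hleq G^{L_2}$, then $G^{L_1R} \hleq G^{L_2}$. For $L_2 \neq L_1$ this is reversible domination; for $L_2 = L_1$ it is self-reversible domination. Either way canonicity is contradicted. If $H^{L_1R'} \hleq G$, then $G^{L_1R} \hleq H^{L_1} \htri$ chains give $G^{L_1R}\htri G$ only weakly, so instead I would combine them with the lemmas on $\C$-closure (Lemmas~\ref{thm:c-left-triangle-closure} and~\ref{thm:c-right-triangle-closure}) to push toward $G^{L_1RR}\hleq G$ or $G^{L_1RR}\hleq G^{L_2}$. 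These are exactly the two-reversibility and two-(self-)reversible domination patterns, which explains why those reductions were introduced. The remaining mixed subcase $H^{L_1R'} \hleq G^{L_2}$ is handled the same way.

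\emph{Second case.} Symmetrically, $H^{L_1} \htri G$. The same case analysis, with the roles of $G$ and $H$ swapped and canonicity of $H$ applied, eliminates every possibility except $H^{L_1} \hleq G^{L_2}$. Then $G^{L_1} \hleq H^{L_1} \hleq G^{L_2}$, and transitivity gives $G^{L_1} \hleq G^{L_2}$. If $L_2 \neq L_1$, $G^{L_1}$ is dominated, which is impossible. Hence $L_2 = L_1$, so $G^{L_1} \heq H^{L_1}$.

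The map $G^{L_1}\mapsto H^{L_1}$ is injective because distinct options of $G$ mapping to one option of $H$ would be equivalent, giving domination. It is surjective by the symmetric argument. The right options are treated dually.

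The main obstacle is the bookkeeping of the third case, where $\htri$ is witnessed through the combined clause $G^R\hleq H^L$. There, transitivity is available only across mixed chains in $\C$ and $\D$ via Theorems~\ref{thm:c-transitivity-D-right} and~\ref{thm:c-transitivity-D-left}, together with $\C \subseteq \D$. I expect each dead end to land on precisely one of the seven reductions, and I would check first that the composite chains $G^{L_1R} \hleq H^{L_1}$, $H^{L_1R'}\hleq G^{L_2}$ really produce a second-order reversal $G^{L_1RR}\hleq\cdot$ rather than something not covered by the reduction list.
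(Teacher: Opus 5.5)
Your overall structure matches the paper's. You use the three witnesses of $G^{L}\htri H$, reversibility kills the first, and the case $G^{L}\hleq H^{L}$ is closed by a symmetric argument plus domination. Your atomic case and the subcase $H^{L_1}\hleq G^{L_2}$ are also fine.

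\textbf{The gap.} It is in your third case, $G^{L_1R}\hleq H^{L_1}$. For the subcases $H^{L_1R'}\hleq G$ and $H^{L_1R'}\hleq G^{L_2}$ you give no argument. You only state that the $\C$-closure lemmas should ``push toward'' $G^{L_1RR}\hleq G$ or $G^{L_1RR}\hleq G^{L_2}$. You also leave unchecked whether anything outside the reduction list can arise. Your second case reuses ``the same case analysis'' for $H$, so it inherits this hole. The proof is therefore incomplete exactly where the two-step reductions are needed. Moreover, Lemmas~\ref{thm:c-left-triangle-closure} and~\ref{thm:c-right-triangle-closure} are the wrong tool. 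They only move a $\htri$ relation along chains of options and never yield a $\hleq$ statement such as $G^{L_1RR}\hleq G$.

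\textbf{The missing idea.} The relation $G^{L_1R}\htri G$, which you dismiss as ``only weak'', is exactly what you need.
\begin{itemize}
\item Do not split $H^{L_1}\htri G$. Apply Theorem~\ref{thm:c-transitivity-D-left}(3) to $G^{L_1R}\hleq H^{L_1}\htri G$ to get $G^{L_1R}\htri G$, as the paper does.
\item Then unfold the definition of $\htri$. Its three witnesses are:
\begin{itemize}
\item $G^{L_1RR}\hleq G$, which is two-reversibility;
\item $G^{L_1R}\hleq G^{L_2}$, which is reversible domination if $L_2\neq L_1$ and self-reversible domination if $L_2=L_1$;
\item $G^{L_1RR}\hleq G^{L_2}$, which is two-reversible or two-self-reversible domination.
\end{itemize}
\item Each contradicts canonicity, so the case closes in one step.
\end{itemize}

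\textbf{Alternative repair.} You can also keep your own decomposition and use canonicity of $H$ instead.
\begin{itemize}
\item From $H^{L_1R'}\hleq G\hleq H$, transitivity gives $H^{L_1R'}\hleq H$, so $H^{L_1}$ is reversible in $H$.
\item From $H^{L_1R'}\hleq G^{L_2}\htri G\hleq H$, transitivity gives $H^{L_1R'}\htri H$. This unfolds into the same reduction patterns for $H$.
\end{itemize}
With either repair, the rest of your argument goes through.
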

	
	\begin{proof}
		Take $G^L$. Since $G \hleq H$ we have that $G^L \htri H$. There are three ways this can happen.
		\begin{itemize}
			\item [] Case i) $G^{LR} \hleq H$. Then we have $G^{LR} \hleq H \hleq G$. By Theorem~\ref{thm:c-transitivity-D-left} we have that $G^{LR} \hleq G$. But then $G^L$ would be reversible, so we cannot have $G^{LR} \hleq H$.
			\item [] Case ii) $G^{LR} \hleq H^L$. Then we have $G^{LR} \hleq H^L \htri G$. By Theorem~\ref{thm:c-transitivity-D-left} we have that $G^{LR} \htri G$. There are three ways that can happen.
			\begin{itemize}
				\item [] Case a) $G^{LRR} \hleq G$. But then $G^L$ would be two-reversible, so this is not possible.
				\item [] Case b) $G^{LR} \hleq G^L$. But then $G^L$ would be either reversibly dominated or self-reversibly dominated, so this is also not possible.
				\item [] Case c) $G^{LRR} \hleq G^L$. But then $G^L$ would be either two-reversibly dominated or two-self-reversibly dominated, so we cannot have this either.
			\end{itemize}
			Thus we cannot have that $G^{LR} \htri G$, so it is not possible to have $G^{LR} \hleq H^L$. So we must be in case iii.
			\item [] Case iii) $G^L \hleq H^L$. By a similar argument to above, we find that $H^L \hleq G'^L$ for some left option $G'^L$ of $G$. It follows that $G^L \hleq G'^L$. We must then have that $G^L = G'^L$ otherwise $G^L$ would be dominated. Thus $G^L \heq H^L$ for some left option $H^L$ of $H$. By induction hypothesis we have that $G^L \equiv H^L$.
		\end{itemize}
		So we find that every left option $G^L$ of $G$ is equal to some left option of $H$. A similar argument shows the same is true for every left option of $H$, and right option of both $G$ and $H$. Thus $G \equiv H$.
	\end{proof}
	
\chapter{Fallow and Taut Positions}\label{chap:fallow-and-taut}

	Note that in this chapter, unless otherwise stated, every game will be option closed.
	
	\section{Definition}	
	We begin first by revisiting more formally the property of $*$-antimonotonicity that was seen earlier in Chapter~\ref{chap:background}. Recall that $*_d$ is $\gg 0|0\gg$, a game over the 1-element poset ${0}$,
	corresponding to a dead cell.
	
	\begin{definition}
		A game $G$ is called \emph{$*$-antimonotone} if for every left option $G^L$ of $G$ we have $G^L \hs *_d \hleq G$ and for every right option $G^R$ of $G$ we have $G \hleq G^R \hs *_d$.
	\end{definition}
	
	\begin{theorem}\label{thm:rex+-antimonotone}
		Every Rex+ game is $*$-antimonotone.
	\end{theorem}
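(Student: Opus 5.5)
The plan is to argue directly from Definition~\ref{def:intrinsic-order}, comparing board positions cell by cell rather than using transitivity. Fix a Rex+ position $G$. A left option $G^L$ is $G$ with some nonempty set $S$ of empty cells filled with black stones. The game $G^L \hs *_d$ is then the board $G^L$ together with one extra dead cell, and its options are obtained by playing stones on the board, in the dead cell, or both. I will also use that $K \hs 0$ is the same game form as $K$; this follows by an easy induction, since $0$ has no options.

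The first step is an auxiliary recolouring lemma. If $P$ and $P'$ are positions on the same board with the same empty cells, and $P'$ is obtained from $P$ by turning some black stones white, then $P \hleq P'$. This formalises the remark in Chapter~\ref{chap:examples} that Black always prefers the opponent's colour. I would prove it by induction on the number of empty cells, together with the companion statement that $P \htri P'$ whenever the corresponding options of $P$ and $P'$ are compared. Every move in $P$ corresponds to the identical move in $P'$, and the resulting positions are again related by the same recolouring, so the induction closes. On full boards the atomic clause is needed: a connection for Black in $P'$ already exists in $P$, since $P$ has a superset of the black stones, so the outcome of $P$ is at most that of $P'$.

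Next I check $G^L \hs *_d \hleq G$ against the definition.
\begin{itemize}
\item[] Clause 1: take a left option of $G^L \hs *_d$. If Left plays a set $T$ on the board together with the dead cell, the result is $G$ with $S\cup T$ black, plus $0$. This is itself a left option of $G$, so clause 2 of $\htri$ applies with reflexivity (Lemma~\ref{thm:order-reflexivity}). If Left plays only the dead cell, the result is $G^L \hs 0 = G^L$, again a left option of $G$. If Left plays only $T$ on the board, the result still contains $*_d$. In that case I use clause 3 of $\htri$: Right answers in the dead cell alone, giving $G$ with $S\cup T$ black, which is a left option of $G$.
\item[] Clause 2: take a right option $G^R$ of $G$, with white set $W$. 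I use clause 3 of $\htri$. Right, in $G^L \hs *_d$, plays $W\setminus S$ together with the dead cell. This is a legal nonempty move, since the dead cell is always available. The result is the board with $S$ black and $W\setminus S$ white. On the other side, Left in $G^R$ plays $S\setminus W$ (or takes $G^R$ itself via clause 1 if $S\subseteq W$). These two positions differ only on $S\cap W$, which is black in the first and white in the second, so the recolouring lemma gives the required $\hleq$.
\item[] Clause 3: the atomic clause is vacuous, since $G^L \hs *_d$ is never atomic.
\end{itemize}
The right-option half of the theorem, $G \hleq G^R \hs *_d$, is the exact dual, with colours and roles swapped.

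The main obstacle I expect is the recolouring lemma. Its statement must be general enough to be self-sustaining under induction, and the atomic case must be handled honestly using the definition of the outcome poset. The clause 2 case analysis, when $W$ overlaps or is contained in $S$, is the other delicate point. It is the reason the dead cell matters: it guarantees Right a legal move even when $W\setminus S$ is empty.
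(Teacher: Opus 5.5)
Your proof is correct and has the same structure as the paper's. You use the same three-way split on the left options of $G^L \hs \stard$. For a right option $G^R$ you use the same witness: Right fills the dead cell together with $W\setminus S$, Left answers with $S\setminus W$ in $G^R$, and the two results differ only in that the stones on $S\cap W$ turn from black to white.

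There are two minor differences. When Left plays on the board but leaves the dead cell, you let Right fill the dead cell (clause 3 of $\htri$), whereas the paper uses the induction hypothesis and Lemma~\ref{thm:oc-left-closure}. You also prove the recolouring fact $P \hleq P'$ by induction, which the paper only asserts; no companion $\htri$ statement is needed there, since each option of $P$ is matched by the identical option of $P'$.
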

	
	\begin{proof}
		Let $G$ be a Rex+ position. Take a left option $G^L$ of $G$. We wish to show that $G^L \hs \stard \hleq G$. Take a left option $(G^L \hs \stard)^L$  of $G^L \hs *$. There are three options for what it could be.
		\begin{itemize}
			\item [] Case i) $(G^L \hs \stard)^L = G^L$. Then, $G^L \htri G$.
			\item [] Case ii) $(G^L \hs \stard)^L = G^{LL}$. Then, since $G$ is a Rex+ position it is option closed, thus $G^{LL}$ is a left option of $G$. It follows that $G^{LL} \htri G$.
			\item [] Case iii) $(G^L \hs \stard)^L = G^{LL} \hs \stard$. By induction hypothesis, $G^{LL} \hs \stard \htri G^L$. Since $G$ is option closed, being a Rex+ position, it follows from Theorem~\ref{thm:oc-left-closure} that $G^{LL} \hs *_d \htri G$.
		\end{itemize}
		Next we take a right option $G^R$ of $G$. We wish to show $G^L \hs *_d \htri G^R$. Then, we construct a right option of $G^L \hs *_d$ and a left option of $G^R$. First, let $G^{LR}$ be the right option of $G^L \hs *_d$ which fills the $*_d$, and plays any possible cells that were in the move $G^R$. Then, let $G^{R(L)}$ be the left option of $G^R$ that fills any possible cells that were in the move $G^L$. If no such cells exist then we just take $G^{R(L)} = G^R$. Then, the only difference between $G^{LR}$ and $G^{R(L)}$ is that some black stones may have become white. Since this is never bad for Black, we have that $G^{LR} \hleq G^{R(L)}$. It follows that $G^L \hs \stard \htri G^R$.
		
		Thus we have that $G^L \hs \stard \hleq G$. The proof for right options is dual.
	\end{proof}
	
	We also formalize an observation that was made previously in Chapter~\ref{chap:examples} by proving that two dead cells is equivalent to one dead cell.
	
	\begin{lemma}\label{thm:dead-cells-equal}
		$*_d \hs *_d \heq \stard$.
	\end{lemma}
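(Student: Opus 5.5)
The plan is a direct computation with Definition~\ref{def:intrinsic-order}: first write out the game form of $*_d \hs *_d$, then check both inequalities clause by clause. Here $*_d = \gg [0] \mid [0] \gg$, with $[0]$ the unique atom of the one-element poset.

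\textbf{Step 1 (game form of the sum).} Unfold the definition of $\hs$. The left options of $*_d \hs *_d$ are $[0]\hs *_d$, $*_d \hs [0]$ and $[0]\hs[0]$, and the right options are the same by symmetry. Since $[0]$ has no options and $0+0=0$ in the poset, we get $[0]\hs[0] = [0]$. Also $[0]\hs *_d = \gg [0]\hs[0] \mid [0]\hs[0]\gg = \gg [0]\mid[0]\gg = *_d$, and likewise $*_d\hs[0]=*_d$. Hence
\[ S := *_d \hs *_d = \gg *_d, [0] \mid *_d, [0] \gg . \]
The main point needing care is this identification of $[0]\hs *_d$ with $*_d$ as literal game forms. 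It follows by unfolding $\hs$ one more level. If one prefers to avoid it, one can instead carry the forms $[0]\hs *_d$ through the computations below, since every step only uses that its left and right options are $[0]$.

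\textbf{Step 2 ($S \hleq *_d$).} We check the left options of $S$ first. For $*_d \htri *_d$, use clause 3 of $\htri$: $(*_d)^R = [0] \hleq [0] = (*_d)^L$, which holds by Lemma~\ref{thm:order-reflexivity}. For $[0] \htri *_d$, use clause 2: $[0] \hleq [0] = (*_d)^L$.

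Next the right options of $*_d$: the only one is $[0]$, and $S \htri [0]$ holds by clause 1 via $S^R = [0] \hleq [0]$. The atomic clause is vacuous since $S$ is not atomic, so $S \hleq *_d$.

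\textbf{Step 3 ($*_d \hleq S$).} The only left option of $*_d$ is $[0]$, and $[0] \htri S$ holds by clause 2 via $[0] \hleq S^L = [0]$.

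For the right options of $S$: $*_d \htri *_d$ holds as in Step 2. For $*_d \htri [0]$, use clause 1: $(*_d)^R = [0] \hleq [0]$. Again neither game is atomic, so $*_d \hleq S$.

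Together, Steps 2 and 3 give $*_d \hs *_d \heq *_d$. No transitivity or option-closure results are needed; everything reduces to reflexivity on the atom $[0]$.
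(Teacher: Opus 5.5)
Your proof is correct and takes the same approach as the paper. Both verify the two inequalities clause by clause from Definition~\ref{def:intrinsic-order}, using that the options of $*_d \hs *_d$ on each side are $*_d$ and $[0]$, and that everything reduces to $[0] \hleq [0]$. You additionally spell out the game-form computation of the sum and the reverse inequality $*_d \hleq *_d \hs *_d$, which the paper leaves implicit (``follows similarly'').
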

	
	\begin{proof}
		First we show $*_d \hs *_d \hleq \stard$. Consider a left option of $\stard \hs \stard$. There are two options.
		\begin{itemize}
			\item [] Case i) The left option is $\stard$. Then the right option $[0]$ of $\stard$ is equal to the left option $[0]$ of $\stard$. It follows that $\stard \htri \stard$.
			\item [] Case ii) The left option is $[0]$. Then $[0]$ is equal to the left option $[0]$ of $\stard$. It follows that $[0] \htri \stard$.
		\end{itemize}
		Next, consider a right option of $\stard$. There is only one option, $[0]$. Then $[0]$ is equal to the right option $[0]$ of $*_d \hs *_d$. It follows that $*_d \hs *_d \htri [0]$.
		
		The proof that $\stard \hleq \stard \hs \stard$ follows similarly.
	\end{proof}
	
	Now we recall Definition~\ref{def:outcome-classes}, the definition of an $\N$ position, where an $\N$ position is one where both players have a first player winning strategy. We wish to generalize this notion, to one where both players have a move they wish to make, even if they cannot win in this game. First we recall from Chapter~\ref{chap:examples} that players always want to play in a dead cell $\stard$. We generalize this notion to a property called fallow, which encapsulates this idea of having somewhere both players want to play using a dead cell.
	
	\begin{definition}\label{def:fallow}
		A game $G$ is called \emph{fallow} if $G \heq H \hs *_d$ for some game $H$. $G$ is called \emph{taut} otherwise.
	\end{definition}
	
	\section{Alternative Characterizations}
	
	We now present many alternative ways to characterize fallow games. We start with the two alternative characterizations that we will use the most.
	
	\begin{theorem}\label{thm:fallow-Gstar}
		A game $G$ is fallow if and only if $G \heq G \hs *_d$.
	\end{theorem}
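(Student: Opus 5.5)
The reverse direction is immediate: if $G \heq G \hs \stard$, then taking $H = G$ in Definition~\ref{def:fallow} shows that $G$ is fallow. All the work is in the forward direction.

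Suppose $G \heq H \hs \stard$ for some $H$. The plan is to add a dead cell to both sides and absorb the duplicate using Lemma~\ref{thm:dead-cells-equal}.

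First, from $G \hleq H \hs \stard$ and $H \hs \stard \hleq G$, Lemma~\ref{thm:order-sums} (with $K = \stard$) gives
\[ G \hs \stard \heq (H \hs \stard) \hs \stard. \]

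Second, I would reassociate the right-hand side. The sum $\hs$ is associative and commutative up to identity of game forms, which follows by a routine induction on the recursive definition. Hence $(H \hs \stard)\hs\stard = H \hs (\stard \hs \stard)$. By Lemma~\ref{thm:dead-cells-equal} we have $\stard \hs \stard \heq \stard$, and another application of Lemma~\ref{thm:order-sums} (with $H$ as the added summand) yields
\[ H \hs (\stard\hs\stard) \heq H \hs \stard. \]

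Chaining these relations gives
\[ G \hs \stard \heq H\hs\stard\hs\stard \heq H \hs \stard \heq G, \]
and the conclusion then needs transitivity. Under the chapter's standing assumption that every game is option closed, this comes from Theorem~\ref{thm:oc-transitivity}. For that theorem to apply, every game in the chain must be option closed. So I need $\stard$ (which is trivially option closed) and sums of option closed games to be option closed.

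The main obstacle is checking that $\hs$ preserves option closure. A left option of $(G\hs K)^L$ has one of the forms $G^{L}\hs K$, $G\hs K^{L}$, or $G^{L}\hs K^{L}$. Composing two such moves yields a term built from $G^{LL}$ or $G^L$ together with $K^{LL}$ or $K^L$. Since $G$ and $K$ are option closed, each such term is again one of the three forms of a left option of $G \hs K$, and this shape is exactly why the sum $\hs$ was defined with the simultaneous terms. A secondary concern is associativity of the atomic case, which needs $\hs$ of atoms to be defined via an associative operation $a+e$ on the outcome poset. The paper assumes such an operation in the proof of Lemma~\ref{thm:order-sums}, and I would take it to be associative and commutative.
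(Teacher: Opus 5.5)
Your proposal is correct and follows the same route as the paper's proof. From $G \heq H \hs \stard$ you add $\stard$ to both sides, collapse $\stard \hs \stard$ to $\stard$ via Lemma~\ref{thm:dead-cells-equal}, and conclude by transitivity, while the trivial direction takes $H = G$. You also supply details the paper leaves implicit, namely associativity and commutativity of $\hs$, the use of Lemma~\ref{thm:order-sums} to lift $\stard \hs \stard \heq \stard$ to $H \hs \stard \hs \stard \heq H \hs \stard$, and closure of option-closed games under $\hs$ so that Theorem~\ref{thm:oc-transitivity} applies, and you label the two directions correctly where the paper swaps them.
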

	
	\begin{proof}
		"$\Rightarrow$" Suppose $G \heq G \hs *_d$. Then $G = H \hs *_d$ for some game $H$, namely $H = G$.
		
		"$\Leftarrow$" Suppose $G \heq H \hs *_d$ for some game $H$. Then we will show that $H \hs *_d \heq H \hs *_d \hs *_d$. By Lemma~\ref{thm:dead-cells-equal} $*_d \hs *_d \heq *_d$. Our desired result follows from transitivity.
	\end{proof}
	
	\begin{theorem}\label{thm:fallow-triangle}
		$G$ is fallow if and only if $G\htri G$.
	\end{theorem}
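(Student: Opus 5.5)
We prove both directions, working throughout with option closed games as the chapter stipulates. By Theorem~\ref{thm:fallow-Gstar} it suffices to show that $G \htri G$ holds if and only if $G \heq G \hs \stard$. Transitivity (Theorem~\ref{thm:oc-transitivity}) and the closure Lemmas~\ref{thm:oc-right-closure} and~\ref{thm:oc-left-closure} are available whenever all the games involved are option closed. Since $\stard$ is option closed and $\hs$ preserves option closure, $G \hs \stard$ is option closed as well.

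For ``$\Rightarrow$'', suppose $G \htri G$; we show $G \hleq G\hs\stard$ and $G\hs\stard \hleq G$ directly from Definition~\ref{def:intrinsic-order}.

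We start with $G \hleq G \hs \stard$. Each $G^L$ satisfies $G^L \htri G$, and by Lemma~\ref{thm:order-sums} this gives $G^L \htri G \hs \stard$; alternatively, $G^L \hleq G^L$ with $G^L \hs \stard$ already provides the witness. Now take the right options of $G \hs \stard$. The option $G^R \hs \stard$ is handled by Lemma~\ref{thm:order-sums} applied to $G \htri G^R$. The option $G^R$ (the dead cell filled) is handled since $G^R$ is itself a right option of $G$ and $G^R \hleq G^R$. The remaining option is $G\hs[0] = G$, which needs exactly $G \htri G$; this is where the hypothesis enters.

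Next, $G\hs\stard \hleq G$. Its left options are $G^L\hs\stard$, $G^L$ and $G$. The last needs $G \htri G$ again. The first two need $G^L \hs \stard \htri G$, which follows from $*$-antimonotonicity (Theorem~\ref{thm:rex+-antimonotone}) or by induction. Right options of $G$ are handled by $G\hs\stard \htri G^R$, which follows from $G \htri G^R$ and Lemma~\ref{thm:order-sums} combined with $G \hleq G \hs \stard$ just shown, or more simply by the witness $(G\hs\stard)^R = G^R\hs[0]$. Finally, the atomic clause is vacuous since $G \hs \stard$ is not atomic.

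For ``$\Leftarrow$'', suppose $G \heq G\hs\stard$. By Lemma~\ref{thm:order-reflexivity}, $G \hleq G$, and since $G$ is a right option of $G\hs\stard$ (the move filling the dead cell), we have $G \htri G \hs \stard$. Here $G \hleq G$ serves as case 2 of the definition of $\htri$, viewing $G$ as a left option of $G\hs\stard$. Then $G \htri G\hs\stard \hleq G$, and Theorem~\ref{thm:oc-transitivity}(2) yields $G \htri G$.

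The main obstacle is bookkeeping rather than conceptual. One must make sure that every option of $G\hs\stard$, including the mixed options $G^L\hs[0]$, is accounted for. One must also ensure that the uses of $G^L\hs\stard \htri G$ are justified for general option closed $G$ rather than only for Rex+ positions. If $*$-antimonotonicity is unavailable, I would instead prove it by a simultaneous induction in the style of the proof of Theorem~\ref{thm:rex+-antimonotone}, using Lemma~\ref{thm:oc-left-closure}.
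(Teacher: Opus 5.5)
Your skeleton matches the paper's. You reduce via Theorem~\ref{thm:fallow-Gstar} to $G \heq G\hs\stard$, check the options of $G\hs\stard$ one at a time, and use the hypothesis $G\htri G$ exactly on the option $G = G\hs[0]$. For the converse, the paper reads $G\htri G$ directly off clause 2 of $G\hleq G\hs\stard$, since $G$ is a right option of $G\hs\stard$; your route through Theorem~\ref{thm:oc-transitivity} is also valid. However, two cases are not justified as written.

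First, Lemma~\ref{thm:order-sums} adds the same game on both sides. From $G\htri G^R$ it gives $G\hs\stard\htri G^R\hs\stard$, not the needed $G\htri G^R\hs\stard$. Converting one into the other would require $G\hleq G\hs\stard$, which is what you are proving. Your first justification of $G^L\htri G\hs\stard$ has the same defect. Your alternative witness there must be the left option $G^L = G^L\hs[0]$, not $G^L\hs\stard$: $G^L\hleq G^L\hs\stard$ is not reflexivity, and it would even force $G^L\htri G^L$.

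Second, for the left option $G^L\hs\stard$ you invoke $*$-antimonotonicity. Theorem~\ref{thm:rex+-antimonotone} proves this only for Rex+ positions, and it fails for option-closed games in general. Take $G = \{[1]\mid[0]\}$ over $0<1$: then $[1]\hs\stard = \{[1]\mid[1]\}$ is not $\hleq G$, since $\{[1]\mid[1]\}\htri[0]$ would need $1\leq 0$. So your fallback induction cannot work, because the right-option step of that theorem is the Rex+ colour-swap argument. Even for Rex+, you would still need transitivity with $G\htri G$ to pass from $\hleq$ to $\htri$.

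The missing observation is that the options obtained by filling the dead cell are themselves witnesses for clause 3 of $\htri$. For $G\htri G^R\hs\stard$: $G^R$ is a right option of $G$, $G^R = G^R\hs[0]$ is a left option of $G^R\hs\stard$, and $G^R\hleq G^R$. For $G^L\hs\stard\htri G$: $G^L = G^L\hs[0]$ is a right option of $G^L\hs\stard$ and a left option of $G$. With these one-line replacements, no antimonotonicity or sum lemma is needed, and the argument becomes exactly the paper's.
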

	
	\begin{proof}
		"$\Rightarrow$" Suppose $G$ is fallow, then $G \heq G \hs \stard$ by Theorem~\ref{thm:fallow-triangle}.  Since $G$ is a right option of $G+*_d$ we have $G\htri G$.
		\\
		"$\Leftarrow$" Suppose $G \htri G$. We will show $G \heq G \hs \stard$. First, take a left option of $G^L$. Then, we have that $G^L$ is a left option of $G \hs \stard$, so $G^L \htri G \hs \stard$. Next, take a right option $(G \hs \stard)^R$ of $G \hs \stard$. There are three possible options.
		\begin{itemize}
			\item [] Case i) $(G \hs \stard)^R = G$. Then $G \htri G$ by assumption.
			\item [] Case ii) $(G \hs \stard)^R = G^R$. Then $G^R \htri G$.
			\item [] Case iii) $(G \hs \stard)^R = G^R \hs \stard$. Then, $G^R$ is a left option of $G^R \hs \stard$ and a right option of $G$. It follows that $G^R \hs \stard \htri G$.
		\end{itemize}
		
		Thus $G \hleq G \hs \stard$. The proof that $G \hs \stard \hleq G$ follows similarly. So we have that $G \heq G \hs \stard$.
	\end{proof}
	
	Now we begin working towards our main results in this chapter, that fallow games have unique ``best" moves, just like $\N$ positions.
	
	\begin{lemma}\label{thm:fallow-gL-to-gRgL}
		For fallow $G$, $G\hleq G^L \Rightarrow G^R \hleq G^{L'}$.
	\end{lemma}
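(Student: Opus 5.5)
We read the statement as follows: if $G$ is fallow and $G \hleq G^L$ for some left option $G^L$, then for every right option $G^R$ of $G$ there is a left option $G^{L'}$ of $G$ with $G^R \hleq G^{L'}$. Throughout, $G$ is option closed (a Rex+ position), so Theorem~\ref{thm:oc-transitivity}, Lemmas~\ref{thm:oc-right-closure} and~\ref{thm:oc-left-closure}, and $*$-antimonotonicity (Theorem~\ref{thm:rex+-antimonotone}) are all available.

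The plan is to chain three known comparisons so that $G^R$ ends up on the left of a comparison whose right-hand side is a left option of $G$.

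\begin{enumerate}
\item By Theorem~\ref{thm:fallow-Gstar}, $G \heq G \hs \stard$. Combining this with the hypothesis $G \hleq G^L$ and transitivity gives $G \hs \stard \hleq G^L$.
\item $G^R$ is a right option of $G$, hence $G^R \hs \stard$ is a right option of $G \hs \stard$. By reflexivity (Lemma~\ref{thm:order-reflexivity}), $G^R \hs \stard \hleq G^R \hs \stard$, so the first clause of $\htri$ gives $G \hs \stard \htri G^R \hs \stard$.
\item By $*$-antimonotonicity, $G \hleq G^R \hs \stard$. Together with fallowness, $G \heq G\hs\stard$, this pins $G^R \hs \stard$ between copies of $G$.
\end{enumerate}

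The real target is a comparison of the form $G^R \hleq X$ with $X$ a left option of $G$. We try to produce it by unfolding $\hleq$ at the level of options. Take the chain $G \hleq G^L$ and apply the second clause of the definition of $\hleq$ to the right options $G^{LR}$ of $G^L$. This yields $G \htri G^{LR}$ for every $G^{LR}$. We then case-split on the three ways this $\htri$ can hold.

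\begin{itemize}
\item Case $G^{R''} \hleq G^{LR}$ for some right option $G^{R''}$ of $G$.
\item Case $G \hleq G^{LRL}$. Here $G^{LRL}$ is a left option of $G^{LR}$, and option closure will be used to convert such iterated options back into options of $G$.
\item Case $G^{R''} \hleq G^{LRL}$. This is handled the same way as the previous case.
\end{itemize}

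For the converse direction we use Theorem~\ref{thm:fallow-triangle}, which gives $G \htri G$, and unfold that too. In its cases $G^{R} \hleq G^{L'}$ arises directly from clause 3, while in the other two cases ($G^R\hleq G$ or $G\hleq G^{L'}$) we would compose with $G \hleq G^L$ to get $G^R \hleq G^L$, or recycle into the hypothesis.

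The main obstacle is that the statement quantifies over an arbitrary $G^R$, while fallowness only hands us one specific witness. That witness comes from unfolding $G \htri G$: either a right option that is $\hleq G$, or a left option that is $\hgeq G$, or a pair $G^{R_0} \hleq G^{L_0}$. To pass from this witness to every $G^R$, my first attempt will be the Rex+-specific construction from the proof of Theorem~\ref{thm:rex+-antimonotone}. Given the move $G^R$ and the move $G^{L}$, take $G^{L'}$ to be Left's move filling the cells of $G^L$ not already occupied by the move $G^R$ (if that leaves no cells, take $G^{L'}=G^L$). Then compare $G^R$ with $G^{L'}$ cellwise, using that changing black stones to white is never bad for Black, together with $G \hleq G^L$, to absorb the difference. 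If that cellwise argument fails, the fallback is to prove $G^R \hleq G^L$ directly. This would follow via transitivity from $G^R \hleq G$ and $G \hleq G^L$, so the remaining gap would be to establish $G^R \hleq G$ for fallow $G$ from $G \htri G$ and $G \hleq G^L$.
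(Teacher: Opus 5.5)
There is a genuine gap. You set out to prove a stronger statement than the lemma, and that stronger statement is false. The lemma is existential: from $G \hleq G^L$ it produces \emph{some} right option and \emph{some} left option with $G^R \hleq G^{L'}$. That is exactly how it is used later (``there exist $G^R$ and $G^{L'}$ such that $G^R \hleq G^{L'}$'').

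Your universal reading already fails for option closed games over the chain $0<1<2$. Take $H=\{[1]\mid[1]\}$ and $G=\{H,[1]\mid[0],[2]\}$. Then $G$ is option closed and $G\hleq H$: indeed $H\htri H$ since $H^R\hleq H^L$, $[1]\htri H$ since $[1]\hleq H^L$, and $G\htri [1]$ since $[0]\hleq[1]$. Hence $G\htri G$, and $G$ is fallow by Theorem~\ref{thm:fallow-triangle}. Yet neither $[2]\hleq[1]$ nor $[2]\hleq H$ holds: the former fails since $2>1$, and the latter would need $[2]\htri[1]$, which is impossible because neither atom has options. The same example refutes your fallback $G^R\hleq G$, since that would need $[2]\htri[0]$.

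So the ``main obstacle'' you identify is an artifact of the misreading, and no cellwise argument can remove it. Your proposed comparison of $G^R$ with $G^{L'}$ is also not a ``black stones turned white'' comparison, because the two positions occupy different sets of cells. In the proof that Rex+ games are $*$-antimonotone, by contrast, $G^{LR}$ and $G^{R(L)}$ fill the same cells. The detour through $G\hs\stard$ and $*$-antimonotonicity plays no role here.

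For the correct existential statement, your unfolding of $G\htri G$ is close but not well founded. In the case $G\hleq G^{L'}$ you ``recycle into the hypothesis'', which returns you to the starting situation with nothing having decreased. The paper first composes $G\htri G\hleq G^L$ to get $G\htri G^L$ by Theorem~\ref{thm:oc-transitivity}, and unfolds \emph{that} instead. The three cases are then:
\begin{itemize}
\item $G^R\hleq G^L$, which is the conclusion directly;
\item $G^R\hleq G^{LL}$, which is also done, since $G^{LL}$ is a left option of $G$ by option closure;
\item $G\hleq G^{LL}$, where $G^{LL}$ is again a left option of $G$ but strictly simpler than $G^L$, so induction on the left option closes the argument.
\end{itemize}
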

	
	\begin{proof}
		We have $G \htri G \hleq G^L$. It follows that $G \htri G^L$. There are three ways this can happen.
		\begin{itemize}
			\item [] Case i) $G^R \htri G^L$.
			\item [] Case ii) $G^R \htri G^{LL}$. Then, since $G$ is option closed, $G^{LL}$ is a left option of $G$. So, we take $G^{L'} = G^{LL}$ and we are done.
			\item [] Case iii) $G \htri G^{LL}$. Then, by induction hypothesis, it follows that there is some $G^R$ and $G^{L'}$ such that $G^R \hleq G^{L'}$.\qedhere
		\end{itemize}	
	\end{proof}
	
	\begin{lemma}\label{thm:fallow-gR-to-gRgL}
		For fallow $G$, $G^R \hleq G \Rightarrow G^{R'} \hleq G^{L}$.
	\end{lemma}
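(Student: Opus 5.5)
This is the left–right dual of Lemma~\ref{thm:fallow-gL-to-gRgL}, and I would mirror that argument, arguing by induction on the game tree of $G$. As in the previous lemma, the conclusion is read as: there exist some right option $G^{R'}$ and some left option $G^L$ of $G$ with $G^{R'} \htri G^L$ or $G^{R'} \hleq G^L$. In practice the case analysis produces a relation of the form $G^{R'} \htri G^{L}$ or $G^{R'} \htri G^{LL}$, exactly as in the proof of Lemma~\ref{thm:fallow-gL-to-gRgL}.

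The first step is to combine the hypotheses. By Theorem~\ref{thm:fallow-triangle}, fallowness of $G$ gives $G \htri G$. Together with the hypothesis $G^R \hleq G$, this gives
\[
G^R \hleq G \htri G.
\]
Since $G$ is option closed (the standing assumption of this chapter), and its options are then option closed too, Theorem~\ref{thm:oc-transitivity}(3) yields $G^R \htri G$.

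The second step is to unfold $G^R \htri G$ using Definition~\ref{def:intrinsic-order}, which gives three cases.
\begin{itemize}
\item[] Case i) $G^R \hleq G^L$ for some $G^L$. This is already the desired conclusion, with $G^{R'} = G^R$.
\item[] Case ii) $G^{RR} \hleq G^L$. Since $G$ is option closed, $G^{RR}$ is itself a right option of $G$, so we take $G^{R'} = G^{RR}$.
\item[] Case iii) $G^{RR} \hleq G$. Again $G^{RR}$ is a right option of $G$, so we are back in the hypothesis of the lemma with the right option $G^{RR}$ in place of $G^R$.
\end{itemize}

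The main obstacle is Case iii), because the induction there does not reduce the game $G$ but only moves to a deeper right option. I would handle it by inducting on the birthday (depth) of the right option $G^R$ rather than on $G$. The new option $G^{RR}$ is strictly simpler than $G^R$ and is still a right option of the same fallow game $G$. Since game trees are finite, the chain $G^R, G^{RR}, G^{RRR}, \dots$ must terminate. So Case iii) cannot recur forever, and eventually Case i) or Case ii) must occur, which produces the required $G^{R'}$ and $G^L$.

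Alternatively, one can simply invoke Lemma~\ref{thm:fallow-gL-to-gRgL} applied to the colour-swapped conjugate of $G$. Fallowness is symmetric, because $*_d$ is its own conjugate, so the conjugate of $G$ is again fallow and the dual statement follows directly.
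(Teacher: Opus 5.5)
Your proof is correct and follows the paper's route: the paper's proof is just the dual of Lemma~\ref{thm:fallow-gL-to-gRgL}, which is what you do when you get $G^R \htri G$ from $G^R \hleq G \htri G$ via Theorem~\ref{thm:oc-transitivity}(3), unfold it into the three cases, use option closure to treat $G^{RR}$ as a right option of $G$, and recurse in the case $G^{RR} \hleq G$. Your explicit induction on the depth of the right option, with $G$ held fixed, is the right measure, which the paper leaves implicit. The one thing to fix is your first paragraph: the conclusion should be read as $G^{R'} \hleq G^{L}$, not the weaker ``$\htri$ or $\hleq$''. The $\htri$'s in the cases of the proof of Lemma~\ref{thm:fallow-gL-to-gRgL} are typos for $\hleq$, and the stronger form is exactly what your three cases deliver and what Theorem~\ref{thm:*antimonotone-unique-taut-positions} later needs.
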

	
	\begin{proof}
		The proof follow similarly to that for Lemma~\ref{thm:fallow-gL-to-gRgL}
	\end{proof}
	
	\begin{lemma}\label{thm:prev-lemma}
		If $G \htri H + *_d$, then $G \htri H$ or $G \hleq H$.
	\end{lemma}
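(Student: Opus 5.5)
Unfold the definition of $G \htri H \hs \stard$ and handle the three cases of Definition~\ref{def:intrinsic-order} one at a time. Throughout, $G$ and $H$ are option closed (the standing convention of this chapter). The options of $H \hs \stard$ are easy to list. Its left options are $H$ (Left fills the dead cell), $H^L$, and $H^L \hs \stard$. Its right options are $H$, $H^R$, and $H^R \hs \stard$.

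\emph{Case i): $G \hleq (H\hs\stard)^L$.} If the option is $H$, then $G \hleq H$ and we are done. If it is $H^L$, then $G \htri H$ directly. If it is $H^L \hs \stard$, then $G \hleq H^L \hs \stard$. By Theorem~\ref{thm:rex+-antimonotone}, $H^L \hs \stard \hleq H$. Transitivity (Theorem~\ref{thm:oc-transitivity}) then gives $G \hleq H$.

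\emph{Case ii): $G^R \hleq H \hs \stard$.} Here I would prove an auxiliary claim: $H \hs \stard \hleq H$ or $H \htri H$ holds in general. If $H\hs\stard \hleq H$, then transitivity gives $G^R \hleq H$, hence $G \htri H$. If instead $H \htri H$, then $H$ is fallow by Theorem~\ref{thm:fallow-triangle}, so $H \heq H \hs \stard$, and again $G^R \hleq H$.

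\emph{Case iii): $G^R \hleq (H\hs\stard)^L$.*} If the option is $H$ or $H^L$, we directly get $G \htri H$. If it is $H^L \hs \stard$, antimonotonicity gives $G^R \hleq H^L\hs\stard \hleq H$, so $G \htri H$.

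The main obstacle is justifying the auxiliary claim in Case ii), namely that $H\hs\stard \hleq H$ whenever $H$ is not fallow. Checking the definition of $H\hs\stard \hleq H$, the left options $H^L\hs\stard$ and $H^L$ are fine: each is $\htri H$ by antimonotonicity or trivially. The remaining left option is $H$ itself, and $H \htri H$ is exactly fallowness. So in the taut case the plan must avoid $H\hs\stard \hleq H$ altogether. Instead I would treat $G^R \hleq H\hs\stard$ directly. Expanding $G^R \hleq H\hs\stard$ shows that every $G^{RL} \htri H\hs\stard$ and that $G^R \htri (H\hs\stard)^R$ for all right options, among them $H$ itself, since Right can fill the dead cell. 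Therefore $G^R \htri H$. Lemma~\ref{thm:oc-right-closure} then gives $G \htri H$. This bypasses the fallow/taut split entirely, and I expect it to be the cleanest route. If it works, the auxiliary claim is unnecessary and the lemma follows from the three cases together with Lemma~\ref{thm:oc-right-closure}, Theorem~\ref{thm:rex+-antimonotone}, and transitivity.
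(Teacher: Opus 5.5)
Your treatment of $G^R \hleq H\hs\stard$ in the last paragraph is correct and matches the paper: $H$ is a right option of $H\hs\stard$, so $G^R\htri H$, and Lemma~\ref{thm:oc-right-closure} gives $G\htri H$. You were also right to drop the auxiliary claim, which fails for every taut $H$ because $H$ is itself a left option of $H\hs\stard$.

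The gap is in the two subcases where the left option of $H\hs\stard$ is $H^L\hs\stard$, namely $G\hleq H^L\hs\stard$ and $G^R\hleq H^L\hs\stard$. There you use Theorem~\ref{thm:rex+-antimonotone} to get $H^L\hs\stard\hleq H$. That theorem only covers Rex+ positions, whereas the lemma is stated for arbitrary option closed games. Neither the lemma nor Corollary~\ref{corollary} and Lemma~\ref{thm:fallow-lem-taut-equal}, which rest on it, assume $*$-antimonotonicity; the paper adds that hypothesis explicitly where it is needed, e.g.\ in Lemma~\ref{thm:fallow-add-star-equal}. Without it the step is actually false. Over atoms $0<1$, the game $H=\{[1]\mid[0]\}$ is option closed, and $G=H^L\hs\stard=\{[1]\mid[1]\}$ satisfies $G\hleq H^L\hs\stard$ by reflexivity. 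Yet $G\hleq H$ fails: it would require $G\htri[0]$, and the only way to get that is $[1]\hleq[0]$. So your intermediate conclusion $G\hleq H$ in that subcase is wrong in general, even though the lemma's conclusion $G\htri H$ still holds.

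The repair is the device you already found: $H^L$ is a right option of $H^L\hs\stard$ (Right fills the dead cell). Hence $G\hleq H^L\hs\stard$ gives $G\htri H^L$, and then $G\htri H$ by Lemma~\ref{thm:oc-left-closure}. Likewise $G^R\hleq H^L\hs\stard$ gives $G^R\htri H^L$, and then $G\htri H$ by Lemmas~\ref{thm:oc-right-closure} and~\ref{thm:oc-left-closure}. With these substitutions your case split is exactly the paper's seven-case proof. That proof uses only option closure of $G$ and $H$ and needs neither antimonotonicity nor transitivity. It also spares you from checking that $H^L\hs\stard$ is option closed before invoking Theorem~\ref{thm:oc-transitivity}.
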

	
	\begin{proof}
		There are seven possible way we can have $G \htri H + *_d$.
		\begin{itemize}
			\item [] Case i) $G^R \hleq H\hs \stard$. Then we have $G^R \htri H$. It follows from Theorem~\ref{thm:oc-right-closure} that $G \htri H$.
			\item [] Case ii) $G^{R}\hleq H^L\hs \stard$. Then we have $G^{R} \htri H^L$. It follows from Theorems~\ref{thm:oc-right-closure} and \ref{thm:oc-left-closure} that $G \htri H$.
			\item [] Case iii) $G\hleq H^L\hs \stard$. Then we have $G \htri H^L$. It follows from Theorem~\ref{thm:oc-left-closure} that $G \htri H$.
			\item [] Case iv) $G^{R} \hleq H^L$. It follows that $G^{R} \htri H$. Then by Theorem~\ref{thm:oc-right-closure} we have that $G \htri  H$.
			\item [] Case v) $G \hleq H^L$. It follows that $G \htri H$.
			\item [] Case vi) $G^{R} \hleq H$. It follows that $G \htri H$.
			\item [] Case vii) $G \hleq H$.\qedhere
		\end{itemize}
	\end{proof}
	
	\begin{corollary}\label{corollary}
		If $G +* \hleq H + *$, then $G \htri H$ or $G \hleq H$.
	\end{corollary}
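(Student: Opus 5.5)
The plan is to reduce the corollary to Lemma~\ref{thm:prev-lemma} by peeling the $\stard$ off the left-hand side. Here $+$ and $*$ are read as $\hs$ and $\stard$, and all games are option closed as in this chapter. Suppose $G \hs \stard \hleq H \hs \stard$. The game $G \hs \stard$ has $G$ as a left option: Left fills the dead cell, and $G \hs [0] = G$. By clause 1 of Definition~\ref{def:intrinsic-order} applied to this left option, we get $G \htri H \hs \stard$.

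Lemma~\ref{thm:prev-lemma} now applies directly to $G \htri H \hs \stard$ and yields $G \htri H$ or $G \hleq H$, which is the claim. That lemma already does the case analysis over the seven ways a $\htri$ relation with $H \hs \stard$ can arise. It relies on Lemmas~\ref{thm:oc-right-closure} and \ref{thm:oc-left-closure}, so it needs $G$ and $H$ to be option closed; this is our standing assumption.

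The only step requiring care is the identification $G \hs [0] = G$, that is, that filling the dead cell really produces $G$ itself as a left option of $G \hs \stard$. This follows from the definition of $\hs$: the option $G \hs \stard^L$ with $\stard^L = [0]$, together with the atom $0$ acting as the identity of the atom-level sum. This is the same identification used implicitly in the proofs of Theorem~\ref{thm:fallow-triangle} and Lemma~\ref{thm:dead-cells-equal}.

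Beyond that, nothing new is needed. In particular, we never need to inspect right options of $H \hs \stard$, since the single left option $G$ already suffices.
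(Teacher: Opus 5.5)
Your proposal is correct and is the paper's argument. The paper likewise notes that $G$ is a left option of $G \hs \stard$, concludes $G \htri H \hs \stard$ from $G \hs \stard \hleq H \hs \stard$, and then applies Lemma~\ref{thm:prev-lemma}; your explicit remarks on the identification $G \hs [0] = G$ and on the standing option-closure assumption fill in details the paper leaves implicit.
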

	
	\begin{proof}
		$G$ is a left option of $G\hs*_d$. Thus, $G \htri H+*$ and the statement follows immediately from Lemma~\ref{thm:prev-lemma}.
	\end{proof}
	
	\begin{lemma}\label{thm:fallow-lem-taut-equal}
		If $G \hs \stard \hleq H \hs \stard$, $H \hleq G$ and at least one of $G$, or $H$ is taut, then $G \heq H$.
	\end{lemma}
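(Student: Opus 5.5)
We need $G \hleq H$, since $H \hleq G$ is given. By Corollary~\ref{corollary}, $G \hs \stard \hleq H \hs \stard$ gives $G \htri H$ or $G \hleq H$. In the second case we are done, so assume $G \htri H$ and aim to show that $G$ and $H$ are both fallow, contradicting the hypothesis that one of them is taut. Throughout, all games are option closed, so Theorem~\ref{thm:oc-transitivity} gives transitivity freely.

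\textbf{Step 1: $G$ is fallow.} From $G \htri H$ and $H \hleq G$, part (2) of Theorem~\ref{thm:oc-transitivity} gives $G \htri G$. By Theorem~\ref{thm:fallow-triangle}, $G$ is fallow.

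\textbf{Step 2: $H$ is fallow.} From $H \hleq G$ and $G \htri H$, part (3) of Theorem~\ref{thm:oc-transitivity} gives $H \htri H$. Hence $H$ is fallow, again by Theorem~\ref{thm:fallow-triangle}.

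So both $G$ and $H$ are fallow, contradicting the assumption that at least one is taut. Therefore the case $G \htri H$ cannot occur, we have $G \hleq H$, and together with $H \hleq G$ this gives $G \heq H$.

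The only real obstacle is checking that Corollary~\ref{corollary} applies as stated, in particular that its proof uses only option closedness of the games involved. That proof first derives $G \htri H \hs \stard$ from $G \hs \stard \hleq H \hs \stard$, which uses only that $G$ is a left option of $G \hs \stard$. It then applies Lemma~\ref{thm:prev-lemma}, which relies on Lemmas~\ref{thm:oc-right-closure} and~\ref{thm:oc-left-closure}. Since every game in this chapter is option closed, I expect these to go through, and the argument is then just two applications of transitivity.
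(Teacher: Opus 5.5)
Your proposal is correct and follows the paper's proof: Corollary~\ref{corollary} leaves only the case $G \htri H$ to rule out. Combined with $H \hleq G$, parts (2) and (3) of Theorem~\ref{thm:oc-transitivity} give $G \htri G$ and $H \htri H$, which contradicts tautness by Theorem~\ref{thm:fallow-triangle}. You are simply more explicit than the paper about which transitivity parts are used and why the corollary applies to option closed games.
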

	
	\begin{proof}
		Since $G \hs \stard \hleq H \hs \stard$, we have by Corollary~\ref{corollary} that either $G \htri H$ or $G \hleq H$. But, if we have $G \htri H$, that combined with $H \hleq G$ gives us that $G \htri G$ and $H \htri H$. Since at least one of $G$ and $H$ is taut, this is a contradiction, so we must have that $G \hleq H$. Since $H \hleq G$, it follows that $G \heq H$.
	\end{proof}
	
	\begin{lemma}\label{thm:fallow-taut-options-existence}
		Let $G$ and $H$ be games, where at least one of $G$ or $H$ is fallow and $G \hleq H$. Then, there are taut games $G^{(R)}$ and $H^{(L)}$ such that $G^{(R)} \hleq H^{(L)}$.
	\end{lemma}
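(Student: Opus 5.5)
The plan is to argue by induction on the pair $(G,H)$, for instance on the sum of their birthdays. I read $G^{(R)}$ as an iterated right option $G^{R\cdots R}$ of $G$, possibly of length zero, meaning $G$ itself; similarly $H^{(L)}$ is an iterated left option of $H$. Since every game in this chapter is option closed, any nonempty such iterate is in fact a single right option $G^R$ (respectively left option $H^L$), so the conclusion could equivalently be stated with ordinary options.

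The first step is to turn fallowness into a strict comparison. By Theorem~\ref{thm:fallow-triangle}, $G$ is fallow iff $G \htri G$.
\begin{itemize}
\item If $G$ is fallow, then $G \htri G \hleq H$, and part (2) of Theorem~\ref{thm:oc-transitivity} gives $G \htri H$.
\item If instead $H$ is fallow, then $G \hleq H \htri H$, and part (3) gives $G \htri H$.
\end{itemize}
Either way we obtain $G \htri H$.

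Next I unfold $G \htri H$ via Definition~\ref{def:intrinsic-order}, which yields three cases: $G^R \hleq H$, or $G \hleq H^L$, or $G^R \hleq H^L$. In each case I get a new pair $(G_1,H_1)$ with $G_1 \hleq H_1$. Here $G_1$ is $G$ or a right option of $G$, $H_1$ is $H$ or a left option of $H$, and at least one of them is strictly simpler than before. Now split on tautness.
\begin{itemize}
\item If both $G_1$ and $H_1$ are taut, we are done: take $G^{(R)} = G_1$ and $H^{(L)} = H_1$.
\item Otherwise one of them is fallow, and the induction hypothesis applied to $(G_1,H_1)$ produces taut games $G_1^{(R)} \hleq H_1^{(L)}$.
\end{itemize}
In the second case, $G_1^{(R)}$ is an iterated right option of $G$ and $H_1^{(L)}$ is an iterated left option of $H$, which is exactly the required form; option closure even makes them single options. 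The induction hypothesis applies because the options of option closed games are again option closed.

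Termination and the base case need a short check. Atomic games $[a]$ have no options, so $[a] \htri [a]$ is impossible, and hence atoms are taut. The recursion strictly decreases the birthday sum at every step, so it must stop at a pair in which both games are taut. That is the success case, and no separate base case is required. Note also that a fallow game cannot be atomic, so whenever we unfold $G \htri H$ with one side fallow, at least one side genuinely has options.

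I expect the main obstacle to be the bookkeeping rather than any deep step. One point is the convention that the length-zero iterate, $G$ itself, is allowed. The other is confirming that the transitivity invoked in the first step is legitimate: Theorem~\ref{thm:oc-transitivity} needs all three games to be option closed. This holds by the chapter's standing assumption, and it is inherited by options, so the inductive step stays within option closed games throughout.
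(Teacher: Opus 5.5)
Your proposal is correct and follows the paper's argument: get $G \htri H$ from fallowness via Theorem~\ref{thm:fallow-triangle} and Theorem~\ref{thm:oc-transitivity}, unfold the three cases of $\htri$, recurse, and use option closure to collapse iterated options into single options. You are also a little more careful than the paper, since you treat the case where $H$ is the fallow game (using part (3) of transitivity) and you stop when both new games are taut, whereas the paper invokes the induction hypothesis without checking its fallowness premise.
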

	
	\begin{proof}
		Suppose, without loss of generality that $G$ is fallow. Then $G \htri G \hleq H$. It follows from Theorem~\ref{thm:oc-transitivity} that $G \htri H$. There are three ways this can happen.
		\begin{itemize}
			\item [] Case i) $G^R \hleq H$. By induction hypothesis, there are taut $G^{R(R)}$ and $H^{(L)}$ such that $G^{R(R)} \hleq H^{(L)}$. Since $G$ is option closed, we have $G^R \hleq H^{(L)}$, both taut.
			\item [] Case ii) $G \hleq H^L$. By induction hypothesis, there are taut $G^{(R)}$ and $H^{L(L)}$ such that $G^{(R)} \hleq H^{L(L)}$. Since $G$ is option closed, we have $G^{(R)} \hleq H^L$, both taut.
			\item [] Case iii) $G^R \hleq H^L$. By induction hypothesis, there are taut $G^{R(R)}$ and $H^{L(L)}$ such that $G^{R(R)} \hleq H^{L(L)}$. Since $G$ is option closed, we have $G^R \hleq H^L$, both taut.\qedhere
		\end{itemize}
	\end{proof}
	
	\begin{lemma}\label{thm:fallow-add-star-equal}
		For $*$-antimonotone $G$, $G^R \hleq G^L \Rightarrow G^L \hs *_d \heq G^R \hs *_d \heq G$.
	\end{lemma}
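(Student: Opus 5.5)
The plan is to build a cycle of $\hleq$ relations through the three games and then collapse it with transitivity. Throughout I use the chapter's standing assumption that all games are option closed.

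\emph{The two relations given by $*$-antimonotonicity.} Since $G$ is $*$-antimonotone, the definition gives directly
\[
G^L \hs \stard \hleq G \qquad\text{and}\qquad G \hleq G^R \hs \stard .
\]

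\emph{The relation from the hypothesis.} Apply Lemma~\ref{thm:order-sums}(1) to $G^R \hleq G^L$ with $K = \stard$. This gives $G^R \hs \stard \hleq G^L \hs \stard$. We now have the cycle
\[
G^L \hs \stard \hleq G \hleq G^R \hs \stard \hleq G^L \hs \stard .
\]

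\emph{Collapsing the cycle.} I will use Theorem~\ref{thm:oc-transitivity}(1) repeatedly.
\begin{itemize}
\item From $G \hleq G^R \hs \stard \hleq G^L \hs \stard$ we get $G \hleq G^L \hs \stard$. Together with $G^L \hs \stard \hleq G$, this gives $G^L \hs \stard \heq G$.
\item From $G^R \hs \stard \hleq G^L \hs \stard \hleq G$ we get $G^R \hs \stard \hleq G$. Together with $G \hleq G^R \hs \stard$, this gives $G^R \hs \stard \heq G$.
\item Finally, $G^L \hs \stard \hleq G \hleq G^R \hs \stard$ gives $G^L \hs \stard \hleq G^R \hs \stard$. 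Combined with the relation from the hypothesis, this gives $G^L \hs \stard \heq G^R \hs \stard$.
\end{itemize}
These three equivalences are exactly the statement.

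\emph{Main obstacle.} The only real check is that Theorem~\ref{thm:oc-transitivity} applies, i.e.\ that $G^L \hs \stard$ and $G^R \hs \stard$ are option closed. The options $G^L$ and $G^R$ are option closed because $G$ is. The game $\stard = \gg 0|0\gg$ is trivially option closed. So it suffices to show that $\hs$ preserves option closure. I would prove this by induction on the options of $A \hs B$:
\begin{itemize}
\item A left option of a left option has the form $A' \hs B'$, where $A'$ is $A$ or some $A^L$ or $A^{LL}$, and similarly for $B'$, and not both are unchanged.
\item By option closure of $A$ and $B$, each $A^{LL}$ is an $A^L$ and each $B^{LL}$ is a $B^L$.
\item Hence $A' \hs B'$ is one of $A^L \hs B$, $A \hs B^L$, $A^L \hs B^L$, which are exactly the left options of $A \hs B$ in the definition of $\hs$.
\end{itemize}
Right options are dual. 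The inductive hypothesis then handles the options of $A \hs B$ themselves, since they are again $\hs$-sums of option closed games. Once this is established, the argument above goes through as written.
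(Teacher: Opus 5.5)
Your proposal is correct and matches the paper's proof: the paper derives $G^R \hs \stard \hleq G^L \hs \stard$ from Lemma~\ref{thm:order-sums}, takes $G^L \hs \stard \hleq G \hleq G^R \hs \stard$ from $*$-antimonotonicity, and concludes that all three games are equivalent by transitivity. You also check that $\hs$ preserves option closure, so that Theorem~\ref{thm:oc-transitivity} genuinely applies to $G^L \hs \stard$ and $G^R \hs \stard$; the paper leaves this step implicit, and your argument for it is sound.
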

	
	\begin{proof}
		Since $G^R \hleq G^L$, it follows that $G^R \hs *_d \hleq G^L \hs *_d$. But since $G$ is $*$-an\-ti\-mo\-no\-tone, we have that $G^L \hs *_d \hleq G \hleq G^R \hs *_d$. It follows that they are all equivalent.
	\end{proof}
	
	\begin{lemma}\label{thm:fallow-star-taut-equal}
		For taut, $*$-antimonotone games $G$ and $H$, $G \hs \stard \heq H \hs \stard$ if and only if $G \heq H$.
	\end{lemma}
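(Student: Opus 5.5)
The backward direction is routine. If $G \heq H$, then Lemma~\ref{thm:order-sums} applied to both $G \hleq H$ and $H \hleq G$ with $K = \stard$ gives $G \hs \stard \hleq H \hs \stard$ and $H \hs \stard \hleq G \hs \stard$. Hence $G \hs \stard \heq H \hs \stard$. Tautness and $*$-antimonotonicity are not needed here.

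For the forward direction I will aim to apply Lemma~\ref{thm:fallow-lem-taut-equal}, whose tautness hypothesis is satisfied since both $G$ and $H$ are taut. That lemma needs $G \hs \stard \hleq H \hs \stard$, which is given, and $H \hleq G$. So the real work is proving $H \hleq G$ from $H \hs \stard \hleq G \hs \stard$. By the symmetric use of the same lemma (swapping roles) it is enough to prove one of the two comparisons, since the other follows from the lemma.

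Here is the plan for $H \hleq G$. Corollary~\ref{corollary} applied to $H \hs \stard \hleq G \hs \stard$ gives $H \htri G$ or $H \hleq G$. In the second case I am done. In the first case, Corollary~\ref{corollary} applied the other way gives $G \htri H$ or $G \hleq H$.

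If $G \hleq H$, then combining it with $H \htri G$ via Theorem~\ref{thm:oc-transitivity} gives $H \htri H$, contradicting the tautness of $H$ by Theorem~\ref{thm:fallow-triangle}. If instead $G \htri H$ and $H \htri G$ both hold, I must derive a contradiction using $*$-antimonotonicity; this is the main obstacle.

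My first attempt is to unfold $H \htri G$ into its three cases: $H^R \hleq G$, $H \hleq G^L$, or $H^R \hleq G^L$. In each case I chain with the right-hand-side relation $G \hs \stard \hleq H \hs \stard$, using antimonotonicity facts such as $G^L \hs \stard \hleq G$ and $H \hleq H^R \hs \stard$, to manufacture either $G \htri G$ or $H \htri H$.

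For example, take the case $H \hleq G^L$. I get $G \hs \stard \hleq H \hs \stard \hleq G^L \hs \stard \hleq G$. Since $G$ is a right option of $G \hs \stard$, we have $G \htri G \hs \stard$. Combined with $G \hs \stard \hleq G$, this yields $G \htri G$, so $G$ is fallow, a contradiction.

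The case $H^R \hleq G$ is symmetric: $H \hleq H^R \hs \stard \hleq G \hs \stard \hleq H \hs \stard$, which gives $H \hs \stard \hleq H$ and similarly $H \htri H$.

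In the case $H^R \hleq G^L$, the chain $H \hs \stard \hleq H^R \hs \stard \hs \stard \heq H^R \hs \stard \hleq G^L \hs \stard \hleq G$ uses Lemma~\ref{thm:dead-cells-equal}. This gives $H \hs \stard \hleq G$, and hence $H \htri G$ by way of $H \hs \stard$. It does not yet give $H \hleq G$, so I would instead compare with $G \hs \stard \hleq H \hs \stard$. This yields $G \hs \stard \hleq G$, and so $G \htri G$ as before, a contradiction.

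Thus every branch either gives $H \hleq G$ or contradicts tautness, and Lemma~\ref{thm:fallow-lem-taut-equal} finishes the proof. Throughout, I use transitivity from Theorem~\ref{thm:oc-transitivity} for option closed games, as assumed in this chapter.
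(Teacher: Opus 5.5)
Your proof is correct, and it takes a genuinely different and shorter route than the paper's.

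\textbf{How the routes differ.} The paper assumes both $G \htri H$ and $H \htri G$ and checks all nine combinations of their unfoldings. Eight of these are dismissed by transitivity and the closure lemmas alone. Only the case $G^R \hleq H^L$, $H^R \hleq G^L$ uses $*$-antimonotonicity, and it then appeals to the induction hypothesis for $G^R$ and $G^L$, whose tautness is never checked.

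You instead unfold only $H \htri G$. In each of its three cases you combine $*$-antimonotonicity with the \emph{other} half of the hypothesis, $G \hs \stard \hleq H \hs \stard$, in a transitivity chain that ends in $G \hs \stard \hleq G$ or $H \hleq H \hs \stard$. These chains are legitimate because $\hs$ preserves option closure. Since $G$ is both a left and a right option of $G \hs \stard$, either relation gives $G \htri G$ or $H \htri H$ directly from the definition of $\hleq$.

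This needs no induction and never uses $G \htri H$, so your second application of Corollary~\ref{corollary} can be dropped. Corollary~\ref{corollary} applied to $H \hs \stard \hleq G \hs \stard$, together with your three cases, already yields $H \hleq G$, and Lemma~\ref{thm:fallow-lem-taut-equal} finishes. What the paper's nine-case layout buys is mainly the information that antimonotonicity is needed in only one configuration. Your version is more economical and avoids the questionable induction step.

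\textbf{Small slips to fix.}
\begin{itemize}
\item In the case $H^R \hleq G$, your chain gives $H \hleq H \hs \stard$, not $H \hs \stard \hleq H$. The conclusion $H \htri H$ still follows, because $H$ is a right option of $H \hs \stard$.
\item In the case $H \hleq G^L$, the relation $G \htri G \hs \stard$ comes from $G$ being a \emph{left} option of $G \hs \stard$. This is harmless, since it is both.
\item In the case $H^R \hleq G^L$, the chain $H \hleq H^R \hs \stard \hleq G^L \hs \stard \hleq G$ gives $H \hleq G$ at once. So Lemma~\ref{thm:dead-cells-equal} and the extra copy of $\stard$ are unnecessary.
\end{itemize}
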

	
	\begin{proof}
		"$\Rightarrow$ By Corollary~\ref{corollary} we have that either $G \hleq H$ or $G \htri H$. Similarly, we know that $H \hleq G$ or $H \htri G$. If we can show that either $G \hleq H$ or $H \hleq G$, then by Lemma~\ref{thm:fallow-lem-taut-equal} we would have that $G \heq H$.
		
		Suppose for the sake of contradiction that we have $G \htri H$ and $H \htri G$. There are nine ways this can happen. 
		\begin{itemize}
			\item [] Case i) $G^R \hleq H$ and $H^R \hleq G$. Then, we have that $G^R \hleq H \htri H^R \hleq G$. It follows from Theorem~\ref{thm:oc-transitivity} that $G^R \htri G$. Then, by Theorem~\ref{thm:oc-right-closure} it follows that $G \htri G$. This contradicts the assumption that $G$ was taut, so we do not have $G^R \hleq H$ and $H^R \hleq G$.
			\item [] Case ii) $G^R \hleq H$ and $H \hleq G^L$. Then, we have that $G^R \hleq H \hleq G^L$. It follows from Theorem~\ref{thm:oc-transitivity} that $G^R \hleq G^L$. Thus $G$ is fallow. This contradicts the assumption that $G$ was taut, so we do not have $G^R \hleq H$ and $H \hleq G^L$.
			\item [] Case iii) $G^R \hleq H$ and $H^R \hleq G^L$. Then, we have that $G^R \hleq H \htri H^R \hleq G^L$. It follows from Theorem~\ref{thm:oc-transitivity} that $G^R \htri G^L$. Then, by Theorems~\ref{thm:oc-right-closure}~and~\ref{thm:oc-left-closure} it follows that $G \htri G$. This contradicts the assumption that $G$ was taut, so we do not have $G^R \hleq H$ and $H^R \hleq G^L$.
			\item [] Case iv) $G \hleq H^L$ and $H^R \hleq G$. Then, we have that $H^R \hleq G \hleq H^L$. It follows from Theorem~\ref{thm:oc-transitivity} that $H^R \hleq H^L$. Thus, $H$ is fallow. This contradicts the assumption that $H$ was taut, so we do not have $G \hleq H^L$ and $H^R \hleq G$.
			\item [] Case v) $G \hleq H^L$ and $H \hleq G^L$. Then, we have that $G \hleq H^L \htri H \hleq G^L$. It follows from Theorem~\ref{thm:oc-transitivity} that $G \htri G^L$. Then, by Theorem~\ref{thm:oc-left-closure} it follows that $G \htri G$. This contradicts the assumption that $G$ was taut, so we do not have $G \hleq H^L$ and $H \hleq G^L$.
			\item [] Case vi) $G \hleq H^L$ and $H^R \hleq G^L$. Then, we have that $H^R \hleq G^L \htri G \hleq H^L$. It follows from Theorem~\ref{thm:oc-transitivity} that $H^R \htri H^L$. Then, by Theorems~\ref{thm:oc-right-closure}~and~\ref{thm:oc-left-closure} it follows that $H \htri H$. This contradicts the assumption that $H$ was taut, so we do not have $G \hleq H^L$ and $H^R \hleq G^L$.
			\item [] Case vii) $G^R \hleq H^L$ and $H^R \hleq G$. Then, we have that $H^R \hleq G \htri G^R \hleq H^L$. It follows from Theorem~\ref{thm:oc-transitivity} that $H^R \htri H^L$. Then, by Theorems~\ref{thm:oc-right-closure}~and~\ref{thm:oc-left-closure} it follows that $H \htri H$. This contradicts the assumption that $H$ was taut, so we do not have $G^R \hleq H^L$ and $H^R \hleq G$.
			\item [] Case viii) $G^R \hleq H^L$ and $H \hleq G^L$. Then, we have that $G^R \hleq H^L \htri H \hleq G^L$. It follows from Theorem~\ref{thm:oc-transitivity} that $G^R \htri G^L$. Then, by Theorems~\ref{thm:oc-right-closure}~and~\ref{thm:oc-left-closure} it follows that $G \htri G$. This contradicts the assumption that $G$ was taut, so we do not have $G^R \hleq H^L$ and $H \hleq G^L$.
			\item [] Case ix) $G^R \hleq H^L$ and $H^R \hleq G^L$. Then we have that $G^R \hs \stard \hleq H^L \hs \stard$ and $H^R \hs \stard \hleq G^L \hs \stard$. But by $*$-antimonotonicity, we have that $H^L \hs \stard \hleq H \hleq H^R \hs \stard$ and $G^L \hs \stard \hleq G \hleq G^R \hs \stard$.
			
			It follows that $G^R \hs \stard \heq H^L \hs \stard \heq H^R \hs \stard \heq G^L \hs \stard$. Thus, by induction hypothesis we have that $G^R = G^L$. This contradicts the assumption that $G$ was taut, so we do not have $G^R \hleq H^L$ and $H^R \hleq G^L$.
		\end{itemize}
		So we do not have that $G \htri H$ and $H \htri G$. Thus, we must have that either $G \hleq H$ or $H \htri G$. It follows that $G \heq H$.
		
		"$\Leftarrow$" This follows immediately from Theorem~\ref{thm:order-sums}.
	\end{proof}
	
	\begin{theorem}\label{thm:*antimonotone-unique-taut-positions}
		For option closed, $*$-antimonotone $G$, $G$ is fallow if and only if there exist unique $G^L, G^R$ both taut, such that $G^R \heq  G^L$.
	\end{theorem}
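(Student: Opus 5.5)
The plan is to prove the two directions separately, using Theorem~\ref{thm:fallow-triangle} (fallow iff $G \htri G$) together with Lemmas~\ref{thm:fallow-taut-options-existence}, \ref{thm:fallow-add-star-equal} and \ref{thm:fallow-star-taut-equal}. Uniqueness is to be read up to equivalence $\heq$.

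For ``$\Leftarrow$'', suppose there are $G^L, G^R$ with $G^R \heq G^L$. Then $G^R \hleq G^L$, so case 3 of the definition of $\htri$ gives $G \htri G$. By Theorem~\ref{thm:fallow-triangle}, $G$ is fallow. Tautness and uniqueness are not needed for this direction.

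For existence in ``$\Rightarrow$'', let $G$ be fallow. By Theorem~\ref{thm:fallow-triangle} we have $G \htri G$, and I unfold the three cases.
\begin{itemize}
\item If $G^R \hleq G^L$ directly, the option pair is already in hand.
\item If $G^R \hleq G$, Lemma~\ref{thm:fallow-gR-to-gRgL} produces a pair $G^{R'} \hleq G^L$.
\item If $G \hleq G^L$, Lemma~\ref{thm:fallow-gL-to-gRgL} produces a pair $G^R \hleq G^{L'}$.
\end{itemize}
So in every case some $G^R \hleq G^L$. Since $G$ is fallow, Lemma~\ref{thm:fallow-taut-options-existence} (applied with $G$ on both sides, or to the pair $G^R, G^L$) yields taut games $G^{R(R)} \hleq G^{L(L)}$. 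By option closure these are again a right and a left option of $G$; call them $A = G^R$ and $B = G^L$, with $A \hleq B$ and both taut.

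Next I apply Lemma~\ref{thm:fallow-add-star-equal}, which uses $*$-antimonotonicity, to get $B \hs \stard \heq A \hs \stard \heq G$. Then Lemma~\ref{thm:fallow-star-taut-equal}, which needs tautness and the $*$-antimonotonicity of $A$ and $B$ (inherited, since options of Rex+-like option-closed positions are again such games), gives $A \heq B$. This is the existence half.

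For uniqueness, take another taut pair $A' \heq B'$ of options of $G$. Lemma~\ref{thm:fallow-add-star-equal} again gives $A' \hs \stard \heq G \heq A \hs \stard$. Transitivity (Theorem~\ref{thm:oc-transitivity}) then gives $A \hs \stard \heq A' \hs \stard$, and Lemma~\ref{thm:fallow-star-taut-equal} gives $A \heq A'$, hence also $B \heq B'$. So the taut pair is unique up to equivalence.

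I expect the main obstacle to be the existence step. Lemma~\ref{thm:fallow-taut-options-existence} may return games that are not literally options of $G$, so I must check that they are iterated options, and therefore options of $G$ by option closure. I also need $*$-antimonotonicity to pass to those options. If that inheritance is not automatic, I would restrict the statement to Rex+ positions, where Theorem~\ref{thm:rex+-antimonotone} gives it for all subpositions.
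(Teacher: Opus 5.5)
Your proof is correct and follows essentially the paper's route: you unfold $G \htri G$ into some $G^R \hleq G^L$, descend to taut options via Lemma~\ref{thm:fallow-taut-options-existence} and option closure, apply Lemma~\ref{thm:fallow-add-star-equal}, and get uniqueness from Lemma~\ref{thm:fallow-star-taut-equal}; you also supply the easy ``$\Leftarrow$'' direction, which the paper omits. One improvement is available: for existence the paper uses Lemma~\ref{thm:fallow-lem-taut-equal} (you already have $A \hleq B$, $B \hs \stard \hleq A \hs \stard$, and $B$ taut), which needs no $*$-antimonotonicity of the options, so the inheritance issue you flag only matters in the uniqueness step, where the paper relies on it just as implicitly.
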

	
	\begin{proof}
		First we show such options must exist. Since $G$ is fallow, we have that $G \htri G$. There are three ways this can happen.
		\begin{itemize}
			\item [] Case i) $G \hleq G^L$. Then we have by Lemma~\ref{thm:fallow-gL-to-gRgL} that there exist $G^R$ and $G^{L'}$ such that $G^R \hleq G^{L'}$
			\item [] Case ii) $G^R \hleq G$. Then we have by Lemma~\ref{thm:fallow-gR-to-gRgL} that there exist $G^{R'}$ and $G^L$ such that $G^{R'} \hleq G^{L}$.
			\item [] Case iii) $G^R \hleq G^L$.
		\end{itemize}
		
		So we have $G^R \hleq G^L$ for some $G^R$ and $G^L$. By Lemma\ref{thm:fallow-taut-options-existence}, there are $G^{R(R)} $ and $G^{L(L)}$ which are taut such that $G^{R(R)} \hleq G^{L(L)}$. Those are right and left options of $G$, since $G$ is option closed. So we have taut $G^{R'}$ and $G^{L'}$ such that $G^{R'} \hleq G^{L'}$. It follows from Lemma~\ref{thm:fallow-add-star-equal} that $G^{L'} \hs *_d \hleq G^{R'} \hs *_d$. Then, by Lemma~\ref{thm:fallow-lem-taut-equal} that $G^{R'} \heq G^{L'}$.
		
		Thus we have taut $G^L$ and $G^R$ such that $G^R \heq G^L$. We now show that these $G^L$ and $G^R$ are unique. Suppose we have $G^L \heq G^R$ and $G^{L'} \heq G^{R'}$. Then we have that $G^L \hs \stard \heq G^R \hs \stard \heq G^{L'} \hs \stard \heq G^{R'} \hs \stard \heq G$ by Lemma~\ref{thm:fallow-add-star-equal}. Thus, by Lemma~\ref{thm:fallow-star-taut-equal} we have that $G^L \heq G^{L'}$ and $G^R \heq G^{R'}$.
	\end{proof}
	
	\begin{theorem}
		For a fallow Rex+ game $G$, Left and Right play the same set of cells X to achieve $G^L$ and $G^R$ respectively, where $G^L$, $G^R$ are the unique taut $G^L$, and $G^R$ such that $G^R \heq  G^L$.
	\end{theorem}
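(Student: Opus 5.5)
The plan is to let $X$ be the set of cells Left fills to reach the taut option $G^L$, and $Y$ the set Right fills to reach the taut option $G^R$, where $G^R \heq G^L$ as in Theorem~\ref{thm:*antimonotone-unique-taut-positions}. I will show $X = Y$ by contradiction, mirroring the stone-swapping argument used for $\N$ positions in Chapter~\ref{chap:examples}. The aim is to produce, from a mismatch between $X$ and $Y$, a second pair of taut options $G^{L'}, G^{R'}$ with $G^{R'} \heq G^{L'}$ and $G^{L'} \neq G^L$. Uniqueness in Theorem~\ref{thm:*antimonotone-unique-taut-positions}, which holds up to $\heq$, would then give $G^{L'} \heq G^L$. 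This should be incompatible with the construction.

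The construction follows the proof of Theorem~\ref{thm:rex+-antimonotone}. Let $G^{L(Y)}$ be the position where Left fills $X \cup Y$ with black stones, a left option of $G$ by option closure. Let $G^{R(X)}$ be the position where Right fills $X\cup Y$ with white stones. Also consider the mixed positions: $X$ black and $Y\setminus X$ white, which is a right option of $G^L$; and $Y$ white and $X\setminus Y$ black, which is a left option of $G^R$. The board-level monotonicity fact from Chapter~\ref{chap:examples} says that turning black stones white never hurts Black. With it I would compare the mixed position $G^{LR}$, obtained from $G^L$ by Right playing $Y\setminus X$, against $G^{RL}$, obtained from $G^R$ by Left playing $X\setminus Y$. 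These two boards cover the same cells and differ only in that the cells of $X\cap Y$ are black in one and white in the other. That yields a $\hleq$ between a right option of $G^L$ and a left option of $G^R$.

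Combining this with $G^R \hleq G^L$ and $G^L \hleq G^R$, transitivity (Theorem~\ref{thm:oc-transitivity}) and option closure (Lemmas~\ref{thm:oc-right-closure}, \ref{thm:oc-left-closure}) should give $G^L \htri G^L$ or $G^R \htri G^R$ whenever $X \neq Y$. By Theorem~\ref{thm:fallow-triangle}, this says $G^L$ or $G^R$ is fallow, contradicting tautness. The concrete chain I will try first is $G^{L}\hleq G^R$ together with $G^R \htri G^{RL}$, which holds since $G^{RL}$ is a left option of $G^R$. Then $G^{RL}$ is compared with $G^{LR}$ via the colour-swap. Finally $G^{LR}$ is a right option of $G^L$, giving $G^L \htri G^L$ by closure, or the symmetric chain if $Y\setminus X$ is empty and $X\setminus Y$ is not.

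The main obstacle is orienting the colour-swap inequality correctly. The swap gives ``black$\to$white is good for Black,'' and I need the inequality to point the way that produces a $\htri$ loop rather than a harmless comparison. If it points the wrong way, the fallback is to first use $*$-antimonotonicity (Lemma~\ref{thm:fallow-add-star-equal}), which gives $G^L\hs\stard \heq G \heq G^R \hs \stard$. I would then compare boards with $X\cup Y$ filled plus a dead cell, and conclude with Lemma~\ref{thm:fallow-star-taut-equal} that $G^L \heq G^{L(Y)}$ while $G^{L(Y)}$ is strictly changed, forcing $Y\subseteq X$; symmetrically $X \subseteq Y$.
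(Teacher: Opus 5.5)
Your key construction matches the paper's: compare $G^{LR}$ (Right adds $Y\setminus X$ to $G^L$) with $G^{RL}$ (Left adds $X\setminus Y$ to $G^R$), two boards that differ only in the colour of $X\cap Y$. The gap is in turning this comparison into a contradiction. The concrete chain you commit to has every arrow reversed, and you leave the orientation of the swap open.

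\begin{itemize}
\item[(i)] $G^{RL}$ being a left option of $G^R$ gives $G^{RL} \htri G^R$, from $G^R \hleq G^R$. It does not give $G^R \htri G^{RL}$, which does not follow.
\item[(ii)] The swap gives $G^{LR} \hleq G^{RL}$. In $G^{RL}$ the cells of $X\cap Y$ are white, which is never worse for Black (Left). Your chain needs the reverse, $G^{RL} \hleq G^{LR}$.
\item[(iii)] Even granting these, the chain ends at $G^L \htri G^{LR}$, which holds trivially for every right option of $G^L$. Lemma~\ref{thm:oc-right-closure} only converts $G^{LR} \htri K$ into $G^L \htri K$, so no closure step yields $G^L \htri G^L$ from it.
\end{itemize}

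Your fallbacks do not rescue this, because $\heq$ cannot see which cells were filled. Producing a second taut pair equivalent to the first contradicts nothing, since uniqueness is only up to $\heq$. Likewise, showing $G^L \heq G^{L(Y)}$ for a board carrying more stones contradicts nothing.

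The missing observation is that, with the correct orientation, no chain is needed. $G^{LR} \hleq G^{RL}$ is literally clause 3 of Definition~\ref{def:intrinsic-order}, so it gives $G^L \htri G^R$ directly. The degenerate cases keep the same orientation, so no separate ``symmetric'' case arises:
\begin{itemize}
\item[$\bullet$] if $X\setminus Y=\emptyset$, compare $G^{LR}$ with $G^R$ itself and use clause 1;
\item[$\bullet$] if $Y\setminus X=\emptyset$, compare $G^L$ with $G^{RL}$ and use clause 2.
\end{itemize}
Then $G^L \htri G^R \hleq G^L$ gives $G^L \htri G^L$ by Theorem~\ref{thm:oc-transitivity}(2). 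By Theorem~\ref{thm:fallow-triangle}, $G^L$ is then fallow, contradicting tautness. This is exactly the paper's proof.

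Alternatively, your chain works once it is fully reversed. From $G^{LR} \hleq G^{RL} \htri G^R \hleq G^L$, transitivity gives $G^{LR} \htri G^L$. Lemma~\ref{thm:oc-right-closure} then gives $G^L \htri G^L$.
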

	
	\begin{proof}
		Let $X$ be the set of cells that Black plays in to achieve $G^L$, and $Y$ the set of cells that White plays in to achieve $G^R$. Suppose that $X \neq Y$. Then, without loss of generality, there are cells in $Y$ which are not in $X$. 
		
		Let $G^{LR}$ be the move where White fills all the cells in $Y$ which are not in $X$. Then, let $G^{R(L)}$ be the move where Black fills all the cells in $X$ not in $Y$. If there are no such cells, then we take $G^{R(L)} = G^R$.
		
		Then, $G^{LR}$ and $G^{R(L)}$ are identical, except that potentially some stones that were black in $G^{LR}$ became white in $G^{R(L)}$. Since this is never bad for Black, we have that $G^{LR} \hleq G^{R(L)}$. It follows that $G^L \htri G^R$. But then we would have that $G^L \htri G^L$, since $G^L \heq G^R$. This contradicts the assumption that $G^L$ was taut, so we must have that $X = Y$.
	\end{proof}
	
	We call these cells fallow cells, and they are an attempt to generalize dead cells. This also generalizes our observation in Chapter~\ref{chap:examples} that $\N$ positions have unique winning moves and that they are the same for both players. We conjecture that fallow cells are the only cells ever where both players want to play, and then these unique taut positions guaranteed by Theorem~\ref{thm:*antimonotone-unique-taut-positions} are the fallow positions with all the fallow cells filled in. If fallow cells are truly the only places that are good for both players to play in, then these taut position would be the best move for both players, with a unique set of cells to play in to achieve it.

\chapter{Conclusion and Future Work}

In this thesis we developed an intrinsic and a contextual order on option closed games over posets. We defined a class of games under which this order is transitive, and has unique canonical forms. We also defined a new property of some Rex+ games called fallowness, and provided some alternative characterizations of fallowness.

In the future we hope to find alternative characterizations of the classes $\C$ and $\D$ that are easier to work with then the current definition. We also wish to find more ways to characterize taut games. Finally, we wish to prove that the contextual order implies our intrinsic order.

\bibliographystyle{plain}
\bibliography{thesis}

\end{document}